\definecolor{indigo}{HTML}{492DA5}
\providecommand{\noopsort}[1]{}
\g@addto@macro\bfseries{\boldmath}\makeatother
\let\origsection\section
\renewcommand\section{\@ifstar{\starsection}{\nostarsection}}
\newcommand\sectionspace{\vspace{0.5ex}}
\newcommand\nostarsection[1]{\sectionspace\origsection{#1}\sectionspace}
\newcommand\starsection[1]{\sectionspace\origsection*{#1}\sectionspace}
\crefname{page}{page}{pages}
\setlist[enumerate]{font=\normalfont}
\crefname{enumi}{}{}
\crefname{enumii}{}{}
\numberwithin{equation}{section}
\crefname{equation}{equation}{equations}
\crefname{condition}{condition}{conditions}
\newtheorem{theorem}{Theorem}[section]
\newtheorem{thm}[theorem]{Theorem}
\crefname{thm}{Theorem}{Theorems}
\newtheorem{lemma}[theorem]{Lemma}
\crefname{lemma}{Lemma}{Lemmas}
\newtheorem{prop}[theorem]{Proposition}
\crefname{prop}{Proposition}{Propositions}
\newtheorem{cor}[theorem]{Corollary}
\crefname{cor}{Corollary}{Corollaries}
\theoremstyle{definition}
\newtheorem{definition}[theorem]{Definition}
\crefname{definition}{Definition}{Definitions}
\newtheorem{remark}[theorem]{Remark}
\crefname{remark}{Remark}{Remarks}
\crefname{remarks}{Remarks}{Remarks}
\newtheorem{example}[theorem]{Example}
\crefname{example}{Example}{Examples}
\newcommand{\T}{\mathbb{T}}
\newcommand{\VV}{\mathcal{U}}
\newcommand{\Go}{{G^{(0)}}}
\newcommand{\supp}{\operatorname{supp}}
\newcommand{\lal}{\langle}
\newcommand{\ral}{\rangle}
\newcommand{\Bc}{B_c}
\newcommand{\Nc}{N_c}
\begin{document}

\date{\today}
\title{Cartan semigroups and twisted groupoid C*-algebras}
\author[Bice]{Tristan Bice}
\author[Clark]{Lisa Orloff Clark}
\author[Lin]{Ying-Fen Lin}
\author[McCormick]{Kathryn McCormick}

\thanks{Research of the first-named author was supported by GAČR project 22-07833K and RVO: 67985840. The second-named author was supported by Marsden grant  21-VUW-156 from the Royal Society of New Zealand. The third-named author thanks Victoria University of Wellington for their hospitality during her visit, funded by Queen's University Belfast, to the second-named author. The fourth-named author also thanks the second-named author and Victoria University of Wellington for their hospitality during her visit, as well as supplementing funding by CSULB.\\ 
The authors thank the anonymous referee for their thorough review of the paper and helpful suggestions.  }

\address[T. Bice]{Institute of Mathematics of the Czech Academy of Sciences, \v{Z}itn\'a 25, Prague, CZECH REPUBLIC}
\email{bice@math.cas.cz}

\address[L.O. Clark]{School of Mathematics and Statistics, Victoria University of Wellington, PO Box 600, Wellington 6140, NEW ZEALAND}
\email{\href{mailto:lisa.orloffclark@vuw.ac.nz}{lisa.orloffclark@vuw.ac.nz}}

\address[Y.-F. Lin]{Mathematical Sciences Research Centre, Queen's University Belfast, Belfast, BT7 1NN, UNITED KINGDOM}
\email{\href{mailto:y.lin@qub.ac.uk}{y.lin@qub.ac.uk}}

\address[K. McCormick]{Department of Mathematics and Statistics, California State University, Long Beach, CA, UNITED STATES}
\email{\href{mailto:kathryn.mccormick@csulb.edu}{kathryn.mccormick@csulb.edu}}

\subjclass[2020]{46L05 (primary),
22A22 (secondary)}
\keywords{Twisted groupoid C*-algebra, Cartan subalgebra}

\begin{abstract}
    We prove that twisted groupoid C*-algebras are characterised, up to isomorphism, by having \emph{Cartan semigroups}, a natural generalisation of normaliser semigroups of Cartan subalgebras.  This extends the classic Kumjian-Renault theory to general twisted \'etale groupoid C*-algebras, even non-reduced C*-algebras of non-effective groupoids.
\end{abstract}

\maketitle

\setcounter{tocdepth}{1}
\tableofcontents

\section{Introduction}

Groupoid C*-algebras have been playing an ever increasing role in C*-algebra theory since their inception in \cite{Renault1980}.  Indeed, it is rare to meet a C*-algebra that can not be built in a natural way from some groupoid.  This begs the question -- what exactly distinguishes groupoid C*-algebras from arbitrary C*-algebras?  For reduced C*-algebras of twisted effective \'{e}tale groupoids, the Kumjian-Renault theory developed in \cite{Kumjian1986} and \cite{Renault2008} provides a complete answer -- these are (up to isomorphism) precisely the C*-algebras $A$ which have a \emph{Cartan subalgebra} $B$, i.e.~a maximal commutative C*-subalgebra (MASA) whose normalisers $N(B)$ generate $A$ and which is the range of a faithful conditional expectation $E:A\twoheadrightarrow B$ (in which case we call $(A,B)$ a \emph{Cartan pair}).  While a particular C*-algebra may arise from different twisted effective \'{e}tale groupoids, the choice of Cartan subalgebra $B$ completely determines the groupoid, which can be constructed directly from its normaliser semigroup $N(B)$.  Indeed, in the resulting twisted groupoid C*-algebra, $B$ gets identified with the \emph{diagonal subalgebra} consisting of functions supported on the unit space, while its normalisers $N(B)$ get identified with the \emph{monomial semigroup} consisting of functions supported on bisections.  Cartan subalgebras and their normalisers thus again completely characterise diagonal subalgebras and monomial semigroups of reduced C*-algebras of twisted effective \'{e}tale groupoids.

But what of non-effective \'etale groupoids?  Even recovering such a groupoid from its reduced C*-algebra and diagonal subalgebra then becomes an impossible task in general (e.g.~$\mathbb{Z}_4$ and $\mathbb{Z}_2\times\mathbb{Z}_2$ give rise to the same C*-algebra and diagonal, as noted in \cite{CRST2021}), although in some cases this can be achieved under some strong conditions on the isotropy, as in \cite{CRST2021}, or in the presence of a dual group action, as in \cite{BFPR}. In the algebraic setting of Steinberg algebras of ample groupoids, an analog of the Kumjian-Renault theory has been established in work of the last three authors and coauthors in \cite{ACCCLMRSS} which does indeed apply to some non-effective groupoids, namely those satisfying the `local bisection hypothesis'.  However, the C*-algebraic version of the local bisection hypothesis has been shown recently in work of the last three authors and coauthors in \cite{ABCCLMR2023} to only apply to effective groupoids, thus ruling out a similar approach in the C*-algebra setting.

Our goal here is to show that an entirely satisfactory extension of the Kumjian-Renault theory to C*-algebras of general twisted \'etale groupoids can be achieved by shifting focus from the diagonal subalgebra to the monomial semigroup.  This approach is in line with previous work of the first two authors in \cite{BC2021} showing how to recover general \'etale groupoids from `bumpy semigroups' of bisection-supported functions.  Similarly, the algebraic analog of the Kumjian-Renault theory mentioned above in \cite{ACCCLMRSS} has recently been extended by the first author to bundles over general ample groupoids in \cite{Bice2023} and their resulting Steinberg rings, which have a distinguished semigroup as a defining part of their structure.

Accordingly, here we introduce \emph{Cartan semigroups} as the appropriate generalisation of normaliser semigroups of Cartan subalgebras. Indeed, a Cartan semigroup $N$ is still required to generate the ambient C*-algebra $A$ and to contain a commutative \emph{semi-Cartan subalgebra} $B$ (generated by the positive elements of $N$), which is also the range of a conditional expectation $E:A\twoheadrightarrow B$.  However, we do not require $B$ to be a MASA -- instead we place a weaker `stability' condition on our expectation $E$ which still ensures its uniqueness and that the groupoid we build is Hausdorff.  We do not even need to assume our expectation $E$ is faithful and, as a result, our work applies not only to reduced C*-algebras but also full C*-algebras and all exotic C*-completions in between.

With these Cartan semigroups we are able to prove exactly the same kind of results even for non-effective groupoids.  Specifically, we show that twisted groupoid C*-algebras are completely characterised (up to isomorphism) by having a Cartan semigroup.  We show that the resulting semi-Cartan subalgebras $B$ characterise diagonal subalgebras, while \emph{summable} Cartan semigroups (those that are also closed under \emph{compatible sums}) characterise monomial semigroups on the groupoid side. Most of the paper is devoted to proving the following:

\begin{thm}\label{thm:main}[\cref{cor:it}]\label{MainResult}
Let $A$ be a C*-algebra containing a Cartan semigroup $N$ with semi-Cartan subalgebra $B$ generated by the positive elements of $N$ and a stable expectation $E:A\twoheadrightarrow B$ (see \cref{def:semiCartan}).  We then have an isomorphism $\Psi$ from $A$ onto a twisted groupoid C*-algebra $C=\mathrm{cl}(C_c(\Sigma;G))$.  Moreover,
    \begin{enumerate}
        \item If $N$ is summable then $\Psi(N)$ is the monomial semigroup $\mathrm{cl}(\Nc(\Sigma;G))$, where $N_c(\Sigma; G)= \{a\in C(\Sigma;G): q(\overline{\supp}(a)) \text{ is a compact bisection}\}$.
        \item\label{it2:it'} If $E$ is faithful then $C=\Psi(A)$ is the reduced C*-algebra $C_r^*(\Sigma;G)$.
    \end{enumerate}
\end{thm}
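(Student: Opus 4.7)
The plan is to construct the groupoid $G$, the twist $\Sigma$, and the isomorphism $\Psi$ directly from the Cartan semigroup data, broadly following the outline of Kumjian--Renault reconstruction but adapted to handle isotropy. First, I would recover the unit space from the semi-Cartan subalgebra $B$: since $B$ is commutative, Gelfand duality gives $B\cong C_0(\Go)$ for $\Go=\Spec(B)$, and each $n\in N$ acts by a partial homeomorphism $\alpha_n$ of $\Go$ by conjugation on characters, producing an inverse-semigroup-style action of $N$ on $\Go$. I would then build $G$ as a germ groupoid of this action on germs $[n,x]$ where $x$ is in the open support of $n^*n$, using a germ relation coarse enough that two normalisers giving the same partial homeomorphism near $x$ represent the same point of $G$, but fine enough to retain genuine isotropy at each unit (this is the main departure from the effective case, where the germ relation can be collapsed).

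Next, I would construct the twist $\Sigma\to G$ using scalar comparison inside $N$: when $n,m\in N$ induce the same germ at $x$, the product $n^*m$ should locally behave like a scalar multiple of a positive element of $B$ near $x$, yielding a unique phase $\lambda\in\T$ and, as $(n,m,x)$ vary, a central $\T$-extension $\Sigma$ of $G$. Stability of $E$, together with the normaliser conditions, should give the continuity needed to realise $\Sigma$ as a genuine Hausdorff topological twist. I would then define the isomorphism $\Psi:A\to\mathrm{cl}(C_c(\Sigma;G))$ by the rule
\[
\Psi(a)(\sigma)=\text{the scalar recovered from }E(n^*a)(x)\text{ via the lift }\sigma\in\Sigma\text{ over }[n,x]\in G,
\]
where one checks well-definedness independently of the representative $n$ using precisely the stability of $E$, and continuity/compactness of support on the dense subspace $\vecspan N$. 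The $*$-algebra structure on $C_c(\Sigma;G)$ should match the image under $\Psi$ because multiplication and involution in $A$ are controlled on $N$ by the inverse semigroup identities, and the twist cocycle is designed to record exactly the phase discrepancy in products $nm$.

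For the two extra assertions, I would work on the dense subalgebra first. When $N$ is summable, each $\Psi(n)$ sits inside $\Nc(\Sigma;G)$ by design (its open support projects to the bisection of $\alpha_n$), and conversely every element of $\Nc(\Sigma;G)$ is realised by a compatible sum from $N$; taking closures gives $\Psi(N)=\mathrm{cl}(\Nc(\Sigma;G))$. When $E$ is faithful, I would show that $\Psi$ intertwines $E$ with the canonical restriction map $a\mapsto a\restr{\Go}$ on $C_c(\Sigma;G)$, so that the target norm on $C=\mathrm{cl}(C_c(\Sigma;G))$ must dominate the reduced norm while $\Psi$ is isometric, forcing $C=C_r^*(\Sigma;G)$ via the universal property of the reduced completion.

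The principal obstacle I foresee is engineering the twist $\Sigma$ as an honest Hausdorff $\T$-bundle in the non-effective setting, where the phase data live on a topologically substantive (not nowhere-dense) set and must be patched consistently over varying choices of local normaliser. This is exactly the point where the stability condition on $E$ replaces the classical MASA hypothesis: it must be used to guarantee both that the phase comparison $(n,m,x)\mapsto \lambda$ is continuous and single-valued, and that the resulting $\Psi$ is injective despite $B$ no longer being maximal commutative. Getting this stability assumption to do the required work uniformly in germs and phases should be the technical heart of the argument.
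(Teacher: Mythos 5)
Your outline founders at the central step: the definition of $G$. You propose to build $G$ as a germ groupoid of the partial homeomorphisms $\alpha_n$ of $\Go=\Spec(B)$ induced by conjugation, and then to choose a germ relation that is ``coarse enough that two normalisers giving the same partial homeomorphism near $x$ represent the same point of $G$, but fine enough to retain genuine isotropy.'' These two requirements are directly contradictory in the non-effective setting, which is the whole point of the theorem. In the motivating example of a discrete group (or any groupoid with nontrivial isotropy interior), \emph{every} $n\in N$ over a given unit induces the same (identity) germ of partial homeomorphism, so any relation that identifies normalisers with the same germ collapses all isotropy to a point -- exactly the $\mathbb{Z}_4$ versus $\mathbb{Z}_2\times\mathbb{Z}_2$ failure. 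No choice of ``germ relation on partial homeomorphisms'' can work; the equivalence defining arrows of $G$ must be built from finer algebraic data in $N$ itself. The paper's resolution is to abandon germs entirely and take $G$ to be the groupoid of ultrafilters of $N$ with respect to the domination relation $m<_s n$ (meaning $sm,ms,sn,ns\in B$ and $nsm=m=msn$); an arrow is then an ultrafilter $U$, and the twist element over it is an equivalence class $[n]_U$ under $m\sim_U n\iff\psi_U(E(n^*m))>0$, with $\T$ acting by $t[n]_U=[tn]_U$. Your proposal names the obstacle but supplies no mechanism to overcome it, and the mechanism is the mathematical content of the theorem.

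A secondary, related gap: your phase comparison ``$n^*m$ should locally behave like a scalar multiple of a positive element of $B$ near $x$'' is only meaningful once you already know which arrow of $G$ you are sitting over; at a mere point $x\in\Go$ there are many arrows with source $x$, and $n^*m$ need not lie in $B$ at all unless $n$ and $m$ represent the \emph{same} arrow. In the paper this circularity is broken by defining the angle $\langle m,n\rangle_U=\tfrac{1}{|m|_U|n|_U}\psi_U(E(n^*m))$ using the expectation $E$ applied to $n^*m$ and the source state of the ultrafilter $U$, with multiplicativity resting on \cref{lem:homom-like}; the expectation, not membership of $n^*m$ in $B$, is what extracts the phase. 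Your treatment of summability and of the faithful case is in the right spirit (it matches \cref{Nchat} and the Hilbert-module argument in \cref{cor:it}), but those steps cannot be assessed until a working construction of $G$ and $\Sigma$ is in place.
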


What is more, if we start with a twisted groupoid C*-algebra, then its monomial semigroup is a Cartan semigroup and its usual diagonal is a semi-Cartan subalgebra by \cref{TwistedGroupoid->CartanSemigroup}, and the original twist is recovered by \cref{thm:main}, see \cref{rmk:recover}. 

Incidentally, while the Cartan semigroup $N$ may be distinct from the normaliser semigroup $N(B)$ used in the Kumjian-Renault theory, $N$ is always contained in $N(B)$, by \cref{lem:Normal}.  In fact, if $N=N(B)$ then $B$ is a MASA, by \cref{NormalisersImpliesMASA}.  If the expectation $E$ is also faithful then, by \eqref{it2:it'} above, our result reduces to the classical Kumjian-Renault result.  Even in this case, however, our work has some added value, as we build our groupoids $\Sigma$ and $G$ using ultrafilters, in contrast to the original construction via a groupoid of germs. The advantage of ultrafilters is that they have a nice general theory which parallels the classical theory of maximal ideals.  Thus, even for the original normaliser semigroups of Cartan subalgebras, our work provides an alternative approach to the Kumjian-Renault theory.

The paper is organised as follows. In \cref{sec:Preliminaries}, we establish notation and preliminaries for twisted groupoid C*-algebras. In \cref{sec:Cartansemi}, we define Cartan semigroups and semi-Cartan subalgebras, and develop the building blocks we need. In \cref{sec:domination}- \cref{sectionG}, we introduce the domination relation and the groupoid of ultrafilters defined by this relation. Section \ref{sec:equivalence} lays the groundwork for the twist, while Section \ref{sec:sigma} defines the twist and quotient map. In \cref{sec:rep}, we conclude our goal of representing a semi-Cartan pair as a twisted groupoid C*-algebra, and Section \ref{sec:masa} spells out the specific case when we have a MASA. In the final section, we compare some variants of the domination relation.

\section{Preliminaries} \label{sec:Preliminaries}

\subsection{Normed Spaces}\label{NormedSpaces}

As usual, a \emph{norm} on a complex vector space $A$ is a subadditive function $\|\cdot\|:A\rightarrow\mathbb{R}_+$ which is non-zero on $A\setminus\{0\}$ and satisfies $\|za\|=|z|\|a\|$, for all $z\in\mathbb{C}$ and $a\in A$.  The unit ball of $A$ with respect to a given norm $\|\cdot\|$ is then denoted by
\[A^1=\{a\in A:\|a\|\leq1\}.\]
A \emph{contraction} is a linear map $D:A\rightarrow B$ between normed spaces with $D(A^1)\subseteq B^1$.  If an operator $D: A\rightarrow A$ is contractive with respect to some norm $\|\cdot\|$ then we call that norm \emph{$D$-contractive}.  If $D:A\rightarrow A$ is contractive and idempotent (that is, $D \circ D = D$), then we call it an \emph{expectation}.  If $A$ is a *-algebra then a \emph{C*-norm} on $A$ is a submultiplicative norm with
\[\|a^*a\|=\|a\|^2,\]
for all $a\in A$.  The completion of $A$ with respect to any C*-norm is then a C*-algebra. Note that by {\cite[II.6.10.3]{Blackadar2017}} an expectation between C*-algebras is a conditional expectation in the traditional sense of \cite[II.6.10.1]{Blackadar2017}.

For any function $f:X\rightarrow\mathbb{C}$, we denote its \emph{support} by
\[\mathrm{supp}(f):=\{x\in X:f(x)\neq0\}.\]
We also define the (at this stage possibly infinite valued) \emph{supremum norm} of $f$ by
\[\|f\|_\infty:=\sup_{x\in X}|f(x)|.\]
The bounded functions on $X$ will then be denoted by
\[\ell^\infty(X):=\{f:X\rightarrow\mathbb{C}\mid\|f\|_\infty<\infty\}.\]

If $X$ is a topological space, we also denote the \emph{open support} (respectively, \emph{closed support}) of $f:X\rightarrow\mathbb{C}$ by
\begin{align*}
    \mathrm{supp}^\circ(f)&:=\mathrm{int}(\mathrm{supp}(f)),\\
    \overline{\mathrm{supp}}(f)&:=\mathrm{cl}(\mathrm{supp}(f).
\end{align*}
We further denote the (compactly supported) continuous $\mathbb{C}$-valued functions on $X$ by
\begin{align*}
    C(X)&:=\{f:X\rightarrow\mathbb{C}\mid f\text{ is continuous}\} \text{ and }\\
    C_c(X)&:=\{f\in C(X):\overline{\mathrm{supp}}(f)\text{ is compact}\}.
\end{align*}
Note that $\mathrm{supp}^\circ(f)=\mathrm{supp}(f)$, for any $f\in C(X)$ -- from now on we usually write $\mathrm{supp}^\circ(f)$ rather than $\mathrm{supp}(f)$ to clearly distinguish open supports from closed supports.  Also note that $C_c(X)\subseteq\ell^\infty(X)$, so we can define the continuous functions \emph{vanishing at infinity} as the closure of $C_c(X)$ in $\ell^\infty(X)$.  We denote these by
\[C_0(X):=\mathrm{cl}_\infty(C_c(X)).\]

As a closed subspace of $\ell^\infty(X)$, we immediately see that $C_0(X)$ is a Banach space.  Under the usual pointwise product and conjugaction operations, it is also a commutative C*-algebra.  Indeed, the classic Gelfand duality tells us that every commutative C*-algebra $B$ is isomorphic to one of the form $C_0(X)$, for some locally compact Hausdorff space $X$.  Specifically, $B$ may be identified with $C_0(X_B)$, where $X_B$ is the set of all maximal ideals $I\subseteq B$ with the  hull-kernel topology, i.e. generated by $\{X_b\}_{b\in B}$ where
\[X_b=\{I\in X_B:b\notin I\}.\]
More precisely, for each $I\in X_B$, there is a unique character $\langle I\rangle:B\rightarrow\mathbb{C}$ with $\mathrm{ker}(\langle I\rangle)=I$ and each $b\in B$ can be identified with $b\in C_0(X_B)$ defined by $b(I)=\langle I\rangle(b)$.  Whenever convenient, we will use this identification of $B$ with $C_0(X_B)$ in our arguments as well as the continuous functional calculus it leads to.  Specifically, any continuous function $f$ on $\mathbb{C}$ with $f(0)=0$ can be applied to any normal element $a$ of a C*-algebra $A$ to yield another element $f(a)\in A$ identified with the function on $X_{C^*(a)}$ defined by $f(a)(I)=f(a(I))$ (where $C^*(a)$ denotes the C*-subalgebra of $A$ generated by $a$).

\subsection{Twists}

There are various definitions of twisted groupoids in the literature, e.g. see \cite{Armstrong2022, BFPR,Bonicke, CaH, Kumjian1986, MW92,Renault2008, Sims2020}.  For us it will be convenient to use a generalisation from \cite{Kumjian1986} of the original definition of a twist as a principal bundle of groupoids.

If $G$ is a groupoid, we let $G^{(0)}$ denote the unit space of $G$, $G^{(2)}$ denote the collection of composable pairs, and let $\mathsf{s} : G \to G^{(0)}$ and $\mathsf{r} : G \to G^{(0)}$ be the source and range maps, respectively. Recall that if $G$ is a groupoid, then $O\subseteq G$ is called a \emph{bisection} of $G$ if $O^{-1}O \subseteq G^{(0)}$ and $OO^{-1} \subseteq G^{(0)}$. A groupoid is \emph{\'{e}tale} when it carries a topology with a basis of open bisections which is closed under pointwise products and inverses. 

\begin{definition}\label{TwistDefinition}
    A \emph{$\mathbb{T}$-groupoid} is a Hausdorff topological groupoid $\Sigma$ on which we have a free continuous action of $\mathbb{T}$ such that, for all $t\in\mathbb{T}$ and $(e,f)\in\Sigma^{(2)}$,
    \[t(ef)=(te)f=e(tf).\]
    A \emph{twist} is a continuous open groupoid homomorphism $q:\Sigma\twoheadrightarrow G$ from a $\mathbb{T}$-groupoid $\Sigma$ onto a locally compact Hausdorff \'etale groupoid $G$ such that $\mathbb{T}$ acts transitively on each fibre (i.e. such that $q^{-1}(\{q(e)\})=\mathbb{T}e$, for all $e\in\Sigma$).
\end{definition}

The first thing to note is that twists restrict to homeomorphisms of unit spaces.

\begin{prop}\label{QuotientHomeo}
    Any twist $q:\Sigma\twoheadrightarrow G$ restricted to $\Sigma^{(0)}$ is a homeomorphism onto $G^{(0)}$.
\end{prop}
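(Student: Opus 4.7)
The plan is to establish bijectivity, continuity, and openness of $q|_{\Sigma^{(0)}}$ separately. Since groupoid homomorphisms send units to units, $q(\Sigma^{(0)}) \subseteq G^{(0)}$ is immediate. For surjectivity onto $G^{(0)}$, given $v \in G^{(0)}$, pick any $e \in \Sigma$ with $q(e) = v$ using surjectivity of $q$; then $\mathsf{r}(e) \in \Sigma^{(0)}$ and $q(\mathsf{r}(e)) = \mathsf{r}_G(q(e)) = \mathsf{r}_G(v) = v$. For injectivity, suppose $u, u' \in \Sigma^{(0)}$ satisfy $q(u) = q(u')$; the transitive fibre condition yields $u' = tu$ for some $t \in \mathbb{T}$. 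Since $u$ is a unit $uu = u$, so the $\mathbb{T}$-compatibility axiom gives $tu = t(uu) = (tu)u$, which forces $\mathsf{s}(tu) = u$. But $u' = tu$ is itself a unit, so $\mathsf{s}(u') = u'$, and therefore $u = u'$.

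Continuity of $q|_{\Sigma^{(0)}}$ is automatic from continuity of $q$. The main step is openness: given $V \subseteq \Sigma^{(0)}$ open (in the subspace topology), I need $q(V)$ open in $G^{(0)}$. The idea is to pull $V$ back to an open set of $\Sigma$ via the continuous range map, then push it forward via the open map $q$. Concretely, $\mathsf{r}^{-1}(V) \subseteq \Sigma$ is open, and since $q$ is open, $q(\mathsf{r}^{-1}(V))$ is open in $G$. Using the commutation $\mathsf{r}_G \circ q = (q|_{\Sigma^{(0)}}) \circ \mathsf{r}$ together with the injectivity already established, I would verify the identity $q(\mathsf{r}^{-1}(V)) = \mathsf{r}_G^{-1}(q(V))$: the nontrivial direction lifts $g \in \mathsf{r}_G^{-1}(q(V))$ through surjectivity of $q$ to some $e \in \Sigma$, observes that $\mathsf{r}(e) \in \Sigma^{(0)}$ maps into $q(V)$, and uses injectivity of $q|_{\Sigma^{(0)}}$ to pin down $\mathsf{r}(e) \in V$. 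Because $G$ is \'etale, $G^{(0)}$ is open in $G$ and $\mathsf{r}_G$ restricts to $\id$ on $G^{(0)}$, so $\mathsf{r}_G^{-1}(q(V)) \cap G^{(0)} = q(V)$; this exhibits $q(V)$ as the intersection of an open subset of $G$ with $G^{(0)}$, hence as open in $G^{(0)}$.

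The main obstacle is openness. The other parts are short once one spots the algebraic trick for injectivity, but openness requires genuinely combining three ingredients: the $\mathbb{T}$-bundle structure (channeled through injectivity on the unit space), the openness of $q$, and the \'etale structure of $G$ that ensures $G^{(0)}$ is itself open. If this combination fails to produce the identity $q(\mathsf{r}^{-1}(V)) = \mathsf{r}_G^{-1}(q(V))$, an alternative fallback would be to construct local continuous sections of the principal $\mathbb{T}$-bundle $q^{-1}(G^{(0)}) \to G^{(0)}$ and compare them to the canonical section $\Sigma^{(0)} \subseteq q^{-1}(G^{(0)})$, but the retract-based argument above is more direct.
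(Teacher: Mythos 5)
Your proof is correct. The bijectivity and continuity parts coincide with the paper's argument (the paper uses $\mathsf{s}(e)$ where you use $\mathsf{r}(e)$ for surjectivity, and its injectivity step is the same algebraic trick: a unit acting as a one-sided identity must be the source/range of the element it fixes). Where you genuinely diverge is the openness step, which is indeed the main content. The paper argues via nets: given $g_\lambda\rightarrow g$ in $G^{(0)}$, it uses the net-theoretic characterisation of open maps (citing Doran--Fell) to lift a subnet to $\Sigma$ converging to $q|_{\Sigma^{(0)}}^{-1}(g)$, then applies the continuous source map to land back in $\Sigma^{(0)}$. You instead give a direct point-set argument: the identity $q(\mathsf{r}^{-1}(V))=\mathsf{r}_G^{-1}(q(V))$ checks out (the forward inclusion is the intertwining $\mathsf{r}_G\circ q=q\circ\mathsf{r}$; the reverse uses surjectivity of $q$ plus the injectivity on units you already proved), and then $q(V)=q(\mathsf{r}^{-1}(V))\cap G^{(0)}$ is open in $G^{(0)}$. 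Your route avoids subnet bookkeeping entirely and, as a small bonus, does not actually need $G^{(0)}$ to be open in $G$ (so the \'etale hypothesis is not essential to this step, only that $\mathsf{r}$ is continuous on the topological groupoid $\Sigma$); it also transfers verbatim to the non-Hausdorff ``pretwists'' discussed in the remark following the proposition. The paper's net argument buys nothing extra here beyond fitting the authors' general preference for net-based reasoning elsewhere in the section.
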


\begin{proof}
    Since $q$ is a groupoid homomorphism, we have $q(\Sigma^{(0)}) \subseteq \Go$.  For injectivity, suppose $e,f\in\Sigma^{(0)}$ with $q(e)=q(f)$.  Then $e=tf$, for some $t\in\mathbb{T}$, by the transitivity of the action on each fibre.  But then $\overline{t}e=\overline{t}(ee)=(\overline{t}e)e=fe$, so
    $\mathsf{s}(f)=\mathsf{r}(e)$ and hence $e=f$ as they are units.  Also, since $q$ is surjective, for any $g\in G^{(0)}$ we have $e\in\Sigma$ with $q(e)=g$ and hence $q(\mathsf{s}(e))=\mathsf{s}(q(e))=\mathsf{s}(g)=g$.  This shows that $q$ maps $\Sigma^{(0)}$ onto $G^{(0)}$.
    
    As $q$ is continuous on $\Sigma$, its restriction to $\Sigma^{(0)}$ is continuous.  It only remains to show that the restriction is also an open map.
   Accordingly, say $g_\lambda\rightarrow g$ in $G^{(0)}$.  As $q$ is an open map, we have a subnet $(g_\gamma)$ and another net $(e_\gamma)\subseteq\Sigma$ with $q(e_\gamma)=g_\gamma$, for all $\gamma$, and $e_\gamma\rightarrow q|_{\Sigma^{(0)}}^{-1}(g)$, by \cite[\S II.13.2]{DoranFell1988}.  For each $\gamma$, we then see that $\mathsf{s}(e_\gamma)\in\Sigma^{(0)}$ and $q(\mathsf{s}(e_\gamma))=\mathsf{s}(q(e_\gamma))=\mathsf{s}(g_\gamma)=g_\gamma$.  Also $\mathsf{s}(e_\gamma)\rightarrow \mathsf{s}(q|_{\Sigma^{(0)}}^{-1}(g))=q|_{\Sigma^{(0)}}^{-1}(g)$, showing $q|_{\Sigma^{(0)}}$ is an open map, again by \cite[\S II.13.2]{DoranFell1988}.
\end{proof}

\begin{remark}
    The proof of \cref{QuotientHomeo} applies equally well to `pretwists' $q:\Sigma\twoheadrightarrow G$, which are just like twists but without the requirement that $\Sigma$ is Hausdorff.  Indeed, we can use \cref{QuotientHomeo} to show that, for any pretwist $q:\Sigma\twoheadrightarrow G$ (over a locally compact Hausdorff \'etale groupoid $G$), the following are equivalent:
    \begin{enumerate}
        \item\label{SigmaHausdorff} $\Sigma$ is Hausdorff.
        \item\label{Sigma0closed} $\Sigma^{(0)}$ is closed in $\Sigma$.
        \item\label{qSigmaHausdorff} $q^{-1}(G^{(0)})$ is Hausdorff.
        \item\label{qSigma0closed} $\Sigma^{(0)}$ is closed in $q^{-1}(G^{(0)})$.
    \end{enumerate}
    To see this, first recall that a topological groupoid $\Gamma$ is Hausdorff precisely when its unit space $\Gamma^{(0)}$ is both Hausdorff and closed in $\Gamma$.  As $\Sigma^{(0)}$ is Hausdorff, taking $\Gamma$ to be $\Sigma$ and $q^{-1}(G^{(0)})$ respectively yields \eqref{SigmaHausdorff}$\Leftrightarrow$\eqref{Sigma0closed} and \eqref{qSigmaHausdorff}$\Leftrightarrow$\eqref{qSigma0closed} thanks to \cref{QuotientHomeo} and the assumption that $G^{(0)}$ is Hausdorff.  On the other hand, taking $\Gamma$ to be $G$ tells us that $G^{(0)}$ is closed and hence $q^{-1}(G^{(0)})$ is closed too, as $q$ is continuous, from which \eqref{Sigma0closed}$\Leftrightarrow$\eqref{qSigma0closed} immediately follows as well.  Thus in \cref{TwistDefinition}, we could replace the requirement that $\Sigma$ is Hausdorff with any of the other equivalent conditions above.
\end{remark}

Note the $\mathbb{T}$-action of any $\mathbb{T}$-groupoid $\Sigma$ is completely determined by its restriction to $\Sigma^{(0)}$, as $te=t(\mathsf{r}(e)e)=(t\mathsf{r}(e))e$, for all $t\in\mathbb{T}$ and $e\in\Sigma$.  For twists, this restricted $\mathbb{T}$-action yields a topological groupoid isomorphism from $\mathbb{T}\times G^{(0)}$ onto $q^{-1}(G^{(0)})$.

\begin{prop}
    If $q:\Sigma\twoheadrightarrow G$ is a twist, then we have a topological groupoid isomorphism $\iota:\mathbb{T}\times G^{(0)}\twoheadrightarrow q^{-1}(G^{(0)})$ given by $\iota(t,g)=tq|_{\Sigma^{(0)}}^{-1}(g)$ such that, for all $e\in\Sigma$,
    \[\iota(t,q(\mathsf{r}(e)))e=te=e\iota(t,q(\mathsf{s}(e))).\]
\end{prop}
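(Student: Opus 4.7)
The plan is to verify that $\iota$ is a well-defined bijective groupoid homomorphism satisfying the stated identity, then upgrade this to a topological isomorphism. For well-definedness, since the $\T$-action preserves fibres of $q$ (this follows from the definition $q^{-1}(\{q(e)\})=\T e$), we have $q(\iota(t,g))=q(q|_{\Sigma^{(0)}}^{-1}(g))=g\in G^{(0)}$, so $\iota$ lands in $q^{-1}(G^{(0)})$. Injectivity comes from applying $q$ to recover $g$ and then using freeness of the $\T$-action to recover $t$. Surjectivity is transitivity: given $e\in q^{-1}(G^{(0)})$, set $u=q|_{\Sigma^{(0)}}^{-1}(q(e))\in\Sigma^{(0)}$, so $u$ and $e$ lie in the same fibre and hence $e=tu=\iota(t,q(e))$ for some $t\in\T$.

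For the displayed identity, observe that $\iota(t,q(\r(e)))=t\,\r(e)$ because $q|_{\Sigma^{(0)}}^{-1}$ recovers $\r(e)$ from $q(\r(e))$; then the axiom $t(ef)=(te)f=e(tf)$ applied with the unit $\r(e)$ gives
\[
\iota(t,q(\r(e)))\,e=(t\r(e))\,e=t(\r(e)\,e)=te,
\]
and symmetrically on the source side. This identity in turn lets me confirm the groupoid homomorphism property: for units $x\in\Sigma^{(0)}$, the axiom yields $(tx)(sx)=t(x(sx))=t(sx)=(ts)x$, so $\iota(t,g)\iota(s,g)=\iota(ts,g)$, and composability on both sides of $\iota$ matches because elements of the form $tx$ lie in the isotropy at $x$ (from $(tx)x=tx=x(tx)$).

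Continuity of $\iota$ is immediate: it is the composition $(t,g)\mapsto(t,q|_{\Sigma^{(0)}}^{-1}(g))\mapsto t\cdot q|_{\Sigma^{(0)}}^{-1}(g)$, using that $q|_{\Sigma^{(0)}}$ is a homeomorphism by \cref{QuotientHomeo} and that the $\T$-action is continuous. The main obstacle is the continuity of $\iota^{-1}$, i.e.\ the openness of $\iota$, because there is no algebraic formula that cleanly extracts the $\T$-coordinate from an element of $q^{-1}(G^{(0)})$. My plan here is to exploit compactness of $\T$: suppose $e_\lambda\to e$ in $q^{-1}(G^{(0)})$, and write $e_\lambda=\iota(t_\lambda,g_\lambda)$ and $e=\iota(t,g)$. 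The $g$-coordinate converges by continuity of $q$, namely $g_\lambda=q(e_\lambda)\to q(e)=g$. For $t_\lambda$, any subnet admits a further subnet $t_\gamma\to s$ in the compact $\T$; then by continuity of $\iota$ and uniqueness of limits in the Hausdorff space $\Sigma$, $\iota(s,g)=\lim\iota(t_\gamma,g_\gamma)=e=\iota(t,g)$, and injectivity of $\iota$ forces $s=t$. Hence every subnet of $(t_\lambda)$ has a sub-subnet converging to $t$, and compact Hausdorffness of $\T$ implies $t_\lambda\to t$.

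Combining these steps, $\iota$ is a continuous bijective groupoid homomorphism whose inverse is continuous, hence a topological groupoid isomorphism as required.
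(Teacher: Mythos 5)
Your proposal is correct and follows essentially the same route as the paper: continuity of $\iota$ from \cref{QuotientHomeo} and continuity of the action, and continuity of $\iota^{-1}$ via compactness of $\mathbb{T}$ together with Hausdorffness of $\Sigma$ (the paper phrases this as continuity of the map $\tau$ extracting the $\mathbb{T}$-coordinate, argued by contradiction, while you phrase it as the equivalent subnet-of-a-subnet criterion). The extra algebraic checks you include (well-definedness, surjectivity, the homomorphism property) are fine and merely spelled out more explicitly than in the paper.
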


\begin{proof}
    By \cref{QuotientHomeo}, $q|_{\Sigma^{(0)}}^{-1}$ is continuous.  As the $\mathbb{T}$-action is also continuous, $\iota$ is continuous.  As the $\mathbb{T}$-action is free, $\iota$ is also injective with inverse $\iota^{-1}(e)=(\tau(e),q(e))$ for $e \in q^{-1}(G^{(0)})$, where $\tau(e)$ is the unique element of $\mathbb{T}$ such that $e=\tau(e)\mathsf{s}(e)=\tau(e)\mathsf{r}(e)$.  If $\tau$ were not continuous then we would have a net $e_\lambda\rightarrow e$ in $q^{-1}(G^{(0)})$ such that $\tau(e_\lambda)\not\rightarrow\tau(e)$.  As $\mathbb{T}$ is compact, we would then have a subnet $(\tau(e_\gamma))$ converging to some $t\neq\tau(e)$.  But then $e_\gamma=\tau(e_\gamma)\mathsf{s}(e_\gamma)\rightarrow t\mathsf{s}(e)\neq\tau(e)\mathsf{s}(e)=e$, showing that the subnet $(e_\gamma)$ has two distinct limits, contradicting the fact $\Sigma$ is Hausdorff.  Thus, $\tau$ must be continuous and $\iota$ must be a homeomorphism, hence a groupoid isomorphism.  Finally just note that
    \[\iota(t,q(\mathsf{r}(e)))e=(tq|_{\Sigma^{(0)}}^{-1}(q(\mathsf{r}(e))))e=(t\mathsf{r}(e))e=t(\mathsf{r}(e)e)=te\]
    and, likewise, $te= e\iota(t,q(\mathsf{s}(e)))$, for all $t\in\mathbb{T}$ and $e\in\Sigma$.
\end{proof}

Sometimes twists are also required to be proper; however, for us this is automatic.

\begin{prop}
\label{lem:proper}
Every twist $q:\Sigma\twoheadrightarrow G$ is a proper map.
\end{prop}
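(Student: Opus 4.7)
The plan is to show properness directly by nets: take a compact $K \subseteq G$ and show any net in $q^{-1}(K)$ has a subnet converging to a point of $q^{-1}(K)$. Since $\Sigma$ is Hausdorff, this suffices for compactness.

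First I would take a net $(e_\lambda)\subseteq q^{-1}(K)$. Because $K$ is compact, $(q(e_\lambda))$ has a subnet (which I will relabel by the same index) converging to some $g\in K$. Since $q$ is an open continuous surjection, I can apply the lifting principle from \cite[\S II.13.2]{DoranFell1988} (the same tool used in the proof of \cref{QuotientHomeo}) to produce a further subnet together with a net $(f_\gamma)\subseteq\Sigma$ such that $q(f_\gamma)=q(e_\gamma)$ for all $\gamma$ and $f_\gamma\rightarrow f$ for some $f\in\Sigma$ with $q(f)=g$.

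Next I would use the $\mathbb{T}$-action to compare $e_\gamma$ with $f_\gamma$. Since $q(f_\gamma)=q(e_\gamma)$ and $\mathbb{T}$ acts transitively on each fibre, there exist $t_\gamma\in\mathbb{T}$ with $e_\gamma=t_\gamma f_\gamma$. Compactness of $\mathbb{T}$ then yields a subnet $t_\epsilon\rightarrow t$, and continuity of the action gives
\[e_\epsilon=t_\epsilon f_\epsilon\rightarrow tf.\]
Finally, since $\mathbb{T}$ acts within fibres, $q(tf)=q(f)=g\in K$, so $tf\in q^{-1}(K)$. This exhibits the required subnet of $(e_\lambda)$ converging inside $q^{-1}(K)$, so $q^{-1}(K)$ is compact.

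The only real subtlety is the lifting step: one must invoke the characterisation of open maps via nets to obtain $f_\gamma$ with $q(f_\gamma)=q(e_\gamma)$ (not merely $q(f_\gamma)\rightarrow g$), which is exactly what allows the $\mathbb{T}$-valued differences $t_\gamma$ to be formed. After that, the argument is a routine compactness chase using freeness/transitivity of the $\mathbb{T}$-action on fibres together with the compactness of $\mathbb{T}$. The Hausdorffness of $\Sigma$ is needed implicitly to conclude compactness from the net criterion (and was used in the previous proposition to establish continuity of $\tau$, though we do not need $\tau$ here).
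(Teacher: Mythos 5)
Your proof is correct and follows essentially the same route as the paper's: pass to a subnet so the images converge in $K$, lift via openness of $q$ to a convergent net in the same fibres, use transitivity of the $\mathbb{T}$-action to write $e_\gamma = t_\gamma f_\gamma$, and then use compactness of $\mathbb{T}$ and continuity of the action to extract a convergent subnet in $q^{-1}(K)$. (One minor remark: the net criterion for compactness does not actually require Hausdorffness, so that appeal is superfluous, though harmless.)
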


\begin{proof}
We follow the proof of \cite[Lemma~2.2]{CDGanHV2024}.  Let $K\subseteq G$ be compact and $(e_\lambda)$ be a net in $q^{-1}(K)$.  As $K$ is compact, we may revert to a subnet if necessary to ensure $q(e_\lambda)\rightarrow q(e)\in K$, for some $e\in\Sigma$.  As $q$ is an open map, reverting to a further subnet if necessary, we have $(f_\lambda)\subseteq\Sigma$ with $q(e_\lambda)=q(f_\lambda)$ and $f_\lambda\rightarrow e$.  As the $\mathbb{T}$-action is transitive on fibres, we then have $(t_\lambda)\subseteq\mathbb{T}$ with $e_\lambda=t_\lambda f_\lambda$, for all $\lambda$.  Reverting to yet another subnet if necessary, we can ensure that $t_\lambda\rightarrow t$, for some $t\in\mathbb{T}$, and hence $e_\lambda=t_\lambda f_\lambda\rightarrow te$.  As $q(te)=q(e)\in K$, we have shown that every net in $q^{-1}(K)$ has a convergent subnet and hence $q^{-1}(K)$ is compact.  This shows $q$ is a proper map.
\end{proof}

It follows that the domain of a twist must be locally compact, just like the range.

\begin{cor}\label{lem:topprop on sigma}
If $q:\Sigma\twoheadrightarrow G$ is a twist, then $\Sigma$ is locally compact.
\end{cor}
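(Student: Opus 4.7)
The plan is to exploit the properness of the twist, established in the previous proposition, to pull back compact neighborhoods from $G$ to $\Sigma$.

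First, I would fix an arbitrary point $e\in\Sigma$ and aim to produce a compact neighborhood of $e$ in $\Sigma$. Since $G$ is locally compact Hausdorff, I can choose an open set $U\subseteq G$ with $q(e)\in U$ and $\overline{U}$ compact.

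Next, because $q$ is continuous, $q^{-1}(U)$ is an open subset of $\Sigma$ containing $e$. By \cref{lem:proper}, $q$ is proper, so $q^{-1}(\overline{U})$ is compact in $\Sigma$. Since $\Sigma$ is Hausdorff, this compact set is closed, so it contains the closure of $q^{-1}(U)$, which is therefore itself compact. In particular, $q^{-1}(\overline{U})$ is a compact neighborhood of $e$ (it contains the open set $q^{-1}(U)\ni e$).

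The argument is essentially one short deduction from properness together with local compactness of $G$ and Hausdorffness of $\Sigma$, so there is no real obstacle here; the substantive work has already been done in \cref{lem:proper}.
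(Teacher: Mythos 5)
Your proof is correct and follows essentially the same route as the paper: pick a compact neighbourhood of $q(e)$ in $G$ and pull it back via the continuous proper map $q$ to get a compact neighbourhood of $e$. The intermediate remark about the closure of $q^{-1}(U)$ is not needed, since $q^{-1}(\overline{U})$ already contains the open set $q^{-1}(U)\ni e$ and is compact.
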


\begin{proof}
    For every $e\in\Sigma$, the local compactness of $G$ means that we have some compact neighbourhood $K$ of $q(e)$.  As $q$ is a continuous proper map, $q^{-1}(K)$ is then a compact neighbourhood of $e$, showing that $\Sigma$ is indeed locally compact.
\end{proof}

\subsection{Twisted Groupoid C*-Algebras}\label{TwistedGroupoidCstarAlgebras}

Assume we have a twist $q:\Sigma\twoheadrightarrow G$.  We say $a:\Sigma\rightarrow\mathbb{C}$ is \emph{$\mathbb{T}$-contravariant} if $a(te)=\overline{t}a(e)$, for all $t\in\mathbb{T}$ and $e\in\Sigma$.  We define classes of $\mathbb{T}$-contravariant continuous $\mathbb{C}$-valued functions by
\begin{align*}
    C(\Sigma;G)&:=\{a\in C(\Sigma): a\text{ is $\mathbb{T}$-contravariant}\},\\
    C_0(\Sigma;G)&:=C(\Sigma;G)\cap C_0(\Sigma), \text{ and}\\
    C_c(\Sigma;G)&:=C(\Sigma;G)\cap C_c(\Sigma).
\end{align*}
The \emph{convolution} of any $a,b\in C_c(\Sigma;G)$ is the function $ab\in C_c(\Sigma;G)$ given by
\[ab(e):=\sum_{g\in q(e)G}a(\sigma(g))b(\sigma(g)^{-1}e)=\sum_{g\in Gq(e)}a(e\sigma(g)^{-1})b(\sigma(g)),\]
where $\sigma:G\rightarrow\Sigma$ is a (not necessarily continuous) section of $q$, i.e. satisfying $q(\sigma(g))=g$, for all $g\in G$. Note that compactness of supports is used to show the above sums are finite and define another element of $C_c(\Sigma;G)$.  Together with sums, scalar products with $z \in \mathbb{C}$ and the involution defined as usual by
\[(a+b)(e):=a(e)+b(e),\qquad (za)(e):=z(a(e))\qquad\text{and}\qquad a^*(e):=\overline{a(e^{-1})},\]
this makes $C_c(\Sigma;G)$ a *-algebra, that has a \emph{diagonal map} $D$ given by
\[D(a)(e):=\begin{cases}a(e)&\text{if }q(e)\in G^{(0)}\\0&\text{ otherwise.}\end{cases}\]
Note $D$ is an idempotent map from $C_c(\Sigma;G)$ onto
\[\Bc(\Sigma;G):=\{a\in C_c(\Sigma;G):q(\mathrm{supp}^\circ(a))\subseteq G^{(0)}\}.\]
And $\Bc(\Sigma;G)$ is contained in the *-semigroup
\[\Nc(\Sigma;G):=\{a\in C(\Sigma;G):q(\overline{\mathrm{supp}}(a))\text{ is a compact bisection}\},\]
which is in turn contained in the *-semigroup
\[S:=\{a\in C(\Sigma;G):q(\mathrm{supp}^\circ(a))\text{ is a bisection}\}\]
(note here that \cref{lem:proper} implies $\Nc(\Sigma;G)\subseteq S\cap C_c(\Sigma;G)$ and in general the containment can be proper).
We also define
\[N_0(\Sigma;G):=\{a \in C_0(\Sigma; G) : \: q(\supp^\circ(a)) \text{ is a bisection} \}.\]
By \cite[Theorem~3.1(3)]{ABCCLMR2023}, $G$ is effective if and only if the normalisers of $C_0(G^{(0)})$ in $C^*_r(\Sigma;G)$ are a subset of $S$.

\begin{prop}\label{NcNorm}
    Every C*-norm $\|\cdot\|$ on $C_c(\Sigma;G)$ agrees with $\|\cdot\|_\infty$ on $\Nc(\Sigma;G)$.
\end{prop}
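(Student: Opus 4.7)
My plan is to use the C*-identity to reduce everything to elements of the diagonal $\Bc(\Sigma;G)$, then pin down those norms by a spectral argument.  First I would compute $a^{*}a$ for $a \in \Nc(\Sigma;G)$: writing $B = q(\overline{\supp}(a))$ for the compact bisection supporting $a$ and unwinding the convolution sum, the injectivity of $\r|_B$ and $\s|_B$ forces the unique nonzero term to lie over $\Go$, so $a^{*}a \in \Bc(\Sigma;G)$.  Using \cref{QuotientHomeo} to identify $\Sigma^{(0)}$ with $\Go$, and exploiting $\T$-contravariance, one checks that $a^{*}a$ corresponds to $|a|^{2}$ transported across $\s|_B : B \to \Go$, giving $\|a^{*}a\|_\infty = \|a\|_\infty^{2}$.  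Denoting by $A$ the C*-completion with respect to the given norm, the C*-identity yields $\|a\|^{2} = \|a^{*}a\|_A$, so the proposition reduces to proving $\|f\|_A = \|f\|_\infty$ for the positive element $f = a^{*}a \in \Bc(\Sigma;G)$.

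For the upper bound $\|b\|_A \leq \|b\|_\infty$ on all of $\Bc(\Sigma;G)$, I would use that convolution of functions supported on units simplifies to pointwise multiplication, so $\Bc(\Sigma;G)$ is a commutative $*$-subalgebra of $C_c(\Sigma;G)$ that is $*$-isomorphic to $C_c(\Go)$.  The inclusion $\Bc(\Sigma;G) \hookrightarrow A$ extends to unitizations, and since $*$-algebra homomorphisms cannot enlarge spectra, $\Spec_{\widetilde{A}}(b) \subseteq \Spec_{C_c(\Go) + \C}(b)$; a direct invertibility computation in $C_c(\Go) + \C$ (writing out $(b - \lambda)^{-1}$ explicitly on and off $\supp(b)$) shows the latter equals $b(\Go) \cup \{0\}$.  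Since $b$ is normal in $A$, $\|b\|_A$ coincides with its spectral radius, hence $\|b\|_A \leq \|b\|_\infty$.

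For the reverse inequality, suppose for contradiction that $\|f\|_A < \|f\|_\infty$.  Pick continuous $g : [0,\|f\|_\infty] \to [0,\infty)$ that vanishes on $[0,\|f\|_A] \supseteq \Spec_A(f)$ with $g(0) = 0$ and $g(\|f\|_\infty) > 0$, and approximate $g$ uniformly by polynomials $p_n$ with $p_n(0) = 0$.  The continuous functional calculus in $A$ applied to the positive element $f$ then gives $g(f) = 0$, so $p_n(f) \to 0$ in $\|\cdot\|_A$.  On the other hand, each $p_n(f) \in \Bc(\Sigma;G)$ coincides with $p_n \circ f$ as a function on $\Go$, so $p_n(f) \to g \circ f$ in $\|\cdot\|_\infty$; invoking the upper bound from the previous paragraph for the differences shows this convergence also holds in $\|\cdot\|_A$.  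Hence $g \circ f = 0$ in $A$, contradicting the fact that $g \circ f$ is a nonzero element of $\Bc(\Sigma;G) \subseteq C_c(\Sigma;G)$ and $\|\cdot\|$ is a genuine norm.

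The main technical obstacle is the initial identification of $a^{*}a$ with $|a|^{2}$ on $\Go$: one needs to verify that only the ``diagonal'' term survives in the convolution (using the bisection property of $B$), that the resulting function is independent of the chosen section $\sigma$ of $q$, and that $\T$-contravariance lets it be read as a genuine element of $C_c(\Go)$.  Once this bookkeeping is in place, the two spectral arguments proceed by standard C*-algebraic manipulations.
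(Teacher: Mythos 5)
Your proposal is correct, and its first step (reduce to the diagonal via the C*-identity, using that $n^*n\in \Bc(\Sigma;G)$ with $\|n^*n\|_\infty=\|n\|_\infty^2$) is exactly the paper's; where you diverge is in how you pin down C*-norms on $\Bc(\Sigma;G)\cong C_c(\Go)$. The paper's route is shorter: it exhausts $\Bc(\Sigma;G)$ by the subalgebras $B_O$ for relatively compact open $O\subseteq\Go$, observes that each $B_O$ is already a \emph{complete} C*-algebra in $\|\cdot\|_\infty$ (isomorphic to $C_0(O)$), and invokes uniqueness of the C*-norm on a C*-algebra. You instead work directly on the incomplete $*$-algebra $C_c(\Go)$ with a two-sided spectral argument: spectral permanence under the inclusion into the completion for $\|b\|_A\le\|b\|_\infty$, then functional calculus plus the fact that $\|\cdot\|$ is a genuine norm (nonzero on the nonzero element $g\circ f$) to rule out $\|f\|_A<\|f\|_\infty$. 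Both are standard and both work; yours is more self-contained and makes visible exactly where faithfulness of the norm on $C_c(\Sigma;G)$ enters, at the cost of more bookkeeping (in particular the explicit resolvent computation in $C_c(\Go)+\C$ and the care needed to read $g\circ f$ back as a $\mathbb{T}$-contravariant element of $\Bc(\Sigma;G)$, which you rightly flag). One small point worth making explicit in your opening reduction: the definition of $\Nc(\Sigma;G)$ only assumes $q(\overline{\supp}(a))$ is compact, so compactness of $\overline{\supp}(a)$ itself — and hence of $\overline{\supp}(a^*a)$, needed to place $a^*a$ in $\Bc(\Sigma;G)$ — requires properness of $q$, which is \cref{lem:proper} and is the reason the paper cites it at this step.
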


\begin{proof}
The argument is similar to the untwisted case of \cite[Corollary~9.3.4]{Sims2020} and \cite[Proposition~3.14]{Exel2008}.  First, for any $n\in \Nc(\Sigma;G)$, \cref{lem:proper} yields $n^*n\in \Bc(\Sigma;G)$ and $\|n^*n\|_\infty=\|n\|^2_\infty$.  Thus, it suffices to show that every C*-norm $\|\cdot\|$ on $C_c(\Sigma;G)$ agrees with $\|\cdot\|_\infty$ on $\Bc(\Sigma;G)$. This follows for the same reason that any C*-norm on $C_c(G^{(0)})$ agrees with the supremum norm on $C_c(G^{(0)})$. Specifically note that, for any open $O\subseteq \Go$ with $\mathrm{cl}(O)$ compact,
    \[B_O=\{a\in C_c(\Sigma;G): q(\mathrm{supp}^\circ(a))\subseteq O\}\]
    is a C*-algebra with respect to $\|\cdot\|_{\infty}$ isomorphic to $C_0(O)$.
    As C*-norms on C*-algebras are unique (see \cite[II.2.2.10]{Blackadar2017}), it follows that $\|\cdot\|$ agrees with $\|\cdot\|_\infty$ on $B_O$.  But $B_c(\Sigma;G)$ is the union of these $B_O$'s and hence $\|\cdot\|$ agrees with $\|\cdot\|_\infty$ everywhere on $B_c(\Sigma;G)$.
\end{proof}

We will be particularly interested in $D$-contractive C*-norms on $C_c(\Sigma;G)$.  The completion of $C_c(\Sigma;G)$ with respect to such a norm is a C*-algebra $A$ on which we have a unique expectation $E$ extending the diagonal map $D$.  Any C*-algebra $A$ obtained in this way will be called a \emph{twisted groupoid C*-algebra}.  The two most important examples are the \emph{full} and \emph{reduced} twisted groupoid C*-algebras obtained as the completion of $C_c(\Sigma;G)$ with respect to the \emph{full norm} $\|\cdot\|_f$ and \emph{reduced norm} $\|\cdot\|_r$ respectively given by
\begin{align*}
    \|a\|_f&:=\sup\{\|a\|:\|\cdot\|\text{ is a C*-norm on }C_c(\Sigma;G)\} \text{ and }\\
    \|a\|_r&:=\sup\{\|D(c^*a^*ac)\|_\infty^{1/2}:c\in C_c(\Sigma;G)\text{ and }\|D(c^*c)\|_\infty\leq1\}.
\end{align*}
(For more information about why these are $D$-contractive C*-norms, see \cite{Armstrong2022}, \cite[Remark 2.5]{BFPR} or \cite{Sims2020}).

The reduced norm is the opposite of the full norm in the following sense.

\begin{prop}
    The reduced norm is the smallest $D$-contractive C*-norm on $C_c(\Sigma;G)$.
\end{prop}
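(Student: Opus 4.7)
The plan is to show $\|a\|_r \leq \|a\|$ for every $D$-contractive C*-norm $\|\cdot\|$ on $C_c(\Sigma;G)$ by realising the defining suprema in the formula for $\|\cdot\|_r$ as values of GNS states coming from characters on the diagonal of the completion.

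First I would set up the expectation. Let $\|\cdot\|$ be any $D$-contractive C*-norm on $C_c(\Sigma;G)$ with completion $A$. Because $D$ is $\|\cdot\|$-contractive, it extends uniquely to a contractive idempotent linear map $E\colon A\to A$, and by continuity and density, $E(A)=\mathrm{cl}_{\|\cdot\|}(B_c(\Sigma;G))=:B$. By \cref{NcNorm}, $\|\cdot\|$ restricted to $B_c(\Sigma;G)$ agrees with $\|\cdot\|_\infty$, so $B$ is isomorphic to $C_0(G^{(0)})$. Since $E$ is a contractive idempotent onto a C*-subalgebra, it is a conditional expectation by \cite[II.6.10.3]{Blackadar2017}, in particular completely positive.

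Next I would produce the relevant states. For each $x\in G^{(0)}$ let $\mathrm{ev}_x\colon B\to\mathbb{C}$ be evaluation at $x$ under the identification $B\cong C_0(G^{(0)})$, and set
\[\varphi_x:=\mathrm{ev}_x\circ E\colon A\to\mathbb{C}.\]
As the composition of a character with a positive contraction, $\varphi_x$ is a positive linear functional on $A$ of norm at most one. Let $(\pi_x,H_x,\xi_x)$ be its GNS triple, so that $\varphi_x(b^*b)=\|\pi_x(b)\xi_x\|^2$ and $\|\pi_x(b)\|\leq\|b\|$ for every $b\in A$.

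Finally I would run the key estimate. Fix $a\in C_c(\Sigma;G)$ and $c\in C_c(\Sigma;G)$ with $\|D(c^*c)\|_\infty\leq 1$. For every $x\in G^{(0)}$,
\[
D(c^*a^*ac)(x)
=\varphi_x(c^*a^*ac)
=\|\pi_x(a)\pi_x(c)\xi_x\|^2
\leq\|\pi_x(a)\|^2\,\|\pi_x(c)\xi_x\|^2
\leq\|a\|^2\,\varphi_x(c^*c)
=\|a\|^2\,D(c^*c)(x)
\leq\|a\|^2.
\]
Taking the supremum over $x\in G^{(0)}$ gives $\|D(c^*a^*ac)\|_\infty^{1/2}\leq\|a\|$, and then the supremum over admissible $c$ gives $\|a\|_r\leq\|a\|$, proving minimality.

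The only non-routine step is verifying that $E$ is a genuine conditional expectation onto $B\cong C_0(G^{(0)})$, which I expect to be the main obstacle to write carefully; it is essentially forced by \cref{NcNorm} (to identify the range) together with Tomiyama's theorem (to upgrade the contractive idempotent to a conditional expectation), after which the positivity of $E$ and the Cauchy--Schwarz-type GNS inequality make the rest formal.
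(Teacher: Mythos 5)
Your proof is correct and rests on the same key idea as the paper's: extend $D$ to a (Tomiyama, hence positive) conditional expectation $E$ on the completion $A$ and then bound $E(c^*a^*ac)$ by $\|a\|^2E(c^*c)$. The paper gets this bound in one line from positivity of $E$ applied to $\|a\|^2c^*c-c^*a^*ac\geq0$, whereas you re-derive the same inequality pointwise via the GNS representations of the states $\mathrm{ev}_x\circ E$ -- a slightly longer but equivalent route.
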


\begin{proof}
    Let $\|\cdot\|$ be any $D$-contractive C*-norm on $C_c(\Sigma;G)$, which must agree with $\|\cdot\|_\infty$ on $\Bc(\Sigma;G)$ by \cref{NcNorm}.  By \cite[II.6.10.2]{Blackadar2017}, the unique expectation $E$ on the completion $A$ extending $D$ must be positive and hence, for all $a,c\in C_c(\Sigma;G)$ with $\|D(c^*c)\|_\infty\leq1$,
    \[\|D(c^*a^*ac)\|_\infty=\|E(c^*a^*ac)\|\leq\|E(\|a\|^2c^*c)\|=\|a\|^2\|E(c^*c)\|=\|a\|^2\|D(c^*c)\|_\infty\leq\|a\|^2.\]
    As $c$ was arbitrary, this shows that $\|a\|_r\leq\|a\|$.
\end{proof}

The $D$-contractive condition is crucial here, as there may be no minimal C*-norm, even for examples like the trivial twist on $\mathbb{Z}$ -- see Caleb Eckhardt's comment in \cite{Daws2014}.
While a C*-completion $A$ of $C_c(\Sigma;G)$ will no longer just consist of functions on $\Sigma$, if the C*-norm in question is $D$-contractive then there will at least be a canonical `$j$-map' taking elements of $A$ back to $\mathbb{T}$-contravariant functions on $\Sigma$.

\begin{prop}
    If $A$ is any $D$-contractive C*-completion of $C_c(\Sigma;G)$ then there is unique contractive map $j:A\rightarrow C_0(\Sigma;G)$ extending the identity on $C_c(\Sigma;G)$.
\end{prop}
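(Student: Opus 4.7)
The plan is to establish the pointwise bound $\|a\|_\infty \leq \|a\|$ for every $a \in C_c(\Sigma;G)$, where $\|\cdot\|$ denotes the $D$-contractive C*-norm whose completion is $A$.  Once this is in hand, the inclusion $C_c(\Sigma;G) \hookrightarrow C_0(\Sigma;G)$ is contractive from $\|\cdot\|$ to $\|\cdot\|_\infty$, and because $C_c(\Sigma;G)$ is dense in $A$ while $(C_0(\Sigma;G),\|\cdot\|_\infty)$ is Banach, this inclusion extends uniquely to a contractive linear map $j\colon A \to C_0(\Sigma;G)$; uniqueness follows immediately from density.

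Fix $e \in \Sigma$.  If $e \in \Sigma^{(0)}$, then $a(e) = D(a)(e)$ and $D(a) \in \Bc(\Sigma;G)$, so the argument used in \cref{NcNorm} (that every C*-norm agrees with $\|\cdot\|_\infty$ on $\Bc(\Sigma;G)$) together with $D$-contractivity gives
\[
|a(e)| = |D(a)(e)| \leq \|D(a)\|_\infty = \|D(a)\| \leq \|a\|.
\]

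For $e \notin \Sigma^{(0)}$, I reduce to the unit case by left-multiplying $a$ with a normaliser that takes the value $1$ at $e$.  A standard partition-of-unity argument exploiting the principal $\mathbb{T}$-bundle structure of $q$ near $q(e)$ produces $n \in \Nc(\Sigma;G)$ with $n(e) = 1$ and $\|n\|_\infty = 1$.  Because $q(\overline{\supp}(n))$ is a compact bisection, the convolution sum defining $(n^*a)(\mathsf{s}(e))$ collapses to a single term: the unique $g \in G$ with $\mathsf{r}(g) = \mathsf{s}(q(e))$ and $g \in q(\supp(n))^{-1}$ is $g = q(e)^{-1}$, and choosing $\sigma(g) = e^{-1}$ gives
\[
(n^*a)(\mathsf{s}(e)) = n^*(e^{-1})\, a(e\cdot\mathsf{s}(e)) = \overline{n(e)}\, a(e) = a(e).
\]
Since $\mathsf{s}(e) \in \Sigma^{(0)}$, this equals $D(n^*a)(\mathsf{s}(e))$, so applying the unit case to $n^*a$ together with $\|n\| = \|n\|_\infty = 1$ (by \cref{NcNorm}) yields $|a(e)| \leq \|n^*a\| \leq \|n\|\cdot\|a\| = \|a\|$.

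I expect the main obstacle to be the construction of the normaliser $n$ with $n(e) = 1$ in the non-unit case; the remaining steps are direct computation.  The conceptual point is that convolving with $n^*$ moves the value $a(e)$ to a point of $\Sigma^{(0)}$, where $D$-contractivity can be deployed.
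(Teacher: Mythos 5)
Your proof is correct and follows essentially the same route as the paper: both reduce the bound $\|a\|_\infty\leq\|a\|$ to the diagonal via a normaliser $n\in \Nc(\Sigma;G)$ with $n(e)=1=\|n\|_\infty$, using \cref{NcNorm} and $D$-contractivity (the paper uses $D(an^*)(\mathsf{r}(e))$ where you use $D(n^*a)(\mathsf{s}(e))$, an immaterial difference). Your separate treatment of the case $e\in\Sigma^{(0)}$ is harmless but unnecessary, since the normaliser argument covers units as well.
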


\begin{proof}
    It suffices to show that every $D$-contractive C*-norm $\|\cdot\|$ on $C_c(\Sigma;G)$ dominates the supremum norm $\|\cdot\|_\infty$.  Accordingly, take any $a\in C_c(\Sigma;G)$ and $e\in\Sigma$.  Further taking any $n\in \Nc(\Sigma;G)$ with $n(e)=1=\|n\|_\infty$, note $\|n\|=1$ by \cref{NcNorm} so
    \[|a(e)|=|an^*(\mathsf{r}(e))|\leq\|D(an^*)\|_\infty=\|D(an^*)\|\leq\|a\|\|n\|=\|a\|.\]
    As $e$ was arbitrary, this shows that $\|a\|_\infty\leq\|a\|$, as required.
\end{proof}
  
If $A$ above is the reduced C*-algebra $C_r^*(\Sigma;G)$ then $j$ will even be injective and $j(ab)$ will always be the convolution of $j(a)$ and $j(b)$ (where the sums involved may no longer be finite but still converge absolutely -- see \cite{BFPR}).  In other words, $j(C_r^*(\Sigma;G))$ is a concrete C*-algebra of functions with respect to convolution which we could simply identify with $C_r^*(\Sigma;G)$. Reduced C*-algebras can even be defined directly from functions in the first place, even for more general Fell bundles, as in \cite{Bice2024}.  However, in keeping with the more traditional mindset, in the present paper we will continue to distinguish $C_r^*(\Sigma;G)$ from $j(C_r^*(\Sigma;G))$ and view the latter primarily as a linear subspace of $C_0(\Sigma;G)$.

In general, we can still show that the map $j$ is injective on the closure of $\Nc(\Sigma;G)$. 

\begin{prop}\label{jSemigroupIso}
    If $A$ is any $D$-contractive C*-completion of $C_c(\Sigma;G)$, then the $j$-map restricts to a semigroup isomorphism from $\mathrm{cl}(\Nc(\Sigma;G))$ onto $N_0(\Sigma;G)$.
\end{prop}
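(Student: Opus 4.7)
The core observation I would build on is \cref{NcNorm}: every C*-norm on $C_c(\Sigma;G)$ agrees with $\|\cdot\|_\infty$ on $\Nc(\Sigma;G)$. Since \cref{lem:proper} gives $\Nc(\Sigma;G) \subseteq C_c(\Sigma;G)$ and $j$ extends the identity on $C_c(\Sigma;G)$, the restriction $j|_{\Nc(\Sigma;G)}$ is already an isometry from $(\Nc(\Sigma;G),\|\cdot\|_A)$ into $(C_0(\Sigma;G),\|\cdot\|_\infty)$. It therefore extends to an isometric (and hence injective) bijection from $\mathrm{cl}(\Nc(\Sigma;G))$ in $A$ onto $\mathrm{cl}_\infty(\Nc(\Sigma;G))$ in $C_0(\Sigma;G)$. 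The plan then reduces to two tasks: (i) identifying this image as $N_0(\Sigma;G)$, and (ii) verifying that the bijection respects multiplication.

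For (i), I would show $\mathrm{cl}_\infty(\Nc(\Sigma;G)) = N_0(\Sigma;G)$. The inclusion ``$\subseteq$'' is routine: uniform limits preserve continuity, $\mathbb{T}$-contravariance, and vanishing at infinity, and the bisection property of the open support passes to uniform limits, since whenever $g_1,g_2\in q(\supp^\circ(a))$ are composable, lifts $e_i\in\Sigma$ with $a(e_i)\neq 0$ eventually satisfy $a_n(e_i)\neq 0$, placing $g_1,g_2$ inside the bisection $q(\supp^\circ(a_n))$ and forcing $g_1 g_2^{-1}\in\Go$. For ``$\supseteq$'', given $a\in N_0(\Sigma;G)$ and $\epsilon>0$, I would truncate via the continuous radial cutoff $\hat\phi_\epsilon(z):=\max(|z|-\epsilon,0)\cdot z/|z|$ (with $\hat\phi_\epsilon(0):=0$) and set $a_\epsilon:=\hat\phi_\epsilon\circ a$. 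This preserves continuity and $\mathbb{T}$-contravariance (since $\hat\phi_\epsilon(\overline{t}z)=\overline{t}\hat\phi_\epsilon(z)$), and has $\overline{\supp}(a_\epsilon)\subseteq\{|a|\geq\epsilon\}$, which is compact (as $a\in C_0$ and $\Sigma$ is locally compact by \cref{lem:topprop on sigma}) and contained in $\supp^\circ(a)$. Hence $q(\overline{\supp}(a_\epsilon))$ is a compact bisection, so $a_\epsilon\in\Nc(\Sigma;G)$, and the pointwise estimate $|z-\hat\phi_\epsilon(z)|\leq\epsilon$ gives $\|a-a_\epsilon\|_\infty\leq\epsilon$.

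For (ii), I would exploit the fact that for $c,d\in N_0(\Sigma;G)$ the defining convolution sum has at most one nonzero term at each $e\in\Sigma$ (since $q(\supp^\circ(c))$ and $q(\supp^\circ(d))$ are bisections), yielding the bound $\|c*d\|_\infty\leq\|c\|_\infty\|d\|_\infty$ and therefore joint sup-norm continuity of convolution on $N_0(\Sigma;G)$. Now for $a,b\in\mathrm{cl}(\Nc(\Sigma;G))$ with sequences $a_n\to a$, $b_n\to b$ in $A$ and $a_n,b_n\in\Nc(\Sigma;G)$, continuity of multiplication in $A$ gives $a_n b_n\to ab$, so $j(a_n b_n)\to j(ab)$ uniformly. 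Since each $a_n b_n$ equals the convolution $a_n*b_n$ in $C_c$, and the joint continuity bound gives $a_n*b_n\to j(a)*j(b)$ uniformly, we conclude $j(ab)=j(a)*j(b)$, establishing the semigroup homomorphism property (and incidentally confirming that $N_0(\Sigma;G)$ is closed under convolution).

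The main obstacle I anticipate is the ``$\supseteq$'' half of step (i): one must construct a \emph{compactly supported} approximant whose \emph{closed} support has a bisection $q$-image, not merely open support. The key trick is that $\{|a|\geq\epsilon\}$ is both closed and already inside $\supp^\circ(a)$, which transfers the bisection property from $a$ to the truncation $a_\epsilon$ while simultaneously providing the required compactness from $a\in C_0$.
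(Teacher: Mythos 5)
There is a genuine gap at the first step of your argument. From \cref{NcNorm} you conclude that $j|_{\Nc(\Sigma;G)}$ is ``an isometry'' which ``therefore extends to an isometric (and hence injective) bijection'' between the closures. But \cref{NcNorm} only says the two norms agree on elements \emph{of} $\Nc(\Sigma;G)$, i.e.\ that $j$ is norm-preserving there; it does not make $j$ distance-preserving on $\Nc(\Sigma;G)$, because the difference $m-n$ of two elements of $\Nc(\Sigma;G)$ generally lies outside $\Nc(\Sigma;G)$ (the union of two bisections need not be a bisection), and on such differences the C*-norm can strictly dominate the sup norm. Concretely, for the trivial twist over $G=\mathbb{Z}$ one has $\|\delta_0-\delta_1\|_\infty=1$ while $\|\delta_0-\delta_1\|_{C^*_r(\mathbb{Z})}=\sup_{z\in\mathbb{T}}|1-z|=2$. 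Since $\Nc(\Sigma;G)$ is not a linear subspace, norm-preservation gives you neither injectivity of $j$ on $\mathrm{cl}(\Nc(\Sigma;G))$ nor the fact that your sup-norm Cauchy sequence of truncations $(a_{1/k})$ is Cauchy in $A$ --- and the latter is exactly what surjectivity requires, since the preimage must be produced inside $A$, not merely inside $C_0(\Sigma;G)$.

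The missing ingredient, which is the heart of the paper's proof, is that the two norms \emph{do} agree on differences of truncations of a fixed $a\in N_0(\Sigma;G)$: since $\overline{\supp}(a_{1/k})\subseteq\{|a|\geq 1/k\}\subseteq\supp^\circ(a)$ and $q(\supp^\circ(a))$ is a bisection, the difference $a_{1/l}-a_{1/k}$ again lies in $\Nc(\Sigma;G)$, so $\|a_{1/l}-a_{1/k}\|_A=\|a_{1/l}-a_{1/k}\|_\infty\leq|\tfrac{1}{l}-\tfrac{1}{k}|$ and the sequence converges in $A$ to a preimage $m$ of $a$. Injectivity needs a further argument of the same flavour: given any other $(m_k)\subseteq\Nc(\Sigma;G)$ converging in $A$ to some $m'$ with $j(m')=a$, pass to a subsequence with $\|m_k-a\|_\infty<1/k$, note that the truncations of $m_k$ at level $1/k$ have closed support inside $\supp^\circ(a)$, so their difference with $a_{1/k}$ lies in $\Nc(\Sigma;G)$, and deduce $\|m_k-a_{1/k}\|_A\leq 4/k\rightarrow0$, whence $m'=m$. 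Your identification $\mathrm{cl}_\infty(\Nc(\Sigma;G))=N_0(\Sigma;G)$ via the radial cutoff (the same truncation the paper uses) and your convolution-continuity argument for multiplicativity are both sound, but without the observations above the bijection itself is not established.
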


\begin{proof}
    As $j$ is contractive, $j(\mathrm{cl}(\Nc(\Sigma;G)))\subseteq\mathrm{cl}_\infty(j(\Nc(\Sigma;G)))=N_0(\Sigma;G)$. For the reverse inclusion note that, for any $n\in N_0(\Sigma;G)$ and $k\in\mathbb{N}$, we can define $n^k\in \Nc(\Sigma;G)$ with $\|n-n^k\|_\infty\leq1/k$ by
    \[n^k(e):=\tfrac{\max(0,|n(e)|-\tfrac{1}{k})}{|n(e)|}n(e).\]
    It follows that $\|n^l-n^k\|= \|n^l-n^k\|_\infty\leq|\frac{1}{l}-\frac{1}{k}|$ so $(n^k)$ is Cauchy and hence has some limit $m\in\mathrm{cl}(\Nc(\Sigma;G))$ with $j(m)=n$, thus showing that $N_0(\Sigma;G)\subseteq j(\mathrm{cl}(\Nc(\Sigma;G)))$.  For any other $(m_k)\subseteq \Nc(\Sigma;G)$ with $m_k\rightarrow n$ in $N_0(\Sigma;G)$, we can revert to a subsequence if necessary to ensure that $\|m_k-n\|_\infty<1/k$.  Then $\overline{\mathrm{supp}}(m_k^k)\subseteq\mathrm{supp}^\circ(n)$ and hence $m_k^k-n^k\in \Nc(\Sigma;G)$, so
    \begin{align*}
        \|m_k-n^k\|&\leq\|m_k-m_k^k\|+\|m_k^k-n^k\|\\
        &=\|m_k-m_k^k\|_\infty+\|m_k^k-n^k\|_\infty\\
        &\leq2\|m_k-m_k^k\|_\infty+\|m_k-n\|_\infty+\|n-n^k\|_\infty\\
        &\leq4/k\\
        &\rightarrow0.
    \end{align*}
    Thus $m_k\rightarrow m$ in $A$, showing that $m$ is the unique element of $\mathrm{cl}(\Nc(\Sigma;G))$ with $j(m)=n$.

    Finally, for any $m,n\in\mathrm{cl}(\Nc(\Sigma;G))$, we see that $m^kn^k$ converges to both $j(mn)$ and $j(m)j(n)$ in $N_0(\Sigma;G)$, so $j$ is not just bijective but also a semigroup isomorphism.
\end{proof}

Accordingly, we will often identify $\mathrm{cl}(N_c(\Sigma;G))$ with $N_0(\Sigma;G)$ via \cref{jSemigroupIso} and refer to both as the \emph{monomial semigroup}.
 The terminology here comes from the fact that, in the special case of a matrix algebra $M_k$ (viewed as $C(\Sigma;G)$ where $\Sigma$ is the trivial twist over the full equivalence relation $G=\{(i,j):1\leq i,j\leq k\}$ on $k$ elements), the monomial semigroup consists precisely of the monomial matrices, i.e.~those matrices with at most one non-zero entry in each row and each column.

Incidentally, building on \cref{jSemigroupIso}, one can even show that the $j$-map restricts to a *-algebra isomorphism from $\mathrm{span}(\mathrm{cl}(\Nc(\Sigma;G)))$ onto $\mathrm{span}(N_0(\Sigma;G))$.  This will also become apparent as a by-product of our later work.

\section{Cartan Semigroups}\label{sec:Cartansemi}

For any subset $N$ of a C*-algebra $A$, let us denote its positive cone by
\[N_+:=\{n^*n:n\in N\}.\]
Furthermore, let $C^*(N)$ denote the C*-subalgebra generated by $N$.

\begin{definition}\label{def:semiCartan}
Let $A$ be a C*-algebra.  We call $N\subseteq A$ a \emph{Cartan semigroup} if
\begin{enumerate}
\item $N$ is a closed *-subsemigroup of $A$ with dense span,
\item\label{item:CommutativePositiveCone} $B:= C^*(N_+)$ is a commutative subsemigroup of $N$, and
\item\label{item:faithfulbistable} there is an expectation $E$ from $A$ onto $B$ such that, for all $n\in N$,
\begin{equation}\label{StarStability}
    \tag{Stable}E(n)n^*\in B.
\end{equation}
\end{enumerate}
In this case we call $B$ the associated \emph{semi-Cartan subalgebra}.
\end{definition}

Cartan semigroups and their associated semi-Cartan subalgebras provide a convenient general framework to extend the Kumjian-Renault theory, as we will see in the following sections. The first thing to note is that semi-Cartan subalgebras (even those coming from the summable Cartan semigroups defined below) are more general than Cartan subalgebras.  For one thing, a semi-Cartan subalgebra $B$ needs only be commutative, not maximal commutative (from here on abbreviated to MASA as usual).  As a result, semi-Cartan subalgebras arise even from non-effective groupoids (e.g. discrete groups).  But we also do not require the expectation $E$ to be faithful, and consequently semi-Cartan subalgebras arise not only in reduced C*-algebras but also in full C*-algebras and all other C*-completions of $C_c(\Sigma;G)$ in between.

The Cartan semigroups that arise in practice often satisfy one further condition.

\begin{definition} Given a Cartan semigroup $N$ with associated semi-Cartan subalgebra $B$, call $C\subseteq N$ a \emph{compatible} subset of $N$ if, for all $c,d\in C$, both $c^*d$ and $cd^*$ lie in $B$.  We call $N$ \emph{summable} if it is closed under taking finite (equivalently pairwise) compatible sums, i.e. if, for all $m,n\in N$,
\[\tag{Summable}m^*n,mn^*\in B\qquad\Rightarrow\qquad m+n\in N.\]
\end{definition}

\begin{prop}\label{TwistedGroupoid->CartanSemigroup} Let $q: \Sigma \twoheadrightarrow G$ be a twist. 
    If $A$ is a $D$-contractive C*-completion of $C_c(\Sigma;G)$, then the monomial semigroup $N:=\mathrm{cl}(\Nc(\Sigma;G))$ is a summable Cartan semigroup in $A$ whose associated semi-Cartan subalgebra is the diagonal $B:=\mathrm{cl}(\Bc(\Sigma;G))$.
\end{prop}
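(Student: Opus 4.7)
The plan is to verify the four defining properties of a summable Cartan semigroup by working at the function level on $\Sigma$ and then passing to the closure, using the semigroup isomorphism $j\colon N\to N_0(\Sigma;G)$ of \cref{jSemigroupIso}, which is in fact isometric on $N$ by \cref{NcNorm}. Condition (1) is almost immediate: $N$ is closed by definition, and is closed under products and adjoints because $N_c(\Sigma;G)$ is a $*$-subsemigroup; density of $\vecspan(N)$ in $A$ reduces, via the density of $C_c(\Sigma;G)$, to writing each $f\in C_c(\Sigma;G)$ as a finite sum in $N_c(\Sigma;G)$ using a partition of unity subordinate to a finite open bisection cover of $q(\overline{\supp}(f))\subseteq G$.

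For condition (2), the inclusion $B\subseteq N$ is clear from $B_c(\Sigma;G)\subseteq N_c(\Sigma;G)$, and commutativity of $B$ follows since $B_c(\Sigma;G)$ is isomorphic to $C_c(G^{(0)})$ via restriction along the homeomorphism of \cref{QuotientHomeo}. A direct convolution calculation shows $n^*n\in B_c(\Sigma;G)$ for $n\in N_c(\Sigma;G)$, so by continuity $N_+\subseteq B$; conversely, every positive $b\in B$ has a square root $b^{1/2}\in B\subseteq N$ via functional calculus in the commutative C*-algebra $B$, hence $b=(b^{1/2})^*b^{1/2}\in N_+$, and so $C^*(N_+)=B$.

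For condition (3), the $D$-contractivity of the ambient C*-norm ensures $D$ extends by continuity to a conditional expectation $E\colon A\twoheadrightarrow B$. To verify stability, take $n\in N_c(\Sigma;G)$ with $q(\overline{\supp}(n))$ a bisection $B_n$, so $E(n)=D(n)$ has support in $q^{-1}(G^{(0)}\cap B_n)$. A short convolution calculation, using that in the bisection $B_n$ each unit is the unique element of $B_n$ with matching source, shows $D(n)n^*\in B_c(\Sigma;G)\subseteq B$. Continuity of $E$ and of multiplication in $A$ then extends stability to all of $N$.

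For summability, given $m,n\in N$ with $m^*n,mn^*\in B$, I would translate via $j$: writing $B_m=q(\supp^\circ j(m))$ and $B_n=q(\supp^\circ j(n))$, a direct calculation using bisection supports and $\mathbb{T}$-contravariance shows $q(\supp^\circ(j(m)^*j(n)))=B_m^{-1}B_n$, which the hypothesis forces to lie in $G^{(0)}$. Together with the symmetric inclusion $B_mB_n^{-1}\subseteq G^{(0)}$, a standard groupoid check shows $B_m\cup B_n$ is itself a bisection. The truncations $j(m)^k,j(n)^k\in N_c(\Sigma;G)$ from the proof of \cref{jSemigroupIso} have closed supports inside $\supp^\circ j(m)$ and $\supp^\circ j(n)$ respectively, so $j(m)^k+j(n)^k\in N_c(\Sigma;G)\subseteq N$; as $k\to\infty$ these converge in $A$ to $m+n$, which must therefore lie in the closed set $N$. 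I expect this summability step to be the main obstacle, since it requires converting the purely algebraic hypothesis on $m^*n$ and $mn^*$ into the geometric bisection compatibility of open supports, relying essentially on the injective semigroup structure of $j$ on $\mathrm{cl}(N_c(\Sigma;G))$.
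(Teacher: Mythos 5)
Your proposal is correct and follows essentially the same route as the paper: verify the semigroup and span conditions directly, identify $N$ with $N_0(\Sigma;G)$ via \cref{jSemigroupIso}, and reduce $N_+\subseteq B$, stability, and summability to the support calculus $q(\supp^\circ(ab))\subseteq q(\supp^\circ(a))\,q(\supp^\circ(b))$ for bisection-supported functions. Your truncation argument $j(m)^k+j(n)^k\to m+n$ in the summability step is a nice touch, as it makes explicit why $m+n$ actually lands in $\mathrm{cl}(\Nc(\Sigma;G))$ rather than merely having $j$-image in $N_0(\Sigma;G)$, a point the paper's proof leaves implicit.
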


\begin{proof}
    First note that, as $\Nc(\Sigma;G)$ is a *-subsemigroup of $C_c(\Sigma; G)$ that spans $C_c(\Sigma;G)$, its closure $N$ is a *-subsemigroup with dense span.  Also $\Bc(\Sigma;G)\subseteq \Nc(\Sigma;G)$ immediately implies $B\subseteq N$.  Now identify $N$ with $N_0(\Sigma;G)$, which we can do by \cref{jSemigroupIso}.  For any $n\in N$, setting $O=q(\mathrm{supp^\circ}(n))$ yields $q(\mathrm{supp^\circ}(n^*n))\subseteq O^{-1}O\subseteq G^{(0)}$.  Thus $N_+\subseteq B$ and hence $C^*(N_+)\subseteq B$, while certainly $B_+\subseteq N_+$ and hence $B=C^*(B_+)\subseteq C^*(N_+)$ so $B=C^*(N_+)$.  Again, for any $n\in N$, set $O=q(\mathrm{supp^\circ}(n))$ and note that
    \[q(\mathrm{supp^\circ}(E(n)n^*))\subseteq(O\cap G^{(0)})O^{-1}\subseteq OO^{-1}\subseteq G^{(0)}\]
    so $E(n)n^*\in B$, thus verifying \eqref{StarStability}.  This shows that $N$ is a Cartan semigroup.  For summability, take $m,n\in N$ with $m^*n,mn^*\in B$.  Letting $U=q(\mathrm{supp}^\circ(m))$ and $V=q(\mathrm{supp}^\circ(n))$, note $G^{(0)}\supseteq q(\mathrm{supp}^\circ(m^*n))=U^{-1}V$ and, likewise, $UV^{-1}\subseteq G^{(0)}$.  This means $U\cup V\supseteq q(\mathrm{supp}^\circ(m+n))$ is a bisection and hence $m+n\in N$, as required.
\end{proof}

If $G$ is not effective then the semi-Cartan $B$ is not a MASA.  Indeed, then we have some $a\in C_c(\Sigma;G)$ with $q(\mathrm{supp}^\circ(a))$ contained in the isotropy of $G$ but not contained in the unit space $G^{(0)}$.  This implies that $a$ commutes with every $b\in B$ even though $a$ itself is not in $B$, showing that $B$ is not a MASA.

\begin{remark}\label{SpecialBasisExample}
    There can also be non-summable Cartan semigroups contained in $\mathrm{cl}(\Nc(\Sigma;G))$, for example obtained by restricting to functions supported on a suitable basis.  Specifically, say we have basis $\mathcal{B}$ of open bisections of $G$ containing the unit space that is also closed under open subsets, products and inverses, i.e.
    \begin{enumerate}
        \item\label{G0inB} $G^{(0)}\in\mathcal{B}$,
        \item\label{SubsetClosed} $O\subseteq U\in\mathcal{B}$ implies $O\in\mathcal{B}$, and
        \item\label{InverseSemigroup} $OU,O^{-1}\in\mathcal{B}$, for all $O,U\in\mathcal{B}$.
    \end{enumerate}
    Then we have a Cartan semigroup given by
    \[N_\mathcal{B}:=\mathrm{cl}(\{n\in C_c(\Sigma;G):q(\mathrm{supp}^\circ(n))\in\mathcal{B}\}).\]
    Indeed, \eqref{InverseSemigroup} implies that $N_\mathcal{B}$ is a *-semigroup while \eqref{G0inB} and \eqref{SubsetClosed} imply that $C^*(N_{\mathcal{B}_+})\subseteq N_\mathcal{B}$.  As $\mathcal{B}$ is a basis, $N_\mathcal{B}$ still has dense span in $A$ and the other required properties follow from an argument similar to the proof of \cref{TwistedGroupoid->CartanSemigroup}.

    For an example of this, consider the trivial twist $\Sigma=G\times\mathbb{T}$ over the discrete principal groupoid $G=\{(i,j):1\leq i,j\leq 2\}$ coming from the full equivalence relation on $2$ elements, and let
    \[\mathcal{B}=\{O:O\subseteq G^{(0)}\}\cup\{\{(1,2)\},\{(2,1)\}\}.\]
    Then $N:=\mathrm{cl}(\Nc(\Sigma;G))=\{a \in C(\Sigma;G) : \: q(\supp^\circ(a)) \text{ is a bisection} \} \approx \{a \in C(G) : \: \supp^\circ(a) \text{ is a bisection} \}$ is isomorphic to the multiplicative semigroup of $2\times2$ matrices with at most one non-zero entry in each row and column.  In particular, the off-diagonal matrix with $1$ in both the top right and bottom left corner will be in $N$ but not in $N_\mathcal{B}$.
\end{remark}

\cref{TwistedGroupoid->CartanSemigroup} above of course applies to trivial twists, in which case $C_c(\Sigma;G)$ can be identified with $C_c(G)$.  When $G$ is a discrete group (e.g.~the integers $\mathbb{Z}$), the monomial semigroup is then just $\bigcup_{g\in G}\mathbb{C}\delta_g$ with diagonal $\mathbb{C}\delta_e$, where $e\in G$ is the identity. 

Here is another example of a Cartan semigroup, which is really just a special case of \cref{TwistedGroupoid->CartanSemigroup} in disguise.

\begin{example}
\label{ex:twist}
Let $A$ be a C*-algebra, and suppose there is a state $\phi:A\rightarrow\mathbb{C}$ and a unitary $u\in A$ generating $A$ such that $\phi(u^k)=0$, for all non-zero $k\in\mathbb{Z}$.  Then $N=\bigcup_{k\in\mathbb{Z}}\mathbb{C}u^k$ is a Cartan semigroup with $B=C^*(N_+)=\mathbb{C}1$ and $E(a)=\phi(a)1$, for all $a\in A$. To see that Condition~\eqref{item:faithfulbistable}~\eqref{StarStability} holds, note that $E(n)n^*\in\mathbb{R}_+1$ whenever $n\in\mathbb{C}1$ and $E(n)n^*=0$ for all other $n\in N$.  If $\phi$ is also faithful, then it will follow from the general theory developed in the following sections and especially \cref{cor:it} that $A$ must be isomorphic to $C^*_r(\mathbb{Z})$ (which is isomorphic to $C(\mathbb{T}))$.
\end{example}

As mentioned in the introduction, there has been work by others that have recovered a twisted groupoid from a dual group action rather than using effectiveness \cite{BFPR}. From their work, we can see another example of a Cartan semigroup. 

\begin{example}
Let $A$ be a C*-algebra topologically graded by a discrete abelian group $\Gamma$ where its dual group $\hat{\Gamma}$ acts strongly on $A$, and let $D$ be an abelian C*-subalgebra of $A_0$ such that $(A,D)$ is $\Gamma$-Cartan, as in \cite{BFPR}. Let $N_h(A,D)$ be the so-called \emph{homogeneous normalisers}, which is a subset of the normalisers. Then $N_h(A,D)$ is a closed *-semigroup of $A$, $N_h(A,D)$ has dense span due to \cite[Lemma~3.10(1)]{BFPR}, and $D=C^*((N_h(A,D))_+)$ due to \cite[Lemma~3.10(3)]{BFPR}. If $E$ is the expectation from the Cartan pair $(A_0,D)$ and $\Phi_0$ is as given, $E \circ \Phi_0$ is an expectation from $A$ onto $D$; it can be checked to be stable by \cite[Lemma~3.4]{BFPR}.
    \end{example}

When $N$ is a Cartan semigroup with semi-Cartan subalgebra $B=C^*(N_+)$, note
\[B_+=N_+=N\cap A_+.\]
Indeed, $B\subseteq N$ by definition so $B_+\subseteq N_+$, while if $n\in N$ then $\sqrt{n^*n}\in B$ so $N_+\subseteq B_+$.  Similarly, the second equality follows from the fact that $N_+\subseteq N\cap A_+$ because $N$ is a *-semigroup, while if $a\in N\cap A_+$ then $a^2\in N_+\subseteq B$ so $a=\sqrt{a^2}\in B_+=N_+$.

Another observation is the following. 

\begin{lemma}\label{prop:approxunit}
    Every semi-Cartan subalgebra $B$ contains an approximate unit for $A$.
\end{lemma}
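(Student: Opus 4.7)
The plan is to take any approximate unit $(e_\lambda)$ for the commutative C*-algebra $B$ itself (which exists by standard C*-algebra theory), and show that it is already an approximate unit for the whole of $A$. Since such $(e_\lambda)$ is uniformly bounded (by $1$) and $N$ has dense span in $A$ by \cref{def:semiCartan}, it suffices by a standard $\varepsilon/3$ argument to check that $e_\lambda n\to n$ and $ne_\lambda\to n$ for each $n\in N$.

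The key observation is that, because $N$ is a $*$-semigroup, both $n^*n$ and $nn^*$ lie in $N_+\subseteq B_+$, as noted in the paragraph before the lemma. So although $e_\lambda$ need not commute with $n$, it does commute with $n^*n$ and $nn^*$ inside the commutative C*-algebra $B$. The plan is then to exploit the C*-identity to push everything into $B$ and invoke the defining property of $(e_\lambda)$.

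Concretely, working in the unitization $\widetilde{A}$ so that $1-e_\lambda$ makes sense, I would combine the C*-identity with the elementary inequality $(1-e_\lambda)^2\le 1-e_\lambda$ (which holds since $0\le e_\lambda\le 1$) to get
\[
\|n-e_\lambda n\|^2=\|n^*(1-e_\lambda)^2 n\|\le\|n^*(1-e_\lambda)n\|=\|(1-e_\lambda)^{1/2}n\|^2,
\]
and then use the C*-identity the other way together with the commutativity of $nn^*$ and $e_\lambda$ in $B$ to rewrite
\[
\|(1-e_\lambda)^{1/2}n\|^2=\|(1-e_\lambda)^{1/2}nn^*(1-e_\lambda)^{1/2}\|=\|nn^*(1-e_\lambda)\|,
\]
which tends to $0$ because $nn^*\in B$ and $(e_\lambda)$ is an approximate unit for $B$. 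The analogous computation using $n^*n\in B$ gives $\|n-ne_\lambda\|^2\le\|n^*n(1-e_\lambda)\|\to 0$.

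The only mild subtlety (and the step where one has to be careful) is verifying that the positivity inequality $(1-e_\lambda)^2\le 1-e_\lambda$ and the rewriting using commutativity really take place in a sensible ambient algebra; everything happens either inside the commutative C*-algebra $B$ (for the parts involving only $e_\lambda$, $n^*n$ and $nn^*$) or inside $\widetilde{A}$ where the $*$-algebraic identities are routine. There is no genuine obstacle beyond that bookkeeping, and in particular the stability condition on the expectation $E$ is not needed here.
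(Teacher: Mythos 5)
Your proof is correct and follows essentially the same route as the paper, which simply cites \cite[Lemma 3.10(2)]{BFPR}: take an approximate unit for $B$, use $n^*n,nn^*\in B_+$ to see it acts as an approximate unit on each $n\in N$, and extend to $\mathrm{span}(N)$ and its closure $A$. You have merely written out in full the C*-identity computation that the paper delegates to the reference.
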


\begin{proof}
    We argue as in \cite[Lemma 3.10 (2)]{BFPR}.  If $N$ is a Cartan semigroup with $B=C^*(N_+)$, then any approximate unit for $B$ is an approximate unit for any $n\in N$, as $n^*n,nn^*\in B$ (see \cite[Equation~(3.12)]{BFPR}).  This then extends to $\mathrm{span}(N)$ and its closure $A$.
\end{proof}

From now on, it will be convenient to fix a sequence of non-zero polynomials $(p_k)$ with zero constant terms that converge to $1$ uniformly on all compact subsets of $\mathbb{R}\setminus\{0\}$ (such $(p_k)$ exist by the Stone-Weierstrass theorem).  For any C*-algebra $A$ and any $a\in A$, it then follows that $(p_k(aa^*))$ and $(p_k(a^*a))$ are left and right approximate units for $a$, i.e.
\begin{equation} \label{def:p_k poly} p_k(aa^*)a=ap_k(a^*a)\rightarrow a.\end{equation}
Replacing each $p_k$ with $p_k^2/\max_{x\in[0,1]}p_k^2(x)$ if necessary, we may further assume that $p_k(\mathbb{R})=\mathbb{R}_+$ and $p_k([0,1])=[0,1]$, so that $p_k(a)$ is in $A_+$ or $A^1_+$ whenever $a$ is.  As $p_k$ has zero in both its constant term and the coefficient of its $x$ term, there is a unique polynomial $q_k$ with
\[p_k(x)=xq_k(x),\]
which again has zero constant term and satisfies $q_k(\mathbb{R}_+)=\mathbb{R}_+$

Any Cartan pair (defined in the beginning of the introduction) is isomorphic to the reduced C*-algebra of a twist over an effective groupoid together with its diagonal subalgebra (see \cite[Theorem~5.9]{Renault2008} for the second countable case and \cite[Theorem~1.1]{Raad2022} for the general result). Thus Cartan subalgebras are semi-Cartan, using \cref{TwistedGroupoid->CartanSemigroup} and $\mathrm{cl}(B_c(\Sigma;G))= C_0(q^{-1}(G^{(0)});G^{(0)})$.  We can also prove this directly as follows.  

\begin{prop}\label{prop:CartanGeneralisation}
    Suppose $(A,B)$ satisfies all properties of a Cartan pair except that the expectation $E:A\twoheadrightarrow B$ need not be faithful.  Then the entire normaliser semigroup
    \[N(B)=\{n\in A:n^*Bn\cup nBn^*\subseteq B\}\]
    forms a Cartan semigroup with associated semi-Cartan subalgebra $B=C^*(N(B)_+)$.
\end{prop}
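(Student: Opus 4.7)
The plan is to directly verify the three defining conditions of a Cartan semigroup from \cref{def:semiCartan} for $N(B)$, with the given expectation $E$ and subalgebra $B$.

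The first two axioms amount to routine checks. That $N(B)$ is a *-subsemigroup follows from $(mn)^*B(mn) = n^*(m^*Bm)n \subseteq n^*Bn \subseteq B$ together with the symmetric computation; closedness of $N(B)$ in $A$ comes from continuity of multiplication and closedness of $B$ (if $n_k \to n$ with $n_k \in N(B)$, then $n_k^*bn_k \to n^*bn \in B$); and dense span of $N(B)$ is the regularity part of the Cartan-pair hypothesis. For $B = C^*(N(B)_+)$: commutativity of $B$ and the inclusion $B \subseteq N(B)$ are immediate from the MASA assumption, while the approximate unit $(e_\lambda) \subseteq B$ (see \cref{prop:approxunit}) combined with $n^*e_\lambda n \in B$ yields $n^*n \in B$ in the limit, so $N(B)_+ = B_+$ and $C^*(N(B)_+) = B$.

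The main work is the stability $E(n)n^* \in B$ for $n \in N(B)$. Since $B$ is a MASA, this reduces to showing $E(n)n^*$ commutes with every $b \in B$. Using that $E(n), b \in B$ commute, the commutator collapses to
\[ [b, E(n)n^*] = bE(n)n^* - E(n)n^*b = E(n)bn^* - E(n)n^*b = E(n)[b, n^*], \]
so the task becomes showing $E(n)[b, n^*] = 0$ for all $b \in B$. The key tool is the functional-calculus identity $f(nn^*)n = nf(n^*n)$ for continuous $f$ vanishing at $0$, whose adjoint reads $n^*f(nn^*) = f(n^*n)n^*$. Applying the $B$-bimodule expectation $E$ gives $f(nn^*)E(n) = E(n)f(n^*n)$, and commutativity in $B$ then yields
\[ E(n)\bigl(f(nn^*) - f(n^*n)\bigr) = 0, \]
localising the support of $E(n)$ (in $B^{**}$) to the set $\{nn^* = n^*n\}$ on which the partial conjugation by $n$ acts trivially. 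Specialising $b = f(nn^*)$, the adjoint identity gives $[b, n^*] = (f(nn^*) - f(n^*n))n^*$, so $E(n)[b, n^*] = 0$ immediately for such $b$.

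The main obstacle is extending this calculation from $b \in C^*(nn^*, n^*n)$ to an arbitrary $b \in B$. My plan is to exploit the localisation of $E(n)$ together with the partial conjugation $b \mapsto n^*bn$ on the appropriate hereditary subalgebra of $B$: on the support of $E(n)$, this conjugation has only fixed points, so $[b, n^*]$ vanishes there in a sense compatible with multiplication by $E(n)$, forcing $E(n)[b, n^*] = 0$. Making this rigorous without appealing to the twisted-groupoid structure that the paper is about to construct is the delicate point, and will likely require a careful combination of the MASA hypothesis with hereditary-subalgebra manipulations built around $n^*n$ and $nn^*$, perhaps passing through $B^{**}$ and the associated support projections of $E(n)$.
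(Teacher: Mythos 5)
Your reduction of stability to showing $E(n)[b,n^*]=0$ for all $b\in B$ is correct, and your routine checks of the first two axioms match the paper's (which dismisses them as immediate). But the heart of the argument --- extending $E(n)[b,n^*]=0$ from $b\in C^*(nn^*,n^*n)$ to arbitrary $b\in B$ --- is exactly what you leave as an unexecuted ``plan,'' and this is a genuine gap, not a technicality. Your localisation $E(n)\bigl(f(nn^*)-f(n^*n)\bigr)=0$ never invokes the normaliser hypothesis $n^*Bn\subseteq B$, yet that hypothesis must enter somewhere: it is the only thing connecting the conjugation $b\mapsto n^*bn$ back to $B$, and the heuristic ``on the support of $E(n)$ the partial conjugation has only fixed points'' is precisely the conclusion one is trying to prove, not something that follows from the support identity alone. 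The proposed detour through $B^{**}$ and support projections would also have to contend with the fact that multiplication by the support projection of $E(n)$ does not interact well with elements like $[b,n^*]$ that live outside $B$.

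The paper closes this gap with a short direct computation that uses the normaliser property for \emph{arbitrary} $b\in B$ together with the $B$-bimodule property of $E$: for $n\in N(B)$ and $b\in B$,
\[E(n)n^*nn^*b=E(n)n^*bnn^*=E(nn^*bn)n^*=E(bnn^*n)n^*=bE(n)n^*nn^*,\]
where the first and third equalities use that $nn^*,n^*n\in B$ commute with $b$, and the second and fourth use $n^*bn\in B$ and the bimodule identities $E(m\,c)=E(m)c$, $E(c\,m)=cE(m)$ for $c\in B$. The same holds with $(nn^*)^k$ in place of $nn^*$, and letting $k\to\infty$ along $p_k(nn^*)$ (a right approximate unit for $n^*$) gives $E(n)n^*b=bE(n)n^*$, whence $E(n)n^*\in B$ by the MASA hypothesis. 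The moral is that the factor of $nn^*$ (or $p_k(nn^*)$) you need to insert is not there to localise $E(n)$ but to create an expression of the form $n(n^*bn)$ to which the normaliser condition applies; I recommend replacing your functional-calculus localisation with this computation.
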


\begin{proof}
    All the required conditions on $N(B)$ are immediate except for \eqref{StarStability} in Condition \eqref{item:faithfulbistable}.  To see that \eqref{StarStability} also holds first note that, for all $b\in B$ and $n\in N(B)$,
    \[E(n)n^*nn^*b=E(n)n^*bnn^*=E(nn^*bn)n^*=E(bnn^*n)n^*=bE(n)n^*nn^*.\]
    Likewise, $E(n)n^*(nn^*)^kb=bE(n)n^*(nn^*)^k$ for all $k>1$ and hence
    \begin{equation}\label{Commutant}
    E(n)n^*b=\lim_kE(n)n^*p_k(nn^*)b=\lim_kbE(n)n^*p_k(nn^*)=bE(n)n^*.
    \end{equation}
    As $B$ is a MASA, it follows that $E(n)n^*\in B$, showing that $N(B)$ is a Cartan semigroup.
    Now $B\subseteq N(B)$ and hence $B_+\subseteq N(B)_+$, while $N(B)_+\subseteq B\cap A_+=B_+$ because $B$ contains an approximate unit for $A$.  Thus $B=C^*(B_+)=C^*(N(B)_+)$.
\end{proof}

We have already noted above that the converse to \cref{prop:CartanGeneralisation} is false, i.e.~there are semi-Cartan subalgebras that are not MASAs and hence not Cartan subalgebras.  Indeed, our primary goal is to show that semi-Cartan subalgebras are precisely the right generalisation needed to extend the Kumjian-Renault theory to non-effective groupoids and non-reduced C*-completions.  From this it will follow that summable Cartan semigroups really characterise the situation in \cref{TwistedGroupoid->CartanSemigroup}.  In fact, even when a C*-algebra $A$ contains a (potentially non-summable) Cartan semigroup $N$ then we can still show it is isomorphic to a twisted groupoid C*-algebra where the closure of the compatible sums of $N$ corresponds precisely to the closure of the functions supported on a open bisections with compact closure.

\section{Algebraic Properties}\label{sec:algprop}
In this section, we exhibit several useful algebraic properties of a Cartan semigroup $N$ and its associated semi-Cartan subalgebra $B$.  To avoid constantly repeating our basic assumptions, let us assume throughout the rest of the paper (except in \cref{cor:masa} where our slightly different assumptions on $A$, $B$ and $E$ are stated explicitly) that
\begin{center}
    $N$\textbf{ is a Cartan semigroup in a C*-algebra }$A$\textbf{ with stable expectation }$E:A\twoheadrightarrow B:=C^*(N_+)$.
\end{center}

We start by showing that $N$ is closed under scalar products and contained in the normaliser semigroup of $B$.  By definition, $N$ also contains $B$ and has dense span, and is thus a `skeleton' of $(A,B)$, in the sense of \cite[Definition 1.8]{Pitts2017} and \cite[Definition 2.4]{Pitts2021}.

\begin{lemma}\label{lem:Normal}
    For all $\alpha\in\mathbb{C}$ and $n\in N$, $\alpha B\cup n^*Bn\subseteq B$, that is,
    \begin{equation*}
        \mathbb{C}N=N\subseteq N(B).
    \end{equation*}
\end{lemma}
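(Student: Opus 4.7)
The plan is to establish $\mathbb{C}N \subseteq N$ (equivalently $\mathbb{C}N = N$) and the containment $n^*Bn \subseteq B$ for every $n \in N$; the symmetric inclusion $nBn^* \subseteq B$ will then follow immediately by applying the latter to $n^* \in N$ (which lies in $N$ since $N = N^*$), and together the two containments place $n$ in $N(B)$.

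For scalar closure, I would fix $\alpha \in \mathbb{C}$ and $n \in N$ and exploit the approximate unit $(e_\lambda) \subseteq B$ for $A$ furnished by \cref{prop:approxunit}. Since $B$ is a C*-subalgebra, in particular a complex vector space, each $\alpha e_\lambda$ lies in $B \subseteq N$, and because $N$ is a semigroup, $(\alpha e_\lambda)n = \alpha e_\lambda n \in N$ for every $\lambda$. Since $(e_\lambda)$ is an approximate unit for $A$ we have $e_\lambda n \to n$, hence $\alpha e_\lambda n \to \alpha n$, and as $N$ is norm-closed this forces $\alpha n \in N$.

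For the normaliser condition, I would handle positive $b \in B_+$ first. Recall from the discussion preceding \cref{prop:approxunit} that $B_+ = N_+$, so $\sqrt{b} \in B_+ \subseteq N$. The semigroup structure of $N$ then gives $\sqrt{b}\,n \in N$, and consequently
\[n^*bn = (\sqrt{b}\,n)^*(\sqrt{b}\,n) \in N_+ = B_+ \subseteq B.\]
For an arbitrary $b \in B$, I would decompose $b$ as a complex linear combination of four elements of $B_+$ via the usual $b = b_1 - b_2 + i(b_3 - b_4)$ splitting inside the C*-algebra $B$ and apply linearity of the map $b \mapsto n^*bn$ to conclude $n^*bn \in B$.

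No step here looks genuinely delicate. The only mild subtlety is that $B$ need not be unital, so scalar closure cannot simply be obtained by pulling $\alpha 1$ out of $B$; instead one has to approximate $\alpha n$ from within $N$ using the approximate unit of \cref{prop:approxunit} and invoke norm-closedness of $N$.
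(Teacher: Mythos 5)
Your proposal is correct and follows essentially the same route as the paper: scalar closure via the approximate unit of \cref{prop:approxunit} together with $BN\subseteq NN\subseteq N$ and norm-closedness of $N$, and the normaliser condition via $n^*bn=(\sqrt{b}\,n)^*(\sqrt{b}\,n)\in N_+=B_+$ for $b\in B_+$, extended to all of $B$ by the standard decomposition $B=\operatorname{span}(B_+)$. The paper phrases the second step as $n^*(mn)^{\vphantom{*}}$-type products with $b=m^*m$ for $m\in N$, but this is the same calculation.
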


\begin{proof}
    Let $(b_\lambda)\subseteq B$ be an approximate unit for $A$.  Then, for all $\alpha\in\mathbb{C}$ and $n\in N$,
    \[\alpha n=\lim_\lambda \alpha b_\lambda n\in\mathrm{cl}(BN)\subseteq\mathrm{cl}(NN)\subseteq N.\]
    Next, for any $m,n\in N$, note $n^*m^*mn=(mn)^*mn\in N_+$.  So $n^*N_+n\subseteq N_+$ and hence
    \begin{align*}n^*Bn&=n^*\mathrm{span}(B_+)n=n^*\mathrm{span}(N_+)n=\mathrm{span}(n^*N_+n)\\&\subseteq\mathrm{span}(N_+)=\mathrm{span}(B_+)=B.\qedhere\end{align*}
\end{proof}

We can replace $n^*$ in \cref{lem:Normal} by any $m\in N$ with $mn\in B$, showing that $B$ is \emph{binormal}.  Binormality is one of the required conditions for $(N,B,B)$ to be a structured semigroup in \cite[Definition 1.6]{Bice2022}.  The theory in \cite{Bice2022} then gives us an \'etale groupoid of ultrafilters that we can use as a generalised Weyl groupoid in the following sections.

\begin{cor}
\label{cor:binormal}
    For all $m,n\in N$,
    \begin{equation}\label{Binormal}
        \tag{Binormal}mn\in B\qquad\Rightarrow\qquad mBn\subseteq B.
    \end{equation}
\end{cor}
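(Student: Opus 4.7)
The plan is to prove $mbn\in B$ for each $b\in B$ by first showing $(mbn)\gamma\in B$, where $\gamma:=(mn)^*(mn)\in B_+$, and then approximating $mbn$ itself via the polynomial approximate units $p_k(\gamma)$ from \eqref{def:p_k poly}.

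For the first step, note $n^*\in N$ gives $nn^*\in N_+\subseteq B$, and commutativity of $B$ then gives $b\cdot nn^*\in B$.  Applying \cref{lem:Normal} to $m^*\in N$ (which yields $mBm^*\subseteq B$), we obtain
\[(mbn)(mn)^* \;=\; m(b\cdot nn^*)m^* \;\in\; B.\]
Multiplying on the right by $mn\in B$ gives $(mbn)\gamma\in B$, and since $p_k(t)=t\,r_k(t)$ for some polynomial $r_k$, we conclude $(mbn)p_k(\gamma) = [(mbn)\gamma]\,r_k(\gamma) \in B$ for each $k$.

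For the second step, set $\delta:=(mbn)^*(mbn)$.  Commutativity in $B$ (using $m^*m,b\in B$) gives $b^*(m^*m)b=(m^*m)(b^*b)$, so $\delta=n^*(m^*m)(b^*b)n$.  As $(m^*m)(b^*b)\leq\|b\|^2(m^*m)$ in $B$, conjugating by $n$ yields $\delta\leq\|b\|^2\gamma$ in $A_+$.  Hence
\[\|mbn-(mbn)p_k(\gamma)\|^2 \;=\; \|(1-p_k(\gamma))\delta(1-p_k(\gamma))\| \;\leq\; \|b\|^2\,\|(1-p_k(\gamma))^2\gamma\|,\]
and the right-hand side tends to $0$ by applying \eqref{def:p_k poly} to $a=\gamma^{1/2}$.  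Thus $(mbn)p_k(\gamma)\to mbn$, and since each approximant lies in the closed set $B$, so does $mbn$.

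The main technical obstacle is the convergence in the second step: $\gamma$ is generally not invertible, so one cannot simply divide by it, and the crucial ingredient is the domination $\delta\leq\|b\|^2\gamma$, which ensures that $p_k(\gamma)$ serves as a right approximate identity for $mbn$, rather than just for $mn$.
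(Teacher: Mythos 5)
Your proof is correct, and it differs from the paper's in one substantive respect. Both arguments rest on the same two ingredients, namely the polynomial approximate units $p_k$ and the normality $mBm^*\subseteq B$ supplied by \cref{lem:Normal}, but the paper places the approximate unit on the \emph{left} of $m$: it writes $mbn=\lim_k p_k(mm^*)mbn$ and factors
\[p_k(mm^*)mbn=mbp_k(m^*m)n=(mbm^*)\,q_k(mm^*)\,(mn)\in B\cdot B\cdot B\subseteq B,\]
so that convergence is automatic from $p_k(mm^*)m\to m$ and no further estimate is needed. You instead use the right approximate unit $p_k(\gamma)$ with $\gamma=(mn)^*(mn)$; membership of $(mbn)p_k(\gamma)$ in $B$ then falls out cleanly from $(mbn)(mn)^*=m(bnn^*)m^*\in mBm^*\subseteq B$, but you must pay for this with a genuine convergence argument, which you supply correctly via the domination $\delta=(mbn)^*(mbn)=n^*(m^*m)(b^*b)n\leq\|b\|^2\gamma$ (valid because $m^*m$ and $b^*b$ commute in $B$) and the estimate $\|(1-p_k(\gamma))\delta(1-p_k(\gamma))\|\leq\|b\|^2\|(1-p_k(\gamma))^2\gamma\|\to0$. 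Your version is a little longer, but it isolates a useful general principle -- that $p_k(c^*c)$ is a right approximate unit for any $a$ with $a^*a\leq\lambda\, c^*c$ -- whereas the paper's choice of side sidesteps the convergence issue entirely. In both arguments the hypothesis $mn\in B$ enters only at the final multiplication by $mn$.
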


\begin{proof}
    For any $b\in B$ and $m,n\in N$ with $mn\in B$, \cref{lem:Normal} yields
    \[p_k(mm^*)mbn=mp_k(m^*m)bn=mbp_k(m^*m)n\in Bmn\subseteq BB\subseteq B,\]
    for all $k$.  Thus $mbn=\lim_kp_k(mm^*)mbn\in\mathrm{cl}(B)\subseteq B$, showing that $mBn\subseteq B$.
\end{proof}

We can strengthen \cref{lem:Normal} above from $N\subseteq N(B)$ to $N=N(B)$ when $B$ is a MASA and $N$ is summable -- see \cref{MASAimpliesNormalisers}.  However, in general the inclusion can be strict and any given semi-Cartan subalgebra $B\subseteq A$ can be associated to multiple Cartan semigroups $N\subseteq N(B)$ with $B= C^*(N_+)$.

For normaliser semigroups of Cartan subalgebras, summability comes for free.

\begin{prop}\label{SummableNormalisers}
    The normaliser semigroup $N(B)$ of a Cartan subalgebra $B$ in \cref{prop:CartanGeneralisation} is summable.
\end{prop}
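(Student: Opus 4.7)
The plan is to unpack the definition of summability and verify the two normaliser conditions directly for $m+n$. Concretely, I take $m,n\in N(B)$ with $m^*n,mn^*\in B$ as hypotheses, and I need to show both $(m+n)^*B(m+n)\subseteq B$ and $(m+n)B(m+n)^*\subseteq B$.

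For the first containment, I fix an arbitrary $b\in B$ and expand
\[(m+n)^*b(m+n)=m^*bm+m^*bn+n^*bm+n^*bn.\]
The two ``diagonal'' terms $m^*bm$ and $n^*bn$ lie in $B$ immediately from $m,n\in N(B)$. For the cross terms, the key is to invoke binormality (\cref{cor:binormal}) inside the Cartan semigroup $N(B)$ supplied by \cref{prop:CartanGeneralisation}. First, since $m^*n\in B$ and $m^*,n\in N(B)$, binormality gives $m^*Bn\subseteq B$, so $m^*bn\in B$. Second, $B$ is a $*$-subalgebra, so $n^*m=(m^*n)^*\in B$, and binormality applied to $n^*,m\in N(B)$ gives $n^*Bm\subseteq B$, placing $n^*bm$ in $B$ as well.

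The second containment is entirely parallel: expanding $(m+n)b(m+n)^*$ into four terms, the diagonal terms $mbm^*$ and $nbn^*$ are in $B$ by the normaliser conditions for $m$ and $n$, while the cross terms $mbn^*$ and $nbm^*$ are handled by binormality using the second hypothesis $mn^*\in B$ together with its adjoint $nm^*=(mn^*)^*\in B$.

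I do not anticipate any real obstacle here; the proof is essentially a bookkeeping exercise once one has binormality and the fact that $N(B)$ is closed under adjoints (which is immediate from the symmetric definition of $N(B)$). The only subtle point, which also explains why both hypotheses $m^*n\in B$ and $mn^*\in B$ are needed rather than just one, is that each of the four cross terms requires its own invocation of binormality with a distinct product-in-$B$ justification.
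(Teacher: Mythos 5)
Your proof is correct and follows essentially the same route as the paper: both reduce the problem to the four-term expansion of $(m+n)^*B(m+n)$ and $(m+n)B(m+n)^*$ and dispose of the cross terms by applying binormality (\cref{cor:binormal}) to the Cartan semigroup $N(B)$ furnished by \cref{prop:CartanGeneralisation}, using the hypotheses $m^*n\in B$ and $mn^*\in B$ together with their adjoints.
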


\begin{proof}
    In \cref{prop:CartanGeneralisation} it was already shown that $N(B)$ is a Cartan semigroup with associated semi-Cartan subalgebra $B$.  Thus $mBn\subseteq B$, for all $m,n\in N(B)$ with $mn\in B$, by \eqref{Binormal}. So if $m,n\in N(B)$ and $m^*n,mn^*\in B$ then
    \[(m+n)^*B(m+n)\subseteq m^*Bm+m^*Bn+n^*Bm+n^*Bn\subseteq B.\]
    Likewise $(m+n)B(m+n)^*\subseteq B$, showing that $m+n\in N(B)$, as required. 
\end{proof}

In \S8.1 of \cite{Bice2022}, some stronger results are proved for \emph{symmetric} structured semigroups, which also apply to all Cartan semigroups, by the following result.

\begin{lemma} \label{lem:Symmetry}
For all $l,m\in N$,
\begin{equation}\label{Symmetry}
\tag{Symmetry}lm\in B\qquad\Rightarrow\qquad mlml\in B.
\end{equation}
\end{lemma}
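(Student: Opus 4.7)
The plan is to prove the stronger identity $mlml = E(ml)^2\in B$.

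The first step exploits the hypothesis $lm\in B$ via the sandwich $l^*(lm)l = (l^*l)(ml)$, which lies in $l^*Bl\subseteq B$ by \cref{lem:Normal}. Since $l^*l\in N_+\subseteq B$ and $E$ is $B$-bilinear and idempotent, applying $E$ to this $B$-valued element gives $(l^*l)(ml) = (l^*l)\,E(ml)$. Setting $r := ml - E(ml)$, this says $(l^*l)r = 0$, and then the $C^*$-identity
\[\bigl\|lr\bigr\|^2 = \bigl\|r^*(l^*l)r\bigr\| = 0\]
forces $lr = 0$. In other words, $lml = l\,E(ml)$, so multiplying on the left by $m$ yields
\[mlml = (ml)\,E(ml).\]

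Next I would establish the identity $n\,E(n) = E(n)^2$ for every $n\in N$, which is a clean algebraic consequence of \eqref{StarStability}. Applying $E$ to $E(n)n^*\in B$ and using that $E$ is a $*$-map fixing $B$ pointwise gives $E(n)n^* = E(n)E(n)^*$, so that $sE(n)^* = 0$ where $s := n - E(n)$. Commutativity of $B$ now supplies $|E(n)|^2 = E(n)^*E(n)$, whence $s|E(n)|^2 = (sE(n)^*)E(n) = 0$. A second application of the $C^*$-identity, this time to $sE(n)$, yields $(sE(n))(sE(n))^* = s|E(n)|^2s^* = 0$, so $sE(n) = 0$ and $nE(n) = E(n)^2$. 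Specialising to $n = ml$ gives $(ml)E(ml) = E(ml)^2$, and combined with the displayed formula above this produces $mlml = E(ml)^2 \in B$.

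I expect the main subtlety to lie in the first step: spotting that $(l^*l)(ml)$ is the right element of $B$ to feed into $E$, and that the $C^*$-identity then cancels the factor $l^*l$ to pin down $lml$. Everything afterwards is routine manipulation of a stable expectation, though it does use the commutativity of $B$ twice in order to upgrade the one-sided annihilation $sE(n)^*=0$ coming directly from stability into the two-sided $sE(n)=0$ we ultimately need.
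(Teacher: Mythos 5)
Your proof is correct, but it takes a genuinely different route from the paper's. The paper never touches the expectation: it observes that $l^*l$ and $mm^*$ are commuting elements of $B$, so that $ml(l^*l)(mm^*)ml=\bigl(m(lm)m^*\bigr)\bigl(l^*(lm)l\bigr)\in BB\subseteq B$ by \cref{lem:Normal}, and more generally $ml\,p_k(l^*l)p_k(mm^*)\,ml\in B$; letting $k\to\infty$ and using that $p_k(l^*l)$ and $p_k(mm^*)$ are one-sided approximate units for $l$ and $m$ then gives $mlml\in B$. That argument uses only the normality of $N$ and the commutativity of $B$, so Symmetry is seen to hold independently of condition \eqref{item:faithfulbistable} of \cref{def:semiCartan} --- which is why it can sit among the purely algebraic properties and feed directly into the structured-semigroup machinery the paper invokes. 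Your argument instead leans on the stable expectation: stability together with the Tomiyama bimodule property $E(ba)=bE(a)$ (which the paper licenses in \cref{NormedSpaces}) yields the two annihilation identities $l\bigl(ml-E(ml)\bigr)=0$ and $\bigl(ml\bigr)E(ml)=E(ml)^2$, and hence the sharper conclusion $mlml=E(ml)^2$. Each step checks out: $l^*(lm)l\in l^*Bl\subseteq B$ is legitimate, the cancellation of $l^*l$ via the C*-identity is sound, and your intermediate identity $nE(n)=E(n)^2$ for $n\in N$ is a correct consequence of $E(n)n^*=E(n)E(n)^*$ plus commutativity of $B$ --- indeed it is essentially the germ of $E(n)\sqsubseteq n$, which the paper only establishes later in \cref{prop:resE} by a similar trick. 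So you get an explicit formula for $mlml$ as a bonus, at the price of making Symmetry depend on the expectation, whereas the paper's proof does not.
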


\begin{proof}
If $lm\in B$ then note that, as $l^*l,mm^*\in B$ commute,
\[mll^*lmm^*ml=mlmm^*l^*lml\in mBm^*l^*Bl\subseteq BB\subseteq B.\]
Likewise, $ml(p_k(l^*l))(p_k(mm^*))ml\in B$ so $mlml=\lim_kml(p_k(l^*l))(p_k(mm^*))ml\in B$.
\end{proof}

Another observation that will be useful later in \eqref{StarSwitch} is the following.

\begin{lemma}\label{mnstar}
For all $m,n\in N$, if $m=mn$ then $m=mn^*$.
\end{lemma}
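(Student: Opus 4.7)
The plan is to reduce $m=mn^*$ to an identity living inside the commutative semi-Cartan subalgebra $B$, namely $(m^*m)n^*=m^*m$, and then to lift that $B$-level identity back to $A$ using a polynomial approximate-unit argument.

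To obtain the $B$-level equation, I would first multiply $m=mn$ on the left by $m^*$ to get $b:=m^*m=(m^*m)n=bn$. Since $b\in B$ and $E$ is a conditional expectation onto $B$, applying $E$ yields $b=E(bn)=bE(n)$. Now stability \eqref{StarStability} says $E(n)n^*\in B$, so
\[
bn^*=(bE(n))n^*=b\cdot(E(n)n^*)\in B\cdot B\subseteq B.
\]
Once we know $bn^*\in B$, applying $E$ again gives $bn^*=E(bn^*)=bE(n^*)=bE(n)^*$; taking adjoints of $bE(n)=b$ (with $b$ self-adjoint) and using that $B$ is commutative gives $bE(n)^*=E(n)^*b=b$. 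Hence $bn^*=b$, i.e.\ $(m^*m)n^*=m^*m$.

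For the upgrade, I would use the polynomials $p_k(x)=xq_k(x)$ from \eqref{def:p_k poly}, so that $mp_k(m^*m)\to m$. Because $p_k(m^*m)$ and $m^*m$ both lie in the commutative algebra $B$, they commute, so
\[
p_k(m^*m)\,n^*=q_k(m^*m)\cdot(m^*m)n^*=q_k(m^*m)(m^*m)=p_k(m^*m).
\]
Multiplying on the left by $m$ and sending $k\to\infty$ yields
\[
mn^*=\lim_k mp_k(m^*m)n^*=\lim_k mp_k(m^*m)=m,
\]
as required.

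The key conceptual move is recognising that stability \eqref{StarStability} is exactly what converts the almost-trivial $B$-level consequence $bE(n)=b$ of $m=mn$ into the much stronger identity $bn^*=b$; everything else, in particular the approximate-unit lift from $b$ back up to $m$, is routine. I do not expect any serious obstacle beyond spotting this use of stability.
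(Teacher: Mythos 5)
Your proof is correct, but it takes a genuinely different route from the paper's. The paper never touches the expectation $E$ or the stability axiom: it first observes the elementary C*-identity that $a=ab$ is equivalent to $a^*a=a^*ab$ (via $(a-ab)^*(a-ab)=0$), applies it with $a=\lvert m\rvert$ to pass from $m^*m=m^*mn$ to $\lvert m\rvert=\lvert m\rvert n=n^*\lvert m\rvert$, multiplies these to get $m^*m=\lvert m\rvert nn^*\lvert m\rvert=m^*mnn^*$ using that $\lvert m\rvert$ and $nn^*$ commute in $B$, and then applies the same equivalence again to conclude $m=mnn^*=mn^*$. You instead descend to $B$ via $b=m^*m=bn$, apply $E$ and the bimodule property to get $b=bE(n)$, and only then invoke stability to place $bn^*=b(E(n)n^*)$ in $B$, where commutativity and self-adjointness of $E$ force $bn^*=b$; your lift back to $m$ via $p_k(m^*m)$ is routine and sound. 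The trade-off: your argument is a nice illustration of what stability is for, but it uses strictly more hypotheses -- the paper's proof shows the lemma is really a consequence of $N$ being a closed $*$-semigroup with $C^*(N_+)$ commutative alone, which matters because \cref{mnstar} is quoted in places (e.g.\ \cref{BallDomination} and the proof of \eqref{StarSwitch}) where one wants it to be as cheap as possible. Also note the paper's double use of the equivalence $a=ab\Leftrightarrow a^*a=a^*ab$ replaces your polynomial approximate-unit lift with a one-line step; you may wish to adopt it.
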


\begin{proof}
For all $a,b\in A$, note that $a=ab$ is equivalent to $a^*a=a^*ab$.  Indeed, $a=ab$ certainly implies $a^*a=a^*ab$, while conversely if $a^*a=a^*ab$ then
\[(a-ab)^*(a-ab)=a^*a-a^*ab-b^*a^*a+b^*a^*ab=a^*a-a^*a-a^*a+a^*a=0\]
so $a-ab=0$ and hence $a=ab$.  In particular, if $m=mn$ then $|m|^2=m^*m=m^*mn=|m|^2n$ and hence $|m|=|m|n=n^*|m|$ (where $|m|=\sqrt{m^*m})$.  It then follows that $m^*m=|m|^2=|m|nn^*|m|=m^*mnn^*$, as $|m|,n^*n\in B$ commute, so $m=mnn^*=mn^*$.
\end{proof}

\section{The Restriction Relation}
In this section, we introduce the first of two transitive relations we study on $N$ and prove several properties we will need later when we examine the expectation $E$ and the ultrafilter groupoid $G$ (e.g.~see \cref{prop:resE}, \cref{lem:units and Un}, and \cref{prop:G is Hausdorff}).

Let us call $m\in N$ a \emph{restriction} of $n\in N$ if we have a sequence $(b_k)\subseteq B$ with
\[m=\lim_kmb_k=\lim_knb_k.\]
Put another way this means that, for all $\varepsilon>0$, we have $b\in B$ with $\|m-mb\|,\|m-nb\|<\varepsilon$. 
 The corresponding restriction relation will be denoted by $\sqsubseteq$, i.e.
\[m\sqsubseteq n\qquad\Leftrightarrow\qquad m\text{ is a restriction of }n.\]
The terminology here comes from the following characterisation of $\sqsubseteq$ in the situation of \cref{TwistedGroupoid->CartanSemigroup} of twisted groupoid C*-algebras and their monomial semigroups (see \cite[(9.1)]{Bice2023} for an analogous characterisation of a purely algebraic version of $\sqsubseteq$).

\begin{prop}
    If $A$ is a $D$-contractive C*-completion of $C_c(\Sigma;G)$, for some twist $q:\Sigma\twoheadrightarrow G$, and $N=\mathrm{cl}(\Nc(\Sigma;G))$ $($so $B=\mathrm{cl}(\Bc(\Sigma;G)))$ then, for all $m,n\in N$,
    \[m\sqsubseteq n\qquad\Leftrightarrow\qquad j(m)|_{\supp^\circ(j(m))}=j(n)|_{\supp^\circ(j(m))}.\]
    
\end{prop}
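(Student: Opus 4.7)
My plan is to prove both directions by moving to the concrete function representatives, using the fact that $j$ is injective on $\mathrm{cl}(N_c(\Sigma;G)) \supseteq B$ by \cref{jSemigroupIso}, and identifying $B = \mathrm{cl}(B_c(\Sigma;G))$ with $C_0(G^{(0)})$ via the homeomorphism from \cref{QuotientHomeo}, so that each $b \in B$ corresponds to some $\tilde b \in C_0(G^{(0)})$. The main new input I will need is the pointwise multiplication formula
\[ j(ab)(e) \;=\; j(a)(e)\,\tilde b(s(q(e))) \qquad (a \in N,\ b \in B,\ e \in \Sigma), \]
which I would derive from the convolution formula: since $b$ is supported in $q^{-1}(G^{(0)})$, the defining sum collapses to the single term $g = q(e)$, and the two $\mathbb{T}$-phases coming from $\mathbb{T}$-contravariance of $a$ and $b$ cancel.

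For the forward direction I would take $(b_k) \subseteq B$ witnessing $m = \lim_k m b_k = \lim_k n b_k$ in $A$, then apply $j$ to obtain uniform --- hence pointwise --- convergence in $C_0(\Sigma;G)$. Fixing any $e \in \supp^\circ(j(m))$, the nonvanishing of $j(m)(e)$ combined with the first limit forces $\tilde b_k(s(q(e))) \to 1$, and substituting into the second limit yields $j(m)(e) = j(n)(e)$.

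For the reverse direction I would first extract the algebraic identities $n m^* = m m^*$ and $m^* n = m^* m$ in $B$ from the hypothesis. Writing $U := \supp^\circ(j(m))$ and $V := \supp^\circ(j(n)) \setminus U$, the sets $U$ and $V$ are disjoint subsets of the bisection $\supp^\circ(j(n))$, so the injectivity of $s$ and $r$ on this bisection makes the groupoid products $q(V)\, q(U)^{-1}$ and $q(U)^{-1}\, q(V)$ empty. This forces $n m^*$ and $m^* n$ to be supported in $q^{-1}(G^{(0)})$ and hence to lie in $B$; a direct pointwise comparison on $\Sigma^{(0)}$, using $j(m)|_U = j(n)|_U$, then matches their $j$-images with $j(m m^*)$ and $j(m^* m)$ respectively, so injectivity of $j$ on $B$ yields the two identities. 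Finally, setting $b_k := p_k(m^* m) \in B$ and using $p_k(x) = x q_k(x)$ together with $n m^* = m m^*$, I compute
\[ n b_k \;=\; (n m^*)\, m\, q_k(m^* m) \;=\; (m m^*)\, m\, q_k(m^* m) \;=\; m\, p_k(m^* m) \;=\; m b_k, \]
which converges to $m$ by the approximate-unit property \eqref{def:p_k poly}, so $m \sqsubseteq n$ as witnessed by $(b_k)$.

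The main obstacle is the bookkeeping in the reverse direction: verifying the $\mathbb{T}$-phase cancellation in the pointwise multiplication formula, and running the bisection calculus to show both that $n m^*, m^* n \in B$ and that they equal $m m^*, m^* m$ respectively. Once these algebraic identities are in hand, the trick $b_k := p_k(m^* m)$ collapses $n b_k$ onto $m b_k$, and the standard approximate-unit convergence closes the argument.
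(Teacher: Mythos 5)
Your forward direction ($m\sqsubseteq n$ implies the equality of restrictions) is exactly the paper's argument: push the witnessing sequence through the contractive map $j$, observe that nonvanishing of $j(m)(e)$ forces $\tilde b_k(\mathsf{s}(q(e)))\to 1$, and substitute. Your reverse direction, however, is genuinely different and is correct. The paper proceeds topologically: it picks a bounded sequence $(b_k)\subseteq B$ converging to $0$ uniformly off $\mathsf{s}(\supp^\circ(j(m)))$ and to $1$ uniformly on compact subsets of $\mathsf{s}(\supp^\circ(j(m)))$, and checks directly that $m=\lim_k mb_k=\lim_k nb_k$. You instead extract the C*-identity $nm^*=mm^*$ from the hypothesis (your bisection calculus is sound: $\supp^\circ(j(m))$ and $\supp^\circ(j(n))\setminus\supp^\circ(j(m))$ are disjoint subsets of the single bisection $q(\supp^\circ(j(n)))$, so the cross products vanish, the convolutions land over $G^{(0)}$, and the pointwise comparison on $\Sigma^{(0)}$ together with injectivity of $j$ on $\mathrm{cl}(\Nc(\Sigma;G))$ from \cref{jSemigroupIso} gives the identity), and then the canonical choice $b_k=p_k(m^*m)$ collapses $nb_k$ onto $mb_k=mp_k(m^*m)\to m$ via $p_k(x)=xq_k(x)$. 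What your route buys is that it sidesteps the construction of ad hoc bump functions and the verification that the resulting convergence holds in the C*-norm of $A$ rather than merely in $\|\cdot\|_\infty$; the convergence you need is just the standard approximate-unit property \eqref{def:p_k poly}. What it costs is the extra convolution bookkeeping (the $\mathbb{T}$-phase cancellation and the identification of elements of $N$ supported over $G^{(0)}$ with elements of $B$), all of which you have correctly flagged and which does go through.
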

\begin{proof}
    By \cref{jSemigroupIso}, we can identify $N$ with $N_0(\Sigma;G)$ and omit the $j$'s.
    Assume $m,n\in N$ and $m|_{\supp^\circ(m)}=n|_{\supp^\circ(m)}$.  Letting $(b_k)\subseteq B$ be any bounded sequence converging to $0$ uniformly on $\Go\setminus \mathsf{s}(\supp^\circ(m))$ and converging to $1$ uniformly on all compact subsets of $\mathsf{s}(\supp^\circ(m))$, we see that $m=\lim_kmb_k=\lim_knb_k$ so $m\sqsubseteq n$.
    
    Conversely, if $m\sqsubseteq n$ then we have $(b_k)\subseteq B$ with $m=\lim_kmb_k=\lim_knb_k$.  For all $e\in\supp^\circ(m)$, this means $m(e)=\lim_km(e)b_k(\mathsf{s}(e))$ and hence $b_k(\mathsf{s}(e))\rightarrow1$ so
    \[n(e)=\lim_kn(e)b_k(\mathsf{s}(e))=\lim_km(e)b_k(\mathsf{s}(e))=m(e).\]
    As $e$ was arbitrary, this shows that $m|_{\supp^\circ(m)}=n|_{\supp^\circ(m)}$.
\end{proof}

Next we show that we can choose the sequence $(b_k)$ to lie in the positive unit ball.

\begin{lemma}\label{PositiveBallSequence}
If $m\sqsubseteq n$, then we can pick $(b_k)\subseteq B^1_+$ with $m=\lim_kmb_k=\lim_knb_k$.
\end{lemma}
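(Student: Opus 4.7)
The plan is to produce witnesses intrinsically from $m$ via continuous functional calculus in $B$. Fix the piecewise-linear function $f_k(x):=\min(1,kx)$ on $[0,\infty)$; it is continuous, vanishes at $0$, and takes values in $[0,1]$, so $c_k:=f_k(m^*m)$ lies in $B^1_+$ by continuous functional calculus in the commutative C*-algebra $B$. I claim these $c_k$ witness $m\sqsubseteq n$.

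First I would check that $mc_k\to m$. Using that $c_k$ is self-adjoint and commutes with $m^*m$ inside $B$, expanding $(m-mc_k)^*(m-mc_k)$ gives an element corresponding under functional calculus to the function $x(1-f_k(x))^2$ on $\sigma(m^*m)$, which vanishes for $x\geq 1/k$ and is bounded by $1/k$ on $[0,1/k]$; hence $\|m-mc_k\|^2\leq 1/k\to 0$.

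The crux -- simultaneously the main obstacle and the key insight -- is the stronger equality $nc_k=mc_k$ for every $k$, from which $nc_k\to m$ is immediate. To prove this I would factor $f_k(x)=xh_k(x)$ with $h_k(x):=\min(1/x,k)$ continuously extended by $h_k(0):=k$, so that $c_k=m^*m\cdot h_k(m^*m)$. Given any original witnessing sequence $(a_j)\subseteq B$ satisfying $ma_j\to m$ and $na_j\to m$, multiplying the first limit by $m^*$ on the left yields $m^*m\,a_j\to m^*m$, and since all the relevant factors then live in the commutative algebra $B$,
\[c_k a_j \,=\, h_k(m^*m)\,m^*m\,a_j \,\longrightarrow\, h_k(m^*m)\,m^*m \,=\, c_k.\]
Combined with commutativity of $a_j$ and $c_k$ in $B$ together with $na_j\to m$, this gives
\[nc_k \,=\, \lim_j nc_k a_j \,=\, \lim_j n a_j c_k \,=\, m c_k,\]
completing the argument. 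The only subtlety is ensuring $h_k$ extends continuously across $x=0$ so the functional calculus applies cleanly, which the piecewise-linear choice of $f_k$ handles automatically.
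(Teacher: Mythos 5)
Your proof is correct, and it takes a genuinely different route from the paper's. The paper keeps the original witnessing sequence $(b_k)\subseteq B$ and simply replaces each $b_k$ by $f(b_k)$ for $f(z)=\min(|z|,1)$, using the scalar inequality $|1-|z||\leq|1-z|$ to check that $m f(b_k)\to m$ and $\|(m-n)f(b_k)\|\leq\|(m-n)b_k\|\to0$. You instead build a canonical sequence $c_k=f_k(m^*m)\in B^1_+$ depending only on $m$ (using that $m^*m\in N_+=B_+$), and demote the original witnesses $(a_j)$ to an auxiliary role: from $m^*ma_j\to m^*m$ you get $c_ka_j\to c_k$, and commuting $a_j$ past $c_k$ in $B$ gives the \emph{exact} identity $nc_k=mc_k$. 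That identity is strictly more information than the lemma asks for, and it foreshadows the later description of $\sqsubseteq$ as ``$j(m)$ and $j(n)$ agree on $\supp^\circ(j(m))$''; the paper's argument is shorter but produces witnesses tied to the original sequence and only approximate agreement. One small point to tidy: since $h_k(0)=k\neq0$, the element $h_k(m^*m)$ lives in the unitization of $B$ rather than in $B$ itself when $B$ is non-unital. This is harmless --- $B$ is an ideal in its unitization, so $h_k(m^*m)\cdot m^*ma_j\in B$, and the limit computation only uses continuity of multiplication by the fixed bounded element $h_k(m^*m)$ (equivalently, the pointwise bound $|f_k(x)(a_j(x)-1)|\leq k\,|x\,a_j(x)-x|$ in $C_0(X_B)$) --- but it is worth a sentence.
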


\begin{proof}
    Take $(b_k)\subseteq B$ with $m=\lim_kmb_k=\lim_knb_k$ and define $f$ on $\mathbb{C}$ by
    \[f(x)=\begin{cases}|x|&\text{if }|x|\leq1\\1&\text{if }|x|\geq1.\end{cases}\]
    As $|1-|z||\leq|1-z|$, for all $z\in\mathbb{C}$, it follows that
    \[\|m(1-f(b_k))\|^2=\|m(1-f(b_k))^2m^*\|\leq\|m(1-b_k)(1-b_k^*)m^*\|=\|m(1-b_k)\|^2\rightarrow 0,\]
    so $m(1-f(b_k))\rightarrow0$ and hence $m=\lim_kmf(b_k)$.  Likewise,
    \[\|(m-n)f(b_k)\|\leq\|(m-n)b_k\|\rightarrow 0\]
    so $\lim_kmf(b_k)=\lim_knf(b_k)$. As $(f(b_k))\subseteq B^1_+$, we are done.
\end{proof}

In the definition of a restriction, we could also have put the $b_k$'s on the other side.  To prove this, first note that
\begin{equation}\label{LimitLemma}
    a_k\rightarrow a\quad\text{and}\quad ab_k\rightarrow a\qquad\Rightarrow\qquad a_kb_k\rightarrow a,
\end{equation}
whenever $a\in A$, $(a_k)\subseteq A$ and $(b_k)\subseteq A^1_+$, as we then see that
\[\|a_kb_k-a\|=\|a_kb_k-ab_k+ab_k-a\|\leq\|a_k-a\|\|b_k\|+\|ab_k-a\|\rightarrow0.\]

\begin{lemma}
    For any $m,n\in N$,
    \[m\sqsubseteq n\qquad\Leftrightarrow\qquad \text{there exists }(b_k)\subseteq B^1_+ \text{ such that }m=\lim_kb_km=\lim_kb_kn.\]
\end{lemma}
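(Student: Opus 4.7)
I plan to prove the equivalence by establishing the forward direction ($\Rightarrow$) via an explicit construction, and then obtaining the reverse ($\Leftarrow$) by adjoint symmetry. For the forward direction, assume $m\sqsubseteq n$ and use \cref{PositiveBallSequence} to fix $(b_j)\subseteq B^1_+$ with $m=\lim_j mb_j=\lim_j nb_j$. After rescaling (noting that $\lambda m\sqsubseteq\lambda n$ for any $\lambda\in\mathbb{C}$ with the same $b_j$'s, by \cref{lem:Normal}), I may assume $\|m\|\le 1$, so that $c_k:=p_k(mm^*)\in B^1_+$. Then $c_k m=m\,p_k(m^*m)\to m$ is immediate from \eqref{def:p_k poly}.

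The crux is to show $c_k n\to m$. The key algebraic identity I intend to establish is
\[
p_k(nan^*)\,n\;=\;n\,p_k(an^*n),\qquad a\in B,
\]
proved by expanding $p_k$ (which has zero constant term) and checking inductively that $(nan^*)^l n=n(an^*n)^l$ for $l\geq 1$; this uses $N\subseteq N(B)$ from \cref{lem:Normal} to keep $nan^*\in B$, together with the commutativity of $a$ and $n^*n$ inside $B$. Applying the identity with $a=b_j^2$ and letting $j\to\infty$, with $nb_j^2 n^*=(nb_j)(b_j n^*)\to mm^*$ and $b_j^2 n^*n=(b_j n^*)(nb_j)\to m^*m$, continuity of the functional calculus gives
\[
c_k n\;=\;n\,p_k(m^*m).
\]
Since $b_j$ is an approximate unit for $m^*m$, it is also one for $p_k(m^*m)\in B$; commutativity in $B$ then lets me slide $b_j$ past $p_k(m^*m)$ to conclude
\[
n\,p_k(m^*m)=\lim_j nb_j\,p_k(m^*m)=m\,p_k(m^*m)\to m.
\]

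For the converse, suppose $(c_k)\subseteq B^1_+$ satisfies $c_k m\to m$ and $c_k n\to m$. Taking adjoints gives $m^*c_k\to m^*$ and $n^*c_k\to m^*$, i.e.\ $m^*\sqsubseteq n^*$ in the original right-sided sense. Applying the forward direction to the pair $(m^*,n^*)$ produces $(d_k)\subseteq B^1_+$ with $d_k m^*\to m^*$ and $d_k n^*\to m^*$; a final adjoint yields $md_k\to m$ and $nd_k\to m$, so $m\sqsubseteq n$.

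The main obstacle is the commutation identity $p_k(nan^*)\,n=n\,p_k(an^*n)$, since moving $n$ past elements of $B$ does not respect the $B$-structure in an obvious way; everything hinges on $N\subseteq N(B)$ and on the fact that the approximating polynomials $p_k$ have zero constant term, which together make each summand $(nan^*)^l n$ reducible to $n(an^*n)^l$ inside $nB$.
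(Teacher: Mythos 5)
Your proof is correct; the $\Leftarrow$ direction via adjoints is exactly the paper's ``dual argument'', and the $\Rightarrow$ direction is sound but routed a little differently. The paper takes the sequence $b_k=mc_km^*q_k(mm^*)$, built from the given right-sided witnesses $(c_k)$ of \cref{PositiveBallSequence}, and checks $b_kn=p_k(mm^*)nc_k\rightarrow m$ using \eqref{LimitLemma}; you instead take the simpler sequence $p_k(mm^*)$ and prove the exact intertwining identity $p_k(mm^*)n=np_k(m^*m)=mp_k(m^*m)$ by passing to the limit in $p_k(nb_j^2n^*)n=np_k(b_j^2n^*n)$. Your version buys a cleaner witness (indeed $c_kn=c_km=mp_k(m^*m)$ on the nose, so the two required limits coincide termwise), at the cost of an extra limiting argument in $j$; the paper's is shorter but its sequence depends on the original $(c_k)$. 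Two small remarks. First, the identity $(nan^*)^ln=n(an^*n)^l$ that you flag as the main obstacle is pure associativity (just regroup the $2l+1$ factors), so neither $N\subseteq N(B)$ nor commutativity is needed there; where you genuinely use commutativity in $B$ is in rewriting $b_j^2n^*n=(nb_j)^*(nb_j)\rightarrow m^*m$ and in sliding $b_j$ past $p_k(m^*m)$ at the end. Second, your rescaling so that $\|m\|\le1$ is a legitimate and welcome precaution to ensure $p_k(mm^*)\in B^1_+$, since the normalisation $p_k([0,1])=[0,1]$ only controls $p_k$ on $[0,1]$.
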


\begin{proof}
    Say $m\sqsubseteq n$ so we have $(c_k)\subseteq B^1_+$ with $m=\lim_kmc_k=\lim_knc_k$.  For each $k$, let
    \[b_k=mc_km^*(q_k(mm^*)).\]
    By \cref{lem:Normal}, $mc_km^*\in B$ and $b_k\leq mm^*q_k(mm^*)=p_k(mm^*)\leq1$ so $b_k\in B^1_+$.  Also $m^*n=\lim_kc_k^*n^*n\in\mathrm{cl}(BB)\subseteq B$ so, as $nc_k\rightarrow m$ and $p_k(mm^*)m\rightarrow m$, \eqref{LimitLemma} yields
    \[b_kn=mc_kq_k(m^*m)m^*n=mq_k(m^*m)m^*nc_k=p_k(mm^*)nc_k\rightarrow m.\]
    Replacing $n$ above with $m$ yields $b_km\rightarrow m$ as well, finishing the proof of the $\Rightarrow$ part.  The $\Leftarrow$ part then follows by a dual argument.
\end{proof}

It follows that $\sqsubseteq$ is invariant under products on either side, i.e. for all $l,m,n\in N$,
\begin{equation}
    \label{eq:resinv}
m\sqsubseteq n\qquad\Rightarrow\qquad lm\sqsubseteq ln\quad\text{and}\quad ml\sqsubseteq nl.
\end{equation}

\begin{prop}
\label{prop:partial}
    The restriction relation is a closed partial order relation on $N$.
\end{prop}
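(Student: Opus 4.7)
My plan is to verify the four required properties in turn: reflexivity, transitivity, antisymmetry (giving the partial order structure), and closedness as a subset of $N \times N$. A useful reduction throughout is to replace witnessing sequences by ones in the positive unit ball $B^1_+$, as allowed by \cref{PositiveBallSequence}, which lets me exploit order and continuous functional calculus on $B$.

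Reflexivity follows immediately from \cref{prop:approxunit}: any approximate unit for $A$ lying in $B$ satisfies $n b_\lambda \to n$, witnessing $n \sqsubseteq n$. Closedness is a routine triangle-inequality argument: given $m_i \to m$, $n_i \to n$ with $m_i \sqsubseteq n_i$, for any $\varepsilon > 0$ I would fix $i$ large and pick $b \in B^1_+$ witnessing $m_i \sqsubseteq n_i$ to precision $\varepsilon/3$; then $\|m - m b\| \leq \|m - m_i\| + \|m_i - m_i b\| + \|(m_i - m) b\|$, and the symmetric bound for $\|m - n b\|$, give the required estimates.

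For transitivity $l \sqsubseteq m \sqsubseteq n \Rightarrow l \sqsubseteq n$, I would fix $\varepsilon > 0$, choose $b \in B^1_+$ witnessing $l \sqsubseteq m$ and $c \in B^1_+$ witnessing $m \sqsubseteq n$ to suitable precision, and try $d := bc \in B^1_+$. The crucial observations are that $b$ and $c$ commute in $B$, so the cross-terms rearrange as $mb(1-c) = (m - mc) b$ and $mb - ncb = (m - nc) b$, and that $\|(l - m) b\|$ is already controlled via $\|(l - m) b\| \leq \|l - l b\| + \|l - m b\|$. Standard $\varepsilon$-bookkeeping then shows that $d$ witnesses $l \sqsubseteq n$.

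The main obstacle is antisymmetry. Suppose $m \sqsubseteq n$ and $n \sqsubseteq m$ with witnesses $(b_k), (c_j) \subseteq B^1_+$. I would first observe that the cross products lie in $B$: from $m = \lim n b_k$ we get $m^* = \lim b_k n^*$ and hence $m^* n = \lim b_k n^* n \in B$ (using $n^* n \in B$ and the closedness of $B$); symmetrically $n^* m \in B$, so $f := (n-m)^*(n-m) \in B_+$. The key computation uses $n b_k \to m$ together with commutativity in $B$: $b_k n^* n b_k = n^* n \cdot b_k^2 \to m^* m$. Since $b_k^2 \leq 1$ in $B$, this gives $m^* m \leq n^* n$, and the symmetric argument from $(c_j)$ yields the reverse, forcing $m^* m = n^* n$. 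Plugging this equality back in yields $n^* n (1 - b_k^2) \to 0$, and then the elementary inequality $(1 - b_k)^2 \leq (1 - b_k)(1 + b_k) = 1 - b_k^2$ (valid since $b_k \in B^1_+$) gives $\|n - n b_k\|^2 = \|n^* n (1 - b_k)^2\| \to 0$, so $n b_k \to n$. Combined with $n b_k \to m$ from the hypothesis, this forces $m = n$. The conceptual difficulty here is that $(b_k)$ need not a priori act as an approximate unit for $n$; only the forced equality $m^* m = n^* n$ in $B$ upgrades the approximate-unit behaviour of $b_k$ for $m$ over to $n$.
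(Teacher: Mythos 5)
Your proof is correct. Reflexivity, transitivity and closedness run essentially as in the paper: the paper witnesses reflexivity with $b_k=p_k(n^*n)$ rather than an approximate unit, and handles transitivity by combining a \emph{left} witness for $l\sqsubseteq m$ (via the lemma that the $b_k$'s may be moved to the other side) with a right witness for $m\sqsubseteq n$, whereas you keep both witnesses on the right and use commutativity of $B$ to reorder $mbc=mcb$ --- both are fine and your $\varepsilon$-bookkeeping checks out. The genuine divergence is antisymmetry. The paper again exploits the left-sided characterisation: taking $(a_k)$ with $m=\lim a_km=\lim a_kn$ and $(b_k)$ with $n=\lim nb_k=\lim mb_k$, it concludes in one line from $m=\lim a_kn=\lim a_knb_k=\lim mb_k=n$. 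You instead stay with right witnesses and argue C*-algebraically: from $n^*nb_k^2=(nb_k)^*(nb_k)\to m^*m$ and $n^*nb_k^2\le n^*n$ you squeeze out $m^*m\le n^*n$, get the reverse inequality symmetrically, and then use $n^*n(1-b_k)^2\le n^*n(1-b_k^2)\to 0$ to upgrade $(b_k)$ to an approximate unit for $n$ itself, forcing $n=\lim nb_k=m$. This is correct (all the order manipulations take place in the commutative algebra generated by $n^*n$ and $b_k$, where they are pointwise statements) and is self-contained modulo \cref{PositiveBallSequence}; what it costs is length, and what it buys is independence from the left-sided witness lemma together with the incidental observation that $m\sqsubseteq n\sqsubseteq m$ already forces $m^*m=n^*n$ before one concludes $m=n$.
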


\begin{proof}
    For every $n\in N$, note $b_k=p_k(n^*n)\in B^1_+$ satisfies $n=\lim_knb_k$ so $n\sqsubseteq n$, showing that $\sqsubseteq$ is reflexive.  To see that $\sqsubseteq$ is also antisymmetric, say $m\sqsubseteq n\sqsubseteq m$ so we have $(a_k),(b_k)\subseteq B^1_+$ with $m=\lim_ka_km=\lim a_kn$ and $n=\lim_knb_k=\lim_kmb_k$ and hence
    \[m=\lim_ka_kn=\lim_ka_knb_k=\lim_kmb_k=n.\]
    To see that $\sqsubseteq$ is also transitive, say $l\sqsubseteq m\sqsubseteq n$, so we have $(a_k),(b_k)\subseteq B^1_+$ with $l=\lim_kla_k=\lim ma_k$ and $m=\lim_kmb_k=\lim_knb_k$.  Then we see that
    \[l=\lim_kma_k=\lim_kmb_ka_k=\lim_kma_kb_k=\lim_kla_kb_k\]
    and, likewise, $l=\lim_kma_k=\lim_knb_ka_k=\lim_kna_kb_k$, showing $(a_kb_k)$ witnesses $l\sqsubseteq n$.

    To see that $\sqsubseteq$ is closed, take $m_k\rightarrow m$ and $n_k\rightarrow n$ with $m_k\sqsubseteq n_k$, for all $k$. For every $\varepsilon>0$, this means that we have large enough $k$ with $\|m-m_k\|,\|n-n_k\|<\varepsilon$, and $b\in B^1_+$ with $\|m_k-m_kb\|,\|m_k-n_kb\|<\varepsilon$, hence $\|m-mb\|,\|m-nb\|<3\varepsilon$, showing $m\sqsubseteq n$.
\end{proof}

One last property of restriction that will be needed in \cref{BG0} is the following.

\begin{lemma}\label{lem:RR}
    For any $m,n\in N$,
    \begin{equation}
        m\sqsubseteq n\qquad\Rightarrow\qquad n-m\sqsubseteq n.
    \end{equation}
\end{lemma}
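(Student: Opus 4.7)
The plan is to mimic the calculation that would work if $B$ contained an identity: if we had $1 \in B$, we would simply take $c_k = 1 - b_k$ (where $(b_k) \subseteq B$ witnesses $m \sqsubseteq n$) and verify $nc_k = n - nb_k \to n - m$ and $(n-m)c_k = n - nb_k - m + mb_k \to n - m$. In the non-unital setting we will substitute an approximate unit from $B$.

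More precisely, pick, via the definition of $m \sqsubseteq n$ and by passing to a subsequence, a sequence $(b_k) \subseteq B$ with $\|mb_k - m\|, \|nb_k - m\| < 1/k$. By \cref{prop:approxunit}, $B$ contains an approximate unit for $A$, so after passing to a subsequence we may choose $(e_k) \subseteq B$ with $\|ne_k - n\|, \|me_k - m\| < 1/k$. Now set
\[
c_k := e_k - b_k \in B.
\]
A direct four-term estimate gives
\[
\|nc_k - (n-m)\| \leq \|ne_k - n\| + \|nb_k - m\| < 2/k,
\]
and similarly
\[
\|(n-m)c_k - (n-m)\| \leq \|ne_k - n\| + \|me_k - m\| + \|nb_k - m\| + \|mb_k - m\| < 4/k,
\]
so both $nc_k$ and $(n-m)c_k$ converge to $n-m$.

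The one point that needs care is whether $n-m$ actually lies in $N$, since the statement $n - m \sqsubseteq n$ presupposes this and $N$ is only a $*$-semigroup, not closed under subtraction. This is handled by the first limit above: each $nc_k$ lies in $nB \subseteq NN \subseteq N$, and since $N$ is closed we conclude $n - m = \lim_k nc_k \in N$. The sequence $(c_k) \subseteq B$ then witnesses $n - m \sqsubseteq n$ directly from the definition.

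I expect the only subtle step to be this last verification that $n - m \in N$; everything else reduces to the elementary estimate above, and no use of stability, $*$-properties, or the polynomials $p_k, q_k$ seems necessary.
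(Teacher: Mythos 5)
Your proof is correct and uses essentially the same idea as the paper: subtract the witnessing sequence from an approximate unit drawn from $B$ (via \cref{prop:approxunit}), check that the resulting elements of $B$ witness the restriction, and observe that $n-m\in\mathrm{cl}(nB)\subseteq N$ along the way. The only differences are cosmetic — you multiply on the right using the original definition of $\sqsubseteq$ and extract a sequence, while the paper multiplies on the left and works with a product-ordered net.
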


\begin{proof}
    If $m\sqsubseteq n$ then we have $(b_k)_{k\in\mathbb{N}}\subseteq B$ with $m=\lim_kb_km=\lim_kb_kn$.  By \cref{prop:approxunit}, we also have a net $(a_\lambda)_{\lambda\in\Lambda}\subseteq B$ with $n=\lim_\lambda a_\lambda n$ and $m=\lim_\lambda a_\lambda m$.  We can then turn these into limits of nets indexed by $\Lambda'=\Lambda\times\mathbb{N}$ in the product ordering, i.e. $m=\lim_{(\lambda,k)}b_km=\lim_{(\lambda,k)}b_kn$, $n=\lim_{(\lambda,k)}a_\lambda n$ and $m=\lim_{(\lambda,k)}a_\lambda m$ and hence
    \[n-m=\lim_{(\lambda,k)}(a_\lambda-b_k)(n-m)=\lim_{(\lambda,k)}(a_\lambda-b_k)n\in\mathrm{cl}(Bn)\subseteq N,\]
    thus showing that $n-m\sqsubseteq n$.
\end{proof}

\section{The Expectation}

Here we examine some additional properties of our expectation $E$.  First we prove that the $n^*$ in \eqref{StarStability} from \cref{def:semiCartan}\eqref{item:faithfulbistable} can actually be replaced by any $m\in N$ with $mn\in N$, thus showing that $E$ is \emph{bistable} in the sense of \cite[Definition 1.4]{Bice2023}.

\begin{prop}\label{Prop:Bistable}
    For all $m,n\in N$,
    \begin{equation}\label{Bistable}
        \tag{Bistable}mn\in B\qquad\Rightarrow\qquad E(m)n\in B.
    \end{equation}
\end{prop}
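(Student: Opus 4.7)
The plan is to realise $E(m)n$ as the norm-limit of elements already known to lie in $B$, and then invoke closedness of $B$. The natural approximate unit to right-multiply by is $(p_k(x))_k$ with $x := (mn)^*(mn) \in B_+$; note $x$ lies in $B$ because $mn \in B$ by hypothesis.

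The first step is to verify $E(m) n \cdot x \in B$. Stability gives $E(m) m^* \in B$, and since $E(m), m^* \in N$, \cref{cor:binormal} upgrades this to $E(m) B m^* \subseteq B$. Applied to $b = nn^* \in B$, this yields $E(m) nn^* m^* \in B$. Hence
\[E(m) n \cdot x \;=\; \bigl(E(m) n n^* m^*\bigr)(mn) \;\in\; B \cdot B \;\subseteq\; B,\]
using $mn \in B$. Consequently $E(m) n \cdot p_k(x) = (E(m) n x)\, q_k(x) \in B$ for every $k$, as $q_k(x) \in B$.

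The second step is to show $E(m) n p_k(x) \to E(m) n$ in norm. Set $y := n^* |E(m)|^2 n$; by \cref{lem:Normal}, $y \in B_+$ since $|E(m)|^2 \in B$ and $n^* B n \subseteq B$. The Kadison--Schwarz inequality for the conditional expectation $E$ gives $|E(m)|^2 = E(m)^* E(m) \leq E(m^*m) = m^*m$ (using $m^*m \in B$ so that $E$ fixes it). Conjugating by $n$ yields $0 \leq y \leq n^* m^* m n = x$. Since $p_k(x)$ is self-adjoint and commutes with $y$ in the commutative algebra $B$, we obtain
\[\|E(m) n - E(m) n p_k(x)\|^2 \;=\; \|y (1 - p_k(x))^2\|,\]
and $y \leq x$ puts $y$ in the hereditary C*-subalgebra of $B$ generated by $x$, on which $(p_k(x))_k$ acts as an approximate unit; so the right-hand side tends to $0$. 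Since each $E(m) n p_k(x)$ lies in $B$ and $B$ is closed, $E(m) n \in B$.

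The main obstacle I anticipate is establishing the domination $y \leq x$, which is exactly what makes the approximate-unit argument bite. Without Kadison--Schwarz, $|E(m)|^2$ has no obvious control in terms of $m^*m$, and $y$ need not be absorbed by the spectral projections of $x$. The conceptual point is that the hypothesis $mn \in B$ produces a positive element $x = (mn)^*(mn)$ capturing the ``overlap'' of $m$ and $n$ on which $E(m) n$ is actually supported, and Kadison--Schwarz ensures that $y$ is indeed dominated by this overlap.
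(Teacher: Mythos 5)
Your proof is correct, but it takes a genuinely different route from the paper's. The paper first establishes the identity $E(p_k(mm^*)m)=p_k(E(m)m^*)m$ by induction on powers of $mm^*$, using stability ($E(m)m^*\in B$) together with the $B$-bimodule property of $E$, and then concludes $E(m)n=\lim_k p_k(E(m)m^*)\,(mn)\in\mathrm{cl}(BB)\subseteq B$; in other words it approximates with a left approximate unit $p_k(mm^*)$ for $m$ and pushes $E$ through it. You instead approximate on the right with $p_k(x)$ for $x=(mn)^*(mn)$, use \cref{cor:binormal} plus the hypothesis $mn\in B$ to place each approximant $E(m)n\,p_k(x)=(E(m)nn^*m^*)(mn)q_k(x)$ in $B$, and control the convergence via the Schwarz inequality $E(m)^*E(m)\le E(m^*m)=m^*m$ (legitimate here, since the paper's standing citation of Blackadar II.6.10.3 makes $E$ a genuine conditional expectation, hence a $B$-bimodule map, and the usual expansion of $E((m-E(m))^*(m-E(m)))\ge 0$ gives the inequality). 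Both arguments are sound. What the paper's version buys is the reusable identity \eqref{pkE}, which is invoked again in \cref{prop:resE}; what yours buys is a more conceptual localisation — $E(m)n$ is supported where $mn$ is, and Kadison--Schwarz is exactly what certifies that $y=n^*|E(m)|^2n$ is dominated by $x$, so that the commutative functional-calculus estimate $\|y(1-p_k(x))^2\|\to 0$ goes through. One stylistic note: your appeal to ``hereditary subalgebras and approximate units'' is shorthand for the direct two-case estimate (where $x$ is small, $y$ is small since $y\le x$ and $p_k([0,1])=[0,1]$; where $x$ is bounded away from $0$, $p_k(x)\to 1$ uniformly), which is worth spelling out since $(p_k)$ is only assumed to converge to $1$ uniformly on compact subsets of $\mathbb{R}\setminus\{0\}$.
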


\begin{proof}
    First note that, for any $m\in N$ and $k\in\mathbb{N}$,
    \begin{equation}\label{pkE}
        E(p_k(mm^*)m)=p_k(E(m)m^*)m.
    \end{equation}
    To see this, it suffices to show that $E((mm^*)^km)=E(m)(m^*m)^k=(E(m)m^*)^km$, for all $k\geq1$.  The first equality follows from $m^*m\in B$, and the second equality is immediate when $k=1$.  If we assume that $E(m)(m^*m)^k=(E(m)m^*)^km$ holds then, as $E(m)m^*\in B$,
    \begin{align*}
        (E(m)m^*)^{k+1}m&=E(m)m^*(E(m)m^*)^km=E(m)m^*E(m)(m^*m)^k\\
        &=E(E(m)m^*m)(m^*m)^k=E(E(m))m^*m(m^*m)^k=E(m)(m^*m)^{k+1}.
    \end{align*}
    Thus the desired equality follows by induction.

    Now, for any $m,n\in N$ with $mn\in B$, just note that
    \[E(m)n=\lim_kE(p_k(mm^*)m)n=\lim_k p_k(E(m)m^*)mn\in\mathrm{cl}(BB)\subseteq B.\qedhere\]
\end{proof}

Bistability allows us to show that $E$ is deflationary with respect to the restriction relation on $N$.  In particular, we will need the property $E(n) \sqsubseteq n$ to characterise the units of our ultrafilter groupoid in \cref{lem:units and Un}.  In \cref{cor:masa}, we establish that $E$ being deflationary for \emph{all} normalisers implies that $B$ is a Cartan subalgebra.  A special case of the first part of \cref{prop:resE} below is proved in \cite[Theorem 3.1(4)]{ABCCLMR2023}, which in hindsight can be viewed as a particular instance of $E(n) \sqsubseteq n$.

\begin{prop}\label{prop:resE}
    For all $n\in N$, $E(n) \sqsubseteq n$.  In fact, $E(n)$ is the maximum of the poset $(n^\sqsupseteq\cap B,\sqsubseteq)$, i.e.
    \begin{equation}\label{Emax}
        E(n)=\max\{b\in B:b\sqsubseteq n\}.
    \end{equation}
\end{prop}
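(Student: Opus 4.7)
The plan is to prove the two parts of \eqref{Emax} by constructing an explicit witness for $E(n)\sqsubseteq n$ and then using continuity of $E$ to show maximality.

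For $E(n)\sqsubseteq n$, I would test the sequence $b_k:=p_k(E(n)^*E(n))\in B^1_+$. The key observation is that $nb_k$ ends up lying in $B$, so that $E$ fixes it. Indeed, \eqref{StarStability} applied to $n^*\in N$ gives $E(n^*)n=E(n)^*n\in B$, hence $nE(n)^*\in B$ by taking adjoints. Writing $c=E(n)^*E(n)\in B$, we get $nc=(nE(n)^*)E(n)\in B\cdot B\subseteq B$, and by a straightforward induction $nc^k\in B$ for every $k\geq1$. Since each $p_k$ has zero constant term, this gives $nb_k\in B$. Because $E$ is the identity on $B$ and is a $B$-bimodule map,
\[
nb_k \;=\; E(nb_k) \;=\; E(n)b_k.
\]
On the other hand, by \eqref{def:p_k poly}, $E(n)b_k=E(n)p_k(E(n)^*E(n))=p_k(E(n)E(n)^*)E(n)\to E(n)$. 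Thus $nb_k\to E(n)$ and $E(n)b_k\to E(n)$, which is exactly what it means for $E(n)\sqsubseteq n$.

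For the maximality claim, I would combine the above with the continuity of $E$. Suppose $b\in B$ satisfies $b\sqsubseteq n$, so we have $(c_k)\subseteq B$ with $b=\lim_k bc_k=\lim_k nc_k$. Applying the contraction $E$ (which fixes $b$ since $b\in B$ and each $bc_k\in B$) yields
\[
b \;=\; E(b) \;=\; \lim_k E(nc_k) \;=\; \lim_k E(n)c_k,
\]
where the last equality uses that $c_k\in B$. Together with $b=\lim_k bc_k$, this shows $b\sqsubseteq E(n)$. Since $E(n)$ itself lies in $\{b\in B:b\sqsubseteq n\}$ by the first part, and $\sqsubseteq$ is a partial order by \cref{prop:partial}, $E(n)$ is the maximum of this poset.

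The main obstacle is the first half: producing a $B$-sequence that simultaneously captures $n$ and $E(n)$. The trick is to exploit \eqref{StarStability} on both $n$ and $n^*$ to force $nb_k$ into $B$, at which point $E$ being the identity on $B$ collapses $nb_k$ and $E(n)b_k$ into the same element, making convergence to $E(n)$ automatic.
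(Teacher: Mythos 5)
Your proof is correct and follows essentially the same route as the paper: both use the witness sequence $b_k=p_k(E(n)^*E(n))$ and both reduce everything to \eqref{StarStability}; the paper gets $nb_k\to E(n)$ via the identity \eqref{pkE}, whereas you get it by showing $nb_k\in B$ directly, which is a minor (and slightly more elementary) variation. One small slip: the adjoint of $E(n)^*n$ is $n^*E(n)$, not $nE(n)^*$; to get $nE(n)^*\in B$ you should instead apply \eqref{StarStability} to $n$ itself, giving $E(n)n^*\in B$, and take adjoints --- after that one-line fix the argument goes through as written.
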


\begin{proof}
    To see that $E(n)\sqsubseteq n$, we use the sequence $(p_k)$ fixed in \cref{def:p_k poly}.  Specifically, for each $k\in\mathbb{N}$, note that
    \[b_k:=p_k(n^*E(n))=p_k(E(n)^*E(n))\in B^1_+,\]
    by the bistability of $E$ on $N$.  Next note that, using \eqref{pkE},
    \[E(n)=\lim_kE(np_k(n^*n))=\lim_knp_k(n^*E(n))=\lim_knb_k.\]
    It then further follows that
    \[E(n)=E(E(n))=\lim_kE(nb_k)=\lim_kE(n)b_k,\]
    showing that $E(n)\sqsubseteq n$.  Finally just note that if $b\in B$ and $b\sqsubseteq n$ then $b=E(b)\sqsubseteq E(n)$ so $E(n)$ is indeed the maximum $b\in B$ such that $b\sqsubseteq n$.
\end{proof}

By \eqref{Emax}, $E$ is uniquely determined on $N$. As $E$ is a contraction, it follows that the stable expectation $E$ is uniquely determined on $\mathrm{cl}(\mathrm{span}(N))= A$.

\begin{cor}\label{UniqueStableExpectation}
    The expectation $E$ in \cref{def:semiCartan} is unique.
\end{cor}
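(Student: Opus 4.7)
The plan is to observe that the formula \eqref{Emax} in \cref{prop:resE} characterises $E$ on $N$ purely in terms of data that does not involve $E$ itself, and then to bootstrap this pointwise determination on $N$ up to all of $A$ via linearity and continuity.

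First I would note that the restriction relation $\sqsubseteq$ is defined intrinsically in terms of $N$ and $B$ only: $m\sqsubseteq n$ means there exists a sequence $(b_k)\subseteq B$ with $m=\lim_k mb_k=\lim_k nb_k$. In particular, the relation does not depend on the choice of stable expectation. Hence if $E_1,E_2:A\twoheadrightarrow B$ are two stable expectations in the sense of \cref{def:semiCartan}, then applying \eqref{Emax} to each gives, for every $n\in N$,
\[E_1(n)=\max\{b\in B:b\sqsubseteq n\}=E_2(n),\]
the maxima existing and coinciding since the right-hand side is computed from $N$, $B$ and $\sqsubseteq$ alone.

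Next I would extend the agreement from $N$ to $\mathrm{span}(N)$ by linearity of the expectations, and then to $\mathrm{cl}(\mathrm{span}(N))$ using the fact that every expectation between C*-algebras is contractive, hence continuous. By \cref{def:semiCartan}\,(1), $\mathrm{span}(N)$ is dense in $A$, so $\mathrm{cl}(\mathrm{span}(N))=A$, and therefore $E_1=E_2$ on all of $A$.

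There is no real obstacle here; the work was already done in establishing the maximum characterisation \eqref{Emax} of \cref{prop:resE}. The only thing to record is that the formula makes no reference to $E$ beyond identifying its value, so uniqueness on $N$ is immediate, and the passage to $A$ is just density plus contractivity.
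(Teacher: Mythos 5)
Your proposal is correct and follows the paper's own argument exactly: the paper likewise deduces uniqueness on $N$ from the characterisation \eqref{Emax} of $E(n)$ as $\max\{b\in B:b\sqsubseteq n\}$ (which, as you note, involves only $N$, $B$ and $\sqsubseteq$, none of which depend on the choice of expectation), and then extends to $A=\mathrm{cl}(\mathrm{span}(N))$ by linearity and contractivity. Nothing is missing.
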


It also follows that our expectation $E$ is \emph{normal} with respect to $N$ as in the following.

\begin{prop} \label{prop:Normal}
    For any $a\in A$ and $n\in N$,
    \[\tag{Normal}\label{Normal}E(n^*an)=n^*E(a)n.\]
\end{prop}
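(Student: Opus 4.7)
My plan is to reduce the identity to the case $E(a)=0$ and then apply the maximum characterisation \eqref{Emax} after conjugating a candidate $b\sqsubseteq n^*xn$ by $n$ and $n^*$. The main obstacle will be the final double-support argument, where $b$ will be shown to lie in the closed ideal $\overline{B(n^*n)}$ while simultaneously being annihilated by $n^*n$, forcing $b=0$.

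Both sides of \eqref{Normal} lie in $B$ (the left by definition of $E$, the right by \cref{lem:Normal}), are linear and continuous in $a$, and $\mathrm{span}(N)$ is dense in $A$, so it suffices to prove \eqref{Normal} for $a\in N$. For such $a$, the element $x:=a-E(a)$ lies in $N$ by \cref{lem:RR} and satisfies $E(x)=0$; since $n^*E(a)n\in B$ is fixed by $E$, the identity reduces to showing $E(n^*xn)=0$ whenever $x\in N$ has $E(x)=0$. By \eqref{Emax}, it is then enough to show that any $b\in B$ with $b\sqsubseteq n^*xn$ vanishes, so fix such $b$ and a sequence $(c_k)\subseteq B^1_+$ with $b=\lim bc_k=\lim(n^*xn)c_k$.

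Applying \cref{lem:Normal} with $n^*$ in place of $n$ gives $nBn^*\subseteq B$, so $d_k:=nc_kn^*\in B$ and conjugation produces $nbn^*=\lim n(n^*xn)c_kn^*=\lim(nn^*)xd_k$. Since $E$ is a conditional expectation (by Tomiyama's theorem) and hence a $B$-bimodule map, and since $E(x)=0$, each $E((nn^*)xd_k)=(nn^*)E(x)d_k=0$, giving $E(nbn^*)=0$ by continuity. As $nbn^*\in nBn^*\subseteq B$, this forces $nbn^*=0$, and multiplying by $n^*$ on the left and $n$ on the right yields $(n^*n)b(n^*n)=0$, hence $b(n^*n)^2=0$ by commutativity in $B$ and therefore $b(n^*n)=0$ (since $\|b(n^*n)\|^2=\|b(n^*n)^2b^*\|=0$). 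Conversely, applying $E$ to $b=\lim(n^*xn)c_k$ and using the bimodule property yields $b=\lim E(n^*xn)c_k$, while the polynomial identity $n p_j(n^*n)=p_j(nn^*)n$ gives $(n^*xn)p_j(n^*n)=n^*xp_j(nn^*)n\to n^*xn$ and hence $E(n^*xn)=\lim E(n^*xn)p_j(n^*n)$, placing $b$ in the closed ideal $\overline{B(n^*n)}$. As $(p_j(n^*n))_j$ is an approximate unit for this ideal, $b=\lim bp_j(n^*n)=\lim b(n^*n)q_j(n^*n)=0$, completing the argument.
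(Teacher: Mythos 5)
Your proof is correct, but it takes a genuinely different route from the paper's. The paper proves the two restriction inequalities $E(n^*an)\sqsubseteq n^*E(a)n$ and $n^*E(a)n\sqsubseteq E(n^*an)$ and concludes by antisymmetry of $\sqsubseteq$ (\cref{prop:partial}); the second inequality comes from a self-application of the first with $n\mapsto n^*$, $a\mapsto n^*an$, followed by an approximate-unit squeeze. You instead split off the diagonal part, reducing to $E(n^*xn)=0$ for $E(x)=0$, and then kill any $b\in B$ with $b\sqsubseteq n^*xn$ by a two-sided support argument: conjugation plus the Tomiyama bimodule property of $E$ gives $b(n^*n)=0$, while the bimodule property applied to $b=\lim(n^*xn)c_k$ places $b$ in the closed ideal $\overline{B(n^*n)}$, and these two facts are incompatible unless $b=0$. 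Both arguments ultimately rest on \cref{prop:resE} (you use it via \cref{lem:RR} to get $a-E(a)\in N$ and via \eqref{Emax} to conclude), but yours trades the order-theoretic bookkeeping (antisymmetry, product-invariance of $\sqsubseteq$) for the conditional-expectation identity $E(b_1ab_2)=b_1E(a)b_2$ and a little Gelfand-style reasoning in the commutative algebra $B$; it arguably makes the mechanism more transparent — the off-diagonal part of $a$ stays off-diagonal after conjugation. Two cosmetic points: the final step is cleanest if you note directly that $b(n^*n)=0$ forces $b$ to vanish on $\overline{\mathrm{supp}}(n^*n)$ in $X_B$ while $b\in\overline{B(n^*n)}$ forces it to vanish off $\mathrm{supp}^\circ(n^*n)$ (this sidesteps having to check that $(p_j(n^*n))_j$ is a \emph{bounded} approximate unit for the ideal, since the paper only normalises $p_j$ on $[0,1]$); and your appeal to the bimodule property is legitimate here since the paper records, via Blackadar II.6.10.3, that a contractive idempotent onto a C*-subalgebra is a conditional expectation in the traditional sense.
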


\begin{proof}
    We show $E(n^*an) \sqsubseteq n^*E(a)n$ and $n^*E(a)n \sqsubseteq E(n^*an)$ which suffice by \cref{prop:partial}. As $N$ has dense span, it suffices to consider $a\in N$.  In this case, $E(a)\sqsubseteq a$ by \cref{prop:resE} and hence $n^*E(a)n\sqsubseteq n^*an$ by \eqref{eq:resinv}.  But $N \subseteq N(B)$, by \cref{lem:Normal}, so $n^*E(a)n\in B$ and hence $n^*E(a)n=E(n^*E(a)n)\sqsubseteq E(n^*an)$.
    
    For the reverse inequality, note that replacing $n$ with $n^*$ and $a$ with $n^*an$ yields
    \[nE(n^*an)n^*\sqsubseteq E(nn^*ann^*)=nn^*E(a)nn^*\]
    and hence for the $q_k$ defined after \cref{def:p_k poly}, $q_k(n^*n)n^*nE(n^*an)n^*nq_k(n^*n)\sqsubseteq q_k(n^*n)n^*nn^*E(a)nn^*nq_k(n^*n)$ so
    \[E(n^*an)=\lim_kp_k(n^*n)E(n^*an)p_k(n^*n)\sqsubseteq\lim_kp_k(n^*n)n^*E(a)np_k(n^*n)=n^*E(a)n.\qedhere\]
\end{proof}

Just as $B\subseteq N(B)$ and \cref{def:semiCartan}\eqref{item:faithfulbistable} imply the stronger *-free statements in \cref{cor:binormal} and \cref{Prop:Bistable} respectively, \cref{prop:Normal} also has the following *-free analog.

\begin{cor} \label{prop:Shiftable} \label{NormalImpliesShiftable}
For any $a\in A$ and $n\in N$,
\begin{equation}
    \tag{Shiftable}E(na)n=nE(an).
\end{equation}
\end{cor}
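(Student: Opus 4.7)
My strategy is to derive the identity from \cref{prop:Normal} by a cancellation argument. Starting from \cref{prop:Normal} applied with $n^*$ in place of $n$ and with $x = an$, and absorbing $nn^*\in B$ using the $B$-bilinearity of the conditional expectation $E$, I obtain the intermediate identity
\begin{equation*}
  nE(an)n^* \;=\; E(n\cdot an\cdot n^*) \;=\; E(na\cdot nn^*) \;=\; E(na)\cdot nn^*.
\end{equation*}
Thus $nE(an)$ and $E(na)\,n$ already coincide after right multiplication by $n^*$; it only remains to cancel this trailing $n^*$.

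To do so, I would right-multiply both sides by $nq_k(n^*n)$ and use the identity $n^*n\cdot q_k(n^*n) = p_k(n^*n)$ from the construction of the polynomials $p_k, q_k$ after \eqref{def:p_k poly}, getting
\begin{equation*}
  nE(an)\,p_k(n^*n) \;=\; E(na)\,n\,p_k(n^*n).
\end{equation*}
As $k\to\infty$, the right-hand side tends to $E(na)\,n$ because $np_k(n^*n) = p_k(nn^*)n \to n$, so the identity will follow once we show that the left-hand side tends to $nE(an)$, i.e., that $E(an)\,p_k(n^*n) \to E(an)$.

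This last convergence is the main obstacle. I would handle it by combining Kadison's inequality for the positive contractive conditional expectation $E$ with \cref{prop:Normal} to obtain
\begin{equation*}
  E(an)^* E(an) \;\leq\; E\bigl((an)^*(an)\bigr) \;=\; E(n^*a^*an) \;=\; n^*E(a^*a)n \;\leq\; \|a\|^2\,n^*n.
\end{equation*}
Since everything here lies in the commutative C*-algebra $B$, this inequality forces the spectral support of $E(an)^*E(an)$ to sit inside that of $n^*n$. Combined with the functional-calculus fact that $x(1-p_k(x))^2\to 0$ uniformly on the compact spectrum of $n^*n$ (since $p_k(0)=0$ while $p_k\to 1$ uniformly on compact subsets of $\mathbb{R}\setminus\{0\}$), this yields
\begin{equation*}
  \|E(an)(1-p_k(n^*n))\|^2 \;\leq\; \|a\|^2\,\|n^*n(1-p_k(n^*n))^2\| \;\to\; 0,
\end{equation*}
giving the required convergence and hence the identity $E(na)n = nE(an)$.
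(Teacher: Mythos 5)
Your proof is correct, and at heart it is the same argument as the paper's: both conjugate by $n$ via \cref{prop:Normal}, absorb the resulting element of $B$ using the bimodule property of the conditional expectation, and then strip off the extra factor with the approximate-unit polynomials $p_k$. The paper arranges the computation as $E(nn^*na)n=nn^*nE(an)$, so that after replacing $nn^*$ by $p_k(nn^*)$ the polynomial sits \emph{inside} $E$ on the left-hand side, and continuity of $E$ gives the limit $E(na)n$ with no further work. Your arrangement, $nE(an)n^*=E(na)nn^*$ followed by right multiplication by $nq_k(n^*n)$, instead leaves the factor $p_k(n^*n)$ outside the expectation, which is why you incur the extra obligation $E(an)p_k(n^*n)\rightarrow E(an)$. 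Your Kadison--Schwarz argument for that step is valid (the inequality $E(x)^*E(x)\leq E(x^*x)$ does hold here, since by Tomiyama's theorem $E$ is a genuine conditional expectation), but it is heavier than necessary: $E(an)$ and $p_k(n^*n)$ both lie in the commutative algebra $B$, hence commute, so
\begin{equation*}
nE(an)p_k(n^*n)=np_k(n^*n)E(an)=p_k(nn^*)nE(an)\rightarrow nE(an)
\end{equation*}
directly, and the entire third paragraph of your proposal can be deleted.
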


\begin{proof}
For any $a\in A$ and $n\in N$, \cref{prop:Normal} yields
\[E(nn^*na)n=nn^*E(na)n=nE(n^*nan)=nn^*nE(an).\]
It follows that $E(na)n=\lim_kE(p_k(nn^*)na)n=\lim_kp_k(nn^*)nE(an)=nE(an)$.
\end{proof}

It follows that $(A,N,B,E)$ is a well-structured semimodule and, in particular, $(N,B,E)$ is a well-structured semigroup, in the sense of \cite[Definition 1.5]{Bice2023}.  Thus we are again free to use any of the results for well-structured semimodules and semigroups appearing in \cite{Bice2023}.

Lastly, we exhibit one more property of the expectation that will be used to prove the transitivity of relation $\sim_U$ considered in Section~\ref{sec:sigma}.

\begin{lemma}\label{lem:homom-like}
    For any character $\phi$ on $B$ and any $m,n\in N$,
    \[\phi(E(m))\neq0\qquad\Rightarrow\qquad\phi(E(mn))=\phi(E(m)E(n)).\]
\end{lemma}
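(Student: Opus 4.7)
The plan is to express $E(m)$ as a norm limit of the form $\lim_k b_k m$ with $b_k \in B$, and then use the $B$-bimodule property of the conditional expectation (Tomiyama) to commute the $b_k$'s past $E$, pulling $E(mn)$ out of the relevant expression. The hypothesis $\phi(E(m)) \neq 0$ will be used at the very end to extract $\phi(b_k) \to 1$.

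Applying \cref{prop:resE} to $m^*$ yields $E(m^*) \sqsubseteq m^*$, so by \cref{PositiveBallSequence} there exists $(b_k) \subseteq B^1_+$ with $E(m^*) = \lim_k E(m^*) b_k = \lim_k m^* b_k$. Taking adjoints (noting $b_k = b_k^*$ since $b_k \in B^1_+$) gives the left-sided version
\[E(m) = \lim_k b_k E(m) = \lim_k b_k m.\]
Multiplying the second limit on the right by $n$ produces $E(m) n = \lim_k b_k m n$ in $A$. Since $E$ is continuous and is a $B$-bimodule map (as the paper notes via \cite[II.6.10.3]{Blackadar2017}, every expectation between C*-algebras is a conditional expectation in the traditional sense), we get
\[E(m) E(n) \;=\; E(E(m) n) \;=\; \lim_k E(b_k m n) \;=\; \lim_k b_k E(mn).\]

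Now apply the character $\phi$. Both sides of the last display lie in $B$, so by multiplicativity and continuity of $\phi$,
\[\phi(E(m))\phi(E(n)) \;=\; \phi(E(m) E(n)) \;=\; \lim_k \phi(b_k)\, \phi(E(mn)).\]
Separately, applying $\phi$ to $E(m) = \lim_k b_k E(m)$ gives $\phi(E(m)) = \lim_k \phi(b_k)\, \phi(E(m))$. Since $\phi(E(m)) \neq 0$ by hypothesis, we may divide through to conclude $\phi(b_k) \to 1$. Substituting into the preceding display yields $\phi(E(m))\phi(E(n)) = \phi(E(mn))$, which is the required identity.

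There is no real obstacle here; all the machinery has already been developed. The only delicate point is bookkeeping: one must use the same sequence $(b_k)$ both to represent $E(m)$ as a left limit of $m$ and to extract the limit $\phi(b_k) \to 1$, which requires going through the $*$-symmetric side (using $E(m^*) \sqsubseteq m^*$) rather than the more readily available right-sided statement $E(m) \sqsubseteq m$.
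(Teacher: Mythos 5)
Your proof is correct and follows essentially the same route as the paper: obtain $(b_k)\subseteq B$ with $E(m)=\lim_k b_kE(m)=\lim_k b_km$, use the conditional-expectation bimodule property to get $E(m)E(n)=\lim_k b_kE(mn)$, and extract $\phi(b_k)\to 1$ from $\phi(E(m))\neq 0$. The only (harmless) detour is passing through $E(m^*)\sqsubseteq m^*$ and taking adjoints; the paper instead invokes the left-sided characterisation of $\sqsubseteq$ proved just after \cref{PositiveBallSequence}, which gives the needed sequence directly from $E(m)\sqsubseteq m$.
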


\begin{proof}
    As $E(m)\sqsubseteq m$ 
    by \cref{prop:resE}, we have $(b_k)\subseteq B$ such that  \[E(m)=\lim_kb_kE(m)=\lim_kb_km.\]
    Then \[\phi(E(m))=\lim_k\phi(b_kE(m))=\lim_k\phi(b_k)\phi(E(m))\] so $\lim_k\phi(b_k)=1$, as $\phi(E(m))\neq0$.  Thus,
    \[\hspace{-3pt}\phi(E(mn))=\lim_k\phi(b_k)\phi(E(mn))=\lim_k\phi(E(b_kmn))=\phi(E(E(m)n))=\phi(E(m)E(n)).\qedhere\]
\end{proof}

We will end this section with an application of \cref{Prop:Bistable} which will allow us to enlarge our given Cartan semigroup $N$ to include all compatible sums without changing the associated semi-Cartan algebra, and will be used in \cref{sec:rep} to describe our main results.  Indeed, let us denote the compatible sums in $N$ and its closure by
\begin{align}\label{SigmaN}
\mathrm{csum}(N)&=\{\sum_{j=1}^kn_j:n_1,\ldots,n_k\in N\text{ are compatible}\}, \text{ and}\\
    \label{SigmabarN}\overline{\mathrm{csum}}(N)&=\mathrm{cl}(\mathrm{csum}(N)).
\end{align}
\begin{prop}The closure
    $\overline{\mathrm{csum}}(N)$ is a Cartan semigroup with $B=C^*(\overline{\mathrm{csum}}(N)_+)$.
\end{prop}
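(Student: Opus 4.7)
The plan is to verify each of the three defining properties of a Cartan semigroup in \cref{def:semiCartan} for the set $M := \overline{\mathrm{csum}}(N)$, reducing everything to calculations on the dense subset $\mathrm{csum}(N)$ and passing to limits using continuity.

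First I would check that $\mathrm{csum}(N)$ is a $*$-subsemigroup of $A$. Closure under the involution is immediate: if $m = \sum_{i=1}^k n_i$ with $\{n_i\}$ pairwise compatible, then $(n_i^*)^* n_j^* = n_i n_j^* \in B$ and $n_i^*(n_j^*)^* = n_i^* n_j \in B$, so $\{n_i^*\}$ is pairwise compatible and $m^* \in \mathrm{csum}(N)$. For closure under multiplication, given $m = \sum_i n_i$ and $m' = \sum_j n_j'$ in $\mathrm{csum}(N)$, I expand $mm' = \sum_{i,j} n_i n_j'$ and check that the summands $\{n_i n_j'\}$ are pairwise compatible. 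This is where \eqref{Binormal} from \cref{cor:binormal} is the key: to see for example that $(n_i n_j')^*(n_k n_l') = n_j'^*(n_i^* n_k)n_l' \in B$, I use that $n_i^* n_k \in B$ and $n_j'^* n_l' \in B$ by compatibility, and then that $n_j'^* B n_l' \subseteq B$ by binormality applied to the pair $(n_j'^*, n_l')$. The symmetric computation handles $(n_i n_j')(n_k n_l')^*$. Taking closures, $M$ is a closed $*$-subsemigroup; since $N \subseteq \mathrm{csum}(N) \subseteq M$ and $N$ has dense span, so does $M$.

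Next I would show $C^*(M_+) = B$. For any $m = \sum_i n_i \in \mathrm{csum}(N)$, the product $m^* m = \sum_{i,j} n_i^* n_j$ lies in $B$ by compatibility, so $\mathrm{csum}(N)_+ \subseteq B$, and by continuity $M_+ \subseteq B$, giving $C^*(M_+) \subseteq B$. The reverse inclusion is automatic from $N_+ \subseteq M_+$ together with $B = C^*(N_+)$. Since $B \subseteq N \subseteq M$ and $B$ is commutative, condition \eqref{item:CommutativePositiveCone} of \cref{def:semiCartan} holds.

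Finally I would verify \eqref{StarStability} with the same expectation $E$. For $m = \sum_i n_i \in \mathrm{csum}(N)$,
\[
E(m) m^* = \sum_{i,j} E(n_i) n_j^*,
\]
and for each pair $(i,j)$ the compatibility $n_i n_j^* \in B$ combined with bistability \eqref{Bistable} (\cref{Prop:Bistable}) gives $E(n_i) n_j^* \in B$, so $E(m) m^* \in B$. For arbitrary $m \in M$, pick $m_k \in \mathrm{csum}(N)$ with $m_k \to m$; by continuity of $E$ and the involution, $E(m_k) m_k^* \to E(m) m^*$, and since $B$ is closed we conclude $E(m) m^* \in B$. The expectation $E$ is the same map as before, so nothing new needs to be constructed; its range is still $B = C^*(M_+)$.

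The only point requiring any real work is the multiplicative closure of $\mathrm{csum}(N)$, and even there the burden is entirely discharged by \eqref{Binormal}; the rest is a matter of unwinding definitions and taking norm limits.
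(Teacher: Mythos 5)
Your proof is correct and follows essentially the same route as the paper's: compatibility of products via \eqref{Binormal}, the positive cone computation via $\mathrm{csum}(N)_+\subseteq B$, and stability via \eqref{Bistable}, all transferred to the closure by continuity. No gaps.
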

\begin{proof}
    We immediately see that $N\subseteq\mathrm{csum}(N)\subseteq\overline{\mathrm{csum}}(N)$ so $N_+\subseteq\overline{\mathrm{csum}}(N)_+$, while the definition of compatibility yields $\mathrm{csum}(N)_+\subseteq B$ and hence $\overline{\mathrm{csum}}(N)_+\subseteq B_+$, from which it follows that $C^*(\overline{\mathrm{csum}}(N)_+)=C^*(N_+)=B$.  
    To see that $\mathrm{csum}(N)$ is a semigroup, take compatible sums $m=\sum_{j=1}^km_j$ and $n=\sum_{j=1}^ln_j$ so $mn=\sum_{i=1}^k\sum_{j=1}^lm_in_j$.  For all $i,i'\leq k$ and $j,j'\leq l$, we then see that $n_j^*m_i^*m_{i'}n_{j'}\in n_j^*Bn_{j'}\subseteq B$, by \cref{cor:binormal}~\eqref{Binormal}, and likewise, $n_jm_im_{i'}^*n_{j'}^*\in B$.  This shows that $mn$ is also a compatible sum of elements in $N$, i.e. $mn\in\mathrm{csum}(N)$, showing that $\mathrm{csum}(N)$ and hence $\overline{\mathrm{csum}}(N)$ is a semigroup.  As $N\subseteq\overline{\mathrm{csum}}(N)=\overline{\mathrm{csum}}(N)^*$, to see that $\overline{\mathrm{csum}}(N)$ is a Cartan semigroup, it only remains to verify stability.  But if $n_1,\ldots,n_k\in N$ are compatible then
    \[E(\sum_{j=1}^kn_j)(\sum_{j=1}^kn_j)^*=\sum_{i,j=1}^kE(n_i)n_j^*\in B,\]
    by \eqref{Bistable} in \cref{Prop:Bistable}, showing that $E$ satisfies \cref{def:semiCartan}\eqref{item:faithfulbistable}, that is, $E$ is stable on $\mathrm{csum}(N)$ and hence on $\overline{\mathrm{csum}}(N)$.
\end{proof}
Iterating compatible sums then yields a larger summable Cartan semigroup with the same semi-Cartan subalgebra, i.e. setting $\overline{\mathrm{csum}}^1(N)=\overline{\mathrm{csum}}(N)$ and $\overline{\mathrm{csum}}^{n+1}(N)=\overline{\mathrm{csum}}^n(N)$ it follows that $\mathrm{cl}(\bigcup_{n\in\mathbb{N}}\overline{\mathrm{csum}}^n(N))$ is the enlarged summable Cartan semigroup that we are after.  Later we will see that there is actually no need to iterate, as $\overline{\mathrm{csum}}(N)$ is already a summable Cartan semigroup isomorphic to one of the form $\mathrm{cl}(\Nc(\Sigma;G))$ like in \cref{TwistedGroupoid->CartanSemigroup}.

\section{The Domination Relation}\label{sec:domination}

Following \cite[Definition~5.1]{Bice2022}, we define the \emph{domination} relation $<$ by
\begin{align*}
m<_sn\qquad&\Leftrightarrow\qquad m,s,n\in N,\quad sm,ms,sn,ns\in B\quad\text{and}\quad nsm=m=msn.\\
m<n\qquad&\Leftrightarrow\qquad\exists s\in N\ \text{ such that } m<_sn.
\end{align*}

Domination is the second but more important transitive relation we study on $N$, given that the groupoid in \cref{sectionG} onwards consists of ultrafilters with respect to the domination relation.  As with $\sqsubseteq$, we can characterise $<$ on monomial semigroups of twisted groupoid C*-algebras in terms of supports.  Indeed, for any topological space $X$, let us define the compact containment relation $\Subset$ on subsets of $X$ by
\[Y\Subset Z\qquad\Leftrightarrow\qquad\exists\text{ compact }K\ \text{ such that }Y\subseteq K\subseteq Z.\]

\begin{prop}\label{DominationSupports}
    If $A$ is a $D$-contractive C*-completion of $C_c(\Sigma;G)$, for some twist $q:\Sigma\twoheadrightarrow G$, and $N=\mathrm{cl}(\Nc(\Sigma;G))$ $($so $B=\mathrm{cl}(\Bc(\Sigma;G)))$ then, for all $m,n\in N$,
    \[m<n\qquad\Leftrightarrow\qquad\supp^\circ(j(m))\Subset\supp^\circ(j(n)).\]
\end{prop}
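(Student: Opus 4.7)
The plan is to identify $N$ with $N_0(\Sigma;G)$ via $j$ (by \cref{jSemigroupIso}) and work directly with functions on $\Sigma$. The key algebraic fact is that for $b \in B$ (identified with $C_0(G^{(0)})$) and any $m \in N_0(\Sigma;G)$, left multiplication by $b$ acts pointwise as $(bm)(e) = b(q(\mathsf{r}(e)))\,m(e)$ and right multiplication as $(mb)(e) = m(e)\,b(q(\mathsf{s}(e)))$, so that convolving with an element of $B$ simply rescales a normalizer along its unit fibers.

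For $(\Leftarrow)$, suppose $\supp^\circ(j(m)) \Subset \supp^\circ(j(n))$, so $\overline{\supp}(j(m))$ is compact and contained in $\supp^\circ(j(n))$. Since $|j(n)|$ is continuous and strictly positive on this compact set, pick $\epsilon > 0$ with $|j(n)(e)|^2 \geq \epsilon$ on $\overline{\supp}(j(m))$, and choose a continuous bounded $f:\mathbb{R}_+\to\mathbb{R}_+$ with $f(0)=0$ and $f(t)=1/t$ for $t\geq\epsilon$. Set $s := f(n^*n)\,n^* \in BN \subseteq N$. One checks that $sm, ms, sn, ns \in B$; the first two reduce to $n^*m, mn^* \in B$, which follow from $q(\supp^\circ(j(m))) \subseteq q(\supp^\circ(j(n)))$ and the bisection identities $q(\supp^\circ(j(n)))^{-1} q(\supp^\circ(j(m))) \subseteq G^{(0)}$ and $q(\supp^\circ(j(m))) q(\supp^\circ(j(n)))^{-1} \subseteq G^{(0)}$. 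The equation $nsm = m$ then follows by rewriting $nf(n^*n) = f(nn^*)n$, giving $nsm = f(nn^*)(nn^*)m$, where $f(nn^*)(nn^*)$ takes value $1$ at $q(\mathsf{r}(e))$ for every $e \in \overline{\supp}(j(m))$. A symmetric calculation gives $msn = m$, so $m <_s n$.

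For $(\Rightarrow)$, suppose $m <_s n$. Since $ns \in B$, the equation $nsm = m$ reads $(ns)(q(\mathsf{r}(e)))\,m(e) = m(e)$, which by continuity forces $(ns) \equiv 1$ on $\mathsf{r}(q(\overline{\supp}(j(m))))$. Because $ns \in C_0(G^{(0)})$, the level set $\{(ns) = 1\}$ is compact, and since $\mathsf{r}$ is a homeomorphism on the bisection $q(\supp^\circ(j(n)))$ it follows that $q(\overline{\supp}(j(m)))$ is relatively compact in $G$; by the properness of $q$ (\cref{lem:proper}), $\overline{\supp}(j(m))$ is compact. For the containment, take $e \in \supp^\circ(j(m))$ and unfold the convolution $nsm(e) = m(e) \neq 0$: the constraint $sm \in B$ forces a unique decomposition $e = e_1 e_2 e_3$ in which $e_2 e_3$ is a unit (up to $\mathbb{T}$), so $q(e_1) = q(e)$ and $e_1 \in \supp^\circ(j(n))$, whence $e \in \supp^\circ(j(n))$ by $\mathbb{T}$-invariance. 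A limit argument using uniqueness of lifts in the bisection $q(\supp^\circ(j(n)))$ extends this to all of $\overline{\supp}(j(m))$, and $K := \overline{\supp}(j(m))$ witnesses $\supp^\circ(j(m)) \Subset \supp^\circ(j(n))$.

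The main obstacle will be the careful bookkeeping between supports on $\Sigma$ and on $G$ modulo the $\mathbb{T}$-action, and in particular ensuring that boundary points of $\supp^\circ(j(m))$ land inside the open set $\supp^\circ(j(n))$ rather than merely its closure; this is where the bisection property and the unique-lift argument, rather than a naive continuity argument, become essential.
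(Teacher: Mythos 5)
Your proof is essentially correct, but it takes a genuinely different route from the paper: the paper's proof is a two-line reduction to the bumpy-semigroup machinery of \cite{BC2021} (the $j$-map identifies $N$ with a compact bumpy semigroup, and the equivalence is \cite[Proposition~4.4]{BC2021}), whereas you give a self-contained computation with convolutions and supports. Your $(\Leftarrow)$ direction is the same construction the paper uses elsewhere (compare \cref{DominatedApproximation} and \cref{OpenOneSidedDomination}): $s=f(n^*n)n^*$ with $f(t)=1/t$ above the uniform lower bound of $|j(n)|^2$ on the compact set $\overline{\supp}(j(m))\subseteq\supp^\circ(j(n))$, and the verification that $sm,ms\in B$ via the bisection identities is correct. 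What your approach buys is independence from \cite{BC2021}; what it costs is the support bookkeeping in the forward direction, which the cited proposition packages away. A small simplification for the containment in $(\Rightarrow)$: since $sm\in B$, the identity $m=n(sm)$ gives pointwise $j(m)(e)=j(n)(e)\cdot(sm)(q(\mathsf{s}(e)))$, so $\supp(j(m))\subseteq\supp^\circ(j(n))$ with no convolution unfolding; your limit/unique-lift argument is then still needed (and does work) to push this to $\overline{\supp}(j(m))$, because a naive continuity argument only lands boundary points in the closed support of $j(n)$.

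The one step whose justification is too thin is the compactness deduction in $(\Rightarrow)$. As written it is slightly circular (you invoke the homeomorphism $\mathsf{r}$ on the bisection $q(\supp^\circ(j(n)))$ before you have shown $q(\overline{\supp}(j(m)))$ lies in that bisection), and, more importantly, relative compactness of the \emph{range} of a closed subset of an open bisection does not by itself give relative compactness of the set: e.g.\ in a group bundle $[0,1]\times\mathbb{Z}$ the bisection $\bigcup_k(\tfrac1{k+1},\tfrac1k)\times\{k\}$ has relatively compact range but is not relatively compact. The missing observation that closes the gap is that $(ns)(u)\neq0$ forces $u\in\mathsf{r}(q(\supp^\circ(j(n))))$, so the compact level set $L:=\{ns=1\}$ sits \emph{inside} the open set $\mathsf{r}(q(\supp^\circ(j(n))))$. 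Then $K:=(\mathsf{r}|_{q(\supp^\circ(j(n)))})^{-1}(L)$ is a compact subset of $q(\supp^\circ(j(n)))$ containing $q(\supp^\circ(j(m)))$, and $q^{-1}(K)$ (compact by \cref{lem:proper}) is exactly the witness for $\supp^\circ(j(m))\Subset\supp^\circ(j(n))$ --- which also renders the separate limit argument for $\overline{\supp}(j(m))$ unnecessary. With that fix the proof is complete.
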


\begin{proof}
   By \cref{jSemigroupIso}, the $j$-map identifies $N$ with $N_0(\Sigma;G)$. By \cite[Theorem 3.3]{BC2021}, $N$ is a bumpy semigroup, in fact, $N$ is compact bumpy.   Thus our claim follows from \cite[Proposition 4.4]{BC2021}.  
\end{proof}

While the multiple conditions defining domination may seem a little intimidating at first, in practice it suffices to verify only some of them, e.g. for all $m,n,s\in N$,
\begin{equation*}
    ms,sn,ns\in B\quad\text{and}\quad m=msn\qquad\Rightarrow\qquad m<_sn,
\end{equation*}
by \cite[Proposition 5.4]{Bice2022}.  Likewise, it is enough to verify $sm,ns,sn\in B$ and $nsm=m$.  If we only care about $<$ then we can simplify further, e.g. for all $m,n,s\in N$,
\begin{equation}\label{OneSidedDomination}
    ms,sn\in B\quad\text{and}\quad m=msn\qquad\Rightarrow\qquad m<n,
\end{equation}
thanks to \cref{lem:Symmetry} and \cite[Proposition 8.6]{Bice2022}.  Even $sn\in B$ here is unnecessary, which means that $<$ also agrees with the domination relation considered in \cite{BC2021}. We continue proving several necessary properties of $<$.

\begin{lemma}\label{OpenOneSidedDomination}
For any $l,s,m\in N$, there exists an $\varepsilon>0$ such that
\[ls\in B, l=lsm \text{ and } \|m-n\|<\varepsilon \quad \Rightarrow \quad l<n.\]
\end{lemma}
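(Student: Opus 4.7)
By \eqref{OneSidedDomination}, to conclude $l<n$ it will suffice to construct $t\in N$ satisfying $lt,tn\in B$ and $ltn=l$. The natural first candidate is $t_0=sms$, which gives $lt_0=(lsm)s=ls\in B$ and $lt_0 n=lsn$. So $lt_0\in B$ is automatic and, when $n=m$, even $lt_0 n=l$ holds; in general, however, $lt_0 n=l+ls(n-m)$, differing from $l$ by at most $\|ls\|\|n-m\|$ in norm. The required correction is thus small, and my plan is to absorb it into $t$ via continuous functional calculus on $B$.

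The key algebraic input will be that $b:=(lsn)^*(lsn)=n^*s^*l^*lsn$ lies in $B$ (by two applications of \cref{lem:Normal}) and equals $l^*l$ exactly when $n=m$, using the hypothesis $l=lsm$. Consequently, for $\|m-n\|<\varepsilon$ with $\varepsilon$ small enough, $b$ will sit in an arbitrarily small norm-neighbourhood of $l^*l$ in $B$. I would then use continuous functional calculus on $B$ to produce $h(b)\in B$ that inverts the perturbation on the hereditary subalgebra $\overline{l^*l\,B\,l^*l}$, where such an inverse exists because $l^*l$ behaves like the unit of this corner. The candidate is then $t:=sms\cdot h(b)$, or an equivalent variant built from $E(mn^*)\in B$ together with the approximating polynomial $q_k(nn^*)$ from \cref{def:p_k poly} for large $k$.

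The verifications will split as follows. First, $lt=ls\cdot h(b)\in B$ since both factors lie in $B$. Second, for $tn\in B$, I would combine the bistability of $E$ (\cref{Prop:Bistable}) with the binormality \eqref{Binormal} of \cref{cor:binormal}, applied to the key identity $l^*l\cdot sm=l^*l\in B$ (equivalently, to its adjoint consequence $l=lm^*s^*$ obtained via \cref{mnstar}), to conclude that $sms\cdot h(b)\cdot n$ lies in $B$; here the fact that $b$ itself depends on $n$ is used to move factors past $n$. The heart of the matter will be establishing $ltn=l$ \emph{exactly} rather than just approximately: this dictates the precise choice of $h$ and $\varepsilon$, since $\varepsilon$ must be chosen so that $b$ restricted to the support of $l^*l$ lies in a spectral region where $h$ effects the required exact inversion of the scaling between $lsn$ and $l=lsm$. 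Once $t$ is in place, \eqref{OneSidedDomination} delivers $l<n$, and I expect this last step—obtaining exact rather than approximate inversion through the functional calculus—to be the main obstacle.
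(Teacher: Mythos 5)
There is a genuine gap at precisely the step you flag as ``the main obstacle'', and it cannot be closed with the ingredients you propose. Your correction factor is $h(b)$ for $b=(lsn)^*(lsn)$, a small norm-perturbation of $l^*l$. But $l^*l$ does \emph{not} behave like the unit of the hereditary subalgebra $\mathrm{cl}(l^*l\,B\,l^*l)$: unless $0$ is isolated in its spectrum, $l^*l$ (viewed as a function on $X_B$) takes values arbitrarily close to $0$ on its own support, and therefore so does $b$, no matter how small $\varepsilon$ is. Hence no continuous $h$ with $h(0)=0$ can make $h(b)$ effect an \emph{exact} inversion on $\supp(l^*l)$ --- the required $h$ would have to behave like $r\mapsto 1/r$ all the way down to $0$. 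So the identity $ltn=l$ cannot be extracted from a functional-calculus correction applied to a perturbation of $l^*l$; shrinking $\varepsilon$ does not confine $b$ to a ``good spectral region'' on $\supp(l^*l)$. (A secondary worry: for $t=sms\,h(b)$ it is also unclear why $tn=sms\,h(b)n\in B$, since neither $sm$, $ms$ nor $sn$ is assumed to lie in $B$ --- only $ls$ is.)

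The way out, which is what the paper does, is to perturb an element of $B$ that equals $1$ (rather than $l^*l$) on $\supp(l^*l)$. From $l=lsm$ one gets $l=lm^*s^*$ by \cref{mnstar}, and then $l^*l=l^*lsm=E(l^*lm^*s^*)sm=l^*l\,E(m^*s^*)sm$, so $E(m^*s^*)sm\in B$ is identically $1$ on $\supp(l^*l)$. Its perturbation $E(n^*s^*)sn$ --- still in $B$ by \eqref{StarStability} applied to $(sn)^*$ --- stays within distance $<1$ of $E(m^*s^*)sm$ once $\varepsilon\|s\|^2(2\|m\|+\varepsilon)<1$, hence is bounded away from $0$ on $\supp(l^*l)$, and a \emph{bounded} continuous $f$ with $f(0)=0$ inverting values beyond that bound yields $t=f(E(n^*s^*)sn)E(n^*s^*)s$ with $ltn=l$, $lt\in lBs\subseteq B$ by \cref{cor:binormal}, and $tn\in BB\subseteq B$. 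The stable expectation is thus essential here to manufacture the ``locally unital'' element; your $b$, being a perturbation of $l^*l$ rather than of such an element, cannot play that role. Your passing mention of a variant built from $E(mn^*)$ points in the right direction, but you would need to identify and prove the key fact that the unperturbed version acts as a right identity on $l$.
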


\begin{proof}
Take any $\varepsilon>0$ with $\varepsilon\|s\|^2(2\|m\|+\varepsilon)<1$.  If $\|m-n\|<\varepsilon$ then
\begin{align*}
\|E(m^*s^*)sm-E(n^*s^*)sn\|&\leq\|E(m^*s^*)sm-E(m^*s^*)sn\|+\|E(m^*s^*)sn-E(n^*s^*)sn\|\\
&\leq\|m-n\|\|s\|^2(\|m\|+\|n\|)\\
&\leq\|m-n\|\|s\|^2(2\|m\|+\|m-n\|)\\
&<1.
\end{align*}
If $l=lsm$ then $l=lm^*s^*$, by \cref{mnstar}, and hence
\[l^*l=l^*lsm=E(l^*l)sm=E(l^*lm^*s^*)sm=l^*lE(m^*s^*)sm.\]
Identifying $B$ with $C_0(X_B)$, this means that $E(m^*s^*)sm$ is $1$ on the support of $l^*l$ and hence $E(n^*s^*)sn$ takes values greater than some $\delta>0$ on the support of $l^*l$.  Letting $f$ be a function on $\mathbb{R}_+$ with $f(0)=0$ and $f(r)=1/r$, for all $r>\delta$, it follows that $l^*l=l^*lf(E(n^*s^*)sn)E(n^*s^*)sn$.  Letting $t=f(E(n^*s^*)sn)E(n^*s^*)s$, it follows that $l=ltn$.  Also $lt\in lBs\subseteq B$, by \cref{cor:binormal}, and $tn\in BE(n^*s^*)sn\subseteq BB\subseteq B$, by \cref{def:semiCartan}\eqref{item:faithfulbistable}~\eqref{StarStability}.  This shows that $l<n$, by \eqref{OneSidedDomination}.
\end{proof}

For any $T\subseteq N$, we let
\[T^<=\{s\in N: \: \text{there exists } t\in T\ \text{ such that }t<s\},\] 
i.e.~the up-set of $T$ with respect to $<$. Similarly, $T^>$ denotes the down-set of $T$. When $T=\{t\}$ is a singleton, we usually omit the curly braces and just write $t^<$ (respectively $t^>$) instead of $\{t\}^<$ (respectively $\{t\}^>$). 

The principal up-set $n^<$ defined by any $n\in N$ is an open subset of $N$, thanks to \cref{OpenOneSidedDomination}.  Taking $n=m$ in \cref{OpenOneSidedDomination}, we also obtain the even weaker characterisation of $<$ alluded to earlier, i.e. for all $l,s,m\in N$,
\begin{equation}\label{WeakDomination}
    ls\in B\quad\text{and}\quad l=lsm\qquad\Rightarrow\qquad l<m.
\end{equation}
In contrast, if we fix some $s\in N$ then $n^{<_s}$ is a closed subset of $N$.  In fact, $<_s$ is a closed relation, i.e.~a closed subset of $N\times N$, as we immediately see from the definition of $<_s$ and the fact that $B$ and $N$ are closed.

There are a few more observations about $<$ that we can make at this point.  First note that any element of $N$ dominated by an element of $B$ must in fact also lie in $B$, i.e.
\begin{equation}\label{Bbounded}
    n \in N, b \in B, \text{ and } n<b \qquad \Rightarrow \qquad n \in B.
\end{equation}
Indeed, if $n<_sb\in B$ then $n=nsb\in BB\subseteq B$. Further, if $m,n,s \in N$ and $b \in B$,
\begin{equation} \label{eqn:mult by B and <}
    m<_s n \qquad \Rightarrow \qquad mb<_s n.
    \end{equation}

Also recall that any property of the domination relation on well-structured semigroups must apply, in particular, to Cartan semigroups.  For example, this is true for the results in \cite[\S5]{Bice2023} relating domination to the expectation, e.g. showing that $<$ is $E$-invariant, i.e.
\begin{equation}\label{DominationExpectation}
    m<_sn\qquad\Rightarrow\qquad E(m)<_{E(s)}E(n).
\end{equation}
Likewise, we immediately see from the definition that $<$ is *-invariant, i.e.
    \begin{equation}\label{eq:stardom}
    m<_sn\qquad\Rightarrow\qquad m^*<_{s^*}n^*.\end{equation}
Alternatively, we can leave $m$ fixed and instead switch the last two terms, i.e.
\begin{equation}\label{StarSwitch}
    m<_sn\qquad\Rightarrow\qquad m<_{n^*}s^*.
\end{equation}
Indeed if $m<_sn$ then $mn^*=msnn^*\in BB\subseteq B$ and $m=mn^*s^*$ by Lemma~\ref{mnstar}.

Next we note that domination is auxiliary to restriction in the following sense.

\begin{lemma}\label{prop:aux}
    For all $k,l,m,n\in N$,
    \[\tag{Auxiliarity}\label{Auxiliarity}k\sqsubseteq l<m\sqsubseteq n\qquad\Rightarrow\qquad k<n.\]
\end{lemma}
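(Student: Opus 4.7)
The plan is to split the implication $k \sqsubseteq l < m \sqsubseteq n \Rightarrow k < n$ into two cases, proved separately and then chained together: (a) $l < m \sqsubseteq n \Rightarrow l < n$, and (b) $k \sqsubseteq l < n \Rightarrow k < n$.  Applying (a) gives $l < n$, and then (b) applied to $k \sqsubseteq l < n$ yields $k < n$.

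For (b), I would verify $k <_s n$ directly.  Using the left-sided characterisation of $\sqsubseteq$ established just before \cref{prop:partial}, pick $(a_i) \subseteq B^1_+$ with $k = \lim_i a_i k = \lim_i a_i l$.  Given $l <_s n$, the products $ks = \lim_i a_i(ls)$ and $sk = \lim_i s a_i l$ both lie in $B$: the first since $a_i, ls \in B$; the second since $sBl \subseteq B$ by \cref{cor:binormal} applied to $sl \in B$.  The identities $ksn = \lim_i a_i(lsn) = \lim_i a_i l = k$ and $nsk = \lim_i ns a_i l = \lim_i a_i(nsl) = \lim_i a_i l = k$ then follow by passing to the limit, the latter using that $ns \in B$ commutes with $a_i \in B$.

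For (a), I would invoke \cref{OpenOneSidedDomination}: from $ls \in B$ and $l = lsm$, it supplies $\varepsilon > 0$ such that $\|m - n'\| < \varepsilon$ (for $n' \in N$) forces $l < n'$.  Choosing $j$ with $\|m - nb_j\| < \varepsilon$, which is possible since $m = \lim_j nb_j$ by the definition of $m \sqsubseteq n$, gives $l < nb_j$.  The main obstacle is then to pass from $l < nb_j$ to $l < n$.  I would do so by revisiting the explicit witness $t_j = f(Xb_j^2) b_j E(n^*s^*) s$ from the proof of \cref{OpenOneSidedDomination}, where $X = E(n^*s^*) sn \in B$ (so $b_j$ commutes with $X$ inside $B$), and exploiting the exact identity $l t_j (nb_j) = l$---which encodes $l \cdot f(Xb_j^2) X b_j^2 = l$---to define a single $s' \in N$ with $ls' \in B$ and the exact identity $l = ls'n$; then \eqref{WeakDomination} gives $l < n$.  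The crux of (a) is arranging the functional-calculus parameter $f$ so that the $b_j$-factor is eliminated exactly, rather than merely in the limit.
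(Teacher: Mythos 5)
Your argument is correct, but the half dealing with $m\sqsubseteq n$ takes a genuinely different and much longer route than the paper's. Your step (b) is essentially the paper's first step: the paper takes $(b_j)\subseteq B$ with $k=\lim_j lb_j$, notes $lb_j<_sm$ by \eqref{eqn:mult by B and <}, and concludes $k<_sm$ since $<_s$ is a closed relation; your left-handed verification of the four conditions amounts to the same thing. For (a), however, the paper does not go near \cref{OpenOneSidedDomination}: given $l<_sm\sqsubseteq n$ it simply exhibits the explicit witness $sms$ and checks $l<_{sms}n$ directly, e.g. taking $(b'_j)\subseteq B$ with $m=\lim_jb'_jm=\lim_jb'_jn$ and using that $ms\in B$ commutes with $b'_j$ to get $lsmsn=\lim_jlsb'_jmsn=\lim_jlsmsb'_jn=lsmsm=l$. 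Your detour does go through, and the ``crux'' you flag is in fact a non-issue requiring no special choice of $f$: from the proof of \cref{OpenOneSidedDomination} applied to $n'=nb_j$ you get the exact identity $l=lf(Xb_j^2)Xb_j^2$ with $X=E(n^*s^*)sn\in B$ (stability), and since $Xb_j^2=b_j^2E(n^*s^*)sn$ by commutativity in $B$, the factor $b_j^2$ is simply absorbed into the witness $s'=f(Xb_j^2)b_j^2E(n^*s^*)s\in BN\subseteq N$, for which $ls'\in lBs\subseteq B$ by \cref{cor:binormal} and $l=ls'n$, so \eqref{WeakDomination} gives $l<n$. What the paper's route buys is brevity and a witness ($sms$) built from the given data with no functional calculus and no reopening of an earlier proof; what yours buys is only the reassurance that the openness lemma already contains enough information to absorb a right multiplier from $B$ -- worth knowing, but a heavier tool than the problem needs.
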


\begin{proof}
    If $k\sqsubseteq l$ and $l<_sm$ then we have $(b_j)\subseteq B$ with $k=\lim_jlb_j$ so $lb_j<_sm$, for all $j$, by \cref{eqn:mult by B and <}, and hence $k<_sm$.  On the other hand, if $l<_sm\sqsubseteq n$ then we claim that $l<_{sms}n$.  To see this, take $(b'_j)\subseteq B$ with $m=\lim_j b'_j m=\lim_j b'_j n$ and note that
    \[lsmsn=\lim_jlsb'_jmsn=\lim_jlsmsb'_jn=lsmsm=l.\]
    The other conditions needed to prove $l<_{sms}n$ are verified in the same way. 
\end{proof}

We also note that any $n\in N$ can be approximated by the elements it dominates.  In particular this shows that, while $<$ itself is not reflexive, its closure is.

\begin{lemma}\label{DominatedApproximation}
For every $n\in N$, there is a sequence $(n_k)\subseteq n^>$ with $n_k\rightarrow n$.
\end{lemma}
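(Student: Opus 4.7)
The plan is to use continuous functional calculus on the commutative C*-algebra $C^*(n^*n) \subseteq B$ to truncate $n$ by cutting off the part of its spectrum near $0$, and to exhibit an explicit partial inverse $s_k$ witnessing the domination $n_k <_{s_k} n$. The key point is that the dominating relation $m <_s n$ requires $m = msn$, which on the functional-calculus side becomes a pointwise identity $\varphi(x) \cdot x\psi(x) = \varphi(x)$ that can be arranged by design.

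Concretely, for each $k \in \mathbb{N}$ I would fix two continuous functions $\varphi_k, \psi_k : [0, \infty) \to [0, \infty)$ with $\varphi_k(0) = \psi_k(0) = 0$ such that $\varphi_k$ vanishes on $[0, 1/(2k)]$ and is identically $1$ on $[1/k, \infty)$, while $\psi_k$ vanishes on $[0, 1/(3k)]$ and equals $1/x$ on $[1/(2k), \infty)$ (piecewise linear in the gaps). The essential property is the pointwise identity $\varphi_k(x)\cdot x\psi_k(x) = \varphi_k(x)$ on $[0,\infty)$, which holds vacuously where $\varphi_k = 0$ and tautologically where $x\psi_k = 1$. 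I would then set
\[n_k := n\,\varphi_k(n^*n) \quad\text{and}\quad s_k := \psi_k(n^*n)\,n^*,\]
both of which lie in $N$, since $B$ is a subsemigroup of $N$ containing the functional calculus of $n^*n$ at continuous functions vanishing at $0$.

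For the verification, I would use the intertwining identity $nf(n^*n) = f(nn^*)n$ valid for any continuous $f$ with $f(0)=0$. This gives $s_k n = (x\psi_k(x))|_{n^*n} \in B$ and $n_k s_k = n(\varphi_k\psi_k)(n^*n)n^* = (x\varphi_k\psi_k)(nn^*) \in B$. The crucial equation unravels as
\[n_k s_k n = n\bigl(\varphi_k(x)\cdot x\psi_k(x)\bigr)\big|_{n^*n} = n\,\varphi_k(n^*n) = n_k,\]
using precisely the designed pointwise identity. By the simplified criterion \eqref{OneSidedDomination}, this yields $n_k < n$. Norm convergence is then a standard spectral estimate:
\[\|n - n_k\|^2 = \|(1-\varphi_k(n^*n))\,n^*n\,(1-\varphi_k(n^*n))\| \leq \sup_{x \geq 0} x(1-\varphi_k(x))^2 \leq 1/k,\]
so $n_k \to n$ as required.

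The main obstacle is not conceptual but a matter of engineering: the functions $\varphi_k, \psi_k$ must be chosen with overlapping transition regions so that $\psi_k$ is continuous and vanishes at $0$, while still satisfying $x\psi_k(x) = 1$ wherever $\varphi_k(x) \neq 0$. The three thresholds $1/(3k) < 1/(2k) < 1/k$ provide just enough room for a continuous interpolation, and it is the separation between these thresholds — rather than any algebraic subtlety — that makes the construction go through.
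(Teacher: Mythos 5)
Your proposal is correct and follows essentially the same route as the paper: the paper likewise sets $n_k = nf_k(n^*n)$ and $s_k = g_k(n^*n)n^*$ for continuous $f_k, g_k$ vanishing at $0$ with $f_k(x)g_k(x)x = f_k(x)$ and $f_k \equiv 1$ on $[1/k,\infty)$, and your $\varphi_k, \psi_k$ are just an explicit instance of such a pair. The verification of $n_ks_kn = n_k$, the membership of $s_kn$ and $n_ks_k$ in $B$, and the norm estimate all match the paper's argument.
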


\begin{proof}
Pick continuous functions $f_k$ and $g_k$ on $\mathbb{R}_+$ such that $f_k(0)=g_k(0)=0$, $f_k(x)g_k(x)x=f_k(x)$ for all $x\in\mathbb{R}_+$, and $f_k(x)=1$ for all $x\in[1/k,\infty)$.  Note this last condition implies that $f_k(n^*n)$ is a right approximate unit for $n$ and hence $n_k\rightarrow n$, where $n_k=nf_k(n^*n)$.  Letting $s_k=g_k(n^*n)n^*$, we see that
\begin{align*}
    n_ks_k&=nf_k(n^*n)g_k(n^*n)n^*\in nBn^*\subseteq B, 
\end{align*}    
as $N\subseteq N(B)$ by \cref{lem:Normal}, and
\begin{align*}
    n_ks_kn&=nf_k(n^*n)g_k(n^*n)n^*n=nf_k(n^*n)=n_k.
\end{align*}
Thus $n_k<_{s_k} n$, for all $k\in\mathbb{N}$, i.e. $(n_k)\subseteq n^>$.
\end{proof}

Another key property of domination is the following interpolation result.

\begin{lemma}\label{Interpolation}
    If $m<_sn$, then there exists $l\in nB_+\cap B_+n$ such that $m<_sl<n$.
\end{lemma}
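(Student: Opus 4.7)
The plan is to construct $l$ by continuous functional calculus on $\phi_1 := (ns)(ns)^* = nss^*n^* \in B_+$ (which lies in $B$ by \cref{lem:Normal}). The first step is to establish the key identity $\phi_1 m = m$: from $m = nsm$, taking adjoints gives $m^* = m^*(ns)^*$, so by \cref{mnstar} we obtain $m^* = m^*ns$, whence $m = s^*n^*m$, and therefore
\[
\phi_1 m = nss^*n^*m = ns(s^*n^*m) = nsm = m.
\]

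Next, I pick a continuous bump $f:[0,\infty)\to[0,1]$ with $f(0)=0$, $f(1)=1$, and $\supp f$ compact in $(0,\infty)$, say $\supp f \subseteq [\epsilon_0,\|\phi_1\|]$ for some small $\epsilon_0 > 0$, and set $l := f(\phi_1)n$. The routine identity $\phi_1 n = nss^*n^*n = n\tilde\phi_1$ with $\tilde\phi_1 := ss^*(n^*n) \in B_+$ yields (by iteration and continuity) $f(\phi_1)n = nf(\tilde\phi_1)$, so $l \in nB_+\cap B_+n$. To verify $m<_s l$, note $sl = (sn)f(\tilde\phi_1)$ and $ls = f(\phi_1)(ns)$ lie in $B$; moreover, using $\phi_1 m = m$, $msn = m$, and $B$-commutativity,
\[
m\tilde\phi_1 = (msn)\tilde\phi_1 = ms\cdot(n\tilde\phi_1) = ms\cdot(\phi_1 n) = \phi_1\cdot(msn) = \phi_1 m = m,
\]
so $mf(\tilde\phi_1) = f(1)m = m$ and likewise $f(\phi_1)m = m$. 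Hence $msl = (msn)f(\tilde\phi_1) = m$ and $lsm = f(\phi_1)(nsm) = m$.

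For $l < n$, the comparison $\tilde\phi_1 = (ss^*)(n^*n) \leq \|s\|^2 n^*n$ in the commutative C*-algebra $B$ implies that the compact set $K := \tilde\phi_1^{-1}([\epsilon_0,\|\tilde\phi_1\|])$, which contains $\overline{\supp}(f(\tilde\phi_1))$, lies inside $\supp(n^*n)$, and by compactness $n^*n \geq \delta := \epsilon_0/\|s\|^2 > 0$ on $K$. Choose a continuous $h:[0,\infty)\to[0,\infty)$ with $h(0)=0$ and $h(x)x = 1$ on $[\delta,\|n\|^2]$, set $g(x) := xh(x)$, and take $s' := h(n^*n)n^* \in BN \subseteq N$. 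The required products $s'n = g(n^*n)$, $ns' = g(nn^*)$, $s'l = g(n^*n)f(\tilde\phi_1)$, and $ls' = nf(\tilde\phi_1)h(n^*n)n^* \in nBn^* \subseteq B$ (by \cref{lem:Normal}) all lie in $B$; and because $g(n^*n) = 1$ on $\supp f(\tilde\phi_1)$,
\[
ls'n = nf(\tilde\phi_1)g(n^*n) = nf(\tilde\phi_1) = l \quad\text{and}\quad ns'l = g(nn^*)nf(\tilde\phi_1) = ng(n^*n)f(\tilde\phi_1) = l,
\]
so $l <_{s'} n$.

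The main obstacle is engineering a single $l$ that straddles both inequalities at once: the cutoff on $\phi_1$ must fix values on the ``support of $m$'' so that $l$ still dominates $m$, while simultaneously leaving enough room for a local inverse of $n^*n$ to exist on that cutoff's support so that $l$ is in turn dominated by $n$ via the witness $s' \approx n^{-1}$. The inequality $\tilde\phi_1 \leq \|s\|^2 n^*n$ in $B$ is the crucial bridge that reconciles these two competing demands.
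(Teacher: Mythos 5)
Your proof is correct. The paper's argument follows the same basic strategy---define $l$ by cutting $n$ down with a functional-calculus bump of an element of $B$ that acts as the identity on $m$, then witness $l<n$ with a local pseudo-inverse---but implements it differently. The paper applies functions $g,h$ on $\mathbb{R}_+$ (chosen with $g(0)=h(0)=0$, $g(1)=1$ and $g(x)h(x)x=g(x)$) directly to $sn$ and $ns$, sets $l=ng(sn)=g(ns)n$, and witnesses $l<n$ by $t=h(sn)s$; since $msn=m$ gives $mg(sn)=m$ immediately, no analogue of your identity $\phi_1m=m$ is needed and the whole thing is four lines. You instead apply the bump to the manifestly positive element $\phi_1=(ns)(ns)^*$, which costs you the extra step through \cref{mnstar} to get $\phi_1m=m$, but buys two things: you never apply a function defined on $\mathbb{R}_+$ to $sn$, whose spectrum need not lie there (a point the paper leaves implicit), and your witness $s'=h(n^*n)n^*$ for $l<n$ lands in $B_+n^*\cap n^*B_+$, which is precisely the normal form the paper has to re-derive later in \cref{BallDomination} and \cref{Predomain}. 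The comparison $\tilde\phi_1\le\|s\|^2n^*n$, used to bound $n^*n$ below on the support of the cut-off, is the genuinely new ingredient replacing the paper's coupled choice of $g$ and $h$; the reliance on compactness of $K$ there is actually superfluous, since the pointwise bound already suffices.
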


\begin{proof}
     Take continuous functions $g$ and $h$ on $\mathbb{R}_+$ with $g(0)=h(0)=0$, $g(1)=1$ and $g(x)h(x)x=g(x)\leq x^{-1}$, for all $x>0$.  If $m<_sn$ then let $l=ng(sn)=g(ns)n$ and note that $msl=msng(sn)=mg(sn)=m$, as $g(1)=1$.  Likewise, $lsm=m$ and $ls,sl\in B$ and hence $m<_sl$.  On the other hand, letting $t=h(sn)s$, we see that $tl=h(sn)sng(sn)=g(sn)\in B$ and $ntl=ng(sn)=l$ and hence $l<n$.
\end{proof}

\section{The Ultrafilter Groupoid}\label{sectionG}

Recall that a \emph{filter} (w.r.t. $<$) is a subset $F\subseteq N$ such that, for all $m,n\in N$,
\[\tag{Filter}m,n\in F\qquad\Leftrightarrow\qquad \text{ there exists } l\in F\text{ such that }l<m,n.\]
Note $\Rightarrow$ is saying that $F$ is \emph{down-directed} while $\Leftarrow$ is saying that $F$ is \emph{up-closed}, i.e. $F^<\subseteq F$.  As $0^<=N$, a filter $F$ is a proper subset of $N$ precisely when $0\notin F$.  The \emph{ultrafilters} are the maximal proper filters, which we denote by
\[G=G_N:=\{U\subseteq N:U\text{ is an ultrafilter}\}.\]
First we show that ultrafilters always exist.

\begin{prop}\label{UltrafiltersExist}
    Every $n\in N\setminus\{0\}$ is contained in an ultrafilter $U\subseteq N$.
\end{prop}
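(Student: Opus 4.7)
The plan is to apply Zorn's lemma to the poset of proper filters containing $n$, ordered by inclusion; the only real work is exhibiting one such filter to start with.

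For this I will build a strictly descending chain of nonzero elements $\cdots < l_2 < l_1 < n$. Since $n\neq 0$ and \cref{DominatedApproximation} supplies a sequence $(n_k)\subseteq n^>$ with $n_k\to n$, continuity of the norm forces $n_k\neq 0$ eventually, so I can pick some nonzero $l_1\in N$ with $l_1<n$. Iterating the same argument with $l_k$ in place of $n$ yields the desired chain with each $l_k\neq 0$. I will then set
\[F=\{s\in N : l_k<s\ \text{for some}\ k\in\mathbb{N}\}.\]
This $F$ contains $n$ (take $k=1$), is up-closed (since $<$ is transitive, as noted in \cref{sec:domination}), and is down-directed: given $l_{k_i}<s_i$ for $i=1,2$, any $k>k_1,k_2$ gives $l_k<l_{k_i}<s_i$ by transitivity, and $l_k\in F$ because $l_{k+1}<l_k$. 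It is proper since $l_k<_r 0$ for any $r$ would force $l_k=0\cdot r\cdot l_k=0$, contradicting $l_k\neq 0$.

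Given $F$, I will apply Zorn's lemma to the set of proper filters containing $F$ (ordered by inclusion). The union of a chain of proper filters is again a proper filter, as up-closure, down-directedness and the condition $0\notin F$ all transfer immediately (for down-directedness, one uses that the chain is totally ordered to land both elements in a single member of the chain). Zorn then yields a maximal element $U$, and any strictly larger proper filter would also extend $F$ and hence contradict maximality of $U$ in this subposet, so $U$ is maximal among \emph{all} proper filters in $N$, i.e.\ an ultrafilter, and $n\in U$.

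The main conceptual hurdle is the initial construction: since $<$ is not reflexive, putting $n$ into a filter demands an honest supply of elements strictly below $n$, which is exactly what \cref{DominatedApproximation} provides, and iterating it is what makes $F$ down-directed without needing any Riesz-type interpolation. Once $F$ is in hand, the remainder is a routine Zorn-style maximality argument.
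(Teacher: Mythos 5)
Your proof is correct and follows essentially the same route as the paper: seed a proper filter containing $n$ by producing an infinite strictly descending chain of nonzero elements below $n$, take the union of their up-sets, and apply the Kuratowski--Zorn lemma. The only difference is that you build the chain by iterating \cref{DominatedApproximation}, whereas the paper applies \cref{DominatedApproximation} once and then uses \cref{Interpolation} to interpolate a chain between the resulting element and $n$; both are valid.
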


\begin{proof}
    Given any $n\in N\setminus\{0\}$, \cref{DominatedApproximation} yields $m\in n^>\setminus\{0\}$.  Then \cref{Interpolation} yields a sequence $(m_k)\subseteq N$ such that $m<m_{k+1}<m_k<n$, for all $k\in\mathbb{N}$.  We then obtain a filter $F=\bigcup_{k\in N}m_k^<$ containing $n$ but not $0$ and hence the Kuratowski-Zorn lemma then yields an ultrafilter $U$ such that $n\in F\subseteq U$.
\end{proof}

Like maximal ideals, ultrafilters carry a natural topology generated by $\{\mathcal{U}_n\}_{n\in N}$, where
\[\mathcal{U}_n=\{U\in G:n\in U\}.\]
(As each $U\in G$ is a filter, we immediately see that $\{\mathcal{U}_n\}_{n\in N}$ forms a basis of $G$.)  In contrast to maximal ideals, however, the ultrafilters also carry a natural groupoid structure.

\begin{prop}\label{UltraGroupoid}
    The ultrafilters $G$ form an \'etale groupoid under the product
    $T\cdot U=(TU)^<$ defined if and only if $0\notin TU$ and the inverse of each $U\in G$ is given by $U^{-1}=U^*:=\{u^*:u\in U\}$, with $\mathsf{s}(U):=(U^{-1}U)^<$ and $\mathsf{r}(U):=(UU^{-1})^<$.
\end{prop}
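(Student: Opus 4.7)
The plan is to leverage the general ultrafilter-groupoid machinery for structured semigroups developed in \cite{Bice2022}. I would first verify that our setting fits that framework: $(N,B,B)$ is a structured semigroup in the sense of \cite[Definition 1.6]{Bice2022}, since $B$ is binormal in $N$ by \cref{cor:binormal}, domination is symmetric by \cref{lem:Symmetry}, and we have $*$-invariance \eqref{eq:stardom}, auxiliarity \eqref{Auxiliarity}, and interpolation (\cref{Interpolation}) on hand. Once this is checked, the theory of \cite{Bice2022} applies verbatim to yield the ultrafilter groupoid.

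To spell out the main steps, I would first show that $U^{-1}:=U^*$ is an ultrafilter whenever $U$ is; this is straightforward because $n\mapsto n^*$ is an involution on $N$ that, by \eqref{eq:stardom}, converts $<$ to itself, hence maps filters to filters and preserves maximality. Next, given $T,U\in G$ with $0\notin TU$, I would verify that $(TU)^<$ is a filter: up-closedness holds by definition, and for down-directedness I would take $a_1,a_2\in (TU)^<$ with witnesses $t_iu_i<a_i$, use down-directedness of $T$ and $U$ separately to find $t\in T$ and $u\in U$ dominated by the respective $t_i,u_i$, and then use interpolation to produce a common lower bound $s\in N$ with $tu<s<t_iu_i$ for both $i$. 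Maximality of $(TU)^<$ would follow by a symmetric argument: any strictly larger filter $F$ would, via $T^{-1}\cdot F$ and $F\cdot U^{-1}$, yield filters strictly larger than $U$ and $T$, contradicting maximality.

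With $\cdot$ and $^{-1}$ established, the groupoid axioms are routine bookkeeping: $\mathsf{s}(U)=(U^{-1}U)^<$ and $\mathsf{r}(U)=(UU^{-1})^<$ are idempotent (hence units), $U\cdot\mathsf{s}(U)=U=\mathsf{r}(U)\cdot U$, $U^{-1}\cdot U=\mathsf{s}(U)$, and associativity descends from associativity of the semigroup product in $N$. For the étale structure, the basic sets $\{\mathcal{U}_n\}_{n\in N}$ form a basis closed under products $\mathcal{U}_m\cdot\mathcal{U}_n\subseteq\mathcal{U}_{mn}$ and inverses $\mathcal{U}_n^{-1}=\mathcal{U}_{n^*}$, and each $\mathcal{U}_n$ is a bisection: if $U,V\in\mathcal{U}_n$ share a source, the filter they generate is pinned down by $n$ together with the common source, forcing $U=V$ (symmetrically for range). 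The main obstacle will be the maximality of $(TU)^<$ and the bisection property of each $\mathcal{U}_n$, both of which require careful use of interpolation (\cref{Interpolation}) and auxiliarity \eqref{Auxiliarity} to transfer information between filters and their products under $<$.
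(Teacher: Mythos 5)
Your opening move --- checking that $(N,B,B)$ satisfies the hypotheses of the structured/well-structured semigroup framework via \cref{cor:binormal}, \cref{lem:Symmetry}, \eqref{eq:stardom}, \eqref{Auxiliarity} and \cref{Interpolation}, and then importing the ultrafilter-groupoid machinery of \cite{Bice2022} --- is exactly the paper's strategy: it cites Theorems 8.1, 8.4 and 12.8 of that reference for the \'etale groupoid structure. But the actual content of this proposition, and essentially the whole of the paper's proof, lies in two identifications that \cite{Bice2022} does not hand you directly, and your proposal establishes neither. First, the inverse produced by the general theory is $U^{-1}=\{s\in N:\exists\,m,n\in U \text{ with } m<_sn\}$, and the claim is that this set equals $U^*$. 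You sidestep this by \emph{defining} $U^{-1}:=U^*$ and noting that $*$ preserves ultrafilters; that is true but does not show $U^*$ is the inverse for the product $T\cdot U=(TU)^<$ --- you would still need $U\cdot(U^*U)^<=U$ and its companions, which you dismiss as bookkeeping. The paper instead gets $U^{-1}=U^*$ in one line from \eqref{StarSwitch}, which converts $m<_sn$ into $m<_{n^*}s^*$.

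Second, and more seriously, the characterisation of composability as $0\notin TU$ is never addressed. In the groupoid of \cite{Bice2022}, $T\cdot U$ is defined when $\mathsf{s}(T)=\mathsf{r}(U)$, and the second half of the paper's proof is devoted to showing this is equivalent to $0\notin TU$, in both directions. If instead you intend a from-scratch construction taking ``$0\notin TU$'' as the definition of composability, you must check that this relation is coherent with the groupoid axioms (definedness of $T^{-1}\cdot(T\cdot U)$, associativity of partially defined products, and so on), and your one-line maximality argument --- that a strictly larger filter $F\supsetneq(TU)^<$ yields $T^{-1}\cdot F\supsetneq U$ --- presupposes both that $0\notin T^*F$ and that $(T^*TU)^<=U$, neither of which is justified; this is precisely where the real work sits. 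The bisection claim is likewise asserted rather than proved (the paper obtains it from $\mathcal{U}_{n^*}\mathcal{U}_n\subseteq\mathcal{U}_{n^*n}\subseteq G^{(0)}$, using \eqref{eq:unm} and \eqref{G0B}). In short, the citation route is the right one, but you need to supply the two bridging arguments --- $U^{-1}=U^*$ via \eqref{StarSwitch}, and $0\notin TU\Leftrightarrow\mathsf{s}(T)=\mathsf{r}(U)$ --- that turn the cited theorems into the statement as written.
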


\begin{proof}
    By \cite[Theorems 8.1, 8.4 and 12.8]{Bice2022}, $G$ is an \'etale groupoid under the product $T\cdot U=(TU)^<$, where the inverse of each $U\in G$ is given by 
    \begin{equation}\label{eq:inv}U^{-1}=\{s\in N: \: \text{ there exists } m, n \in U \text{ such that } m<_s n\}.
    \end{equation} 
    But by \eqref{StarSwitch}, $m<_sn$ is equivalent to $m<_{n^*}s^*$ so $s\in U^{-1}$ is equivalent to $s^*\in U^<=U$, thus showing that $U^{-1}=U^*$.  As for when $T\cdot U$ is defined, this happens precisely when $\mathsf{s}(T)=\mathsf{r}(U)$ -- it only remains to show that this is equivalent to $0\notin TU$.  To see this, note first that if $\mathsf{s}(T)=\mathsf{r}(U)$ then $\mathsf{s}(T)=\mathsf{s}(T)\cdot\mathsf{r}(U)=(T^*TUU^*)^<$.  Then $0\in TU$ would imply $\mathsf{s}(T)=0^<=N$, a contradiction, so $\mathsf{s}(T)=\mathsf{r}(U)$ implies $0\notin TU$.  Conversely, if $\mathsf{s}(T)\neq\mathsf{r}(U)$ then $(\mathsf{s}(T)\mathsf{r}(U))^<$ is filter containing both $U$ and $T$, as $U\subseteq(t^*tU)^<$ and $T\subseteq(Tuu^*)^<$, for any $t\in T$ and $u\in U$.  The maximality of $T$ and $U$ then implies that $0\in(\mathsf{s}(T)\mathsf{r}(U))^<$ and hence $0\in T(\mathsf{s}(T)\mathsf{r}(U))^<U\subseteq TU$.
\end{proof}

\begin{remark}\label{UltrafiltersArePoints}
In the motivating situation of \cref{TwistedGroupoid->CartanSemigroup}, the ultrafilter groupoid $G_N$ corresponds with the groupoid $G$ that we started with.  Indeed, if $A$ is a $D$-contractive C*-completion of $C_c(\Sigma;G)$, for some twist $q:\Sigma\twoheadrightarrow G$, and $N=\mathrm{cl}(\Nc(\Sigma;G))$ is its monomial semigroup then
\[g\mapsto U_g:=\{n\in N:j(n)(q^{-1}\{g\})\neq\{0\}\}\]
is an \'etale groupoid isomorphism from the original groupoid $G$ onto the ultrafilter groupoid $G_N$. This follows by essentially the same argument as in \cite[Theorem 5.3]{BC2021} applied to the bumpy semigroup $N$, see also \cref{DominationSupports}.
\end{remark}

Next note that, by \cite[Proposition 8.2]{Bice2022}, the units of $G$ are precisely the ultrafilters that have nonempty intersection with $B$, that is
\begin{equation}\label{G0B}
    G^{(0)}=\bigcup_{b\in B}\mathcal{U}_b.
\end{equation}
Also, the inverses of ultrafilters in $\mathcal{U}_n$ are just the ultrafilters containing $n^*$, i.e.
\[\mathcal{U}_n^{-1}=\mathcal{U}_{n^*}.\]
In particular, $\mathcal{U}_n^{-1}\mathcal{U}_n=\mathcal{U}_{n^*}\mathcal{U}_n\subseteq\mathcal{U}_{n^*n}\subseteq G^{(0)}$ and, likewise, $\mathcal{U}_n\mathcal{U}_n^{-1}\subseteq G^{(0)}$, showing that each $\mathcal{U}_n$ is a bisection of $G$.  Moreover, by \cref{lem:Symmetry} and \cite[Theorems 8.11 and 12.8]{Bice2022},
\begin{equation}\label{eq:unm}
\mathcal{U}_{mn}=\mathcal{U}_m\mathcal{U}_n,
    \end{equation}
for all $m,n\in N$.  So we see that $\{\mathcal{U}_n\}_{n\in N}$ is a basis of bisections of $G$ forming an inverse semigroup under pointwise products and inverses.

Using the expectation, we also can strengthen \eqref{G0B} like in \cite[Proposition 6.4]{Bice2023}.

\begin{prop}\label{lem:units and Un}
    For all $n\in N$,
    \[\VV_{E(n)}= \VV_n\cap G^{(0)}.\]
\end{prop}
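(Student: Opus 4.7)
The plan is to verify the two inclusions separately, with each reducing to a short application of the apparatus already developed.

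For the inclusion $\mathcal{U}_{E(n)}\subseteq\mathcal{U}_n\cap G^{(0)}$, I would take $U\in\mathcal{U}_{E(n)}$. Since $E(n)\in B$, the characterisation of units in \eqref{G0B} gives $U\in G^{(0)}$ at once. To see $n\in U$, I would use down-directedness of $U$ to produce $l\in U$ with $l<E(n)$, and then chain $l\sqsubseteq l<E(n)\sqsubseteq n$, where the right-hand restriction is \cref{prop:resE} and $\sqsubseteq$ is reflexive by \cref{prop:partial}. Auxiliarity \eqref{Auxiliarity} then upgrades this to $l<n$, and up-closure of $U$ delivers $n\in U$.

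For the reverse inclusion $\mathcal{U}_n\cap G^{(0)}\subseteq\mathcal{U}_{E(n)}$, I would take $U\in\mathcal{U}_n\cap G^{(0)}$. Being a unit, $U$ contains some $b\in B$ by \eqref{G0B}, and down-directedness of $U$ applied to $n,b\in U$ produces a common $l\in U$ with $l<n$ and $l<b$. The crucial observation is that $l<b\in B$ forces $l\in B$ by \eqref{Bbounded}, so $E(l)=l$. Feeding $l<n$ into the $E$-invariance of domination \eqref{DominationExpectation} then yields $l=E(l)<E(n)$, and up-closure of $U$ gives $E(n)\in U$, as required.

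No step presents a serious obstacle, since all the necessary tools have been prepared. The conceptual content is simply that $n$ and $E(n)$ become interchangeable under $<$ once one is already inside the unit space: the forward direction exploits that $E(n)$ is a restriction of $n$ (via Auxiliarity), while the reverse direction exploits that any element of $B$ dominated by $n$ must in fact be dominated by $E(n)$ (via $E$-invariance of $<$).
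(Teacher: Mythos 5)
Your proof is correct and follows essentially the same route as the paper: the forward inclusion uses $E(n)\sqsubseteq n$ from \cref{prop:resE} together with \eqref{Auxiliarity}, and the reverse uses down-directedness, \eqref{Bbounded} and \eqref{DominationExpectation} in exactly the way the paper does.
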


\begin{proof}
    If $U\in \VV_{E(n)}$, then there exists $m\in U$ with $m <E(n)$ and then $m<E(n)\sqsubseteq n$ by \cref{prop:resE}. So $m<n$ by \eqref{Auxiliarity} in \cref{prop:aux}, and hence $U\in \VV_m\subseteq \VV_n$.  Certainly $\VV_{E(n)}\subseteq G^{(0)}$ because $E(n)\in B$ and the unit space is given by \eqref{G0B}.  So $\VV_{E(n)}\subseteq \VV_n\cap G^{(0)}$.

    Conversely, if $U\in \VV_n\cap G^{(0)}$ then, taking any $b\in U\cap B$, we must further have some $m\in U$ with $m<b,n$ and hence $m=E(m)< E(n)$ by \eqref{Bbounded} and \eqref{DominationExpectation}, so $U\in \VV_m\subseteq \VV_{E(n)}$.
\end{proof}

In contrast to \eqref{G0B}, we also have the following.

\begin{prop}\label{prop:G0complement}
    The non-unit ultrafilters are precisely those intersecting $E^{-1}\{0\}$, i.e.
    \begin{equation*}
        G\setminus G^{(0)}=\bigcup_{E(n)=0}\VV_n.
    \end{equation*}
\end{prop}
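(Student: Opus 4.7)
The plan is to prove the two set inclusions separately. The inclusion $\supseteq$ is immediate from \cref{lem:units and Un}: if $U\in\VV_n$ with $E(n)=0$, then $\VV_n\cap\Go=\VV_{E(n)}=\VV_0=\emptyset$ (since $0$ cannot belong to a proper filter), so $\VV_n\subseteq G\setminus\Go$ and in particular $U\notin\Go$.

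For the inclusion $\subseteq$, let $U\in G\setminus\Go$ and pick any $l\in U$. By \eqref{G0B}, $U\cap B=\emptyset$, hence $E(l)\in B$ is not in $U$. The crux of the argument is an ultrafilter-maximality dichotomy: since $U$ is maximal proper and $E(l)\notin U$, there exists $u\in U$ such that no nonzero $z\in N$ satisfies both $z<u$ and $z<E(l)$. Were this to fail, the down-directedness of $U$ would allow me to assemble the finite meet property for $U\cup\{E(l)\}$ and extend it to a proper filter strictly containing $U$, contradicting maximality.

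With such a $u\in U$ in hand, I use the down-directedness of $U$ again to pick $z\in U$ with $z<u$ and $z<l$; this $z$ is nonzero because $U$ is proper. By \eqref{DominationExpectation}, $z<u$ forces $E(z)<E(u)$, which combined with $E(u)\sqsubseteq u$ from \cref{prop:resE} and \eqref{Auxiliarity} from \cref{prop:aux} gives $E(z)<u$. Analogously, $z<l$ yields $E(z)<E(l)$. So $E(z)$ is a common lower bound of $u$ and $E(l)$, and the choice of $u$ forces $E(z)=0$. Therefore $z\in U$ satisfies $E(z)=0$, placing $U$ in $\VV_z\subseteq\bigcup_{E(n)=0}\VV_n$.

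The principal obstacle is the ultrafilter-maximality step. Translating the non-membership $E(l)\notin U$ into the existence of a separating $u\in U$ requires the finite-meet-property characterisation of proper filter extensions, which in this semigroup setting (rather than the classical semilattice one) relies on the filter-theoretic machinery of \cite{Bice2022}. Once that principle is secured, the remainder is a straightforward manipulation of the domination and restriction relations, combining the expectation properties already established in \cref{prop:resE,prop:aux}.
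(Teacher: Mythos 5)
Your $\supseteq$ inclusion is fine and agrees with the paper. The $\subseteq$ direction, however, has a genuine gap at exactly the step you flag as the crux. The claimed dichotomy --- that maximality of $U$ together with $E(l)\notin U$ yields some $u\in U$ admitting no nonzero common $<$-lower bound with $E(l)$ --- does not follow from maximality, and the proposed justification fails: without meets, knowing that \emph{each} $u\in U$ separately has a nonzero common lower bound with $E(l)$ does not let you assemble a down-directed set of common lower bounds, so $U\cup\{E(l)\}$ need not generate a proper filter. Concretely, take the trivial twist over $G=G^{(0)}=[0,1]$, so that $N=B=C[0,1]$, $E=\mathrm{id}$, the ultrafilters are the points $U_x=\{f:f(x)\neq0\}$, and $<$ is compact containment of open supports (\cref{DominationSupports}, \cref{UltrafiltersArePoints}). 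With $l(t)=|t-x|$ we have $l\notin U_x$, yet every $u\in U_x$ has a nonzero $z<u,l$ (support $z$ compactly inside the nonempty open set $\mathrm{supp}^\circ(u)\setminus\{x\}$), so no separating $u$ exists even though $U_x\cup\{l\}$ generates an improper filter. In groupoid terms your dichotomy asserts $U\notin\mathrm{cl}(\mathcal{U}_{E(l)})$ with $\mathcal{U}_{E(l)}\subseteq G^{(0)}$, so proving it for $U\notin G^{(0)}$ amounts to proving that $G^{(0)}$ is closed --- which is precisely the content of this proposition (cf.\ \cref{prop:G is Hausdorff}) --- making the argument circular unless the separation is established by other means.

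The remainder of your argument (producing $z\in U$ with $E(z)<u$ and $E(z)<E(l)$ via \eqref{DominationExpectation}, \cref{prop:resE} and \eqref{Auxiliarity}) is correct, so the proof would close if the dichotomy were secured. The paper sidesteps the issue by working with the whole image $E(U)$ at once: $E(U)$ is directed by \eqref{DominationExpectation}, and $E(U)^<$ is a filter containing $U$ because $m<n$ in $U$ gives $E(m)\sqsubseteq m<n$ and hence $E(m)<n$ by \eqref{Auxiliarity}. If $0\notin E(U)$, maximality forces $E(U)^<=U$, whence $E(U)\subseteq U$ and $U$ meets $B$, contradicting $U\notin G^{(0)}$ via \eqref{G0B}. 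You should either adopt that route or prove your separation claim directly from the stability of $E$ rather than from maximality alone.
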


\begin{proof}
    By \cref{lem:units and Un}, $E(n)=0$ implies $\VV_n\cap G^{(0)}= \VV_{E(n)}= \VV_0= \emptyset$, thus showing that $\bigcup_{E(n)=0}\VV_n\subseteq G\setminus G^{(0)}$.  Conversely, if $U\in G\setminus G^{(0)}$ then, in particular, $U$ and hence $E(U)$ is directed, by \eqref{DominationExpectation}.  Then $E(U)^<$ is a filter containing $U$ (as $m<n$ implies $E(m)<n$, by \cref{prop:aux}~\eqref{Auxiliarity}, because $E(m)\sqsubseteq m<n$).  If $0$ were not in $E(U)$ then this would imply $E(U)^<=U$, by the maximality of $U$.  But then $E(U)\subseteq U$ and hence $U\in G^{(0)}$, a contradiction.  Thus $0\in E(U)$ so $U\in\bigcup_{E(n)=0}\VV_n$, showing $G\setminus G^{(0)}\subseteq\bigcup_{E(n)=0}\VV_n$.
\end{proof}

\begin{cor} \label{prop:G is Hausdorff}
    The ultrafilter groupoid $G$ is Hausdorff.
\end{cor}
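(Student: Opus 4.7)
The plan is to invoke the characterisation recalled in the remark following \cref{QuotientHomeo}: a topological groupoid is Hausdorff precisely when its unit space is both Hausdorff and closed in the groupoid. Thus the corollary reduces to verifying these two properties for $G^{(0)}$.

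The closedness of $G^{(0)}$ in $G$ is the main new content, and it is essentially free from \cref{prop:G0complement}. Indeed, that proposition gives
\[G\setminus G^{(0)}=\bigcup_{E(n)=0}\mathcal{U}_n,\]
which exhibits the complement of $G^{(0)}$ as a union of basic open sets in the ultrafilter topology, so $G^{(0)}$ is closed.

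For the Hausdorffness of $G^{(0)}$, I would appeal to Gelfand duality for the commutative C*-algebra $B$. By \eqref{G0B} we have $G^{(0)}=\bigcup_{b\in B}\mathcal{U}_b$, and for each unit ultrafilter $U\in G^{(0)}$, the set $U\cap B$ naturally determines a maximal ideal of $B$ (equivalently a character $\phi_U$), in such a way that $U\mapsto\phi_U$ is a homeomorphism onto the spectrum $X_B$ with the hull-kernel topology (under which $\mathcal{U}_b$ corresponds to the basic open set $X_b$). Since $X_B$ is locally compact Hausdorff, so is $G^{(0)}$. A quick way to see injectivity of this correspondence is via \cref{lem:units and Un}: if two unit ultrafilters $U\neq V$ had the same intersection with $B$, then the characterisation $\mathcal{U}_{E(n)}=\mathcal{U}_n\cap G^{(0)}$ would force $U=V$, since any $n\in U\setminus V$ would yield $E(n)\in U\cap B\setminus V\cap B$.

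The only mildly subtle step is spelling out the $G^{(0)}\cong X_B$ identification cleanly, but with \eqref{G0B}, \cref{lem:units and Un}, and the basic ultrafilter topology all already in hand, this is routine. Combining the two properties with the recalled fact about topological groupoids then gives that $G$ is Hausdorff.
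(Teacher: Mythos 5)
The overall skeleton of your argument -- reduce to showing $\Go$ is Hausdorff and closed in $G$, and get closedness from \cref{prop:G0complement} -- is exactly the paper's, and that half is fine. Where you diverge is in proving that $\Go$ itself is Hausdorff, and here your proposal has a real gap: you assert that $U\cap B$ ``naturally determines a maximal ideal of $B$'' and that spelling out the homeomorphism $\Go\cong X_B$ ``is routine.'' It is not. That identification is precisely \cref{ComplementHomeo}, which the paper only proves \emph{after} this corollary, and the crux of it -- that $B\setminus U$ is closed under addition, so that it is an ideal at all -- rests on \cref{Ultra+} (ultrafilters are ``additively prime''), whose proof is a page of work involving \cref{DominatedApproximation}, \cref{OpenOneSidedDomination}, \cref{Interpolation} and several imported results. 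Your injectivity argument via \cref{lem:units and Un} is correct, but injectivity plus continuity into $X_B$ is useless until you know the map lands in $X_B$ in the first place, and that is the part you have waved away. (There is no circularity -- \cref{ComplementHomeo} and \cref{Ultra+} do not use Hausdorffness of $G$ -- so your route could in principle be completed, but only by front-loading a substantial chunk of the later theory.)

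The paper's argument is far more elementary and uses only \cref{UltraGroupoid}: if $T,U\in\Go$ are distinct then $T\cdot U$ is undefined (a product of two units is defined only when they coincide), so $0\in TU$ by \cref{UltraGroupoid}; picking $m\in T$ and $n\in U$ with $mn=0$, the basic open sets $\mathcal{U}_m$ and $\mathcal{U}_n$ are disjoint, since any unit $V$ containing both $m$ and $n$ would satisfy $0=mn\in VV$ with $V\cdot V$ defined, which is impossible. That two-line separation argument is what you should supply in place of the appeal to Gelfand duality; combined with your (correct) closedness step it completes the proof.
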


\begin{proof}
    For any $T,U\in G^{(0)}$, if $T\neq U$ then $T\cdot U$ is not defined and hence $0\in TU$.  Taking any $m\in T$ and $n\in U$ with $mn=0$, it follows that $\VV_m$ and $\VV_n$ are disjoint neighbourhoods of $T$ and $U$ respectively, showing that $G^{(0)}$ is Hausdorff.  Now \cref{prop:G0complement} implies $G\setminus G^{(0)}$ is open so $G^{(0)}$ is closed, from which it follows that $G$ is Hausdorff.
\end{proof}

One fact worth noting is that the restriction relation $\sqsubseteq$ is stronger than the corresponding inclusion relation for basic subsets of ultrafilters, that is for all $m,n\in N$,
\begin{equation}\label{RestrictionImpliesInclusion}
    m\sqsubseteq n\qquad\Rightarrow\qquad\mathcal{U}_m\subseteq\mathcal{U}_n.
\end{equation}
Indeed, if $U\in\mathcal{U}_m$ then $m\in U$ so we have some $l\in U$ with $l<m\sqsubseteq n$ and hence $l<n$, by \eqref{Auxiliarity} in \cref{prop:aux}, which then implies $n\in U$ and hence $U\in\mathcal{U}_n$.
Together with the above results, this observation yields another corollary.

\begin{cor}\label{BG0}
    For all $n\in N$, the bisection $\mathcal{U}_n\subseteq G^{(0)}$ if and only if $n\in B$.
\end{cor}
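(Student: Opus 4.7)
The plan is to handle the two directions separately, with the forward direction being essentially immediate and the reverse direction assembling a few previously established pieces. For the forward direction, suppose $n\in B$. Then any $U\in\mathcal{U}_n$ contains $n\in B$, so $U\cap B\neq\emptyset$, which by \eqref{G0B} places $U$ in $G^{(0)}$; hence $\mathcal{U}_n\subseteq G^{(0)}$.

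For the reverse direction I would isolate the portion of $n$ that lies outside $B$, namely $n-E(n)$, and show it must vanish. First, since $E(n)\sqsubseteq n$ by \cref{prop:resE}, \cref{lem:RR} yields $n-E(n)\sqsubseteq n$; the proof of \cref{lem:RR} in particular shows $n-E(n)\in\mathrm{cl}(Bn)\subseteq N$, so the basic open set $\mathcal{U}_{n-E(n)}$ is defined. Linearity and idempotence of $E$ give $E(n-E(n))=0$. Applying \eqref{RestrictionImpliesInclusion} together with the hypothesis produces
\[\mathcal{U}_{n-E(n)}\subseteq\mathcal{U}_n\subseteq G^{(0)},\]
while \cref{prop:G0complement} forces $\mathcal{U}_{n-E(n)}\subseteq G\setminus G^{(0)}$ because $E(n-E(n))=0$. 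Hence $\mathcal{U}_{n-E(n)}=\emptyset$, and then \cref{UltrafiltersExist} (which produces an ultrafilter through every nonzero element of $N$) forces $n-E(n)=0$, i.e., $n=E(n)\in B$.

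The only subtle point — really just bookkeeping — is confirming that $n-E(n)$ genuinely lies in $N$ so that $\mathcal{U}_{n-E(n)}$ is a legitimate basic open set in $G$; this is read off from the proof of \cref{lem:RR}. Once that is in hand, the argument is simply a three-way interaction between the restriction-to-inclusion principle \eqref{RestrictionImpliesInclusion}, the characterisation of non-units via $E^{-1}\{0\}$ in \cref{prop:G0complement}, and the existence of ultrafilters through nonzero elements in \cref{UltrafiltersExist}.
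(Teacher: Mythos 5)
Your proof is correct and follows essentially the same route as the paper: the forward direction is \eqref{G0B}, and the reverse direction hinges on the element $n-E(n)$, combining \cref{prop:resE}, \cref{lem:RR}, \eqref{RestrictionImpliesInclusion}, \cref{prop:G0complement} and \cref{UltrafiltersExist}. The only cosmetic difference is that the paper argues the contrapositive (if $n\notin B$ then $\mathcal{U}_{n-E(n)}$ is a nonempty subset of $\mathcal{U}_n\setminus G^{(0)}$), whereas you argue directly that $\mathcal{U}_{n-E(n)}$ must be empty and hence $n=E(n)$.
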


\begin{proof}
    If $n\in B$ then $\mathcal{U}_n\subseteq G^{(0)}$ by \eqref{G0B}.  Conversely, if $n\in N\setminus B$ then $n\neq E(n)$ and hence $0\neq n-E(n)\sqsubseteq n$, by \cref{prop:resE} and \cref{lem:RR}.  Then \cref{UltrafiltersExist}, \cref{prop:G0complement} and \eqref{RestrictionImpliesInclusion} yield $\emptyset\neq\mathcal{U}_{n-E(n)}\subseteq\VV_n\setminus G^{(0)}$ and hence $\mathcal{U}_n\nsubseteq G^{(0)}$.
\end{proof}

Recall that a proper nonempty closed ideal $I$ in a commutative C*-algebra is maximal precisely when it is prime, meaning that $a\in I$ or $b\in I$ whenever $ab\in I$.  Likewise, ultrafilters are precisely the proper filters that are `additively prime'.

\begin{prop}\label{Ultra+}
A proper nonempty filter $U\subseteq N$ is an ultrafilter if and only if for all $m,n\in N$,
\begin{equation}
\label{eq:Ultra+}
m+n\in U\qquad\Rightarrow\qquad m\in U\quad\text{or}\quad n\in U.
\end{equation}
\end{prop}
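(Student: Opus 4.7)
The plan is to prove the two implications separately, with $(\Rightarrow)$ done by reducing to unit ultrafilters and $(\Leftarrow)$ by a direct maximality argument using the additive primality hypothesis. For $(\Rightarrow)$, first handle the case $U \in G^{(0)}$. By \cref{lem:units and Un}, $n \in U$ iff $E(n) \in U$, so additive primality for $U$ reduces to additive primality in $B$; the latter follows from the Gelfand representation $B \cong C_0(X_B)$, since for each character $\phi$ the implication $\phi(b_1 + b_2) \neq 0 \Rightarrow \phi(b_1) \neq 0$ or $\phi(b_2) \neq 0$ is elementary. For a general ultrafilter $U$, pick any $u \in U$ and note that $u^*(m + n) = u^*m + u^*n$ lies in $U^*U \subseteq (U^*U)^< = \mathsf{s}(U)$, which is a unit ultrafilter. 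The unit case provides $u^*m \in \mathsf{s}(U)$ or $u^*n \in \mathsf{s}(U)$. Say the former; using \cref{Interpolation}, pick $l \in \mathsf{s}(U)$ with $l < u^*m$, and argue $ul < m$ in $N$ by exhibiting an explicit witness for the domination relation. Then $ul \in U \cdot \mathsf{s}(U) = U$ via \cref{UltraGroupoid}, and $m \in (ul)^< \subseteq U$ by upward closure.

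For $(\Leftarrow)$, assume $U$ is a proper nonempty filter satisfying \eqref{eq:Ultra+}, and suppose for contradiction that $V \supsetneq U$ is a proper filter containing some $v \in V \setminus U$. For any $u \in U \subseteq V$, down-directedness of $V$ yields $w \in V$ with $w < u$ and $w < v$. From $w < u$ one deduces $w \sqsubseteq u$ by taking limits of $b_k = p_k(ws(ws)^*)$, so \cref{lem:RR} provides $u - w \in N$ and the decomposition $u = w + (u - w) \in U$. Applying \eqref{eq:Ultra+} to this decomposition gives either $w \in U$ (yielding $v \in w^< \subseteq U$, contradicting $v \notin U$) or $u - w \in U$. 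In the latter case one iterates, extracting via a Zorn's lemma argument a maximal compatible sum $\sum w_i$ of elements in $V$ with each $w_i < v$ and below the previous remainder. The final remainder $u - \sum w_i \in U$ then has ``support disjoint from $v$'' in the sense that any common lower bound with $v$ must vanish, contradicting the down-directedness of the proper filter $V$.

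The main obstacle is twofold. In $(\Rightarrow)$, the step $ul < m$ requires careful use of \eqref{OneSidedDomination} together with \eqref{Binormal} to exhibit the witness element; a naive attempt through $uu^*m \sqsubseteq m$ fails abstractly because $uu^*m$ is a range-side restriction of $m$ rather than a source-side one, so $\sqsubseteq$ does not literally hold between them. In $(\Leftarrow)$, making the Case 2 iteration rigorous is the most subtle point: one must convert the support-theoretic intuition into an algebraic argument ensuring that the Zorn limit and $v$ share no nonzero common lower bound in $N$, likely leveraging \cref{lem:RR} together with the closedness of $\sqsubseteq$ from \cref{prop:partial} and the closedness of each $<_s$ to pass to limits of the compatible sums.
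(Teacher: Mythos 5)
Your backward direction (additive primality implies maximality, argued contrapositively) breaks at the very first step. The claim that $w<u$ implies $w\sqsubseteq u$ is false: domination only says that the \emph{supports} are compactly contained, not that the \emph{values} agree. Concretely, in $B=C_0(\mathbb{R})$ take $u$ a bump equal to $1$ on $[-1,1]$ and $w=\tfrac12 u'$ for a bump $u'$ supported in $(-1,1)$; then $w<u$ but no sequence $(b_k)\subseteq B$ can satisfy $ub_k\rightarrow w$ while $wb_k\rightarrow w$, since that forces $b_k\rightarrow 1$ on $\supp^\circ(w)$ and hence $ub_k\rightarrow u\neq w$ there. (Your proposed witness $b_k=p_k(ws(ws)^*)$ also does not typecheck as a right multiplier of $w$.) Consequently \cref{lem:RR} does not apply, $u-w$ need not lie in $N$ at all, and \eqref{eq:Ultra+} — which quantifies only over summands in $N$ — says nothing about the decomposition $u=w+(u-w)$. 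The paper avoids exactly this trap by using the summand $un'n\in uB\subseteq N$ (where $m<_{n'}n<t,u$ in the larger filter $T$), i.e.\ a genuine $B$-multiple of $u$, and then shows directly that neither $un'n$ nor $u-un'n$ lies in $U$ while their sum is $u$; no Zorn iteration or ``disjoint support'' limit is needed, and your sketch of that iteration is in any case too vague to repair the missing first step.

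The forward direction is structurally sound in its reduction to unit ultrafilters (the passage from $u^*m\in\mathsf{s}(U)$ back to $m\in U$ via $ul<_{su^*}m$ does work, using \eqref{OneSidedDomination}), but the base case is a real gap, and a dangerous one: the assertion that $U\cap B$ is the cozero set of a character is not a consequence of Gelfand duality alone — it is essentially the content of \cref{ComplementHomeo}, whose proof in the paper \emph{invokes} \cref{Ultra+} to show $B\setminus U$ is closed under addition. So citing that correspondence here is circular, and proving it independently (e.g.\ by showing $\bigcap_{b\in U\cap B}\overline{\supp}(b)\neq\emptyset$ via compactness and then using maximality to identify $U\cap B$ with $\{b:b(x)\neq 0\}$) is a nontrivial argument you have not supplied. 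The paper sidesteps the unit-case reduction entirely: it builds the auxiliary filter $T=\{t>E(r_ku^*)u\}$ containing $U$ and $m$, uses maximality to force $E(r_ku^*)=0$, and derives the contradiction $0=E(w^*(m_j+n_j))\in\mathsf{s}(U)$. You would need either to adopt that argument or to write out the commutative base case in full, taking care not to lean on any result downstream of \cref{Ultra+}.
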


\begin{proof}
Take a proper non-empty filter $U\subseteq N$.  If $U$ is not an ultrafilter, then $U$ is contained in a strictly larger proper filter $T$.  Take $t\in T\setminus U$ and $u\in U$.  As $T$ is a filter, we have $m,n\in T$ with $m<n<t,u$.  Take $n'$ with $m<_{n'}n$, necessarily with $n'\in T^{-1}=T^*$, and note that $u-un'n\notin T$ -- otherwise we would obtain the contradiction $0=(u-un'n)n'm\in TT^*T\subseteq T$.  Taking $t'$ with $n<_{t'}t$, we also note that $un'n<_{t'}t$.  Indeed, taking $u'$ with $n<_{u'}u$, we see that $un'nu',nu'ut'\in B$, by \cref{cor:binormal}, and hence $un'nt'=un'nu'ut'\in B$, by \cite[Proposition 1.6]{Bice2023}, as required.  It follows that $un'n\notin U$ -- otherwise $t\in U^<\subseteq U$, contradicting our choice of $t$.  Thus $(u-un'n)+un'n=u\in U$ even though $u-un'n\notin T\supseteq U$ and $un'n\notin U$, showing that $U$ fails to satisfy \eqref{eq:Ultra+}.

Conversely, looking for a contradiction, say we have an ultrafilter $U$ with $m+n\in U$, for some $m,n\in N\setminus U$.  By \cref{DominatedApproximation}, we have sequences $(m_k)\subseteq m^>$ and $(n_k)\subseteq n^>$ dominated by $m$ and $n$ respectively with $m_k\rightarrow m$ and $n_k\rightarrow n$, hence $m_k+n_k\rightarrow m+n$.  By \cref{OpenOneSidedDomination}, $U=\bigcup_{u\in U}u^<$ is open and hence $m_j+n_j\in U$, for some $j\in\mathbb{N}$.  By \cref{Interpolation}, we then have $(r_k),(s_k)\subseteq N$ with $m_j<r_{k+1}<r_k<m$ and $n_j<s_{k+1}<s_k<n$, for all $k\in\mathbb{N}$.

Now note that we have a filter $T$ containing $U$ and $m$ given by
\[T=\{t>E(r_ku^*)u: \text{ for some }u\in U\text{ and }k\in\mathbb{N}\},\]
thanks to \cite[Proposition 5.1 and Lemma 5.3]{Bice2023} and \cite[Proposition 5.7 and 5.8]{Bice2022}.  As $U$ is an ultrafilter and $m\in T\setminus U$, it follows that $T=N$ so $E(r_ku^*)u=0$ and hence $E(r_ku^*)=0$, for some $u\in U$ and $k\in\mathbb{N}$.  Taking $r\in N$ with $m_j<_rr_k$, we see that $E(m_ju^*)=E(m_jrr_ku^*)=m_jrE(r_ku^*)=0$ as well.  Likewise $E(n_jv^*)=0$, for some $v\in U$.  Taking $w\in U$ with $w<u,v$, it follows that $E(m_jw^*)=0=E(n_jw^*)$ and hence $0=E(w^*(m_j+n_j))\in E(U^*U)\subseteq \mathsf{s}(U)$, a contradiction.
\end{proof}

We can now show that maximal ideals in $B$ are just the complements of unit ultrafilters and that the resulting map is even a homeomorphism from $G^{(0)}$ onto the maximal ideal space $X_B$ (as described in \cref{NormedSpaces}).

\begin{thm}\label{ComplementHomeo}
    The map $h:G^{(0)}\rightarrow X_B$ given by $h(U)=B\setminus U$ is a homeomorphism.
\end{thm}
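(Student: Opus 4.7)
The plan is to verify directly that $h$ is bijective and identifies the basis $\{\mathcal{U}_b\}_{b \in B}$ of $G^{(0)}$ (provided by \eqref{G0B} and \cref{BG0}) with the basis $\{X_b\}_{b \in B}$ of $X_B$; once this is done, continuity of both $h$ and $h^{-1}$ follows immediately from the identities $h^{-1}(X_b) = \mathcal{U}_b$ and $h(\mathcal{U}_b) = X_b$, which are evident from the definition $h(U) = B \setminus U$.

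For well-definedness, I would show $I_U := B \setminus U$ is a closed proper ideal of $B$ for each $U \in G^{(0)}$. Closedness follows from openness of $U = \bigcup_{u \in U} u^<$ in $N$ via \cref{OpenOneSidedDomination}. Additive closure is exactly \cref{Ultra+}. For scalar closure, the key is that $\alpha U = U$ for $\alpha \neq 0$: given $u \in U$ and $m \in U$ with $m <_s u$, a direct unpacking of the definition yields $m <_{\alpha^{-1} s} \alpha u$, so $\alpha u \in U^< = U$. Closure under multiplication by $B$ comes from a domination argument: if $c \in I_U$ and $b \in B$ with $bc \in U$, take $n \in U$ with $n <_s bc$; then $nsb \in B$ (since $ns, b \in B$) and $n = n(sb)c$, so \eqref{WeakDomination} gives $n < c$, forcing the contradiction $c \in U$.

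To promote this to maximality, I would show that $U \cap B$ is a maximal filter in $(B, <)$: any strictly larger proper filter $V \subseteq B$ would force the up-closure $V^< := \{n \in N : \exists v \in V,\ v < n\}$ to be a proper filter in $N$ strictly containing $U$, contradicting maximality of $U$. The key containment $U \subseteq V^<$ is established as follows: for each $u \in U$, pick $u' \in U$ with $u' < u$; then $E(u') \in U \cap B \subseteq V$ by \cref{lem:units and Un}, while $E(u') \sqsubseteq u' < u$ yields $E(u') < u$ via \eqref{Auxiliarity}, placing $u \in V^<$. The standard Gelfand-theoretic correspondence (noting that by \eqref{DominationExpectation} domination on $B$ may be witnessed within $B$ and so coincides with compact containment of supports on $B \cong C_0(X_B)$) then identifies maximal filters in $(B, <)$ with points of $X_B$, giving $I_U$ as a maximal ideal.

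Bijectivity of $h$ now follows: injectivity, since for $U \in G^{(0)}$ we have $n \in U$ iff $E(n) \in U \cap B$ by \cref{lem:units and Un}, so $U$ is determined by $U \cap B = B \setminus h(U)$; surjectivity, since for each $I \in X_B$ the set $W := B \setminus I$ is a maximal filter in $(B, <)$ by Gelfand, so $W^< \subseteq N$ is a proper filter ($0 \in W^<$ would require some $v \in W$ with $v < 0$, forcing $v = 0$), extendible by Zorn to an ultrafilter $U$, necessarily in $G^{(0)}$ since $W \subseteq U \cap B$, and with $U \cap B = W$ (hence $h(U) = I$) by the maximality of $W$. I anticipate the main obstacle to be the $U \subseteq V^<$ step of the maximality argument, since it is the one point where the restriction-domination interplay of \eqref{Auxiliarity}, the expectation-based characterisation \cref{lem:units and Un}, and the $E$-image $E(u') \in B$ must all be deployed in concert.
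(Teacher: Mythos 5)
Your proof is correct and shares the overall architecture of the paper's own: establish that $B\setminus U$ is a maximal ideal, get injectivity from \cref{lem:units and Un} (an $E$-based determination of $U$ by $U\cap B$), get surjectivity by extending $B\setminus I$ to an ultrafilter via Kuratowski--Zorn, and match the bases $\{\mathcal{U}_b\}_{b\in B}$ and $\{X_b\}_{b\in B}$. The one genuinely different step is maximality. The paper gets it in one line by observing that $B\setminus U$ is \emph{prime} (if $b,c\in U$ then $bc\in UU\subseteq U$) and invoking the standard fact that a proper nonempty closed prime ideal of a commutative C*-algebra is maximal. You instead show that $U\cap B$ is a maximal filter in $(B,<)$ via the lift $V\mapsto V^<$ (your $U\subseteq V^<$ step, through $E(u')\sqsubseteq u'<u$ and \eqref{Auxiliarity}, is exactly right) and then appeal to the correspondence between maximal round filters on $C_0(X_B)$ and points of $X_B$. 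That route works --- and one direction of that correspondence is needed for surjectivity in both proofs --- but it is heavier: the ``standard Gelfand-theoretic correspondence'' you invoke is really a domain-theoretic fact whose maximal-filter-to-point direction needs a small compactness argument (the closed supports of the filter elements have the finite intersection property inside a compact set, so meet in some $x$, and maximality forces the filter to equal $\{f:f(x)\neq0\}$), whereas primeness sidesteps all of this. Two smaller, harmless divergences: your closure of $B\setminus U$ under multiplication by $B$ goes through \eqref{WeakDomination} to get $n<c$, where the paper deduces $m<b$ from $m<bc$ via \cref{lem:Symmetry} and a cited result of Bice; and you handle scalar closure explicitly via $\alpha U=U$, which the paper leaves implicit (it also follows from closedness together with multiplication by $\alpha$ times an approximate unit). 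Finally, the cleanest citation for $\{\mathcal{U}_b\}_{b\in B}$ being a basis of $G^{(0)}$ is \cref{lem:units and Un} (giving $\mathcal{U}_n\cap G^{(0)}=\mathcal{U}_{E(n)}$) rather than \eqref{G0B} and \cref{BG0} alone, though the claim can also be pieced together from \eqref{G0B}, \eqref{Bbounded} and down-directedness of ultrafilters.
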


\begin{proof}
    If $U\in G^{(0)}$ then $U\cap B\neq\emptyset$ by \eqref{G0B}, so $B\setminus U$ is a proper subset of $B$.  Also $0\notin U$ so $0\in B\setminus U$ and, in particular, $B\setminus U$ is not empty.  As $U=\bigcup_{m\in U}m^<$ is open by \cref{OpenOneSidedDomination}, and $B$ is closed, so is $B\setminus U$.  By \cref{Ultra+}, $B\setminus U$ is closed under addition.  Also, for any $b\in B\setminus U$ and $c\in B$, we see that $bc\in B\setminus U$ as well -- if we had $bc\in U$ then we would have $m\in U$ with $m<bc$ and hence $m<b$, by \cref{lem:Symmetry} and \cite[Proposition 8.7]{Bice2022}, implying that $b\in U^<\subseteq U$, a contradiction.  Thus $B\setminus U$ is a proper nonempty closed ideal.  Moreover, for any $b,c\in B$ with $bc\in B\setminus U$, we must have either $b\in B\setminus U$ or $c\in B\setminus U$ -- otherwise $b,c\in U$ and hence $bc\in UU\subseteq U$, a contradiction.  Thus $B\setminus U$ is also prime and hence a maximal ideal, i.e. $h$ does indeed map $G^{(0)}$ to $X_B$.

    To see that $h$ is injective, take distinct $T, U\in G^{(0)}$ so we have some $t\in T\setminus U$.  Thus $T\in\mathcal{U}_t\cap G^{(0)}=\mathcal{U}_{E(t)}$ by \cref{lem:units and Un}, and hence $U\in G^{(0)}\setminus\mathcal{U}_t= G^{(0)}\setminus\mathcal{U}_{E(t)}$ by \cref{lem:units and Un} again. It follows that $E(t)\in T\setminus U$ and hence $E(t)\in(B\setminus U)\setminus(B\setminus T)$.  In particular, $B\setminus U\neq B\setminus T$, as required.

    To see $h$ is surjective, take $I\in X_B$.  For any $b,c\in B\setminus I$, note $b(I)\neq0\neq c(I)$ under the identification of $B$ with $C_0(X_B)$.  Taking $d\in B$ with $I\in\mathrm{supp^\circ}(d)\Subset\mathrm{supp^\circ}(b)\cap\mathrm{supp^\circ}(c)$, we see that $d<b,c$, by \cref{DominationSupports}.  So $B\setminus I$ is a proper (because $0\notin B\setminus I$) directed subset and Kuratowski-Zorn yields an ultrafilter $U$ containing $B\setminus I$.  So $B\setminus U\subseteq I$ and hence $B\setminus U=I$, as we have already shown that $B\setminus U$ is a maximal ideal.

    By \cref{lem:units and Un}, $\{\VV_b\}_{b\in B}$ is basis for the topology on $G^{(0)}$.  And for each $b\in B$,
    \[h(\VV_b)=\{h(U):U\in \VV_b\}=\{B\setminus U:b\in U\in G^{(0)}\}=\{I\in X_B: b\notin I\}=X_b.\]
    As $\{X_b\}_{b\in B}$ is a basis for $X_B$, this shows that $h$ is a homeomorphism.
\end{proof}

It follows that, for every $U\in G^{(0)}$, there is a unique $I\in X_B$ such that
\[U\cap B=\{b\in B:b(I)\neq0\}.\]
Indeed, by the above result we can take $I=B\setminus U\in X_B$ and then
\[\{b\in B:b(I)\neq0\}=\{b\in B:b\notin I\}=\{b\in B:b\in U\}=U\cap B.\]
Uniqueness is thus immediate from the injectivity of $h$ above.

We have already noted in \cref{DominationSupports} that domination corresponds to compact containment of supports in twisted groupoid C*-algebras.  This remains valid in general if we replace supports with the corresponding subsets of the ultrafilter groupoid.

\begin{prop}\label{prop:rainbow}
For all $m,n\in N$, $m< n$ if and only if $\mathcal{U}_m\Subset\mathcal{U}_n$.
\end{prop}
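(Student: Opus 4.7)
The plan is to reduce both implications to a statement about the open supports of $m^*m$ and $n^*n$ in $B$, transported to $X_B$ via \cref{ComplementHomeo}. Two structural facts drive the reduction: first, $\mathcal{U}_n$ is an open bisection with $\mathsf{s}(\mathcal{U}_n)=\mathcal{U}_n^{-1}\mathcal{U}_n=\mathcal{U}_{n^*n}$ by \eqref{eq:unm}, so $\mathsf{s}$ restricts to a homeomorphism $\mathcal{U}_n\to\mathcal{U}_{n^*n}$; second, $h$ identifies $\mathcal{U}_b\subseteq G^{(0)}$ with $X_b\subseteq X_B$ for each $b\in B$, turning compact containments in $G^{(0)}$ into compact containments of supports in $X_B$.

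For the forward direction, I would start from $m<_s n$ and exploit that $m^*m=(msn)^*(msn)=n^*s^*\cdot m^*m\cdot sn$ is a product entirely within $B$, hence a pointwise identity on $X_B$. Since $(n^*s^*)(I)=\overline{(sn)(I)}$, this forces $|(sn)(I)|=1$ on $X_{m^*m}$, so $K_0:=\{I\in X_B:|(sn)(I)|\geq 1/2\}$ is a compact set containing $X_{m^*m}$. The $B$-inequality $|sn|^2=n^*(s^*s)n\leq\|s\|^2 n^*n$, obtained by sandwiching $s^*s\leq\|s\|^2$ between $n^*$ and $n$, places $K_0$ inside $X_{n^*n}$. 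Lifting $K_0$ via $h^{-1}$ into $\mathcal{U}_{n^*n}=\mathsf{s}(\mathcal{U}_n)$ and pulling back through the bisection homeomorphism $\mathsf{s}|_{\mathcal{U}_n}$ produces a compact subset of $\mathcal{U}_n$ containing $\mathcal{U}_m$ (using $\mathcal{U}_m\subseteq\mathcal{U}_n$, which holds by up-closure of ultrafilters since $m<n$).

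For the backward direction, I would first notice that since $\mathcal{U}_m\subseteq\mathcal{U}_n$ and $\mathcal{U}_n$ is a bisection, every defined element of $\mathcal{U}_{mn^*}=\mathcal{U}_m\mathcal{U}_n^{-1}$ has the form $UU^{-1}\in G^{(0)}$, whence $mn^*\in B$ by \cref{BG0}. Applying $\mathsf{s}$ to $\mathcal{U}_m\Subset\mathcal{U}_n$ gives $\mathcal{U}_{m^*m}\Subset\mathcal{U}_{n^*n}$ in $G^{(0)}$, which via $h$ becomes $X_{m^*m}\Subset X_{n^*n}$ in $X_B$; so $(n^*n)(I)\geq\epsilon$ on the compact set $\mathrm{cl}(X_{m^*m})$ for some $\epsilon>0$. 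Choosing continuous $g:[0,\infty)\to[0,\infty)$ with $g(0)=0$ and $xg(x)=1$ on $[\epsilon,\infty)$ and setting $s=n^*g(nn^*)\in N$, the functional calculus identity $ng(n^*n)=g(nn^*)n$ gives $sn=n^*ng(n^*n)\in B$ equal to $1$ on $X_{m^*m}$, so $m^*m\cdot sn=m^*m$ and therefore $m\cdot sn=m$ via the identity $a=ab\Leftrightarrow a^*a=a^*ab$ used in the proof of \cref{mnstar}; meanwhile $ms=(mn^*)g(nn^*)\in B$, and \eqref{WeakDomination} then delivers $m<n$.

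The main technical hurdle will be the forward direction, where a single $B$-element (namely $sn$) must simultaneously be bounded away from zero on $X_{m^*m}$ and have its support inside $X_{n^*n}$; it is the C*-inequality $|sn|^2\leq\|s\|^2 n^*n$ that delivers the support containment in one clean step.
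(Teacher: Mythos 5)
Your proof is correct, and it follows the same overall skeleton as the paper's: reduce compact containment to the sources via the bisection homeomorphism and \eqref{eq:unm}, transport to $X_B$ via \cref{ComplementHomeo}, deduce $mn^*\in B$ from $\mathcal{U}_m\mathcal{U}_n^{-1}\subseteq G^{(0)}$ and \cref{BG0}, and close with \eqref{WeakDomination}. The difference lies in how the commutative core is handled. The paper first establishes the proposition for $m,n\in B$ by combining \cref{DominationSupports}, \eqref{G0B} and \cref{ComplementHomeo}, and in the forward direction invokes an external result ($m<n\Rightarrow m^*m<n^*n$, \cite[Proposition 5.7]{Bice2022}) before applying that $B$-case; in the backward direction it extracts a witness $b$ with $m^*m<_bn^*n$ from the $B$-case and uses binormality to see $mbn^*\in B$. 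You instead make both steps self-contained by direct functional calculus in $C_0(X_B)$: the identity $m^*m=(msn)^*(msn)=|sn|^2\,m^*m$ forces $|sn|=1$ on $X_{m^*m}$, and the inequality $|sn|^2\leq\|s\|^2n^*n$ yields $X_{m^*m}\Subset X_{n^*n}$ in one stroke; conversely you build the explicit witness $s=n^*g(nn^*)$ with $sn=(n^*n)g(n^*n)=1$ on $\mathrm{cl}(X_{m^*m})$ and $ms=(mn^*)g(nn^*)\in B$. What your route buys is independence from the citation to \cite{Bice2022} and from the $B$-instance of the statement being proved; what the paper's route buys is brevity and reuse of machinery already set up for \cref{DominationSupports}. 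All the small verifications in your argument (that $s\in N$, that $m^*m\cdot sn=m^*m$ upgrades to $m\cdot sn=m$ via the $a=ab\Leftrightarrow a^*a=a^*ab$ trick from \cref{mnstar}, and that the compact set $\{|sn|\geq 1/2\}$ pulls back to a compact subset of $\mathcal{U}_n$ containing $\mathcal{U}_m$) check out.
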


\begin{proof}
    By \cref{DominationSupports}, \eqref{G0B} and \cref{ComplementHomeo}, we have that \cref{prop:rainbow} holds for all $m,n\in B$.  For all $m,n\in N$, we also know from \eqref{eq:unm} that
    \[\mathcal{U}_m\mathcal{U}_n=\mathcal{U}_{mn}.\]
    Now if $m<n$, then $\mathcal{U}_m\subseteq\mathcal{U}_n$ because ultrafilters are upwards closed.  To show $\mathcal{U}_m\Subset\mathcal{U}_n$ it suffices to show that $\mathsf{s}(\mathcal{U}_m)\Subset \mathsf{s}(\mathcal{U}_n)$, as the source map $\mathsf{s}$ is a homeomorphism on any open bisection.  To see this, note that $m<n$ implies $m^*<n^*$, by \eqref{eq:stardom}, so $m^*m<n^*n$, by \cite[Proposition 5.7]{Bice2022}, and hence
    \[\mathsf{s}(\mathcal{U}_m)=\mathcal{U}_m^{-1}\mathcal{U}_m=\mathcal{U}_{m^*}\mathcal{U}_m=\mathcal{U}_{m^*m}\Subset\mathcal{U}_{n^*n}=\mathsf{s}(\mathcal{U}_n).\]

    Conversely, if $\mathcal{U}_m\Subset \mathcal{U}_n$ then, in particular, $\mathcal{U}_m\subseteq \mathcal{U}_n$ and hence
    \[\mathcal{U}_{mn^*}=\mathcal{U}_m\mathcal{U}_{n^*}=\mathcal{U}_m\mathcal{U}_n^{-1}\subseteq \mathcal{U}_n\mathcal{U}_n^{-1}\subseteq G^{(0)},\]
    which implies $mn^*\in B$ by Corollary~\ref{BG0}.  Again $\mathcal{U}_m\Subset\mathcal{U}_n$ implies $\mathcal{U}_{m^*m}=\mathsf{s}(\mathcal{U}_m)\Subset \mathsf{s}(\mathcal{U}_n)=\mathcal{U}_{n^*n}$ and hence $m^*m<_bn^*n$, for some $b\in B$.  Thus $m^*m=m^*mbn^*n$ so $m=mbn^*n$.  As $mbn^*\in mBn^*\subseteq B$ by \cref{cor:binormal} and the fact that $mn^*\in B$, this implies $m<n$ by \eqref{WeakDomination}.
\end{proof}

\section{Source and Range States}\label{sec:equivalence}

Here we examine how ultrafilters interact with characters on $B$.  In particular, we will see how to precisely measure the magnitude and angle between elements of any given ultrafilter.  These functions will play an important role in the following sections.

To start with, note that each $U\in G$ defines a source ideal $B\setminus \mathsf{s}(U)$ and a range ideal $B\setminus \mathsf{r}(U)$ by \cref{ComplementHomeo}.  These are the kernels (as noted in \Cref{NormedSpaces}) of what we then call the source and range states of $U$, which we denote by
\begin{align}\label{eq:srstates}
\psi_U:=\langle B\setminus \mathsf{s}(U)\rangle\qquad\text{and}\qquad\psi^U:=\langle B\setminus \mathsf{r}(U)\rangle.
\end{align}

\begin{remark}
If $A$ is a $D$-contractive C*-completion of $C_c(\Sigma;G)$ for a twist $q:\Sigma\twoheadrightarrow G$ and $N=\mathrm{cl}(\Nc(\Sigma;G))$ is its monomial semigroup, as in \cref{UltrafiltersArePoints}, these states are given by evaluation at the source and range of the corresponding elements of $\Sigma$, i.e.~for $g=q(e)$,
\[\psi_{U_{g}}(b)=j(b)(\mathsf{s}(e))\qquad\text{and}\qquad\psi^{U_{g}}(b)=j(b)(\mathsf{r}(e)).\]
\end{remark}

We can always calculate the source state of a particular ultrafilter $U$ from its range state and vice versa by using the following formula.

\begin{lemma}\label{prop:psiU}
    For all $U\in G$ and $n\in U$,
    \begin{equation*}\psi^U(b)=\psi_U(n^*bn)/\psi_U(n^*n).
    \end{equation*}
\end{lemma}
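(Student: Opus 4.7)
The plan is to define $\phi : B \to \mathbb{C}$ by $\phi(b) := \psi_U(n^*bn)/\psi_U(n^*n)$ and to show that $\phi$ is a nonzero $*$-character on $B$ coinciding with $\psi^U$.  The denominator is nonzero: since $n \in U$, we have $n^*n \in U^{-1}U \subseteq \mathsf{s}(U) \cap B$, so $\psi_U(n^*n) \neq 0$ by \cref{ComplementHomeo}.

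For multiplicativity, given $b,c \in B$, the commutativity of $b$, $c$, and $nn^*$ inside $B$ gives the key identity
\[(n^*bn)(n^*cn) \;=\; n^*(b \cdot nn^* \cdot c)n \;=\; n^*(nn^* \cdot bc)n \;=\; (n^*n)(n^*bcn).\]
Applying $\psi_U$ yields $\psi_U(n^*bn)\psi_U(n^*cn) = \psi_U(n^*n)\psi_U(n^*bcn)$, whence $\phi(b)\phi(c) = \phi(bc)$; linearity and $*$-compatibility are immediate, and $\phi(nn^*) = \psi_U((n^*n)^2)/\psi_U(n^*n) = \psi_U(n^*n) \neq 0$ shows $\phi$ is nonzero.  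By \cref{ComplementHomeo}, $\phi$ corresponds to a unique $V \in G^{(0)}$ with $V \cap B = \{b \in B : \phi(b) \neq 0\}$.

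It then suffices to prove $V = \mathsf{r}(U)$, which reduces to the equivalence, for $b \in B$:
\[n^*bn \in \mathsf{s}(U) \quad \Longleftrightarrow \quad b \in \mathsf{r}(U).\]
By \eqref{eq:unm}, $\mathcal{U}_{n^*bn} = \mathcal{U}_{n^*}\mathcal{U}_b\mathcal{U}_n$.  For $(\Leftarrow)$, if $b \in \mathsf{r}(U)$ then the factorization $\mathsf{s}(U) = U^{-1} \cdot \mathsf{r}(U) \cdot U$ directly exhibits $\mathsf{s}(U) \in \mathcal{U}_{n^*}\mathcal{U}_b\mathcal{U}_n$.  For $(\Rightarrow)$, any composable decomposition $\mathsf{s}(U) = X \cdot Y \cdot Z$ with $X \in \mathcal{U}_{n^*}$, $Y \in \mathcal{U}_b \subseteq G^{(0)}$, and $Z \in \mathcal{U}_n$ collapses, since $Y$ is a unit, to $\mathsf{s}(U) = X \cdot Z$ with $\mathsf{r}(X^{-1}) = \mathsf{s}(X) = Y = \mathsf{r}(Z)$; bisection injectivity of $\mathsf{r}$ on $\mathcal{U}_n$ then forces $X^{-1} = Z$, so $\mathsf{s}(U) = \mathsf{s}(Z)$, whence $Z = U$, $X = U^{-1}$, and $Y = \mathsf{r}(U) \in \mathcal{U}_b$, giving $b \in \mathsf{r}(U)$.

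The main obstacle I anticipate is the bisection factorization argument in the converse direction: recovering the unique triple $(U^{-1}, \mathsf{r}(U), U)$ from an arbitrary decomposition of $\mathsf{s}(U)$ in $\mathcal{U}_{n^*}\mathcal{U}_b\mathcal{U}_n$ requires careful use of the injectivity of source and range on the bisection $\mathcal{U}_n$, together with the fact that elements of $\mathcal{U}_b$ are units.  The multiplicativity computation, by contrast, hinges only on the commutativity of $B$.
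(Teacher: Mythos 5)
Your proof is correct, and its first half is essentially the paper's: you establish multiplicativity of $b\mapsto\psi_U(n^*bn)/\psi_U(n^*n)$ via exactly the same commutativity identity $(n^*bn)(n^*cn)=(n^*n)(n^*bcn)$ in $B$ (the paper just normalises $\psi_U(n^*n)=1$ first). Where you diverge is in identifying the resulting character with $\psi^U$. The paper observes that since both kernels are \emph{maximal} ideals, a single containment $\ker(\chi)\subseteq\ker(\psi^U)$ suffices, and that containment needs only the easy implication $b\in \mathsf{r}(U)\Rightarrow n^*bn\in U^*\mathsf{r}(U)U\subseteq \mathsf{s}(U)$ --- which is precisely your $(\Leftarrow)$ direction, phrased without the $\mathcal{U}$-sets. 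You instead prove the full equivalence $n^*bn\in\mathsf{s}(U)\Leftrightarrow b\in\mathsf{r}(U)$, and your $(\Rightarrow)$ direction, while valid (the collapse of a composable triple $X\cdot Y\cdot Z=\mathsf{s}(U)$ to $(U^{-1},\mathsf{r}(U),U)$ via injectivity of $\mathsf{r}$ and $\mathsf{s}$ on the bisection $\mathcal{U}_n$ is a correct and rather pleasant piece of groupoid bookkeeping), is work the maximal-ideal shortcut renders unnecessary. So: same skeleton, but the paper's appeal to maximality of kernels buys a one-line finish, whereas your version buys a self-contained groupoid-level proof of the kernel equality without invoking that Gelfand-theoretic fact.
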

\begin{proof}
    If $n\in U$ then $n^*n\in U^*U\subseteq \mathsf{s}(U)$ and hence $\psi_U(n^*n)=\langle B\setminus \mathsf{s}(U)\rangle(n^*n)>0$.  Replacing $n$ with $n/\sqrt{\psi_U(n^*n)}$ if necessary, we may then assume that $\psi_U(n^*n)=1$.  Then note the map $\chi:B \to \mathbb{C}$ defined by $\chi(b)=\psi_U(n^*bn)$ is multiplicative because, for all $b,c\in B$, \[\psi_U(n^*bcn)=\psi_U(n^*bcn)\psi_U(n^*n)=\psi_U(n^*bcnn^*n)=\psi_U(n^*bnn^*cn)=\psi_U(n^*bn)\psi_U(n^*cn).\]
    As characters on $B$ are determined by their kernels, it suffices to show $\mathrm{ker}(\chi)=\mathrm{ker}(\psi^U)$ or even just $\mathrm{ker}(\chi)\subseteq\mathrm{ker}(\psi^U)$, as these kernels are maximal ideals.  To see this just note that $\psi^U(b)\neq0$ means $b\in \mathsf{r}(U)$, so $n^*bn\in U^*\mathsf{r}(U)U\subseteq \mathsf{s}(U)$ and thus $\chi(b)=\psi_U(n^*bn)\neq0$.
\end{proof}

The map to source (and range) states is also continuous in the following sense.

\begin{lemma}\label{StateContinuity}
    If $U_{\lambda}\rightarrow U$ in $G$ and $b_\lambda\rightarrow b$ in $B$ then $\psi_{U_\lambda}(b_\lambda)\rightarrow\psi_U(b)$.
\end{lemma}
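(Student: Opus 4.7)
The plan is to deduce this from the fact that convergence in $X_B$ is precisely pointwise convergence of the corresponding characters, combined with the continuity of the source map on the étale groupoid $G$.

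First I would observe that since $G$ is an étale groupoid (by \cref{UltraGroupoid}), the source map $\mathsf{s}:G\rightarrow G^{(0)}$ is continuous, so $U_\lambda\to U$ forces $\mathsf{s}(U_\lambda)\to\mathsf{s}(U)$ in $G^{(0)}$. Applying the homeomorphism $h:G^{(0)}\rightarrow X_B$ from \cref{ComplementHomeo}, we obtain $B\setminus\mathsf{s}(U_\lambda)\to B\setminus\mathsf{s}(U)$ in the hull-kernel topology on $X_B$.

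Next I would invoke the standard fact that, on the maximal ideal space of a commutative C*-algebra, the hull-kernel topology agrees with the Gelfand topology, i.e.\ with pointwise convergence of the corresponding characters. Indeed, the basic open $X_c=\{I\in X_B:c\notin I\}=\{I:\langle I\rangle(c)\neq 0\}$ is exactly the preimage of $\mathbb{C}\setminus\{0\}$ under the evaluation map $I\mapsto\langle I\rangle(c)$, and these basic opens generate both topologies. Hence $\psi_{U_\lambda}(c)\to\psi_U(c)$ for every fixed $c\in B$.

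Finally, a triangle-inequality argument upgrades pointwise convergence to joint convergence: since each $\psi_{U_\lambda}$ is a character on $B$, it is a contraction, so
\begin{equation*}
|\psi_{U_\lambda}(b_\lambda)-\psi_U(b)|\leq|\psi_{U_\lambda}(b_\lambda-b)|+|\psi_{U_\lambda}(b)-\psi_U(b)|\leq\|b_\lambda-b\|+|\psi_{U_\lambda}(b)-\psi_U(b)|,
\end{equation*}
and both summands tend to $0$. There is no real obstacle here; the only point requiring a moment's care is the identification of the hull-kernel and Gelfand topologies on $X_B$, which is a classical consequence of Gelfand duality as set up in \cref{NormedSpaces}.
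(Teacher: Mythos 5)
Your proof is correct and follows essentially the same route as the paper: continuity of the source map, the homeomorphism of \cref{ComplementHomeo} onto $X_B$, weak$^*$ (pointwise) convergence of the corresponding characters, and finally the joint-convergence upgrade via contractivity. The paper compresses the last two steps into a single sentence, so your explicit triangle-inequality argument just fills in detail it leaves implicit.
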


\begin{proof}
    As $G$ is \'etale, the source map is continuous so $U_\lambda\rightarrow U$ implies $\mathsf{s}(U_\lambda)\rightarrow \mathsf{s}(U)$.  Then by \cref{ComplementHomeo}, $B\setminus \mathsf{s}(U_\lambda)\rightarrow B\setminus \mathsf{s}(U)$ in the maximal ideal space $X_B$, so $\langle B\setminus \mathsf{s}(U_\lambda)\rangle\rightarrow\langle B\setminus \mathsf{s}(U)\rangle$ in the weak$^*$-topology on states.  This means that $\psi_{U_\lambda}(b_\lambda)\rightarrow\psi_U(b)$.
\end{proof}

\subsection{Magnitudes}

For any $U\in G$, we can define the \emph{$U$-magnitude} of any $n\in U$ by
\[|n|_U:=\sqrt{\psi_U(n^*n)}.\]

\begin{remark}
When $A$ is a $D$-contractive C*-completion of $C_c(\Sigma;G)$ for some twist $q:\Sigma\twoheadrightarrow G$ and $N=\mathrm{cl}(\Nc(\Sigma;G))$ is its monomial semigroup, we saw in \cref{UltrafiltersArePoints} that $G$ is isomorphic to the ultrafilters in $N$ with respect to $<$.  For 
any $n\in N$ and $e\in\Sigma$, the $U_{q(e)}$-magnitude (where $U_{q(e)}$ is the ultrafilter corresponding to $q(e)\in G$) is just the absolute value of $j(n)$ at $e$, i.e.
\[|n|_{U_{q(e)}}=|j(n)(e)|.\]
\end{remark}

\begin{prop}\label{UnormProperties}
    For any composable pair $(T,U)\in G^{(2)}$, $m\in T$, $n\in U$, and $\alpha\in\mathbb{C}\setminus\{0\}$,
    \[|\alpha n|_U=|\alpha||n|_U,\qquad|n^*|_{U^*}=|n|_U\qquad\text{and}\qquad|mn|_{TU}=|m|_T|n|_U.\]
\end{prop}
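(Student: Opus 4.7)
The plan is to handle the three identities in turn, each reducing to a short application of \cref{prop:psiU} together with basic identities about the source and range maps in the ultrafilter groupoid. Throughout, I will use that the source and range states $\psi_U, \psi^U$ are characters of $B$ (so in particular multiplicative and $\mathbb{C}$-linear), as noted in \cref{NormedSpaces} and \eqref{eq:srstates}.

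For the first identity, I would simply unfold the definition and invoke $\mathbb{C}$-linearity of $\psi_U$:
\[
|\alpha n|_U^2 = \psi_U((\alpha n)^*(\alpha n)) = |\alpha|^2 \psi_U(n^*n) = |\alpha|^2 |n|_U^2.
\]
Taking square roots gives the result (the case $\alpha=0$ is excluded only because $\alpha n$ might not lie in $U$).

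For the second identity, the key observation is that in any groupoid $\mathsf{s}(U^*) = \mathsf{s}(U^{-1}) = \mathsf{r}(U)$, so by \eqref{eq:srstates} we have $\psi_{U^*} = \langle B \setminus \mathsf{s}(U^*)\rangle = \langle B\setminus \mathsf{r}(U)\rangle = \psi^U$. Since $n\in U$, applying \cref{prop:psiU} with $b=nn^*$ and using that $\psi_U$ is multiplicative on $B$,
\[
|n^*|_{U^*}^2 = \psi_{U^*}(nn^*) = \psi^U(nn^*) = \frac{\psi_U(n^*nn^*n)}{\psi_U(n^*n)} = \frac{\psi_U(n^*n)^2}{\psi_U(n^*n)} = \psi_U(n^*n) = |n|_U^2,
\]
where $\psi_U(n^*n) \neq 0$ since $n^*n \in U^*U \subseteq \mathsf{s}(U)$.

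For the third identity, I will use two standard groupoid identities: composability $\mathsf{s}(T) = \mathsf{r}(U)$ (from \cref{UltraGroupoid}) and $\mathsf{s}(TU) = \mathsf{s}(U)$. The first gives $\psi^U = \langle B\setminus \mathsf{r}(U)\rangle = \langle B\setminus \mathsf{s}(T)\rangle = \psi_T$ on all of $B$, and the second gives $\psi_{TU} = \psi_U$. Noting $mn \in TU \subseteq T\cdot U$, and applying \cref{prop:psiU} with $b = m^*m \in B$,
\[
|mn|_{TU}^2 = \psi_{TU}(n^*m^*mn) = \psi_U(n^*(m^*m)n) = \psi^U(m^*m)\,\psi_U(n^*n) = \psi_T(m^*m)\,\psi_U(n^*n) = |m|_T^2\,|n|_U^2.
\]
Taking square roots concludes the proof.

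There is no real obstacle here; the only subtlety is bookkeeping the identification of source/range states with characters of $B$ via \cref{ComplementHomeo}, and the fact that the composability condition $\mathsf{s}(T) = \mathsf{r}(U)$ forces the range state of $U$ to coincide with the source state of $T$. Both are already baked into the setup, so the verification amounts to two applications of \cref{prop:psiU}.
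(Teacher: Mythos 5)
Your proof is correct and follows essentially the same route as the paper's: unfold the definition and apply \cref{prop:psiU} with $b=nn^*$ (resp.\ $b=m^*m$), using multiplicativity of the characters and the identifications $\psi_{U^*}=\psi^U$ and $\psi_{TU}=\psi_U$, $\psi^U=\psi_T$ coming from the groupoid structure. If anything, you are slightly more explicit than the paper about keeping track of the squares and about why $\psi_U(n^*n)\neq 0$, which is fine.
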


\begin{proof}
    We immediately see that
    \[|\alpha n|_U=\sqrt{\psi_U((\alpha n)^*(\alpha n))}=\sqrt{\overline{\alpha}\alpha\psi_U( n^*n)}=|\alpha||n|_U.\]
    Also, by \cref{prop:psiU},
    \[|n^*|_{U^*}=\psi^U(nn^*)=\psi_U(n^*nn^*n)/\psi_U(n^*n)=\psi_U(n^*n)\psi_U(n^*n)/\psi_U(n^*n)=|n|_U.\]
    Again \cref{prop:psiU} yields
    \[\hspace{-2pt}|mn|_{TU}=\sqrt{\psi_U(n^*m^*mn)}=\sqrt{\psi^U(m^*m)\psi_U(n^*n)}=\sqrt{\psi_T(m^*m)\psi_U(n^*n)}=|m|_T|n|_U.\qedhere\]
\end{proof}

Often it will suffice to consider elements of $U$-magnitude $1$, which we denote by
\[U_1=\{n\in U:|n|_U=1\}.\]
Note that $\tfrac{1}{|n|_U}n\in U_1$, for all $n\in U$, as $|\frac{1}{|n|_U}n|_U=\frac{1}{|n|_U}|n|_U=1$. 
 In particular, $U_1\neq\emptyset$, for all $U\in G$, which we can strengthen as follows.

\begin{prop}\label{U11}
    For all $U\in G$,
    \[U^1_1:=U_1\cap A^1\neq\emptyset.\]
\end{prop}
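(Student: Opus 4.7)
The plan is to start with an arbitrary $n\in U$ and construct the desired $m$ by right-multiplying $n$ by a suitable positive element $b\in B$ obtained via continuous functional calculus on $n^{*}n\in B$. Writing $t_{0}:=|n|_{U}^{2}=\psi_{U}(n^{*}n)$, note that $t_{0}>0$ because $n^{*}n\in U^{*}U\subseteq \mathsf{s}(U)$, so $n^*n$ lies outside $\ker\psi_U=B\setminus \mathsf{s}(U)$. I would then pick any continuous $f\colon[0,\|n\|^{2}]\to\mathbb{R}_{+}$ with $f(0)=0$, $f(t_{0})=1/\sqrt{t_{0}}$, and $tf(t)^{2}\le 1$ throughout the interval; explicitly, $f(t)=\sqrt{t}/t_{0}$ on $[0,t_{0}]$ and $f(t)=1/\sqrt{t}$ on $[t_{0},\|n\|^{2}]$ does the job, since $f$ is continuous at $t_0$, vanishes at $0$, and satisfies $tf(t)^2=t^2/t_0^2\le 1$ on $[0,t_0]$ and $tf(t)^2=1$ on $[t_0,\|n\|^2]$. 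Set $b:=f(n^{*}n)\in B$ and $m:=nb\in N$.

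The magnitude and norm conditions then follow by routine computation using that $\psi_{U}$ is a character and $B$ is commutative. Since $m^{*}m=b^{*}n^{*}nb=(n^{*}n)(b^{*}b)$, I would compute
\[|m|_{U}^{2}=\psi_{U}(n^{*}n)\,|\psi_{U}(b)|^{2}=t_{0}\cdot f(t_{0})^{2}=1,\]
while $\|m\|^{2}=\|(n^{*}n)f(n^{*}n)^{2}\|=\sup_{t\in\sigma(n^{*}n)}tf(t)^{2}\le 1$ by the functional calculus and the defining inequality on $f$.

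The main obstacle is showing $m\in U$, since filters are only up-closed under $<$ and right-multiplying $n$ by an element of $B$ is not manifestly compatible with the ultrafilter structure. The key is to combine the semigroup identity $\mathcal{U}_{nb}=\mathcal{U}_{n}\mathcal{U}_{b}$ from \eqref{eq:unm} with the source description in \cref{ComplementHomeo}. Since $\psi_{U}(b)=f(t_{0})=1/\sqrt{t_{0}}\neq 0$, we have $b\notin B\setminus \mathsf{s}(U)=\ker\psi_{U}$, so $b\in \mathsf{s}(U)$ and hence $\mathsf{s}(U)\in\mathcal{U}_{b}$. Combined with the trivial $U\in\mathcal{U}_{n}$ and the groupoid identity $U=U\cdot \mathsf{s}(U)$, this yields $U\in\mathcal{U}_{n}\mathcal{U}_{b}=\mathcal{U}_{nb}$, i.e.\ $m\in U$. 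Therefore $m\in U_{1}\cap A^{1}=U_{1}^{1}$, completing the proof.
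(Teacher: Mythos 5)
Your proof is correct and follows essentially the same route as the paper's: both take $m=nf(n^*n)$ for a continuous $f$ vanishing at $0$ with $tf(t)^2\le 1$, and both get $m\in U$ from $\psi_U(f(n^*n))\neq0$, hence $f(n^*n)\in\mathsf{s}(U)$. The only cosmetic differences are that the paper first normalises $n$ to lie in $U_1$ (so it can use a single fixed $f$ with $f(1)=1$) whereas you adapt $f$ to $t_0=|n|_U^2$, and that you invoke $\mathcal{U}_{nb}=\mathcal{U}_n\mathcal{U}_b$ where the paper uses $U\mathsf{s}(U)\subseteq U$ directly.
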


\begin{proof}
    Take any $n\in U_1$ and any continuous function $f:\mathbb{R}_+\rightarrow[0,1]$ with $f(0)=0$, $f(1)=1$ and $f(x)\leq1/\sqrt{x}$, for all $x>1$.  For any character $\phi$ on $B$ and any $b\in B_+$ with $\phi(b)=1$, it follows that $\phi(f(b))=1$ as well, as $f(1)=1$.  In particular this holds for $\phi:=\psi_U$ and $b:=n^*n$.  Then $\psi_U(f(b))=1\neq0$ implies $f(b)\in\mathsf{s}(U)$ so $m:=nf(b)\in U\mathsf{s}(U)\subseteq U$ and
    \[\|m\|^2=\|m^*m\|=\|f(b)bf(b)\|=1=\psi_U(f(b)bf(b))=\psi_U(m^*m)=|m|_U^2,\]
    showing that $m\in U^1_1$.
\end{proof}

The following lemma will soon be needed to define angles between ultrafilters.

\begin{lemma}
\label{EmnMag}
    Let $U \in G$. Whenever $m,n\in U$,
    \[|\psi_U(E(m^*n))|=|m|_U|n|_U.\]
\end{lemma}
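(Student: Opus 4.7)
The plan is to compute $|\psi_U(E(m^*n))|^2$ via \cref{lem:homom-like}, after first verifying its nonvanishing hypothesis, and then evaluate the resulting expression using normality of $E$ and \cref{prop:psiU}.

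First I would establish $\psi_U(E(m^*n))\neq 0$.  Since $U$ is a filter and $m,n\in U$, there exists $l\in U$ with $l<m$ and $l<n$.  By \eqref{eq:stardom} we also have $l^*<m^*$, and by \cref{prop:rainbow} these dominations translate to $\mathcal{U}_{l^*}\Subset\mathcal{U}_{m^*}$ and $\mathcal{U}_l\Subset\mathcal{U}_n$.  Taking products (which are defined and preserve $\Subset$ since multiplication in the étale groupoid is continuous on open bisections) yields $\mathcal{U}_{l^*l}=\mathcal{U}_{l^*}\mathcal{U}_l\Subset\mathcal{U}_{m^*}\mathcal{U}_n=\mathcal{U}_{m^*n}$, and hence $l^*l<m^*n$ by \cref{prop:rainbow} again.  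Since $l^*l\in U^*U$, this forces $m^*n\in(U^*U)^<=\mathsf{s}(U)$; then \cref{lem:units and Un} (applied to the unit $\mathsf{s}(U)$) gives $E(m^*n)\in\mathsf{s}(U)$, so $\psi_U(E(m^*n))\neq 0$.

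Next I would apply \cref{lem:homom-like} with the elements $m^*n,n^*m\in N$ to obtain
\[\psi_U(E(m^*nn^*m))=\psi_U(E(m^*n)E(n^*m)).\]
Since $E$ is *-preserving we have $E(n^*m)=E(m^*n)^*$, so the right-hand side equals $|\psi_U(E(m^*n))|^2$.  For the left-hand side, note $nn^*\in B$, so $E(nn^*)=nn^*$; then \eqref{Normal} gives
\[E(m^*nn^*m)=m^*E(nn^*)m=m^*nn^*m\in B.\]
Using \cref{prop:psiU} once for $m\in U$ (with $b=nn^*$) and once for $n\in U$ (with $b=nn^*$, noting $\psi_U$ is a character so $\psi_U(n^*nn^*n)=\psi_U(n^*n)^2$), we compute
\[\psi_U(m^*nn^*m)=\psi^U(nn^*)\psi_U(m^*m)=\psi_U(n^*n)\psi_U(m^*m)=|n|_U^2|m|_U^2.\]
Combining gives $|\psi_U(E(m^*n))|^2=|m|_U^2|n|_U^2$, and taking square roots completes the proof.

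The main obstacle is the nonvanishing step, since \cref{lem:homom-like} is only a ``conditional'' homomorphism property.  The intuition (clear from the motivating monomial-semigroup picture) is that $m$ and $n$ both have nonzero values at the ``point'' $U$, so $E(m^*n)$ has nonzero diagonal value at $\mathsf{s}(U)$; the ultrafilter/filter axiom together with the product-preservation of $<$ via $\Subset$ (\cref{prop:rainbow}) realises this rigorously and is the crux of the argument.
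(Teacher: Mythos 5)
Your proof is correct and follows essentially the same route as the paper's: establish $E(m^*n)\in\mathsf{s}(U)$ so that \cref{lem:homom-like} applies, then evaluate $\psi_U(E(m^*nn^*m))$ via \eqref{Normal} and \cref{prop:psiU}. The only differences are cosmetic: the paper simply asserts $m^*n\in U^*U\subseteq\mathsf{s}(U)$ (where you supply a careful argument via the filter property and \cref{prop:rainbow}), and it packages the final computation as $|m^*n|_{\mathsf{s}(U)}=|m^*|_{U^*}|n|_U$ using \cref{UnormProperties}, which is exactly the calculation you carry out by hand.
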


\begin{proof}
    As $m^*n\in U^*U\subseteq \mathsf{s}(U)$ and hence $E(m^*n)\in \mathsf{s}(U)$ by \cref{lem:units and Un}, we can apply \cref{lem:homom-like} and \cref{UnormProperties} to obtain
    \begin{align*}
    |\psi_U(E(m^*n))|&=\sqrt{\psi_U(E(n^*m)E(m^*n))}=\sqrt{\psi_U(E(n^*mm^*n))}\\
    &=|m^*n|_{\mathsf{s}(U)}=|m^*|_{U^*}|n|_U=|m|_U|n|_U.\qedhere
    \end{align*}
\end{proof}

\subsection{Angles}

Let $U\in G$. Whenever $m,n\in U$, we define the \emph{$U$-angle} from $m$ to $n$ by
\[\langle m,n\rangle_U:=\tfrac{1}{|m|_U|n|_U}\psi_U(E(n^*m)).\]
Note $\lal m,n\ral_U\in\mathbb{T}$ for all $m,n\in U$, by \cref{EmnMag}.

\begin{remark}
When $A$ is a $D$-contractive C*-completion of $C_c(\Sigma;G)$ for some twist $q:\Sigma\twoheadrightarrow G$ and $N=\mathrm{cl}(\Nc(\Sigma;G))$ is its monomial semigroup, we see that
\[\lal m,n\ral_{U_{q(e)}}=j(m)(e)\overline{j(n)(e)}/|j(m)(e)j(n)(e)|,\]
for all $m,n\in N$ and $e\in\Sigma$, where again $U_{q(e)}$ is the ultrafilter corresponding to $q(e)$.
\end{remark}

\begin{prop}\label{AngleProperties}
Let $U \in G$. Whenever $l,m,n\in U$,
\[\lal n,n\ral_U=1,\qquad \lal m^*,n^*\ral_{U^*}=\lal n,m\ral_U=\overline{\lal m,n\ral_U}\qquad\text{and}\qquad\lal l,n\ral_U=\lal l,m\ral_U\lal m,n\ral_U.\]
\end{prop}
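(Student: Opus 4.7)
The identity $\lal n,n\ral_U=1$ is immediate: $n^*n\in N_+\subseteq B$ gives $E(n^*n)=n^*n$, so $\psi_U(E(n^*n))=\psi_U(n^*n)=|n|_U^2$.  The conjugation identity $\lal n,m\ral_U=\overline{\lal m,n\ral_U}$ follows at once from $E((n^*m)^*)=E(n^*m)^*$ together with $\psi_U$ being a $*$-character on $B$, which forces $\psi_U(E(m^*n))=\overline{\psi_U(E(n^*m))}$.

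For $\lal m^*,n^*\ral_{U^*}=\lal n,m\ral_U$, first note that $\mathsf{s}(U^*)=\mathsf{r}(U)$ forces $\psi_{U^*}=\psi^U$, and $|k^*|_{U^*}=|k|_U$ by \cref{UnormProperties}, so the identity reduces to $\psi^U(E(nm^*))=\psi_U(E(m^*n))$.  My plan is to apply \cref{prop:psiU} with the chosen $n\in U$ to rewrite the left-hand side as $\psi_U(n^*E(nm^*)n)/\psi_U(n^*n)$, then use \cref{prop:Normal}~\eqref{Normal} to bring $n^*\cdot n$ inside $E$, obtaining $\psi_U(E(n^*nm^*n))/\psi_U(n^*n)$; since $n^*n\in B$ sits in the centre of $E$, the $B$-bimodule property of $E$ and multiplicativity of $\psi_U$ then cancel the $\psi_U(n^*n)$ factor and leave $\psi_U(E(m^*n))$.

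For the transitivity $\lal l,n\ral_U=\lal l,m\ral_U\lal m,n\ral_U$, I would pass to the positive linear functional $\omega:=\psi_U\circ E$ on $A$.  For every $k\in U$, $\omega(k^*k)=|k|_U^2>0$, and \cref{EmnMag} tells us that Cauchy--Schwarz is saturated on $U$: $|\omega(k_1^*k_2)|^2=\omega(k_1^*k_1)\omega(k_2^*k_2)$ for $k_1,k_2\in U$.  Setting $c_{ln}:=\omega(n^*l)/\omega(n^*n)$ and $x:=l-c_{ln}n$, a direct expansion using this equality gives $\omega(x^*x)=0$; the standard Cauchy--Schwarz inequality applied to $\omega$ then forces $\omega(a^*x)=0$ for every $a\in A$, so $\omega(a^*l)=c_{ln}\omega(a^*n)$.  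An identical argument yields $\omega(a^*m)=c_{mn}\omega(a^*n)$ with $c_{mn}:=\omega(n^*m)/\omega(n^*n)$.  Specialising $a=m$ in both relations gives $\omega(m^*l)=c_{ln}\omega(m^*n)$ and $\omega(m^*m)=c_{mn}\omega(m^*n)$, from which $\omega(m^*l)\omega(n^*m)=\omega(m^*m)\omega(n^*l)$ is immediate; dividing through by $|l|_U|m|_U^2|n|_U$ yields the claim.

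The main obstacle is precisely this transitivity.  A purely algebraic attempt using only \cref{lem:homom-like} and the bimodule property of $E$ reduces it to comparing $\psi_U(E(m^*mn^*l))$ with $\psi_U(E(n^*mm^*l))$, and I see no way to equate these orderings of non-$B$ factors without some further input.  The Cauchy--Schwarz detour through $\omega=\psi_U\circ E$ succeeds because it effectively establishes, in the GNS picture of $\omega$, colinearity of the vectors attached to any three elements of $U$---this is the geometric content underlying the cocycle identity for the $U$-angles.
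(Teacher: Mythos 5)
Your proposal is correct. The identities $\lal n,n\ral_U=1$ and $\lal m^*,n^*\ral_{U^*}=\lal n,m\ral_U=\overline{\lal m,n\ral_U}$ are handled exactly as in the paper (normalise, then combine \cref{prop:Normal}, \cref{prop:psiU} and the $B$-bimodule property of $E$), so there is nothing to add there. For the cocycle identity $\lal l,n\ral_U=\lal l,m\ral_U\lal m,n\ral_U$ you take a genuinely different route. The paper's proof is the ``purely algebraic'' one you thought was blocked: it applies \cref{lem:homom-like} directly to the product $(m^*l)(n^*m)$ (legitimate since $\lvert\psi_U(E(m^*l))\rvert=|m|_U|l|_U\neq0$ by \cref{EmnMag}), obtaining $\psi_U(E(m^*l)E(n^*m))=\psi_U(E(m^*ln^*m))$, then uses \eqref{Normal} to pull the outer $m^*\,\cdot\,m$ out of $E$ and \cref{prop:psiU} to land on $\psi^U(E(ln^*))=\lal n^*,l^*\ral_{U^*}=\lal l,n\ral_U$ via the already-proved star identity -- so the two non-$B$ factors never need to be commuted past each other. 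Your alternative via the positive functional $\omega=\psi_U\circ E$ is also valid: $\omega$ is positive because $E$ is a conditional expectation and $\psi_U$ a character, \cref{EmnMag} saturates Cauchy--Schwarz on $U$, so $\omega((l-c_{ln}n)^*(l-c_{ln}n))=0$ and the degenerate-vector argument gives $\omega(a^*l)=c_{ln}\omega(a^*n)$ for all $a$; specialising $a=m$ and eliminating $c_{ln},c_{mn}$ yields $\omega(m^*l)\omega(n^*m)=\omega(m^*m)\omega(n^*l)$, which is the claim after normalising. What your route buys is a conceptual explanation -- in the GNS picture of $\omega$ all elements of $U$ become colinear, which is exactly why the angles multiply -- at the cost of invoking positivity and Cauchy--Schwarz rather than staying inside the paper's algebraic toolkit; the paper's route is shorter and reuses \cref{lem:homom-like}, which is needed elsewhere anyway.
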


\begin{proof}
    As $\lal v,w\ral_U=\lal\frac{1}{|v|_U}v,\frac{1}{|w|_U}w\ral_U$ for all $v,w\in U$, we may assume $l,m,n\in U_1$.  Then
    \[\lal n,n\ral_U=\psi_U(E(n^*n))=\psi_U(n^*n)=1.\]
    Also \cref{prop:Normal} and \cref{prop:psiU} yield
    \begin{align*}   
    \lal m^*,n^*\ral_{U^*}&=\psi^U(E(nm^*))=\psi_U(n^*E(nm^*)n)=\psi_U(E(n^*nm^*n))=\psi_U(n^*n)\psi_U(E(m^*n))\\
    &=\lal n,m\ral_U=\psi_U(E(m^*n))=\overline{\psi_U(E(n^*m))}=\overline{\lal m,n\ral_U}.
    \end{align*}
    Applying \cref{prop:Normal} and \cref{prop:psiU} again, this time with \cref{lem:homom-like} too yields
    \begin{align*}        
    \lal l,m\ral_U \lal m,n\ral_U&=\psi_U(E(m^*l)E(n^*m))=\psi_U(E(m^*ln^*m))=\psi^U(E(ln^*))\\
    &=\lal n^*,l^*\ral_{U^*}=\lal l,n\ral_U.\qedhere
    \end{align*}
\end{proof}

Angles also respect products in the following sense.

\begin{prop}\label{AngleProducts}
Let $U \in G$. Whenever $(U,V)\in G^{(2)}$, $m,n\in U$, and $r, s\in V$,
\[\lal m,n\ral_U \lal r,s\ral_V=\lal mr,ns\ral_{U\cdot V}.\]
\end{prop}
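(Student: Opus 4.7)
The plan is to unwind the definitions and reduce the claim to a single scalar identity. Using the multiplicativity of magnitudes from \cref{UnormProperties} together with $\psi_{U\cdot V}=\psi_V$ (which follows from $\mathsf{s}(U\cdot V)=\mathsf{s}(V)$ and \cref{ComplementHomeo}), I would expand
\[\lal mr,ns\ral_{U\cdot V}=\frac{\psi_V(E(s^*n^*mr))}{|m|_U|r|_V|n|_U|s|_V},\]
and expand $\lal m,n\ral_U$ and $\lal r,s\ral_V$ similarly. The desired identity then reduces to the scalar identity
\[\psi_V(E(s^*n^*mr))=\psi_U(E(n^*m))\cdot\psi_V(E(s^*r)).\]

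To prove the reduced identity I plan to use Shiftable (\cref{prop:Shiftable}) as a cyclic-rotation trick. Applied with $n=r$ and $a=s^*n^*m$, it yields $E(rs^*n^*m)\,r=r\,E(s^*n^*mr)$; left-multiplying by $r^*$ and using commutativity in $B$ gives $r^*E(rs^*n^*m)r=E(s^*n^*mr)\,r^*r$. Then I would apply $\psi_V$ to both sides, use \cref{prop:psiU} together with $\psi^V=\psi_U$ (from $\mathsf{r}(V)=\mathsf{s}(U)$) to identify the left-hand side as $\psi_U(E(rs^*n^*m))\,\psi_V(r^*r)$, and cancel the nonzero factor $\psi_V(r^*r)=|r|_V^2$ to obtain the cyclic rotation
\[\psi_U(E(rs^*n^*m))=\psi_V(E(s^*n^*mr)).\]

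The same argument applied to $a=s^*$ in Shiftable gives $\psi_U(E(rs^*))=\psi_V(E(s^*r))$, which by \cref{EmnMag} has absolute value $|r|_V|s|_V>0$. This nonvanishing is precisely the hypothesis needed to invoke \cref{lem:homom-like} on the product $(rs^*)(n^*m)$ of elements of $N$ with character $\psi_U$ on $B$, giving
\[\psi_U(E(rs^*n^*m))=\psi_U(E(rs^*))\,\psi_U(E(n^*m))=\psi_V(E(s^*r))\,\psi_U(E(n^*m)).\]
Combining this with the cyclic rotation proves the reduced identity, and hence the proposition.

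The main substantive step will be the cyclic-rotation observation via Shiftable: without it there is no clean way to split the four-factor product $s^*n^*mr$ into a piece controlled by $\psi_U$ and a piece controlled by $\psi_V$. The remaining work is bookkeeping with the source/range identifications $\psi_{U\cdot V}=\psi_V$ and $\psi^V=\psi_U$, and verifying the nonvanishing hypothesis of \cref{lem:homom-like}, both of which are automatic from the hypotheses and the results established earlier.
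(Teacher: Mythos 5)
Your proof is correct; every step checks out, including the two places where care is needed: the transfer $\psi_V(r^*br)=\psi^V(b)\psi_V(r^*r)$ via \cref{prop:psiU} with $\psi^V=\psi_U$, and the nonvanishing $\psi_U(E(rs^*))=\psi_V(E(s^*r))\neq0$ (via \cref{EmnMag}) needed to invoke \cref{lem:homom-like}. It is, however, organised differently from the paper's argument. The paper normalises to $m,n\in U_1$ and $r,s\in V_1$, interpolates through the intermediate element $nr$ by proving $\lal mr,nr\ral_{U\cdot V}=\lal m,n\ral_U$ and $\lal nr,ns\ral_{U\cdot V}=\lal r,s\ral_V$ separately --- each a two-line computation using \eqref{Normal} in the form $E(r^*n^*mr)=r^*E(n^*m)r$ together with \cref{prop:psiU} --- and then glues the two with the cocycle identity $\lal l,n\ral_U=\lal l,m\ral_U\lal m,n\ral_U$ from \cref{AngleProperties}. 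You instead attack the four-factor product $s^*n^*mr$ head-on, which forces you to bring in two tools the paper's proof avoids here: the Shiftable identity (to cyclically rotate $r$ past $E$) and \cref{lem:homom-like} (to split $E(rs^*n^*m)$ into $E(rs^*)E(n^*m)$ under $\psi_U$). The paper's route is shorter because interpolating via $nr$ means each computation only ever involves conjugation by a single element of $V$, so \eqref{Normal} suffices and no multiplicativity-of-$E$ statement is needed; your route is more self-contained in the sense that it never appeals to the transitivity of angles, at the cost of needing the nonvanishing hypothesis of \cref{lem:homom-like}, which is exactly the kind of condition that makes direct splittings of $E$ across products delicate in general.
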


\begin{proof}
    Scaling if necessary, and applying \cref{UnormProperties} we may assume $m,n\in U_1$ and $r,s\in V_1$.  By \cref{prop:Normal} and \cref{prop:psiU},
    \[\lal mr,nr\ral_{U\cdot V}=\psi_V(E(r^*n^*mr))=\psi_V(r^*E(n^*m)r)=\psi_U(E(n^*m))=\lal m,n\ral_U.\]
    Likewise, $\lal nr,ns\ral_{U\cdot V}=\lal r,s\ral_V$ and so \cref{AngleProperties} yields
    \[\lal m,n\ral_U \lal r,s\ral_V= \lal mr,nr\ral_{U\cdot V}\lal nr,ns\ral_{U\cdot V}= \lal mr,ns\ral_{U\cdot V}.\qedhere\]
\end{proof}

We also have the following observations for products from scalar multiplication and unit ultrafilters.

\begin{lemma}
    Let $U \in G$. Whenever $n\in U$, $\alpha\in\mathbb{C}\setminus\{0\}$, and $l\in\mathsf{r}(U)$,
    \[\lal\alpha n,n\ral_U= \alpha/|\alpha|=\lal n,\overline{\alpha}n\ral_U\qquad\text{and}\qquad \lal ln,n\ral_U= \psi^U(E(l))/|l|_{\mathsf{r}(U)}= \lal n,l^*n\ral_U.\]
\end{lemma}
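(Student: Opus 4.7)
The plan is to prove both identities by direct unfolding of the definition of $\langle \cdot, \cdot \rangle_U$, applying the magnitude identities from \cref{UnormProperties}, the normality and bistability properties of $E$ from \cref{prop:Normal} and \cref{prop:psiU}, and the fact that $n^{*}n \in B$ (so $E(n^{*}n) = n^{*}n$ and $\psi_U(n^{*}n) = |n|_U^{2}$).

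For the first identity, I would compute
\[\langle \alpha n, n\rangle_U = \tfrac{1}{|\alpha n|_U |n|_U}\psi_U(E(n^{*}\alpha n)) = \tfrac{\alpha}{|\alpha||n|_U^{2}}\psi_U(n^{*}n) = \tfrac{\alpha}{|\alpha|},\]
and symmetrically expand $\langle n, \overline{\alpha}n\rangle_U$, using $(\overline{\alpha}n)^{*} = \alpha n^{*}$ and $|\overline{\alpha}n|_U = |\alpha||n|_U$ (from \cref{UnormProperties}) to reach the same value $\alpha/|\alpha|$.

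For the second identity, the first step is to verify that $ln$ and $l^{*}n$ lie in $U$: since $\mathsf{r}(U)$ is a unit ultrafilter we have $l, l^{*} \in \mathsf{r}(U) = \mathsf{r}(U)^{*}$, and the product rule for ultrafilters gives $ln, l^{*}n \in \mathsf{r}(U)\cdot U = U$, so the angles in question are well-defined. Then \cref{UnormProperties} yields $|ln|_U = |l|_{\mathsf{r}(U)}|n|_U$ and $|l^{*}n|_U = |l|_{\mathsf{r}(U)}|n|_U$. Unfolding the definition,
\[\langle ln, n\rangle_U = \tfrac{1}{|l|_{\mathsf{r}(U)} |n|_U^{2}}\psi_U(E(n^{*}ln)).\]
Applying $\eqref{Normal}$ from \cref{prop:Normal} gives $E(n^{*}ln) = n^{*}E(l)n$, and then \cref{prop:psiU} (using $n \in U$) yields $\psi_U(n^{*}E(l)n) = \psi_U(n^{*}n)\psi^{U}(E(l)) = |n|_U^{2}\psi^{U}(E(l))$. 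Substituting gives the claimed value $\psi^{U}(E(l))/|l|_{\mathsf{r}(U)}$. The computation for $\langle n, l^{*}n\rangle_U$ proceeds identically, since $(l^{*}n)^{*}n = n^{*}ln$ leads to the same kernel expression $\psi_U(E(n^{*}ln))$ and the same magnitude denominator.

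The bulk of the work is bookkeeping, and there is no serious obstacle; the only subtle point is checking that the elements $\alpha n$, $\overline{\alpha}n$, $ln$, and $l^{*}n$ really do belong to $U$ so that the $U$-angle is defined, which follows from $\mathbb{C}^{\times}U = U$ (via \cref{lem:Normal}) and from $\mathsf{r}(U)\cdot U = U$ together with $\mathsf{r}(U)^{*} = \mathsf{r}(U)$ as a unit ultrafilter.
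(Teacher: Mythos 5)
Your proof is correct and follows essentially the same route as the paper: unfold the definition of the $U$-angle and apply \cref{UnormProperties}, \eqref{Normal}, and \cref{prop:psiU}. The only cosmetic differences are that the paper first normalises to $n\in U_1$ and $l\in\mathsf{r}(U)_1$ and obtains the second half of each identity via the conjugate-symmetry in \cref{AngleProperties}, whereas you carry the magnitudes explicitly and compute both sides directly; your extra check that $\alpha n$, $ln$, $l^*n$ actually lie in $U$ is a reasonable (if implicit in the paper) piece of bookkeeping.
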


\begin{proof}
    We may assume $n\in U_1$, in which case $\lal\alpha n,n\ral_U=\tfrac{1}{|\alpha n|_U}\psi_U(\alpha n^*n)=\alpha/|\alpha|$.  Then we also see that $\lal n,\overline{\alpha}n\ral_U=\overline{\lal\overline{\alpha}n,n\ral_U}=\overline{\overline{\alpha}/|\alpha|}=\alpha/|\alpha|$.  Similarly, assuming $l\in\mathsf{r}(U)_1$, \cref{prop:Normal} and \cref{prop:psiU} again yield $\lal ln,n\ral_U=\psi_U(E(n^*ln))=\psi^U(E(l))$ and hence $\lal n,l^*n\ral_U=\overline{\lal l^*n,n\ral_U}=\overline{\psi^U(E(l^*))}=\psi^U(E(l))$ as well.
\end{proof}

Likewise, we also see that, whenever $n\in U$ and $l\in\mathsf{s}(U)$,
\[\lal nl,n\ral_U=\psi_U(E(l))/|l|_{\mathsf{s}(U)}=\lal n,nl^*\ral_U.\]
In particular, for any $b\in\mathsf{r}(U)_+(=\mathsf{r}(U)\cap B_+)$ and $c\in\mathsf{s}(U)_+$,
\begin{equation}\label{bcInvariant}
    \lal bn,n\ral_U= \lal n,bn\ral_U=1= \lal nc,n\ral_U=\lal n,nc\ral_U.
\end{equation}

\subsection{Equivalences}

We define a binary relation $\sim_U$ on each $U\in G$, which we will soon see that it is an equivalence relation.  In \cref{sec:sigma}, the resulting equivalence classes will form the domain $\Sigma$ of our twist $q:\Sigma\twoheadrightarrow G$.
\begin{definition} 
For each $U\in G$, whenever $m,n\in U$, we write $m\sim_Un$ to mean $\lal m,n\ral_U=1$ or, equivalently, $\psi_U(E(n^*m))>0$ so
\[m\sim_Un\qquad\Leftrightarrow\qquad\lal m,n\ral_U=1\qquad\Leftrightarrow\qquad\psi_U(E(n^*m))>0.\]
    \end{definition}

\begin{remark}
\label{rmk:ufequiv}
Continuing the example from \cref{UltrafiltersArePoints}, let $A$ be a $D$-contractive C*-completion of $C_c(\Sigma;G)$ for some twist $q:\Sigma\twoheadrightarrow G$ and $N=\mathrm{cl}(\Nc(\Sigma;G))$ be its monomial semigroup, we see that
\[m\sim_{U_{q(e)}}n\qquad\Leftrightarrow\qquad j(m)(e)/|j(m)(e)|=j(n)(e)/|j(n)(e)|,\]
for all $m,n\in N$ and $e\in\Sigma$, where again $U_{q(e)}$ is the ultrafilter corresponding to $q(e)$.
\end{remark}

\begin{prop}\label{SimProperties}
    For each $U\in G$, $\sim_U$ is an equivalence relation such that $m\sim_Un$ implies $m^*\sim_{U^*}n^*$ and $\alpha m\sim_U\alpha n$, for all $\alpha\in\mathbb{C}\setminus\{0\}$.  Moreover, if $(U,V)\in G^{(2)}$ then
    \begin{equation}\label{SimProducts}
        m\sim_Un\quad\text{and}\quad q\sim_Vr\qquad\Rightarrow\qquad mq\sim_{U\cdot V}nr.
    \end{equation}
\end{prop}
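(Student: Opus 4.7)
The plan is to deduce every clause from the cocycle and product identities for the $U$-angle already established in \cref{AngleProperties,AngleProducts}, by rewriting $m\sim_U n$ as $\langle m,n\rangle_U=1$.

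First I would verify that $\sim_U$ is an equivalence relation using only \cref{AngleProperties}. Reflexivity is just $\langle n,n\rangle_U=1$; symmetry follows from $\langle n,m\rangle_U=\overline{\langle m,n\rangle_U}$, since $\overline 1=1$; and transitivity follows from the cocycle identity $\langle l,n\rangle_U=\langle l,m\rangle_U\langle m,n\rangle_U$. The $*$-compatibility statement $m\sim_U n\Rightarrow m^*\sim_{U^*}n^*$ then follows from the identity $\langle m^*,n^*\rangle_{U^*}=\langle n,m\rangle_U$ in \cref{AngleProperties}, together with the symmetry just established (and the fact that $m^*,n^*\in U^{-1}=U^*$ as required for the symbol to be defined).

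For scalar invariance, I would first check that $\alpha m\in U$ whenever $m\in U$ and $\alpha\neq 0$: picking $l\in U$ with $l<_s m$ and setting $s'=s/\alpha$, one has $ls'=ls/\alpha\in B$, $s'(\alpha m)=sm\in B$ and $ls'(\alpha m)=lsm=l$, so $l<_{s'}\alpha m$ by \eqref{OneSidedDomination}, and up-closedness of $U$ gives $\alpha m\in l^<\subseteq U$. The angle computation is then immediate from the lemma preceding the statement, which computes $\langle\alpha n,n\rangle_U=\alpha/|\alpha|$, so that $\langle n,\alpha n\rangle_U=\overline{\alpha}/|\alpha|$ by symmetry; combined with the cocycle identity and $\langle m,n\rangle_U=1$,
\[\langle\alpha m,\alpha n\rangle_U=\langle\alpha m,m\rangle_U\langle m,n\rangle_U\langle n,\alpha n\rangle_U=\tfrac{\alpha}{|\alpha|}\cdot 1\cdot\tfrac{\overline{\alpha}}{|\alpha|}=1.\]

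Finally, the product compatibility \eqref{SimProducts} is a direct application of \cref{AngleProducts}: since $mq,nr\in UV\subseteq (UV)^<=U\cdot V$, the symbols on the right-hand side are defined and
\[\langle mq,nr\rangle_{U\cdot V}=\langle m,n\rangle_U\,\langle q,r\rangle_V=1\cdot 1=1.\]
There is no real obstacle in the proof; it is essentially a catalogue of corollaries of \cref{AngleProperties,AngleProducts}. The only mildly non-routine point is the membership check $\alpha m\in U$ for the scalar clause, and the observation that $mq,nr$ lie in $U\cdot V$ so that the product clause even makes sense.
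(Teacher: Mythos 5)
Your proof is correct and follows essentially the same route as the paper: every clause is read off from \cref{AngleProperties} and \cref{AngleProducts} after translating $m\sim_U n$ into $\langle m,n\rangle_U=1$, with the same three-term cocycle computation for the scalar clause. The only difference is that you explicitly verify the well-definedness memberships ($\alpha m\in U$ via \eqref{OneSidedDomination}, and $mq,nr\in U\cdot V$), which the paper leaves implicit; this is a harmless and slightly more careful presentation of the identical argument.
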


\begin{proof}
    For each $U\in G$, we apply \cref{AngleProperties} to show that $\sim_U$ is an equivalence relation.  Firstly, $\sim_U$ is reflexive because $\lal n,n\ral_U=1$ for all $n\in N$.  As $\lal m,n\ral_U=1$ implies $\lal n,m\ral_U=\overline{\lal m,n\ral_U}=1$, we see that $\sim_U$ is symmetric.  As $\lal l,m\ral_U=1$ and $\lal m,n\ral_U=1$ implies $\lal l,n\ral_U= \lal l,m\ral_U \lal m,n\ral_U=1$, we see that $\sim_U$ is also transitive.  Likewise, $\lal m,n\ral_U=1$ implies $\lal m^*,n^*\ral_{U^*}=\overline{\lal m,n\ral_U}=1$ and
    \[\lal\alpha m,\alpha n\ral_U=\lal\alpha m,m\ral_U \lal m,n\ral_U \lal n,\alpha n\ral_U=\alpha\overline{\alpha}/|\alpha|^2=1,\] showing that $m\sim_Un$ implies $m^*\sim_{U^*}n^*$ and $\alpha m\sim_U\alpha n$, for all $\alpha\in\mathbb{C}\setminus\{0\}$.  Likewise, \eqref{SimProducts} is immediate from \cref{AngleProducts}.
\end{proof}

Accordingly, if $m\sim_Un$ then we say that $m$ and $n$ are \emph{$U$-equivalent}.  All elements of any given ultrafilter $U$ are in fact $U$-equivalent modulo a unique factor in $\mathbb{T}$, namely $\lal m,n\ral_U$.  Indeed, if $m,n\in U\in G$ and $t\in\mathbb{T}$ then
\begin{equation}\label{msimtn}
    m\sim_Utn\qquad\Leftrightarrow\qquad 1=\lal m,tn\ral_U=\overline{t}\lal m,n\ral_U\qquad\Leftrightarrow\qquad t=\lal m,n\ral_U.
\end{equation}
Also, products with positive elements in the source or range of $U$ are $U$-equivalent, i.e.
\[b\in \mathsf{r}(U)_+,\quad c\in \mathsf{s}(U)_+\quad\text{and}\quad n\in U\qquad\Rightarrow\qquad bn\sim_Un\sim_Unc,\]
by \eqref{bcInvariant}.  In particular, positive elements in any $U\in G^{(0)}$ are always $U$-equivalent, as
\[b,c\in U_+\qquad\Rightarrow\qquad b\sim_Ubc\sim_Uc.\]

Let us now denote the $U$-equivalence class of any $n\in U$ by
\[[n]_U=\{m\in U:m\sim_Un\}.\]
The ultrafilter $U$ can always be recovered from any of its $U$-equivalence classes as follows.

\begin{prop}\label{URecovery}
    Let $U \in G$. Whenever $n\in U$,
    \[U=[n]_U^<.\]
\end{prop}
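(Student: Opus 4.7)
The plan is to prove the two inclusions separately. The $\supseteq$ direction is immediate: given $s \in [n]_U^<$, there is $k \in [n]_U \subseteq U$ with $k < s$, and up-closure of the filter $U$ yields $s \in U$. The substance of the proposition lies in the reverse inclusion $U \subseteq [n]_U^<$.

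For this, I would start with an arbitrary $m \in U$ and apply the down-directed part of the filter axiom to the pair $m, n \in U$ to extract a common lower bound $l \in U$ with $l < m$ and $l < n$. This $l$ need not be $\sim_U$-equivalent to $n$, but the angle $\lal l, n\ral_U \in \mathbb{T}$ (unimodular by \cref{EmnMag}) measures the exact discrepancy. Setting $\alpha := \overline{\lal l, n\ral_U}$ and $k := \alpha l$, the scaling identity $\lal \alpha l, n\ral_U = \alpha\, \lal l, n\ral_U$ collapses to $1$, so $k \sim_U n$. Moreover, a direct check of the defining equations of $<_s$ shows that for any $\alpha \in \mathbb{T}$ and any $t, s \in N$ one has $l <_t s$ if and only if $\alpha l <_t s$, since each defining term either lies in $B = \mathbb{C} B$ (by \cref{lem:Normal}) or has the unimodular scalars cancel in the witnessing equations $stl = l = lts$; in particular $k < m$ is inherited from $l < m$.

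The main obstacle is to confirm that $k$ itself lies in $U$, since $\alpha l$ is a new element not a priori in the ultrafilter. Here the plan is to appeal to the maximality of $U$ by showing that $F := U \cup \{k\}$ is a proper filter, forcing $F = U$. Properness is immediate from $0 \notin U$ and $k = \alpha l \neq 0$. Up-closure of $F$ follows by combining the rotation-invariance of $<$ established above with the up-closure of $U$: if $k < s$ then $l < s$ and so $s \in U \subseteq F$. For the down-directed condition on $F$, the only genuinely new case pairs $k$ with some $u \in U$ (including the case $u = k$ itself): applying the filter property of $U$ to $l, u \in U$ produces $l_0 \in U$ with $l_0 < l$ and $l_0 < u$, and then $l_0 < \alpha l = k$ by the same rotation-invariance. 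With $k \in U$ in hand, $k \in [n]_U$ together with $k < m$ give $m \in [n]_U^<$, completing the argument.
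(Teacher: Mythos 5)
Your proof is correct and follows essentially the same route as the paper's: take a common $<$-lower bound $l\in U$ of $m$ and $n$ and rotate it by the unimodular scalar $\lal n,l\ral_U$ to obtain an element of $[n]_U$ that is still dominated by $m$. The only real difference is that you verify explicitly (via maximality of $U$) that $\mathbb{T}$-multiples of elements of $U$ remain in $U$, a fact the paper's one-line argument leaves implicit.
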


\begin{proof}
    Assume $n\in U$.  We immediately see that $[n]_U\subseteq U$ and hence $[n]_U^<\subseteq U^<\subseteq U$ since $U$ is an ultrafilter.  Conversely, for any $m\in U$, we have $l\in U$ with $l<m,n$ and then $n\sim_U \lal n,l\ral_U l<m$ so $m\in[n]_U^<$.  This shows that $U\subseteq[n]_U^<$.
\end{proof}

\section{The Twist}\label{sec:sigma}

In this section, we investigate a natural twist over our ultrafilter groupoid $G$ formed from all its ultrafilter equivalence classes.  We denote these equivalence classes by
\[\Sigma=\Sigma_N:=\{[n]_U: \: n\in U, U\in G\},\]
which we consider as a topological space with the subbasis
\begin{align*}
    \mathcal{B}&=\{\mathcal{E}^O_n:O\subseteq\mathbb{T}\text{ is open and }n\in N\}\text{ where}\\
    \mathcal{E}^O_n&=\{[tn]_U:t\in O\text{ and }U\in\mathcal{U}_n\}.
\end{align*}
Thanks to \eqref{msimtn} above and \cref{URecovery}, we have
\[[m]_U\in\mathcal{E}^O_n\qquad\Leftrightarrow\qquad \lal m,n\ral_U\in O,\]
where $U\in\mathcal{U}_n$ is implicit on the right hand side for $\lal m,n\ral_U$ to be defined.

We first claim that $\mathcal{B}$ above is not just a subbasis for the topology, it is actually a basis.  In fact, we will prove the stronger result that each point $[m]_U$ has a special neighbourhood base consisting of open sets of the form $\mathcal{E}^O_n$, where $O$ is a neighbourhood of $1$ and $n$ is a multiple of $m$ with some positive element of $B$.

\begin{prop}\label{NeighbourhoodBase}
    Every $[m]_U\in\Sigma$ has a neighbourhood base of the form
    \[\{\mathcal{E}^O_{bm}:b\in \mathsf{r}(U)_+\text{ and $O$ is a neighbourhood of $1$ in }\mathbb{T}\}.\]
\end{prop}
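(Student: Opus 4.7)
The plan is to verify two things: first, that each $\mathcal{E}^O_{bm}$ with $b\in\mathsf{r}(U)_+$ and $O$ an open neighbourhood of $1$ in $\mathbb{T}$ actually contains $[m]_U$; and second, that every basic open neighbourhood of $[m]_U$ contains one of these. The first is essentially immediate: $b\in\mathsf{r}(U)_+$ forces $bm\in U$, and $bm\sim_U m$ by \eqref{bcInvariant}, so $[m]_U=[1\cdot bm]_U\in\mathcal{E}^O_{bm}$.

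For the second, fix a basic neighbourhood $\bigcap_{i=1}^k\mathcal{E}^{O_i}_{n_i}\ni[m]_U$, which by \eqref{msimtn} translates to $n_i\in U$ and $\lambda_i:=\lal m,n_i\ral_U\in O_i$ for each $i$. The strategy is to locate a neighbourhood $W$ of $U$ in the bisection $\mathcal{U}_m$ on which all the angle functions $V\mapsto\lal m,n_i\ral_V$ stay close to $\lambda_i$, and then realise $W$ as $\mathcal{U}_{bm}$ via the Gelfand picture of $B$. Concretely, pick compact neighbourhoods $K_i\subseteq O_i$ of $\lambda_i$; by \cref{StateContinuity} each angle map is continuous on $\mathcal{U}_m\cap\mathcal{U}_{n_i}$, yielding an open $W_i\ni U$ in this intersection on which the angle lies in $K_i$. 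Set $W:=\bigcap_i W_i$. Since $\mathsf{r}$ is a homeomorphism on $\mathcal{U}_m$, $\mathsf{r}(W)$ is an open neighbourhood of $\mathsf{r}(U)$ in $G^{(0)}$. By \cref{ComplementHomeo}, $\{\mathcal{U}_c:c\in B_+\}$ is a basis for $G^{(0)}$, so we can pick $b\in\mathsf{r}(U)_+$ with $\mathcal{U}_b\subseteq\mathsf{r}(W)$. Using the identity $\mathsf{r}(\mathcal{U}_{bm})=\mathcal{U}_b\cap\mathsf{r}(\mathcal{U}_m)$ (immediate from \eqref{eq:unm} together with $\mathcal{U}_b\subseteq G^{(0)}$) and the injectivity of $\mathsf{r}$ on $\mathcal{U}_m\supseteq\mathcal{U}_{bm},W$, we conclude $\mathcal{U}_{bm}\subseteq W$. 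Finally choose $O\ni 1$ open in $\mathbb{T}$ small enough that $tK_i\subseteq O_i$ for all $t\in O$ and each $i$ (possible since $K_i$ is compact and $O_i$ open).

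To verify $\mathcal{E}^O_{bm}\subseteq\bigcap_i\mathcal{E}^{O_i}_{n_i}$, take $[tbm]_V\in\mathcal{E}^O_{bm}$. Since $V\in\mathcal{U}_{bm}$, we get $\mathsf{r}(V)\in\mathsf{r}(\mathcal{U}_{bm})\subseteq\mathcal{U}_b$, i.e.\ $b\in\mathsf{r}(V)_+$, so $bm\sim_V m$ by \eqref{bcInvariant} and hence $\lal bm,m\ral_V=1$. Also $V\in\mathcal{U}_{bm}\subseteq W\subseteq W_i\subseteq\mathcal{U}_{n_i}$, so $n_i\in V$ and $\lal m,n_i\ral_V\in K_i$. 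Combining these with the multiplicativity of angles from \cref{AngleProperties} gives
\[\lal tbm,n_i\ral_V=t\,\lal bm,m\ral_V\,\lal m,n_i\ral_V=t\lal m,n_i\ral_V\in tK_i\subseteq O_i,\]
so $[tbm]_V\in\mathcal{E}^{O_i}_{n_i}$ for each $i$, as required.

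The main obstacle is engineering $b$ so that $\mathcal{U}_{bm}$ — a set of ultrafilters in $G$ — is small enough near $U$ for the angle approximation, while keeping $b$ inside $B$ with $b\in\mathsf{r}(U)_+$. The key trick is the identity $\mathsf{r}(\mathcal{U}_{bm})=\mathcal{U}_b\cap\mathsf{r}(\mathcal{U}_m)$: shrinking $\mathcal{U}_{bm}$ inside a prescribed neighbourhood of $U$ in the bisection $\mathcal{U}_m$ reduces, via the homeomorphism $\mathsf{r}|_{\mathcal{U}_m}$, to the purely commutative task of shrinking a basic open inside a prescribed neighbourhood of $\mathsf{r}(U)$ in $G^{(0)}\cong X_B$, handled by \cref{ComplementHomeo}.
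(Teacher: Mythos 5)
Your proof is correct, and it implements the key step differently from the paper. Both arguments rest on the same two pillars: continuity of the angle maps $V\mapsto\lal m,n\ral_V$ (to trap the angles in a compact set near their values at $U$) and shrinking the scalar window $O$ so that products of angles stay inside the targets. Where you diverge is in manufacturing $b$. The paper stays inside the order-theoretic machinery: it picks $l\in U$ with $l<m,n$ so that the closure of $\{\lal m,n\ral_V:V\in\mathcal{U}_l\}$ lies in $P$, sets $b=ll^*$, and then separately upgrades from subbase to base via the identity $\mathcal{E}^O_{bm}\cap\mathcal{E}^P_{cm}=\mathcal{E}^{O\cap P}_{bcm}$. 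You instead transport the problem through the homeomorphism $\mathsf{r}|_{\mathcal{U}_m}$ into $G^{(0)}\cong X_B$, invoke \cref{ComplementHomeo} to find $b\in\mathsf{r}(U)_+$ with $\mathcal{U}_b$ inside the prescribed neighbourhood, and pull back via $\mathsf{r}(\mathcal{U}_{bm})=\mathcal{U}_b\cap\mathsf{r}(\mathcal{U}_m)$ (which follows from \eqref{eq:unm}); handling finite intersections of subbasic sets in one pass also renders the intersection identity unnecessary. Your route buys a clean reduction to commutative Gelfand theory at the cost of leaning on the heavier \cref{ComplementHomeo}, whereas the paper's is more self-contained within the domination/interpolation framework. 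All the auxiliary facts you use (continuity of angles on $\mathcal{U}_m\cap\mathcal{U}_{n_i}$, $bm\in U$ and $bm\sim_U m$ via \eqref{bcInvariant}, injectivity of $\mathsf{r}$ on the bisection $\mathcal{U}_m$, and the reduction of $\{\mathcal{U}_c\}_{c\in B}$ to $\{\mathcal{U}_c\}_{c\in B_+}$ via $\mathcal{U}_c=\mathcal{U}_{c^*c}$ on $G^{(0)}$) check out, so I see no gap.
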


\begin{proof}
    Let $P \subseteq \mathbb{T}$ be open. If $[m]_U\in\mathcal{E}^P_n$ and hence $\lal m,n\ral_U\in P$ then, as $V\mapsto\lal m,n\ral_V=\tfrac{1}{|m|_V|n|_V}\psi_V(E(n^*m))$ is continuous, we must have $l\in U$ with $l<m,n$ such that $C=\mathrm{cl}\{\lal m,n\ral_V:V\in\mathcal{U}_l\}\subseteq P$.  For a sufficiently small neighbourhood $O$ of $1$ in $\mathbb{T}$, we then still have $OC\subseteq P$.  Letting $b=ll^*\in \mathsf{r}(U)_+$, we immediately see that $\lal m,bm\ral_U=1\in O$ so $[m]_U\in\mathcal{E}^O_{bm}$.  If $[q]_V\in\mathcal{E}^O_{bm}$, then we also see that
    \[V\in\mathcal{U}_{bm}=\mathcal{U}_l\mathcal{U}_{l^*m}\subseteq\mathcal{U}_lG^{(0)}\subseteq\mathcal{U}_l\subseteq\mathcal{U}_n\]
    and $\lal q,m\ral_V= \lal q,bm\ral_V\in O$ so $\lal q,n\ral_V=\lal q,m\ral_V \lal m,n\ral_V\in OC\subseteq P$ and hence $[q]_V\in\mathcal{E}^P_n$, showing that $\mathcal{E}^O_{bm}\subseteq\mathcal{E}^P_n$.

    So we have shown that open sets of the given form are a neighbourhood subbase at $[m]_U\in\Sigma$.  To see that they are even a neighbourhood base, it suffices to show that
    \[\mathcal{E}^O_{bm}\cap\mathcal{E}^P_{cm}=\mathcal{E}^{O\cap P}_{bcm}.\]
    To see this, first note that $\mathcal{U}_{bcm}=\mathcal{U}_b\mathcal{U}_c\mathcal{U}_m=(\mathcal{U}_b\cap\mathcal{U}_c)\mathcal{U}_m=\mathcal{U}_{bm}\cap\mathcal{U}_{cm}$.  We then immediately see that $\mathcal{E}^{O\cap P}_{bcm}\subseteq\mathcal{E}^O_{bm}\cap\mathcal{E}^P_{cm}$.  Conversely, if $[l]_U\in\mathcal{E}^O_{bm}\cap\mathcal{E}^P_{cm}$ then $\lal l,m\ral_U=\lal l,bm\ral_U\in O$ and $\lal l,m\ral_U=\lal l,cm\ral_U\in P$ so $\lal l,bcm\ral_U=\lal l,m\ral_U\in O\cap P$ and hence $[l]_U\in\mathcal{E}^{O\cap P}_{bcm}$. 
\end{proof}

Likewise, every $[m]_U\in\Sigma$ has a neighbourhood base of the form
\[\{\mathcal{E}^O_{mb}:b\in \mathsf{s}(U)_+\text{ and $O$ is a neighbourhood of $1$ in }\mathbb{T}\}.\]
We can also characterise convergence in $\Sigma$ as follows.

\begin{lemma}\label{SigmaLimits}
    For any net $([n_\lambda]_{U_\lambda})\subseteq\Sigma$ and any $[n]_U\in\Sigma$, $[n_\lambda]_{U_\lambda}\rightarrow[n]_U$ if and only if $U_\lambda\rightarrow U$ and $\lal n_\lambda,n\ral_{U_\lambda}\rightarrow 1$.
  \end{lemma}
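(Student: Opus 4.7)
The plan is to push all the analysis through the defining characterisation
\[[m]_V\in\mathcal{E}^O_n\qquad\Longleftrightarrow\qquad V\in\mathcal{U}_n\text{ and }\lal m,n\ral_V\in O\]
recorded just before \cref{NeighbourhoodBase}. Both implications then reduce to handling the two coordinates of this description (the ultrafilter and the angle) separately, using the subbase of sets $\mathcal{E}^O_n$ for one direction and the refined neighbourhood base from \cref{NeighbourhoodBase} for the other.

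For the forward direction, suppose $[n_\lambda]_{U_\lambda}\to[n]_U$. To show $U_\lambda\to U$ in $G$, it suffices (as $\{\mathcal{U}_m\}_{m\in N}$ is a basis) to check that every $m\in U$ eventually lies in $U_\lambda$. For such $m$, the element $\lal n,m\ral_U\in\mathbb{T}$ is defined and $n\sim_U\lal n,m\ral_U m$ by \eqref{msimtn}, so $[n]_U\in\mathcal{E}^{\mathbb{T}}_m$. Convergence then gives $[n_\lambda]_{U_\lambda}\in\mathcal{E}^{\mathbb{T}}_m$ eventually, i.e.\ $m\in U_\lambda$ eventually. For $\lal n_\lambda,n\ral_{U_\lambda}\to1$, fix a neighbourhood $O$ of $1$ in $\mathbb{T}$; since $\lal n,n\ral_U=1\in O$ by \cref{AngleProperties}, we have $[n]_U\in\mathcal{E}^O_n$, so eventually $[n_\lambda]_{U_\lambda}\in\mathcal{E}^O_n$, which reads $\lal n_\lambda,n\ral_{U_\lambda}\in O$.

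For the backward direction, assume $U_\lambda\to U$ and $\lal n_\lambda,n\ral_{U_\lambda}\to 1$. By \cref{NeighbourhoodBase}, a neighbourhood base at $[n]_U$ is given by the sets $\mathcal{E}^O_{bn}$ with $b\in\mathsf{r}(U)_+$ and $O$ a neighbourhood of $1$ in $\mathbb{T}$, so it suffices to show each such $\mathcal{E}^O_{bn}$ eventually contains $[n_\lambda]_{U_\lambda}$. Two things have to happen eventually: $U_\lambda\in\mathcal{U}_{bn}$ and $\lal n_\lambda,bn\ral_{U_\lambda}\in O$. The first is immediate from $U_\lambda\to U$ together with $bn\in U$ (because $b\in\mathsf{r}(U)$ and $n\in U$, so $bn\in\mathsf{r}(U)\cdot U\subseteq U$). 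For the second, continuity of the range map on the \'etale groupoid $G$ gives $\mathsf{r}(U_\lambda)\to\mathsf{r}(U)$, hence $b\in\mathsf{r}(U_\lambda)_+$ eventually. Once this holds, \eqref{bcInvariant} forces $\lal bn,n\ral_{U_\lambda}=1$, and then the product law in \cref{AngleProperties} yields
\[\lal n_\lambda,bn\ral_{U_\lambda}=\lal n_\lambda,n\ral_{U_\lambda}\lal n,bn\ral_{U_\lambda}=\lal n_\lambda,n\ral_{U_\lambda},\]
which converges to $1$ by hypothesis and so lies in $O$ eventually.

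The only delicate point is the backward direction, where the chosen neighbourhood base is expressed in terms of $bn$ but the hypothesis only controls $\lal n_\lambda,n\ral_{U_\lambda}$. Reducing the former to the latter is exactly what the above two-step argument accomplishes, the essential input being that openness of $\mathsf{r}$ (via \cref{StateContinuity}--style continuity) eventually places the ``bump'' $b$ into the range of $U_\lambda$, after which \eqref{bcInvariant} cancels it from the angle.
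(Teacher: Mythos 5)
Your proof is correct. The forward direction is essentially identical to the paper's: both test against the subbasic sets $\mathcal{E}^{\mathbb{T}}_m$ for $m\in U$ to get $U_\lambda\to U$ and against $\mathcal{E}^O_n$ to get $\lal n_\lambda,n\ral_{U_\lambda}\to1$. The backward direction is where you diverge. The paper checks eventual membership in an \emph{arbitrary} subbasic neighbourhood $\mathcal{E}^O_m$ of $[n]_U$ directly: it uses \cref{StateContinuity} to get $\lal n,m\ral_{U_\lambda}\to\lal n,m\ral_U$ and then the cocycle identity $\lal n_\lambda,m\ral_{U_\lambda}=\lal n_\lambda,n\ral_{U_\lambda}\lal n,m\ral_{U_\lambda}$, so it never needs \cref{NeighbourhoodBase}. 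You instead restrict to the special neighbourhood base $\mathcal{E}^O_{bn}$ from \cref{NeighbourhoodBase} and use \eqref{bcInvariant} to cancel the bump $b$ from the angle; this trades the continuity input of \cref{StateContinuity} for the (heavier) fact that those sets really form a neighbourhood base, plus the eventual memberships $n,bn\in U_\lambda$ and $b\in\mathsf{r}(U_\lambda)_+$, all of which you justify correctly. Both routes are sound and of comparable length; the paper's is marginally more self-contained since it only needs the subbasis property of $\mathcal{B}$. One small slip in your closing remark: what you actually use (and correctly invoke in the body) is \emph{continuity} of $\mathsf{r}$, i.e.\ that $\mathcal{U}_b$ pulls back to a neighbourhood of $U$, not openness of $\mathsf{r}$.
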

  
    We note that the statement of \cref{SigmaLimits} here makes sense: if $U_\lambda\rightarrow U\in\mathcal{U}_n$, then eventually $n\in U_\lambda$ which, in particular, implies $\lal n_\lambda,n\ral_{U_\lambda}$ is defined.  Equivalently, we could have said $[n_\lambda]_{U_\lambda} \to [n]_U$ if and only if $U_\lambda \to U$ and  there exists $t_\lambda \in \T$ such that  $[n_\lambda]_{U_\lambda} = [t_\lambda n]_{U_\lambda}$ eventually where $t_\lambda \to 1$ by \eqref{msimtn}.

\begin{proof}
    Assume $[n_\lambda]_{U_\lambda}\rightarrow[n]_U$.  For every $m\in U$, we see that $[n]_U\in\mathcal{E}^\mathbb{T}_m$ and so eventually $[n_\lambda]_{U_\lambda}\in\mathcal{E}^\mathbb{T}_m$ and hence $U_\lambda\in\mathcal{U}_m$, showing that $U_\lambda\rightarrow U$.  For every open neighbourhood $O$ of $1$ in $\mathbb{T}$, we also see that $[n]_U\in\mathcal{E}^O_n$ and so eventually $[n_\lambda]_{U_\lambda}\in\mathcal{E}^O_n$ and hence $\lal n_\lambda,n\ral_{U_\lambda}\in O$, showing that $\lal n_\lambda,n\ral_{U_\lambda}\rightarrow1$.

    Conversely, say $U_\lambda\rightarrow U$ and $\lal n_\lambda,n\ral_{U_\lambda}\rightarrow 1$.  By \cref{StateContinuity}, $\lal n,m\ral_{U_\lambda}\rightarrow \lal n,m\ral_U$ and hence $\lal n_\lambda,m\ral_{U_\lambda}= \lal n_\lambda,n\ral_{U_\lambda}\lal n,m\ral_{U_\lambda}\rightarrow \lal n,m\ral_U$.  If $[n]_U\in\mathcal{E}^O_m$ and hence $\lal n,m\ral_U\in O$ then this means eventually $\lal n_\lambda,m\ral_{U_\lambda}\in O$ and hence $[n_\lambda]_{U_\lambda}\in\mathcal{E}^O_m$, showing that $[n_\lambda]_{U_\lambda}\rightarrow[n]_U$.
\end{proof}

Under a natural product operation, $\Sigma$ becomes a topological groupoid.

\begin{prop}\label{prop:sigma}
    The space $\Sigma$ is a Hausdorff topological groupoid under the product
    \[[m]_T[n]_U:=[mn]_{T\cdot U}\text{ defined if and only if }(T,U)\in G^{(2)},\]
    where the inverse of each $[n]_U\in\Sigma$ is given by $[n]_U^{-1}=[n^*]_{U^*}$, and the units are given by
    \[\Sigma^{(0)}=\{[b]_U:b\in B_+\text{ and }U\in\mathcal{U}_b\}.\]
\end{prop}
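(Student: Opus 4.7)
The plan is to first verify that the product and inverse are well-defined on equivalence classes, then check the groupoid axioms, then continuity, and finally Hausdorffness, with the last being the main obstacle.

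For well-definedness, suppose $m \sim_T m'$ with $(T,U) \in G^{(2)}$ and $n \sim_U n'$. Then \eqref{SimProducts} in \cref{SimProperties} gives $mn \sim_{T \cdot U} m'n'$, so the product $[m]_T[n]_U := [mn]_{T \cdot U}$ does not depend on representatives; similarly \cref{SimProperties} shows $[n]_U \mapsto [n^*]_{U^*}$ is well-defined. Associativity of the product is inherited from associativity in $N$ and in $G$. For the groupoid identities, $[n]_U^{-1}[n]_U = [n^*n]_{\mathsf{s}(U)}$ and, since $n^*n \in \mathsf{s}(U)_+$, \eqref{bcInvariant} yields $n \cdot n^*n \sim_U n$, so $[n]_U [n^*n]_{\mathsf{s}(U)} = [nn^*n]_U = [n]_U$; likewise on the other side. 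This determines the source and range of $[n]_U$ as $[n^*n]_{\mathsf{s}(U)}$ and $[nn^*]_{\mathsf{r}(U)}$, both of the asserted form. Conversely, given $[b]_V$ with $b \in B_+$ and $V \in \mathcal{U}_b$, the observation $V \in G^{(0)}$ from \cref{BG0} together with the direct computation $\lal b^2, b\ral_V = 1$ (using $\psi_V(b^k) = \psi_V(b)^k$) gives $[b]_V = [b^2]_V = \mathsf{s}([b]_V)$, confirming the identification of $\Sigma^{(0)}$.

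For continuity of the product, suppose $([m_\lambda]_{T_\lambda}, [n_\lambda]_{U_\lambda}) \to ([m]_T,[n]_U)$ in $\Sigma^{(2)}$. By \cref{SigmaLimits}, this says $T_\lambda \to T$, $U_\lambda \to U$, $\lal m_\lambda, m\ral_{T_\lambda} \to 1$ and $\lal n_\lambda, n\ral_{U_\lambda} \to 1$. Since $G$ is étale, $T_\lambda \cdot U_\lambda \to T \cdot U$, and by \cref{AngleProducts},
\[\lal m_\lambda n_\lambda,\, mn\ral_{T_\lambda \cdot U_\lambda} = \lal m_\lambda,m\ral_{T_\lambda}\,\lal n_\lambda,n\ral_{U_\lambda} \to 1,\]
so $[m_\lambda n_\lambda]_{T_\lambda \cdot U_\lambda} \to [mn]_{T \cdot U}$ again by \cref{SigmaLimits}. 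Continuity of inversion is similar, using continuity of the inverse in $G$ together with $\lal n_\lambda^*, n^*\ral_{U_\lambda^*} = \overline{\lal n_\lambda, n\ral_{U_\lambda}} \to 1$ from \cref{AngleProperties}.

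The main obstacle is Hausdorffness. Given $[m]_T \neq [n]_U$, I will split into two cases. If $T \neq U$, then Hausdorffness of $G$ (\cref{prop:G is Hausdorff}) gives disjoint basic neighbourhoods $\mathcal{U}_p \ni T$ and $\mathcal{U}_q \ni U$ for some $p \in T$, $q \in U$, and then $\mathcal{E}^\mathbb{T}_p$ and $\mathcal{E}^\mathbb{T}_q$ are disjoint open neighbourhoods of $[m]_T$ and $[n]_U$ in $\Sigma$. If $T = U$ but $m \not\sim_U n$, set $t := \lal m,n\ral_U \neq 1$ and $\varepsilon := |t-1|/3 > 0$. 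By \cref{StateContinuity}, the function $V \mapsto \lal m,n\ral_V$ is continuous on $\mathcal{U}_m \cap \mathcal{U}_n$, so the filter property of $U$ yields $l \in U$ with $l < m,n$ such that $|\lal m,n\ral_V - t| < \varepsilon$ for every $V \in \mathcal{U}_l$. Pick an open neighbourhood $O$ of $1$ in $\mathbb{T}$ small enough that $\overline{O}\cdot O \subseteq \{s \in \mathbb{T} : |s-1| < \varepsilon\}$. Then $\mathcal{E}^O_m \cap \mathcal{E}^\mathbb{T}_l$ and $\mathcal{E}^O_n \cap \mathcal{E}^\mathbb{T}_l$ are open neighbourhoods of $[m]_U$ and $[n]_U$ respectively, and if $[p]_V$ lay in both, then $V \in \mathcal{U}_l$ together with
\[\lal m,n\ral_V = \overline{\lal p,m\ral_V}\,\lal p,n\ral_V \in \overline{O}\cdot O\]
would force both $|\lal m,n\ral_V - 1| < \varepsilon$ and $|\lal m,n\ral_V - t| < \varepsilon$, contradicting $|t - 1| = 3\varepsilon$. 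Hence these neighbourhoods are disjoint, completing the proof of Hausdorffness.
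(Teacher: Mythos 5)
Your proof is correct. The well-definedness, groupoid-axiom and continuity parts follow the paper's proof essentially verbatim; one small point worth making explicit is that before checking independence of representatives you need to know, via \cref{URecovery}, that the set $[m]_T$ already determines the ultrafilter $T$ (so that $[m]_T=[m']_{T'}$ forces $T=T'$), since the product formula refers to $T$ and $U$ and not just to the classes -- the paper states this explicitly, and your argument implicitly relies on it. Where you genuinely diverge is Hausdorffness: the paper proves uniqueness of net limits, using \cref{SigmaLimits} to reduce to Hausdorffness of $G$ together with the computation $\lal m,n\ral_U=\lim_\lambda\lal m,n_\lambda\ral_{U_\lambda}\lal n_\lambda,n\ral_{U_\lambda}=1$, whereas you exhibit explicit disjoint basic neighbourhoods, separating either in the $G$-direction (when $T\neq U$, pulling back disjoint bisections $\mathcal{U}_p,\mathcal{U}_q$) or in the $\mathbb{T}$-direction (when $T=U$ but $\lal m,n\ral_U\neq1$), the latter via continuity of $V\mapsto\lal m,n\ral_V$, the directedness of $U$ to localise inside some $\mathcal{U}_l$, and a $3\varepsilon$ estimate. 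Both routes are valid; the paper's is shorter because \cref{SigmaLimits} has already packaged the description of convergence, while yours buys concrete separating open sets of the form $\mathcal{E}^O_n$, which makes the local product structure of $\Sigma$ as a $\mathbb{T}$-bundle over $G$ more visible.
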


\begin{proof}
    If $[m]_T=[m']_{T'}$ and $[n]_U=[n']_{U'}$ then $T=T'$ and $U=U'$, by \cref{URecovery}, so $(T,U)\in G^{(2)}$ if and only if $(T',U')\in G^{(2)}$.  In this case, $m\sim_Tm'$ and $n\sim_Un'$ and hence $mn\sim_{T\cdot U}m'n'$ by \cref{SimProperties}, i.e. $[mn]_{T\cdot U}=[m'n']_{T'\cdot U'}$.  Thus the product is well-defined and associativity follows from the associativity of the product in $N$.
    
    Now if $b\in B_+$ and $U\in\mathcal{U}_b$ then, for any $[m]_T\in\Sigma$ with $\mathsf{s}(T)=U$, we see that $[m]_T[b]_U=[mb]_{T\cdot U}=[m]_T$, as $m\sim_T mb$.  Likewise, $[b]_U[m]_T=[m]_T$ for any $[m]_T\in\Sigma$ with $\mathsf{r}(T)=U$, so $[b]_U$ is a unit in $\Sigma$.  In particular, every $[n]_U\in\Sigma$ has a source unit $\mathsf{s}([n]_U)=[n^*n]_{\mathsf{s}(U)}=[n^*]_{U^*}[n]_U$ and range unit $\mathsf{r}([n]_U)=[nn^*]_{\mathsf{r}(U)}=[n]_U[n^*]_{U^*}$.  This shows that $\Sigma$ is a groupoid with units $\Sigma^{(0)}=\{[b]_U:b\in B_+\text{ and }U\in\mathcal{U}_b\}$ and inverse operation $[n]_U\mapsto[n^*]_{U^*}$.

    To see that $\Sigma$ is a topological groupoid, we must show that the product and inverse operations are continuous.  Accordingly, note that if $(T_\lambda,U_\lambda)\subseteq G^{(2)}$, $[m_\lambda]_{T_\lambda}\rightarrow[m]_T$ and $[n_\lambda]_{U_\lambda}\rightarrow[n]_U$ in $\Sigma$ then, by \cref{SigmaLimits}, $T_\lambda\rightarrow T$, $\lal m_\lambda,m\ral_{T_\lambda}\rightarrow 1$, $U_\lambda\rightarrow U$ and $\lal n_\lambda,n\ral_{U_\lambda}\rightarrow 1$.  Then $T_\lambda\cdot U_\lambda\rightarrow T\cdot U$, by the continuity of the product in $G$, and $\lal m_\lambda n_\lambda,mn\ral_{T_\lambda\cdot U_\lambda}=\lal m_\lambda,m\ral_{T_\lambda}\lal n_\lambda,n\ral_{U_\lambda}\rightarrow 1$.  Again by \cref{SigmaLimits}, this means $[m_\lambda]_{T_\lambda}[n_\lambda]_{U_\lambda}=[m_\lambda n_\lambda]_{T_\lambda\cdot U_\lambda}\rightarrow[mn]_{T\cdot U}=[m]_T[n]_U$, showing that the product is continuous.  But  $T_\lambda^*\rightarrow T^*$ by the continuity of the inverse in $G$ and $\lal m_\lambda^*,m^*\ral_{T_\lambda^*}=\overline{\lal m_\lambda,m\ral_{T_\lambda}}\rightarrow 1$.  Then \cref{SigmaLimits} again implies that $[m_\lambda^*]_{T_\lambda^*}\rightarrow[m^*]_{T^*}$, showing that the inverse in $\Sigma$ is also continuous and hence $\Sigma$ is a topological groupoid.

    Finally, to see that $\Sigma$ is Hausdorff, say we have a net $([n_\lambda]_{U_\lambda})$ with limits $[m]_T$ and $[n]_U$ in $\Sigma$.  By \cref{SigmaLimits}, this means $U_\lambda\rightarrow T$, $U_\lambda\rightarrow U$, $\lal n_\lambda,m\ral_{U_\lambda}\rightarrow 1$ and $\lal n_\lambda,n\ral_{U_\lambda}\rightarrow 1$.  As $G$ is Hausdorff, $T=U$.  Also $\lal m,n\ral_U=\lim_\lambda\lal m,n\ral_{U_\lambda}=\lim_\lambda\lal m,n_\lambda\ral_{U_\lambda}\lal n_\lambda,n\ral_{U_\lambda}=1$ and hence $[m]_U=[n]_U$.  Thus limits in $\Sigma$ are unique and hence $\Sigma$ is Hausdorff.
\end{proof}

\begin{remark}\label{rmk:recover}
Suppose $A$ is a $D$-contractive C*-completion of $C_c(\Sigma;G)$ for some twist $q:\Sigma\twoheadrightarrow G$ and $N=\mathrm{cl}(\Nc(\Sigma;G))$ is its monomial semigroup. We claim there is a topological groupoid isomorphism from the original $\Sigma$ onto $\Sigma_N$ defined by 
\[e\mapsto N_e:=\{n\in N:j(n)(e)>0\}.\]
To see this map takes values in $\Sigma_N$, note that  for any $n \in N_e$, 
$N_e = [n]_{U_{q(e)}}$ 
 by \cref{rmk:ufequiv}.  It is straightforward to check that the map is a groupoid homomorphism. That the map is bijective follows from the bijectivity of the map from $G$ to $G_N$ given in \cref{UltrafiltersArePoints} along with choices of sections from $G$ to $\Sigma$. 
 Continuity follows from \cref{SigmaLimits}, the continuity in \cref{UltrafiltersArePoints} and the continuity of each function $j(n)$.  
 Finally, \cref{SigmaLimits} and \cref{UltrafiltersArePoints} also imply the inverse is continuous. 
Thus the  equivalence classes $\Sigma_N$ `recover' the original $\Sigma$.  
\end{remark}

Next we observe that $G$ is both a topological and algebraic quotient of $\Sigma$.

\begin{prop}\label{qContinuousOpen}
    There is a continuous open groupoid homomorphism $q$ from $\Sigma$ onto $G$ defined by
    \[q([n]_U):=U.\]
\end{prop}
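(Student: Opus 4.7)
The plan is to verify the five required properties of $q$: well-definedness, being a groupoid homomorphism, surjectivity, continuity, and openness.

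First I would check that $q$ is well-defined. If $[m]_T = [n]_U$ as elements of $\Sigma$, then $m \in U$ and $n \in T$, and by the recovery formula \cref{URecovery} we have $T = [m]_T^< = [n]_U^< = U$, so the assignment $[n]_U \mapsto U$ does not depend on the chosen representative. Next, the fact that $q$ is a groupoid homomorphism is essentially immediate from the construction of $\Sigma$ in \cref{prop:sigma}: the product $[m]_T [n]_U$ is defined exactly when $(T,U) \in G^{(2)}$, and in that case $q([m]_T [n]_U) = q([mn]_{T\cdot U}) = T \cdot U = q([m]_T) \cdot q([n]_U)$; similarly $q([n]_U^{-1}) = q([n^*]_{U^*}) = U^* = q([n]_U)^{-1}$. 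Surjectivity follows from \cref{U11}, which provides a non-empty $U_1^1$ for every $U \in G$, giving a class $[n]_U \in \Sigma$ with $q([n]_U) = U$.

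For continuity, I would use the net characterisation of convergence in \cref{SigmaLimits}: if $[n_\lambda]_{U_\lambda} \to [n]_U$ in $\Sigma$, then in particular $U_\lambda \to U$ in $G$, so $q$ is continuous.

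The last step is openness, which I would verify by computing $q$ on the subbasis $\mathcal{B}$. For any $n \in N$ and any non-empty open $O \subseteq \mathbb{T}$, I claim that
\[q(\mathcal{E}_n^O) = \mathcal{U}_n.\]
The inclusion $\subseteq$ is clear since $[tn]_U \in \mathcal{E}_n^O$ forces $U \in \mathcal{U}_n$. Conversely, for any $U \in \mathcal{U}_n$ and any fixed $t_0 \in O$, the class $[t_0 n]_U$ lies in $\mathcal{E}_n^O$ and maps to $U$. (The case $O = \emptyset$ is trivial since then $\mathcal{E}_n^O = \emptyset$.) Since $\mathcal{U}_n$ is a basic open set of $G$, this shows $q$ sends subbasic open sets to open sets; because $q$ is a map and images commute with unions, it sends the unions to unions of opens, and one only needs that $q$ respects finite intersections of the subbasis up to open images. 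In fact the simpler route is to note that $\mathcal{B}$ is already a basis by \cref{NeighbourhoodBase}, so it suffices that the subbasic images are open, which we just checked. I do not anticipate a substantial obstacle here; the main point worth care is that the identification $q(\mathcal{E}_n^O) = \mathcal{U}_n$ really does hold on the nose whenever $O$ is non-empty, which is where surjectivity of $q$ is effectively being used again.
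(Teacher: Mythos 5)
Your proposal is correct and follows essentially the same route as the paper: well-definedness via \cref{URecovery}, the homomorphism property from the definition of the product on $\Sigma$, surjectivity from nonemptiness of ultrafilters, and openness from $q(\mathcal{E}^O_n)=\mathcal{U}_n$ together with the fact that $\mathcal{B}$ is a basis. The only cosmetic difference is that the paper proves continuity by noting $q^{-1}(\mathcal{U}_n)=\mathcal{E}^{\mathbb{T}}_n$ rather than via the net characterisation of \cref{SigmaLimits}; both are fine.
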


\begin{proof}
    By \cref{URecovery}, $q$ is well-defined and, since ultrafilters are nonempty, $q$ maps $\Sigma$ onto $G$.  By the definition of the product on $\Sigma$, $q$ is a groupoid homomorphism.  As $q^{-1}(\mathcal{U}_n)=\mathcal{E}^\mathbb{T}_n$, for all $n\in N$, $q$ is also continuous.  As $\mathcal{B}$ is a basis and $q(\mathcal{E}^O_n)=\mathcal{U}_n$, for all $n\in N$ and open $O\subseteq\mathbb{T}$, we see that $q$ is also an open map.
\end{proof}
    
We have a natural action of $\mathbb{T}$ on $\Sigma$.

\begin{prop}\label{TAction}
There is a free continuous open action of $\mathbb{T}$ on $\Sigma$ given by
\[t[n]_U:=[tn]_U.\]
Moreover, $\mathbb{T}$ acts transitively on each fibre of $q$.
\end{prop}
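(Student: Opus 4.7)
I plan to verify the five required properties in sequence: well-definedness, the group action axioms, freeness, continuity, openness, and fibrewise transitivity; all but continuity should be quick, given the infrastructure already built.

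First, I would check that the definition is independent of the representative. If $[m]_U = [n]_U$, i.e.\ $m \sim_U n$, then \cref{SimProperties} gives $tm \sim_U tn$ for every $t \in \mathbb{T} \subseteq \mathbb{C}\setminus\{0\}$, so $[tm]_U = [tn]_U$. The group action laws $1\cdot[n]_U = [n]_U$ and $s(t[n]_U) = (st)[n]_U$ are then immediate from $1n = n$ and $s(tn) = (st)n$ in $N$. For freeness, if $t[n]_U = [n]_U$ then $tn \sim_U n$, so by \eqref{msimtn} we get $t = \lal n,n\ral_U = 1$, where the last equality is \cref{AngleProperties}.

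Continuity I would reduce to \cref{SigmaLimits}. Given $t_\lambda \to t$ in $\mathbb{T}$ and $[n_\lambda]_{U_\lambda} \to [n]_U$ in $\Sigma$, that lemma yields $U_\lambda \to U$ and $\lal n_\lambda, n\ral_{U_\lambda} \to 1$. To conclude $[t_\lambda n_\lambda]_{U_\lambda} \to [tn]_U$, I again apply \cref{SigmaLimits}: $U_\lambda \to U$ already holds, and using the cocycle law from \cref{AngleProperties} I would factor
\[
\lal t_\lambda n_\lambda, tn\ral_{U_\lambda} = \lal t_\lambda n_\lambda, n_\lambda\ral_{U_\lambda}\,\lal n_\lambda, n\ral_{U_\lambda}\,\lal n, tn\ral_{U_\lambda}.
\]
The first and third factors equal $t_\lambda$ and $\overline{t}$ respectively by the scalar angle lemma just before \cref{AngleProperties} (using $|t_\lambda| = |t| = 1$), while the middle factor tends to $1$. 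So the product tends to $t\overline{t} = 1$. This is the step I expect to be the most technically delicate, but the angle algebra carries it through cleanly.

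For openness of the action map $\mathbb{T} \times \Sigma \to \Sigma$, it suffices to handle basic open sets. A basic open of $\mathbb{T}\times\Sigma$ has the form $O \times \mathcal{E}^P_n$ for $O, P \subseteq \mathbb{T}$ open. Its image under the action is
\[
\{[tsn]_U : t \in O,\ s \in P,\ U \in \mathcal{U}_n\} = \mathcal{E}^{OP}_n,
\]
and $OP$ is open in $\mathbb{T}$ because $\mathbb{T}$ is a topological group, so $\mathcal{E}^{OP}_n$ is a basic open in $\Sigma$. Finally, for transitivity on the fibres of $q$, given $[m]_U, [n]_U \in q^{-1}(U)$ set $t := \lal m,n\ral_U$; this lies in $\mathbb{T}$ by \cref{EmnMag}, and \eqref{msimtn} then gives $m \sim_U tn$, so $[m]_U = t[n]_U$.
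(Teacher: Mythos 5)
Your proposal is correct and follows essentially the same route as the paper's proof: well-definedness and freeness via \cref{SimProperties} and \eqref{msimtn}, continuity via \cref{SigmaLimits} with the angle computation $\lal t_\lambda n_\lambda, tn\ral_{U_\lambda}=t_\lambda\overline{t}\lal n_\lambda,n\ral_{U_\lambda}\to1$ (which is exactly what your three-factor cocycle decomposition produces), openness via $O\mathcal{E}^P_n=\mathcal{E}^{OP}_n$, and fibre transitivity via $[m]_U=\lal m,n\ral_U[n]_U$. The only slip is cosmetic: the scalar-angle identity $\lal\alpha n,n\ral_U=\alpha/|\alpha|$ appears in the unnumbered lemma \emph{after} \cref{AngleProducts}, not before \cref{AngleProperties}.
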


\begin{proof}
    If $m\sim_Un$ and $t\in\mathbb{T}$ then $tm\sim_Utn$, so the action is well-defined.  And if $sn\sim_Utn$, for some $s,t\in\mathbb{T}$ and $n\in N$, then $s=t$ so the action is free.  For any open $O,P\subseteq\mathbb{T}$ and $n\in N$, we see that $O\mathcal{E}^P_n=\mathcal{E}^{OP}_n$, so the action is also open. If we have nets $t_\lambda\rightarrow t$ in $\mathbb{T}$ and $[n_\lambda]_{U_\lambda}\rightarrow[n]_U$ in $\Sigma$ then $U_\lambda\rightarrow U$ and $\lal n_\lambda,n\ral_U\rightarrow 1$, by \cref{SigmaLimits}, so $\lal t_\lambda n_\lambda,tn\ral_U=t_\lambda\overline{t}\lal n_\lambda,n\ral_U\rightarrow 1$ and hence $t_\lambda[n_\lambda]_{U_\lambda}=[t_\lambda n_\lambda]_{U_\lambda}\rightarrow[tn]_U=t[n]_U$, again by \cref{SigmaLimits}.  This shows that the action is continuous.  Transitivity on the fibres of $q$ follows from the fact that $[m]_U=\lal m,n\ral_U[n]_U$, for all $m,n\in U$.
\end{proof}

Putting together \cref{prop:sigma}, \cref{qContinuousOpen} and \cref{TAction}, we have the following. 

\begin{cor}
    With $G, \Sigma$ and $q$ as defined above, $q:\Sigma\twoheadrightarrow G$ is a twist.
\end{cor}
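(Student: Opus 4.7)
The plan is to simply assemble the pieces already proved in Propositions \ref{prop:sigma}, \ref{qContinuousOpen}, and \ref{TAction}, and verify the two remaining ingredients from \cref{TwistDefinition}: the action compatibility identity $t(ef)=(te)f=e(tf)$, and local compactness of $G$.

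First I would recall that \cref{prop:sigma} yields that $\Sigma$ is a Hausdorff topological groupoid, \cref{qContinuousOpen} yields that $q:\Sigma\twoheadrightarrow G$ is a continuous open groupoid homomorphism onto $G$, and \cref{TAction} yields that $\mathbb{T}$ acts freely and continuously on $\Sigma$ with $q^{-1}(\{q(e)\})=\mathbb{T}e$ for each $e\in\Sigma$. What remains from \cref{TwistDefinition} is (a) the compatibility of the $\mathbb{T}$-action with composition in $\Sigma$, and (b) the hypotheses on the base groupoid $G$.

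For (a), I would simply compute: for composable $[m]_T,[n]_U\in\Sigma$ and $t\in\mathbb{T}$, both sides unfold to $[tmn]_{T\cdot U}$, since scalar multiplication on $N$ passes through the product on either side, so $t([m]_T[n]_U)=(t[m]_T)[n]_U=[m]_T(t[n]_U)$. This is immediate from the way the product and $\mathbb{T}$-action were defined and requires no new estimate.

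For (b), I would note that \cref{UltraGroupoid} shows $G$ is étale and \cref{prop:G is Hausdorff} shows $G$ is Hausdorff, so only local compactness remains. Here I would use \cref{ComplementHomeo}, which identifies $G^{(0)}$ homeomorphically with the maximal ideal space $X_B$ of the commutative C*-algebra $B$; since $X_B$ is locally compact Hausdorff by Gelfand duality, so is $G^{(0)}$. Local compactness of $G$ itself then follows from the fact that $G$ is étale: each point of $G$ has an open bisection neighbourhood of the form $\mathcal{U}_n$ on which the range map restricts to a homeomorphism onto an open subset of $G^{(0)}$, so a compact neighbourhood in $G^{(0)}$ pulls back to a compact neighbourhood in $G$. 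No step here is an obstacle; the only thing worth being careful about is making sure the local compactness argument uses only the étale structure and the already-established homeomorphism with $X_B$, rather than appealing circularly to the twist being a proper map (which is what we are trying to build).
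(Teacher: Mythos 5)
Your proposal is correct and follows essentially the same route as the paper, which simply assembles \cref{prop:sigma}, \cref{qContinuousOpen} and \cref{TAction} without further comment. You additionally spell out the action-compatibility identity and the local compactness of $G$ (via \cref{ComplementHomeo} and the \'etale structure), details the paper leaves implicit; both checks are right, and your caution about not invoking properness circularly is well placed.
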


\section{The Representation}\label{sec:rep}

Here we show how to represent elements of $A$ as functions in $C_0(\Sigma;G)$, yielding an isomorphism of $A$ with a twisted groupoid C*-algebra.  This achieves our main goal of showing that, up to isomorphism, twisted groupoid C*-algebras are completely characterised by having Cartan semigroups.  Moreover, we will show that summable Cartan semigroups are exactly those that can be identified with the monomial semigroups of twisted groupoid C*-algebras, thus providing a precise converse to \cref{TwistedGroupoid->CartanSemigroup}.

\begin{remark}
    In the process we will see that $E$ is automatically faithful on a dense *-subalgebra of $A$ but, unlike elsewhere in the literature, we do not require it to be faithful everywhere.  If it is faithful on all of $A$, however, then the twisted groupoid C*-algebra isomorphic to $A$ is indeed reduced, just like in the original Kumjian-Renault theory.
\end{remark}

For every $a\in A$, we define the desired function $\hat{a}$ as follows.

\begin{prop}\label{prop:hat a}
For every $a\in A$, we have $\hat{a}\in C(\Sigma;G)$ defined by
\begin{equation}\label{def:hat}
\hat{a}([n]_U):=\tfrac{1}{|n|_U}\psi_U(E(n^*a))=\tfrac{1}{|n|_U}\psi^U(E(an^*)).
\end{equation}
\end{prop}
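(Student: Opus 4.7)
The plan is to establish four claims in turn: the two formulas in \eqref{def:hat} agree; $\hat{a}$ is well-defined (independent of the representative $n$ of $[n]_U$); $\hat{a}$ is $\mathbb{T}$-contravariant; and $\hat{a}$ is continuous. The main obstacle will be well-definedness; the other three are short calculations built from preceding results.

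For the equality of the two formulas, apply \cref{prop:psiU} to $b = E(an^*)\in B$ to get $\psi^U(E(an^*)) = \psi_U(n^*E(an^*)n)/|n|_U^2$; then \cref{prop:Normal} (normality of $E$) rewrites $n^*E(an^*)n = E(n^*an^*n)$; finally, using $B$-linearity of $E$ together with multiplicativity of $\psi_U$ on $B$,
\[\psi_U(E(n^*an^*n)) = \psi_U(E(n^*a))\,\psi_U(n^*n) = |n|_U^2\,\psi_U(E(n^*a)),\]
which cancels $|n|_U^2$ to give the desired equality.

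For well-definedness, suppose $[m]_U = [n]_U$. Since scaling by a nonzero complex scalar leaves \eqref{def:hat} unchanged, we may assume $|m|_U = |n|_U = 1$, so that $m \sim_U n$ reads $\psi_U(E(n^*m)) = 1$, equivalently (by the formula equality just proved) $\psi^U(E(mn^*)) = 1$. We first prove $\psi^U(E(am^*)) = \psi^U(E(an^*))$ for $a \in N$. Applying \cref{lem:homom-like} to the character $\psi^U$ with the $N$-elements $mn^*$ and $am^*$, using $\psi^U(E(mn^*)) = 1 \neq 0$, yields
\[\psi^U(E(mn^*am^*)) = \psi^U(E(mn^*))\,\psi^U(E(am^*)) = \psi^U(E(am^*)).\]
Now \cref{prop:Normal} gives $E(mn^*am^*) = m\,E(n^*a)\,m^*$, and a second application of \cref{prop:psiU} with $n_0 = m$ converts $\psi^U(m\,E(n^*a)\,m^*)$ into $\psi_U(m^*m)\,\psi_U(E(n^*a))\,\psi_U(m^*m) = \psi_U(E(n^*a))$. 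Combined with the formula equality, this proves $\psi_U(E(m^*a)) = \psi_U(E(n^*a))$ for $a \in N$. Both sides depend linearly and continuously on $a$, and $\mathrm{span}(N)$ is dense in $A$, so the equality extends to all $a \in A$.

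Finally, $\mathbb{T}$-contravariance is a direct computation: $\hat{a}([tn]_U) = \tfrac{1}{|t||n|_U}\psi_U(\overline{t}\,E(n^*a)) = \overline{t}\,\hat{a}([n]_U)$. For continuity, if $[n_\lambda]_{U_\lambda} \to [n]_U$, then \cref{SigmaLimits} and the remark following it give $U_\lambda \to U$ together with $t_\lambda \in \mathbb{T}$ such that eventually $[n_\lambda]_{U_\lambda} = [t_\lambda n]_{U_\lambda}$ with $t_\lambda \to 1$. Then $\hat{a}([n_\lambda]_{U_\lambda}) = \overline{t_\lambda}\cdot\tfrac{1}{|n|_{U_\lambda}}\,\psi_{U_\lambda}(E(n^*a))$, and \cref{StateContinuity} applied to $b = n^*n$ and $b = E(n^*a)$ shows each factor converges to the corresponding value at $[n]_U$.
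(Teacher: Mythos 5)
Your proof is correct and follows essentially the same route as the paper: well-definedness via \cref{prop:psiU}, \cref{prop:Normal} and \cref{lem:homom-like}, then the same short computations for $\mathbb{T}$-contravariance and for continuity via \cref{SigmaLimits} and \cref{StateContinuity}. Your detour through $a\in N$ followed by a density/linearity extension is a slightly more careful handling of the hypotheses of \cref{lem:homom-like} than the paper's direct application to $an^*$ with $a\in A$; the only quibble is that ``scaling by a nonzero complex scalar leaves \eqref{def:hat} unchanged'' is literally true only for positive scalars, which is all your normalisation to $|m|_U=|n|_U=1$ actually requires.
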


\begin{proof}
    Note this amounts to saying that, for $n\in U_1$,
    \[\hat{a}([n]_U):=\psi_U(E(n^*a))=\psi^U(E(an^*)).\]
    To see that this is well-defined, take any $m,n\in U_1$ with $m\sim_Un$. Then \cref{prop:Normal}, \cref{lem:homom-like}, \cref{prop:psiU} and \cref{EmnMag} yield
    \begin{align*}
        \psi_U(E(m^*a))&=\psi^U(nE(m^*a)n^*)=\psi^U(E(nm^*an^*))=\psi^U(E(nm^*)E(an^*))\\
        &=\psi^U(E(an^*)).
    \end{align*}
    To see that $\hat{a}$ is $\mathbb{T}$-contravariant, for any $U\in G$, $n\in U_1$ and $t\in\mathbb{T}$ we have
    \[\hat{a}(t[n]_U)=\hat{a}([tn]_U)=\psi_U(E(\overline{t}n^*a))= \overline{t}\psi_U(E(n^*a))= \overline{t}\hat{a}([n]_U).\]
    To verify that $\hat{a}$ is continuous, take any nets $(U_\lambda)_{\lambda\in \Lambda}\subseteq G$ and $(m_\lambda)_{\lambda \in \Lambda}$ with $m_\lambda\in (U_{\lambda})_1$, for all $\lambda\in \Lambda$, such that $[m_\lambda]_{U_\lambda} \to [m]_U$ in $\Sigma$, for some $U\in G$ and $m\in U_1$.  This means $U_\lambda\to U$ and $\lal m_\lambda,m\ral_{U_\lambda}\rightarrow 1$ by \cref{SigmaLimits}.  It follows that, for all sufficiently large $\lambda$,
    \[\hat{a}([m_\lambda]_{U_\lambda})=\hat{a}(\lal m_\lambda,m\ral_{U_\lambda}[m]_{U_\lambda})=\overline{\lal m_\lambda,m\ral_{U_\lambda}}\psi_{U_\lambda}(m^*a)\rightarrow\psi_U(m^*a)=\hat{a}([m]_U).\qedhere\]
\end{proof}

Now we proceed to examine further properties of the map $a\mapsto\hat{a}$.  Note below we are considering the * operation on $\ell^\infty(\Sigma)$ from \Cref{TwistedGroupoidCstarAlgebras}, where $a^*(e)=\overline{a(e^{-1})}$.

\begin{prop}\label{hatLinear}
    The map $a\mapsto\hat{a}$ is a *-linear contraction from $A$ to $\ell^\infty(\Sigma)$.
\end{prop}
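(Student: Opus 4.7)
The plan is to verify the three assertions—linearity, *-preservation, and contractivity—separately, in increasing order of difficulty, using the two equivalent formulas for $\hat{a}$ given in \eqref{def:hat} together with \cref{U11}, \cref{prop:psiU}, and \cref{lem:homom-like}.

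\textbf{Linearity.} Fix $[n]_U \in \Sigma$ and choose $n \in U_1$, so $|n|_U = 1$. Then $\hat{a}([n]_U) = \psi_U(E(n^*a))$ depends linearly on $a$ because $E$ and $\psi_U$ are both linear. This immediately gives $\widehat{\alpha a + \beta b} = \alpha\hat{a} + \beta\hat{b}$ for all $\alpha,\beta \in \mathbb{C}$ and $a,b \in A$.

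\textbf{*-preservation.} We need $\widehat{a^*}([n]_U) = \overline{\hat{a}([n^*]_{U^*})}$ for every $[n]_U \in \Sigma$. Taking $n \in U_1$ (so also $n^* \in (U^*)_1$ by \cref{UnormProperties}) and recalling $\psi_{U^*} = \psi^U$, the second formula in \eqref{def:hat} gives
\[
\hat{a}([n^*]_{U^*}) = \psi^U(E(na)).
\]
By \cref{prop:psiU} applied with $n \in U_1$ together with \cref{lem:homom-like}, we have $\psi^U(E(na)) = \psi_U(n^* E(na) n) = \psi_U(E(n^*nan)) = \psi_U(n^*n)\psi_U(E(an)) = \psi_U(E(an))$. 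Since $E$ is *-preserving, conjugating yields
\[
\overline{\hat{a}([n^*]_{U^*})} = \overline{\psi_U(E(an))} = \psi_U(E(an)^*) = \psi_U(E(n^*a^*)) = \widehat{a^*}([n]_U),
\]
as required.

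\textbf{Contractivity.} This is where representative choice matters: $n \in U_1$ only forces $\psi_U(n^*n) = 1$, not $\|n\| \le 1$. However, \cref{U11} provides some $n \in U_1^1 = U_1 \cap A^1$; since $\hat{a}([n]_U)$ depends only on the equivalence class, we may compute with this representative:
\[
|\hat{a}([n]_U)| = |\psi_U(E(n^*a))| \le \|E(n^*a)\| \le \|n^*a\| \le \|n\|\,\|a\| \le \|a\|,
\]
where we used that $\psi_U$ is a character on $B$ (hence norm-decreasing) and that $E$ is a contraction. Taking a supremum over $[n]_U \in \Sigma$ yields $\|\hat{a}\|_\infty \le \|a\|$, so $a \mapsto \hat{a}$ is a contraction into $\ell^\infty(\Sigma)$.

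The only genuinely subtle step is *-preservation, where the asymmetry between $\psi_U$ and $\psi^U$ must be reconciled; I expect the argument above using \cref{prop:psiU} and \cref{lem:homom-like} to handle this cleanly. Linearity is routine and contractivity hinges on the non-obvious but already established fact that each ultrafilter admits a representative lying in the unit ball of $A$.
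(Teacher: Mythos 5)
Your proof is correct and follows essentially the same route as the paper's: linearity and the contractivity estimate via \cref{U11} are identical, and your *-preservation argument reaches the same identity, only by a slightly longer detour through \cref{prop:psiU}, where the paper simply applies the second formula of \eqref{def:hat} to $[n^*]_{U^*}$ together with $\psi_U=\psi^{U^*}$. One small citation slip: the step $\psi_U(E(n^*nan))=\psi_U(n^*n)\psi_U(E(an))$ is not an instance of \cref{lem:homom-like} (which requires both factors to lie in $N$, whereas $an$ need not), but it follows immediately from the $B$-bimodule property of the conditional expectation, $E(n^*n\cdot an)=n^*nE(an)$, together with the multiplicativity of $\psi_U$.
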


\begin{proof}
    For any $a\in A$, $U\in G$ and $n\in U_1$, note that
    \[\widehat{a^*}([n]_U)=\psi_U(E(n^*a^*))=\overline{\psi^{U^*}(E(an))}=\overline{\hat{a}([n^*]_{U^*})}=\overline{\hat{a}([n]_U^{-1})},\]
    showing that $\widehat{a^*}=\hat{a}^*$.  For any $\alpha\in\mathbb{C}$, we see that
    \[\widehat{\alpha a}([n]_U)=\psi_U(E(\alpha n^*a))=\alpha\psi_U(E(n^*a))=\alpha\hat{a}([n]_U),\]
    showing that $\widehat{\alpha a}=\alpha\hat{a}$.  For any other $b\in A$, we also see that
    \[\widehat{a+b}([n]_U)=\psi_U(n^*(a+b))=\psi_U(n^*a)+\psi_U(n^*b)=\hat{a}([n]_U)+\hat{b}([n]_U),\]
    showing that $\widehat{a+b}=\hat{a}+\hat{b}$.  Finally, for any $U\in G$, \cref{U11} yields $n\in U^1_1$ so
    \[|a([n]_U)|=|\psi_U(E(n^*a))|\leq\|E(n^*a)\|\leq\|n^*a\|\leq\|n\|\|a\|=\|a\|.\]
    This shows that $\|\hat{a}\|_\infty\leq\|a\|$ so $a\mapsto\hat{a}$ is indeed a contraction.
\end{proof}

When we consider the map $a\mapsto \hat{a}$ on $B$, we get a stronger property. First let
\[B_0(\Sigma;G):=\{f\in C_0(\Sigma;G):q(\mathrm{supp}^\circ(f))\subseteq G^{(0)}\} \supseteq \Bc(\Sigma;G).\]

\begin{prop}\label{Bhat}
    The map $a\mapsto\hat{a}$ is a C*-algebra isomorphism from $B$ onto $B_0(\Sigma;G)$.
\end{prop}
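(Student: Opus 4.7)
The plan is to identify the map $b\mapsto\hat{b}$ with the Gelfand transform, via the homeomorphisms $\Sigma^{(0)}\cong G^{(0)}\cong X_B$ together with the natural identification of $B_0(\Sigma;G)$ with $C_0(\Sigma^{(0)})$.

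First I would verify that $\hat{b}\in B_0(\Sigma;G)$ for every $b\in B$, that is, $q(\mathrm{supp}^\circ(\hat{b}))\subseteq G^{(0)}$. Given $[n]_U\in\Sigma$ with $\hat{b}([n]_U)\neq0$, \eqref{def:hat} gives $\psi_U(E(n^*b))\neq0$, so $E(n^*b)\in\mathsf{s}(U)$. Since $E(n^*b)\sqsubseteq n^*b$ by \cref{prop:resE}, \eqref{RestrictionImpliesInclusion} yields $\mathcal{U}_{E(n^*b)}\subseteq\mathcal{U}_{n^*b}$, forcing $n^*b\in\mathsf{s}(U)$. Using the standard groupoid identity $U\cdot\mathsf{s}(U)=U$, we then obtain $nn^*b\in U$. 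But $nn^*\in N_+=B_+\subseteq B$, so $nn^*b\in B$, and hence $U\in G^{(0)}$ by \eqref{G0B}. Because $G^{(0)}$ is closed in $G$ by \cref{prop:G is Hausdorff}, $q^{-1}(G^{(0)})$ is closed in $\Sigma$, so this pointwise vanishing propagates to the closed support.

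Next I would compute $\hat{b}|_{\Sigma^{(0)}}$ and show it agrees with the Gelfand transform of $b$. For $[b']_U\in\Sigma^{(0)}$ with $b'\in B_+$ and $|b'|_U=1$, positivity of $b'$ together with $\psi_U$ being multiplicative forces $\psi_U(b')=1$, so \eqref{def:hat} simplifies to
\[
\hat{b}([b']_U)=\psi_U(E(b'b))=\psi_U(b'b)=\psi_U(b')\psi_U(b)=\psi_U(b).
\]
Composing with the homeomorphism $q|_{\Sigma^{(0)}}:\Sigma^{(0)}\to G^{(0)}$ from \cref{QuotientHomeo} and the homeomorphism $h:G^{(0)}\to X_B$ from \cref{ComplementHomeo}, the map $b\mapsto\hat{b}|_{\Sigma^{(0)}}$ becomes exactly the Gelfand transform $B\to C_0(X_B)$, which is a C*-isomorphism.

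Finally, I would observe that restriction to $\Sigma^{(0)}$ gives a C*-isomorphism $B_0(\Sigma;G)\to C_0(\Sigma^{(0)})$: any $f\in B_0(\Sigma;G)$ is supported in $q^{-1}(G^{(0)})=\mathbb{T}\cdot\Sigma^{(0)}$ and is determined on this set by its restriction to $\Sigma^{(0)}$ through $\mathbb{T}$-contravariance, via $f(te)=\overline{t}f(e)$; under this identification, the convolution on $B_0(\Sigma;G)$ reduces to pointwise multiplication, as the only contributing term in the convolution formula at a unit is the source unit itself. Chaining these identifications presents $b\mapsto\hat{b}$ as a composition of C*-algebra isomorphisms, hence a C*-isomorphism $B\to B_0(\Sigma;G)$. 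The main obstacle is the first step: establishing $q(\mathrm{supp}^\circ(\hat{b}))\subseteq G^{(0)}$ requires the interplay between the stable expectation $E$, the restriction preorder $\sqsubseteq$, and ultrafilter multiplication; the remaining steps amount to routine bookkeeping with standard Gelfand duality.
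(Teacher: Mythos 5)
Your proof is correct and follows essentially the same route as the paper: both identify $b\mapsto\hat{b}|_{\Sigma^{(0)}}$ with the Gelfand transform of $b$ via the chain of identifications $B_0(\Sigma;G)\cong C_0(\Sigma^{(0)})\cong C_0(G^{(0)})\cong C_0(X_B)$ coming from \cref{QuotientHomeo} and \cref{ComplementHomeo}. Your explicit verification that $q(\mathrm{supp}^\circ(\hat{b}))\subseteq G^{(0)}$ — which is needed before one can invert these identifications and which the paper leaves implicit — is sound, being essentially the computation that reappears later in the proof of \cref{prop:supports}.
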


\begin{proof}
    This follows from the classic Gelfand duality.  To see this, first note that we have a C*-algebra isomorphism from $B_0(\Sigma;G)$ onto $C_0(G^{(0)})$, specifically given by $f\mapsto f\circ q|_{\Sigma^{(0)}}^{-1}$, where $q|_{\Sigma^{(0)}}^{-1}$ is the inverse of the quotient map restricted to $\Sigma^{(0)}$.  For each $U\in G^{(0)}$, take $b_U\in U_1\cap B_+$ so $q|_{\Sigma^{(0)}}^{-1}(U)=[b_U]_U$.  By \cref{ComplementHomeo}, we have a homeomorphism $u:X_B\twoheadrightarrow G^{(0)}$ (namely $u(I)=(B\setminus I)^<$) which again yields a C*-algebra isomorphism from $C_0(G^{(0)})$ onto $C_0(X_B)$ given by $f\mapsto f\circ u$.  For any $a\in B$, we then see that
    \[\hat{a}\circ q|_{\Sigma^{(0)}}^{-1}\circ u(I)=\hat{a}([b_{u(I)}]_{u(I)})=\psi_{u(I)}(b_{u(I)}^*a)=\langle I\rangle(a).\]
    Gelfand duality then tells us that $a\mapsto\hat{a}\circ q|_{\Sigma^{(0)}}^{-1}\circ u$ is a C*-algebra isomorphism from $B$ onto $C_0(X_B)$.  Inverting the isomorphisms above, it follows that $a\mapsto\hat{a}$ is a C*-algebra isomorphism from $B$ onto $B_0(\Sigma;G)$.
\end{proof}

Next we show that composing the stable expectation $E$ with the map $ a \mapsto \hat{a}$ is the same as first mapping to $\hat{a}$ then applying the usual groupoid expectation $\hat{E}$ following the same formula as the diagonal map $D$.  Specifically, for any $f:\Sigma\rightarrow\mathbb{C}$, we define $\hat{E}(f):\Sigma\rightarrow\mathbb{C}$ by
\[\hat{E}(f)(e)=\begin{cases}f(e)&\text{if }q(e)\in G^{(0)}\\0&\text{otherwise.}\end{cases}\]

\begin{lemma}\label{ExpectationRestriction}
    For all $a\in A$,
    \[\widehat{E(a)}=\hat{E}(\hat{a}).\]
\end{lemma}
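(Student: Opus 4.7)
The plan is to verify the equality pointwise at each $[n]_U\in\Sigma$, which (after rescaling to $n\in U_1$) amounts to comparing $\psi_U(E(n^*E(a)))$ with either $\psi_U(E(n^*a))$ when $U\in G^{(0)}$, or with $0$ when $U\notin G^{(0)}$.

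The non-unit case is a direct consequence of \cref{Bhat}: since $E(a)\in B$, that proposition gives $\widehat{E(a)}\in B_0(\Sigma;G)$, and combined with the identity $\mathrm{supp}^\circ=\mathrm{supp}$ for continuous functions (noted in \cref{NormedSpaces}), this forces $\widehat{E(a)}$ to vanish off $q^{-1}(G^{(0)})$. Thus $\widehat{E(a)}([n]_U)=0=\hat E(\hat a)([n]_U)$ whenever $U\notin G^{(0)}$.

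For $U\in G^{(0)}$ I would first use the module property of the conditional expectation $E$ (the Tomiyama identity, valid since by the remark in \cref{NormedSpaces} $E$ is a genuine conditional expectation): as $E(a)\in B$, $E(n^*E(a))=E(n^*)E(a)$, so multiplicativity of $\psi_U$ on $B$ gives $\psi_U(E(n^*E(a)))=\overline{\psi_U(E(n))}\psi_U(E(a))$. The right-hand side $\psi_U(E(n^*a))$ can then be brought into the same form via \cref{lem:homom-like}, provided $\psi_U(E(n^*))\ne 0$; this holds because \cref{lem:units and Un} combined with $U\in G^{(0)}$ forces $E(n)\in U$, so $\psi_U(E(n))\ne 0$. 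Since \cref{lem:homom-like} is stated only for $m,n\in N$, I first need to extend it to $m\in N$ and arbitrary $a\in A$: the proof of that lemma supplies $(b_k)\subseteq B$ with $b_kn^*\to E(n^*)$ and $\psi_U(b_k)\to 1$, and then norm-continuity of $E$ together with $E(E(n^*)a)=E(n^*)E(a)$ gives
\[\psi_U(E(n^*a))=\lim_k\psi_U(b_kE(n^*a))=\lim_k\psi_U(E(b_kn^*a))=\psi_U(E(n^*)E(a)),\]
matching the formula already derived for $\widehat{E(a)}([n]_U)$.

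The only genuine obstacle is this extension of \cref{lem:homom-like} from $N$ to $A$; once it is in hand, the proof reduces to the module property of $E$, multiplicativity of $\psi_U$ on $B$, and the characterisation of unit ultrafilters provided by \cref{lem:units and Un} and \cref{Bhat}.
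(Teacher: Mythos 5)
Your argument is correct, but it takes a different route from the paper's, which exploits the freedom to choose a convenient representative of each equivalence class. For $U\in G^{(0)}$ the paper evaluates at $[b]_U$ with $b\in U_1\cap B$, so that $E(b^*E(a))=b^*E(a)=E(b^*a)$ is immediate from the module property and no multiplicativity of $\psi_U$ across $E$ is needed; for $U\notin G^{(0)}$ it invokes \cref{prop:G0complement} to pick a representative $n\in U_1$ with $E(n)=0$, whence $\psi_U(E(n^*E(a)))=\psi_U(E(n^*)E(a))=0$ directly. You instead work with an arbitrary representative, which forces you to (a) extend \cref{lem:homom-like} from $a\in N$ to $a\in A$ and verify the nonvanishing hypothesis via \cref{lem:units and Un}, and (b) handle the non-unit case by citing \cref{Bhat} (legitimate, as it precedes this lemma and does not depend on it). Both sides of your extension check out: the sequence $(b_k)$ from $E(n^*)\sqsubseteq n^*$ together with norm-continuity of $E$ and the Tomiyama identity gives $\psi_U(E(n^*a))=\psi_U(E(n^*)E(a))$, matching $\psi_U(E(n^*E(a)))$. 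The trade-off is that the paper's representative-picking makes the computation essentially one line per case, whereas your version does more work but makes explicit the useful general fact that $\phi(E(ma))=\phi(E(m)E(a))$ for $m\in N$, $a\in A$ and characters $\phi$ with $\phi(E(m))\neq 0$ --- a genuine strengthening of \cref{lem:homom-like} that the paper never states.
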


\begin{proof}
    Take $a\in A$.  For any $U\in G^{(0)}$ and $b\in U_1\cap B$,
    \[\widehat{E(a)}([b]_U)=\psi_U(E(b^*E(a)))=\psi_U(E(E(b^*a)))=\psi_U(E(b^*a))=\widehat{a}([b]_U),\]
    showing that $\widehat{E(a)}$ agrees with $\hat{a}$ on $q^{-1}(G^{(0)})$.  On the other hand, for any $U\in G\setminus G^{(0)}$, \cref{prop:G0complement} yields $n\in U_1$ with $E(n)=0$ and then
    \[\widehat{E(a)}([n]_U)=\psi_U(E(n^*E(a)))=\psi_U(E(n^*)E(a))=0.\]
    Extending to $\mathbb{T}$-multiplies yields $\widehat{E(a)}(q^{-1}(G\setminus G^{(0)}))=\{0\}$, as required.
\end{proof}

Whenever $n\in U\in G$, we see that
\[\hat{n}([n]_U)=\tfrac{1}{|n|_U}\psi_U(n^*n)=|n|_U.\]
This observation allows us to identify $\supp^\circ(\hat{n})$.

\begin{prop}\label{prop:supports}
For any $n\in N$,
\[\supp^\circ(\hat{n})=q^{-1}(\mathcal{U}_n).\]
\end{prop}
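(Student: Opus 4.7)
The plan is to reduce the claim to the pointwise equivalence: $\hat{n}([m]_U) \neq 0$ if and only if $n \in U$. This will suffice because $\hat{n}$ is continuous by \cref{prop:hat a}, so $\supp^\circ(\hat{n}) = \{e\in\Sigma : \hat{n}(e)\neq 0\}$, while by \cref{URecovery} every point of $\Sigma$ has the form $[m]_U$ with $m \in U$ and $q^{-1}(\mathcal{U}_n) = \{[m]_U \in \Sigma : n \in U\}$.

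For the inclusion $q^{-1}(\mathcal{U}_n) \subseteq \supp^\circ(\hat{n})$, I would fix $[m]_U \in q^{-1}(\mathcal{U}_n)$, so $m, n \in U$. Then $m^*m, n^*n \in \mathsf{s}(U)$ forces $|m|_U, |n|_U > 0$, and \cref{EmnMag} yields $|\psi_U(E(m^*n))| = |m|_U |n|_U > 0$, so $\hat{n}([m]_U) = \tfrac{1}{|m|_U}\psi_U(E(m^*n)) \neq 0$.

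For the reverse inclusion, suppose $\hat{n}([m]_U) \neq 0$, i.e.\ $\psi_U(E(m^*n)) \neq 0$. Since $\mathrm{ker}(\psi_U) = B \setminus \mathsf{s}(U)$ by \eqref{eq:srstates}, this means $E(m^*n) \in \mathsf{s}(U)$. As $\mathsf{s}(U)$ is a filter, it contains some $l < E(m^*n)$, and combining this with $E(m^*n) \sqsubseteq m^*n$ from \cref{prop:resE}, \eqref{Auxiliarity} gives $l < m^*n$, whence $m^*n \in \mathsf{s}(U)$. Applying \eqref{eq:unm} twice then yields
\[U = U \cdot \mathsf{s}(U) \in \mathcal{U}_m \mathcal{U}_{m^*n} = \mathcal{U}_{mm^*n} = \mathcal{U}_{mm^*}\mathcal{U}_n.\]
Since $mm^* = (m^*)^* m^* \in N_+ \subseteq B$, \cref{BG0} gives $\mathcal{U}_{mm^*} \subseteq G^{(0)}$, so writing $U = T \cdot V$ with $T \in \mathcal{U}_{mm^*}$ and $V \in \mathcal{U}_n$ forces $T = \mathsf{r}(V)$ and hence $U = V \in \mathcal{U}_n$.

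The main obstacle is this second inclusion: the hypothesis only supplies $E(m^*n) \in \mathsf{s}(U)$, while the conclusion demands $n \in U$. The key bridges are \eqref{Auxiliarity}, which promotes $E(m^*n) \in \mathsf{s}(U)$ to $m^*n \in \mathsf{s}(U)$, and \eqref{eq:unm} paired with \cref{BG0}, which together let us strip the positive factor $mm^*$ off $\mathcal{U}_{mm^*n}$ to reach $\mathcal{U}_n$.
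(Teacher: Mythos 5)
Your proof is correct and follows essentially the same route as the paper's: the easy inclusion by evaluating $\hat n$ on the fibre (the paper uses the canonical representative $t[n]_U$ where you use \cref{EmnMag} at a general $[m]_U$), and the reverse inclusion by passing from $E(m^*n)\in\mathsf{s}(U)$ to $m^*n\in\mathsf{s}(U)$ (the paper cites \cref{lem:units and Un}, you rederive it via \eqref{Auxiliarity}) and then using $\mathcal{U}_{mm^*n}=\mathcal{U}_{mm^*}\mathcal{U}_n\subseteq\mathcal{U}_n$. No gaps.
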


\begin{proof}
    For any $U\in\mathcal{U}_n$ and $t\in\mathbb{T}$, we see that $\hat{n}(t[n]_U)=\overline{t}|n|_U\neq0$ so $q^{-1}(\mathcal{U}_n)\subseteq\supp^\circ(\hat{n})$.  Conversely, take $[m]_U\in\supp^\circ(\hat{n})$ so \[
    0\neq\hat{n}([m]_U)=\psi_U(E(m^*n))=\langle B\setminus \mathsf{s}(U)\rangle(E(m^*n)).\]
    This means $E(m^*n)\in \mathsf{s}(U)$ and hence $\mathsf{s}(U)\in\mathcal{U}_{E(m^*n)}\subseteq\mathcal{U}_{m^*n}$ by \cref{lem:units and Un}.  So $m^*n\in \mathsf{s}(U)$ and hence $mm^*n\in U\mathsf{s}(U)\subseteq U$, i.e. $U\in\mathcal{U}_{mm^*n}=\mathcal{U}_{mm^*}\mathcal{U}_n\subseteq\mathcal{U}_n$.  As $U$ was arbitrary, this shows that $\supp^\circ(\hat{n})\subseteq q^{-1}(\mathcal{U}_n)$.
\end{proof}

Building on this, we have the following.

\begin{prop}\label{prop:semihomo}
The map $n\mapsto\hat{n}$ is a semigroup homomorphism from $N$ to $S:=\{a \in C(\Sigma;G) : \: q(\supp^\circ(a)) \text{ is a bisection}\}$.
\end{prop}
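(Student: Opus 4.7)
The proof splits into two claims: (i) $\hat n \in S$ for every $n\in N$, and (ii) $\widehat{mn} = \hat m \hat n$ (the convolution product from \cref{TwistedGroupoidCstarAlgebras}) for all $m,n\in N$. Claim (i) is immediate: \cref{prop:supports} gives $\supp^\circ(\hat n) = q^{-1}(\mathcal{U}_n)$, and since $q$ is surjective by \cref{qContinuousOpen}, $q(\supp^\circ(\hat n)) = \mathcal{U}_n$, which is a bisection of $G$ as observed after \eqref{G0B}.

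For (ii), fix $[l]_V\in\Sigma$ with $l\in V$. Because $\mathcal{U}_m$ and $\mathcal{U}_n$ are bisections, the convolution sum for $\hat m \hat n([l]_V)$ has at most one non-zero term, and it is non-zero only when $V\in \mathcal{U}_m\mathcal{U}_n = \mathcal{U}_{mn}$ by \eqref{eq:unm}. If $V \notin \mathcal{U}_{mn}$, then $\widehat{mn}([l]_V) = 0$ by \cref{prop:supports}, and any $g$ that contributed to $\hat m\hat n([l]_V)$ would give $V = g\cdot(g^{-1}V)\in\mathcal{U}_m\mathcal{U}_n$, a contradiction. Otherwise write $V = T\cdot U$ uniquely with $T\in\mathcal{U}_m$ and $U\in\mathcal{U}_n$; choosing the section $\sigma(T):=[m]_T$, the single surviving term is $\hat m([m]_T)\cdot \hat n([m^*l]_U)$, where we have used $[m]_T^{-1}[l]_V = [m^*l]_{T^{-1}V} = [m^*l]_U$ and the fact that $m^*l\in T^*V \subseteq (T^*V)^< = U$. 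Reading off \eqref{def:hat}, $\hat m([m]_T) = |m|_T$ and $\hat n([m^*l]_U) = |m^*l|_U^{-1}\psi_U(E(l^*mn))$.

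The computational heart of the proof is the magnitude identity $|m^*l|_U = |m|_T \cdot |l|_V$. I would derive it by applying \cref{prop:psiU} with respect to $l\in V$ (crucially, not $l\in U$, since $l$ need not lie in $U$) to write $\psi_V(l^*mm^*l) = \psi^V(mm^*)\,|l|_V^2$, and then identify $\psi_U = \psi_V$ on $B$ (from $\mathsf{s}(U)=\mathsf{s}(V)$) and $\psi^V = \psi^T$ on $B$ (from $\mathsf{r}(V)=\mathsf{r}(T)$), together with $\psi^T(mm^*) = |m|_T^2$. Substituting this identity, the factor $|m|_T$ cancels and we are left with $\psi_V(E(l^*mn))/|l|_V = \widehat{mn}([l]_V)$, completing the verification.

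The main obstacle is the source/range bookkeeping across the composition $V = T\cdot U$: $l$ lives in $V$, $m^*l$ in $U$, and $m$ in $T$, so one must apply \cref{prop:psiU} with the appropriate ultrafilter and correctly identify which of $\psi_U,\psi_V,\psi^V,\psi^T$ coincide on $B$. Once the bookkeeping is handled, everything else is routine character arithmetic; independence of the computation from the choice of section is automatic because the $\mathbb{T}$-phases introduced in $\sigma$ and $\sigma^{-1}$ cancel by the $\mathbb{T}$-contravariance of $\hat m$ and $\hat n$.
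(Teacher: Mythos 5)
Your proof is correct and follows essentially the same route as the paper: both rest on \cref{prop:supports} to place $\hat{n}$ in $S$, on the bisection property of $\mathcal{U}_m$ and $\mathcal{U}_n$ to reduce the convolution to a single surviving term, and on the multiplicativity of magnitudes (your identity $|m^*l|_U=|m|_T|l|_V$ is just \cref{UnormProperties} applied to the composable pair $(T^*,V)$, so the re-derivation via \cref{prop:psiU} is not needed). The only difference is one of execution: the paper evaluates both sides at the canonical point $[mn]_{TU}$, where each manifestly equals $|m|_T|n|_U$, and then extends by $\mathbb{T}$-contravariance together with the support equality $\supp^\circ(\widehat{mn})=\supp^\circ(\hat{m})\supp^\circ(\hat{n})$, whereas you evaluate at a general point $[l]_V$ and absorb the same bookkeeping into an explicit section computation.
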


\begin{proof}
    For each $n\in N$, \cref{prop:hat a} says $\hat{n}\in C(\Sigma;G)$ and \cref{prop:supports} tells us that $q(\mathrm{supp}^\circ(\hat{n}))=\mathcal{U}_n$ is a bisection and hence $\hat{n}\in S$.  Now for any $(T,U)\in G^{(2)}$, $m\in T$ and $n\in U$,
    \[\widehat{mn}([m]_T[n]_U)=\widehat{mn}([mn]_{TU})=|mn|_{TU}=|m|_T|n|_U=\hat{m}([m]_T)\hat{n}([n]_U).\]
    As $q(\mathrm{supp}^\circ(\hat{m}))=\mathcal{U}_m$ and $q(\mathrm{supp}^\circ(\hat{n}))=\mathcal{U}_n$ are bisections, this last product is a convolution, i.e. $\hat{m}([m]_T)\hat{n}([n]_U)=\hat{m}\hat{n}([m]_T[n]_U)$.  This extends to $\mathbb{T}$-multiples, showing that $\widehat{mn}$ and $\hat{m}\hat{n}$ agree on $q^{-1}(\mathcal{U}_m\mathcal{U}_n)$ and hence everywhere, as \[\mathrm{supp}^\circ(\widehat{mn})=q^{-1}(\mathcal{U}_{mn})=q^{-1}(\mathcal{U}_m\mathcal{U}_n)=q^{-1}(\mathcal{U}_m)q^{-1}(\mathcal{U}_n)=\mathrm{supp}^\circ(\hat{m})\mathrm{supp}^\circ(\hat{n}).\qedhere\]
\end{proof}

For any $C\subseteq A$, we denote the image of $C$ under the map $a\mapsto\hat{a}$ by
\[\widehat{C}=\{\hat{c}:c\in C\}.\]

\begin{lemma}\label{nhat}
    For all $n\in N$,
    \[\widehat{n^>}=\{f\in C(\Sigma;G):\mathrm{supp}^\circ(f)\Subset q^{-1}(\mathcal{U}_n)\}.\]
\end{lemma}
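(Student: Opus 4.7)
The plan is to prove the two inclusions separately. For the $\subseteq$ direction, given $m\in n^>$, \cref{prop:rainbow} furnishes $\mathcal{U}_m\Subset\mathcal{U}_n$; choosing a compact $K$ with $\mathcal{U}_m\subseteq K\subseteq\mathcal{U}_n$ and using the properness of $q$ (\cref{lem:proper}, since $q:\Sigma\twoheadrightarrow G$ has already been verified to be a twist), we get $q^{-1}(K)$ compact, so by \cref{prop:supports}, $\supp^\circ(\hat m)=q^{-1}(\mathcal{U}_m)\subseteq q^{-1}(K)\subseteq q^{-1}(\mathcal{U}_n)$, which is the required compact containment.

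For the $\supseteq$ direction, fix $f\in C(\Sigma;G)$ with $\supp^\circ(f)\Subset q^{-1}(\mathcal{U}_n)=\supp^\circ(\hat n)$. The strategy is to realise $f=\hat m$ with $m:=nc$ for a suitable $c\in B$ satisfying $c<n^*n$; then $\mathcal{U}_m=\mathcal{U}_n\mathcal{U}_c\subseteq\mathcal{U}_n\mathcal{U}_{n^*n}\subseteq\mathcal{U}_n$, and this containment is compact because the source map is a homeomorphism on the bisection $\mathcal{U}_n$ (so it pulls compact sets back to compact sets), giving $m<n$ by \cref{prop:rainbow}. By \cref{prop:semihomo}, $\hat m=\hat n\hat c$, and since $\hat c$ is supported over $G^{(0)}$, a direct unpacking of the convolution formula (using that $\mathbb T$ acts transitively on the fibres of $q$ together with the $\mathbb T$-contravariance of $\hat n$ and $\hat c$ to cancel the section) yields
\[
(\hat n\hat c)(e)=\hat n(e)\,\hat c(\mathsf s(e)) \qquad \text{for every } e\in\Sigma.
\]
Thus the condition $\hat m=f$ reduces to prescribing $\hat c|_{\Sigma^{(0)}}$ by $\hat c(\mathsf s(e))=f(e)/\hat n(e)$ on $q^{-1}(\mathcal{U}_n)$ (well-defined, since the ratio of two $\mathbb T$-contravariant functions is $\mathbb T$-invariant) and setting $\hat c=0$ on the rest of $\Sigma^{(0)}$.

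The main obstacle is producing such a $c$. Define $g:\Sigma^{(0)}\to\mathbb C$ by the formula above. Continuity of $g$ on $q^{-1}(\mathcal{U}_{n^*n})\cap\Sigma^{(0)}$ is automatic because $\hat n$ does not vanish on $q^{-1}(\mathcal{U}_n)$; the compact containment hypothesis $\supp^\circ(f)\Subset q^{-1}(\mathcal{U}_n)$ ensures the closed support of $g$ is contained in $\mathsf s(\overline{\supp^\circ(f)})$, which is compact in $q^{-1}(\mathcal{U}_{n^*n})\cap\Sigma^{(0)}$ by continuity of $\mathsf s$. Hence $g$ extends continuously by zero to $\Sigma^{(0)}$ and, viewed as an element of $B_0(\Sigma;G)$, corresponds by the Gelfand duality of \cref{Bhat} to some $c\in B$ with $\hat c|_{\Sigma^{(0)}}=g$. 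Compactness of $\supp(g)$ together with \cref{prop:supports} gives $\mathcal{U}_c\Subset\mathcal{U}_{n^*n}$, so $c<n^*n$ by \cref{prop:rainbow}, and $m:=nc$ then satisfies $m<n$ and $\hat m=f$ as desired. The delicate point in this argument is precisely the continuous extension-by-zero of $g$: this is where the assumption $\supp^\circ(f)\Subset q^{-1}(\mathcal{U}_n)$ (as opposed to mere inclusion) is essential, forcing the prescribed values to decay to zero as one approaches the boundary of $q^{-1}(\mathcal{U}_{n^*n})$ in $\Sigma^{(0)}$.
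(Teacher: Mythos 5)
Your proof is correct, and for the harder inclusion it takes a genuinely different route from the paper's. For $\subseteq$ you argue as the paper does ($\mathcal{U}_m\Subset\mathcal{U}_n$ via \cref{prop:rainbow} and \cref{prop:supports}), but you usefully make explicit the step the paper leaves implicit, namely that properness of $q$ (\cref{lem:proper}) upgrades this to $q^{-1}(\mathcal{U}_m)\Subset q^{-1}(\mathcal{U}_n)$. For $\supseteq$ the paper never divides by $\hat{n}$: it first uses \cref{DominatedApproximation} to find $m<n$ with $\mathrm{supp}^\circ(f)\subseteq\mathrm{supp}^\circ(\hat{m})$, then \cref{Interpolation} to get $m<_s l<n$, realises $\hat{s}f$ as some $\hat{b}$ with $b\in B$, and exploits the fact that $\widehat{ls}$ is identically $1$ on $\mathsf{r}(\mathrm{supp}^\circ(f))$ to conclude $f=\widehat{lb}$. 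You instead divide $f$ by $\hat{n}$ directly, push the quotient down to the unit space, realise it as $c\in B$ via \cref{Bhat}, and take $m=nc$, with \cref{prop:rainbow} converting $\mathcal{U}_{nc}\Subset\mathcal{U}_n$ back into $nc<n$. Your construction is shorter and bypasses \cref{DominatedApproximation} and \cref{Interpolation} entirely; the price is exactly the analytic care you flag: $|\hat{n}|$ is bounded below on the compact set $\mathsf{s}(\overline{\mathrm{supp}}(f))$, so the quotient is bounded and extends continuously by zero. One point you dismiss as ``automatic'' does deserve a sentence: continuity of the prescribed function $g$ on $\Sigma^{(0)}$ is not immediate from nonvanishing of $\hat{n}$ alone, but follows because $\mathsf{s}$ restricted to $q^{-1}(\mathcal{U}_n)$ is a continuous \emph{open} surjection onto an open subset of $\Sigma^{(0)}$ (so $g$ is continuous precisely because $g\circ\mathsf{s}=f/\hat{n}$ is), and likewise the identity $(\hat{n}\hat{c})(e)=\hat{n}(e)\hat{c}(\mathsf{s}(e))$ should be justified by collapsing the convolution sum to the single unit term and cancelling the $\mathbb{T}$-factors, as you sketch. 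With those details supplied, the argument is complete.
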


\begin{proof}
    If $m<n$, then $q(\mathrm{supp}^\circ(\hat{m}))=\mathcal{U}_m\Subset\mathcal{U}_n$ by \cref{prop:rainbow} and \cref{prop:supports}, thus proving the $\subseteq$ part.  Conversely, take any $f\in C(\Sigma;G)$ with $\mathrm{supp}^\circ(f)\Subset q^{-1}(\mathcal{U}_n)$.  By \cref{DominatedApproximation}, we have a sequence in $n^>$ converging to $n$ and hence the same applies to their images under the map $a\mapsto\hat{a}$, as this map is contractive (with respect to $\|\cdot\|$ on $A$ and $\|\cdot\|_\infty$ on $C(\Sigma;G)$) and hence continuous, by \cref{hatLinear}.  As $\mathrm{supp}^\circ(f)\Subset q^{-1}(\mathcal{U}_n)=\mathrm{supp}^\circ(n)$, we must therefore have some $m<n$ such that $\mathrm{supp}^\circ(f)\subseteq\mathrm{supp}^\circ(\hat{m})$.  By \cref{Interpolation}, we then have $l,s\in N$ with $m<_sl<n$.  As $sm\in B$, it follows that
    \[\mathrm{supp}^\circ(\hat{s}f)=\mathrm{supp}^\circ(\hat{s})\mathrm{supp}^\circ(f)\subseteq\mathrm{supp}^\circ(\hat{s})\mathrm{supp}^\circ(\hat{m})=\mathrm{supp}^\circ(\widehat{sm})\subseteq G^{(0)}.\]
    By \cref{Bhat}, we then have $b\in B$ with $\hat{b}=\hat{s}f$ and hence $\widehat{lb}=\widehat{ls}f$.  However, $\widehat{ls}\hat{m}=\widehat{lsm}=\hat{m}$ so $\widehat{ls}$ has to be $1$ on $\mathsf{r}(\mathrm{supp}^\circ(\hat{m}))$ and hence on $\mathsf{r}(\mathrm{supp}^\circ(f))$, which means that $\widehat{lb}=\widehat{ls}f=f$.  As $l<n$ and hence $lb<n$, we have shown $f\in\widehat{n^>}$, as required.
\end{proof}

Now we can extend this to the compatible sums defined in \cref{SigmaN}.

\begin{prop}\label{Nchat}
    The map $a\mapsto\hat{a}$ takes compatible sums from $N^>$ onto $\Nc(\Sigma;G)$, i.e.
    \[\widehat{\mathrm{csum}(N^>)}=\Nc(\Sigma;G).\]
\end{prop}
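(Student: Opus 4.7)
The plan is to prove both inclusions using the support characterisations established in \cref{nhat,prop:supports,BG0}.

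For the $\subseteq$ direction, take a compatible sum $a = \sum_{i=1}^k n_i$ with each $n_i \in N^>$, say $n_i < \widetilde{m}_i$ for some $\widetilde{m}_i \in N$. By \cref{hatLinear}, $\hat{a} = \sum_i \hat{n}_i$, and \cref{nhat} applied to each $\widetilde{m}_i$ gives that $\overline{\supp}(\hat{n}_i)$ is compact, so $\overline{\supp}(\hat{a}) \subseteq \bigcup_i \overline{\supp}(\hat{n}_i)$ is compact. For the bisection condition, the pairwise compatibility $n_i^*n_j \in B$ combined with \eqref{eq:unm} and \cref{BG0} yields $\mathcal{U}_{n_i}^{-1}\mathcal{U}_{n_j} = \mathcal{U}_{n_i^*n_j} \subseteq G^{(0)}$; similarly $\mathcal{U}_{n_i}\mathcal{U}_{n_j}^{-1} \subseteq G^{(0)}$. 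Hence $\bigcup_i \mathcal{U}_{n_i}$ is a bisection whose closure (still a bisection, because $G^{(0)}$ is closed by \cref{prop:G is Hausdorff}) contains $q(\overline{\supp}(\hat{a}))$.

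For the $\supseteq$ direction, let $f \in \Nc(\Sigma;G)$ and set $K = q(\overline{\supp}(f))$, a compact bisection in $G$. Using that $\{\mathcal{U}_n\}_{n\in N}$ is a basis and applying \cref{Interpolation,prop:rainbow} to shrink at each point, I would cover $K$ by finitely many basic bisections $\mathcal{U}_{m_1},\ldots,\mathcal{U}_{m_k}$ with each $\overline{\mathcal{U}_{m_i}}$ compact. Via the identifications $B \cong B_0(\Sigma;G)$ from \cref{Bhat} and $G^{(0)} \cong X_B$ from \cref{ComplementHomeo}, standard partition-of-unity arguments on the locally compact Hausdorff space $G^{(0)}$ produce $b_1,\ldots,b_k \in B_+$ with each $\supp^\circ(\hat{b}_i) \subseteq q^{-1}(\mathsf{s}(\mathcal{U}_{m_i}))$ and $\sum_i \hat{b}_i = 1$ on $q^{-1}(\mathsf{s}(K))$. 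Setting $f_i := f\hat{b}_i$ (convolution), which reduces to pointwise multiplication $f_i(e) = f(e)\hat{b}_i(\mathsf{s}_\Sigma(e))$ since $\hat{b}_i$ is unit-supported, I would obtain $\supp^\circ(f_i) \Subset q^{-1}(\mathcal{U}_{m_i})$ together with $\sum_i f_i = f$. By \cref{nhat}, each $f_i = \hat{n}_i$ for some $n_i < m_i$, so $n_i \in N^>$. Since each $\mathcal{U}_{n_i} \subseteq K$ and $K$ is a bisection, the compatibility $n_i^*n_j, n_in_j^* \in B$ follows from \cref{BG0}, so $\sum_i n_i \in \mathrm{csum}(N^>)$ and $\widehat{\sum_i n_i} = \sum_i \hat{n}_i = f$.

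The main obstacle I anticipate is the careful bookkeeping for the partition of unity: producing the $b_i$'s as genuine elements of $B$ (not just abstract functions) requires chaining Gelfand duality through \cref{Bhat,ComplementHomeo}, and verifying that $f\hat{b}_i$ really does have closed support compactly contained in $q^{-1}(\mathcal{U}_{m_i})$ uses that $K$ itself is a bisection so that $\mathsf{s}$ is injective on $\supp^\circ(f)$, preventing the cut-off function from accidentally picking up support outside the intended $\mathcal{U}_{m_i}$.
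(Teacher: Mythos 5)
Your \(\supseteq\) direction is essentially the paper's argument: cover \(q(\overline{\supp}(f))\) by basic bisections sitting inside a common (open) bisection, cut \(f\) up by a partition of unity drawn from \(B\) via \cref{Bhat} and \cref{ComplementHomeo}, lift the pieces through \cref{nhat}, and read off pairwise compatibility from \cref{BG0}. Modulo the partition-of-unity bookkeeping you flag yourself, that half is sound and matches the paper.

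The \(\subseteq\) direction, however, has a genuine gap at the step ``the closure of \(\bigcup_i\mathcal{U}_{n_i}\) is still a bisection because \(G^{(0)}\) is closed.'' Closedness of \(G^{(0)}\) does not give this: the closure of an open bisection in a Hausdorff \'etale groupoid need not be a bisection. Concretely, let \(G=\Z\times X\) be the trivial group bundle over \(X=\{0\}\cup\{1/k:k\in\N\}\) with the product topology (\(\Z\) discrete) and the trivial twist, and take \(m\) supported on \(\{(1,1/(2j))\}_j\) and \(n\) supported on \(\{(2,1/(2j+1))\}_j\), with values tending to \(0\). Each lies in \(N_c(\Sigma;G)\) and indeed in \(N^>\) (each is dominated by the indicator of the compact open slice \(\{i\}\times X\)), and \(m^*n=mn^*=0\in B\) because the two families of sources are disjoint, so \(m\) and \(n\) are compatible. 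Yet \(\overline{\mathcal{U}_m}\ni(1,0)\) and \(\overline{\mathcal{U}_n}\ni(2,0)\) share the source \(0\), so the closure of the bisection \(\mathcal{U}_m\cup\mathcal{U}_n\) is not a bisection and \(\widehat{m+n}\notin\Nc(\Sigma;G)\). What is true, and worth isolating, is that a \emph{single} \(n_i\in N^>\) has \(\overline{\mathcal{U}_{n_i}}\) compact and contained in an open bisection (this is exactly what \(n_i<\widetilde m_i\) buys via \cref{prop:rainbow}); the failure is purely at the level of unions. Be aware that the paper's own proof of this inclusion leans on the asserted containment \(\overline{\supp}(\hat m)^{-1}\overline{\supp}(\hat n)\subseteq\overline{\supp}(\widehat{m^*n})\), which is the same unjustified passage from open to closed supports and also fails in the example above (the right-hand side is empty while the left contains a non-unit). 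So the defect is not one you can repair by a sharper topological argument from the data you invoke: pairwise compatibility of elements of \(N^>\) does not force their closed supports into a common bisection, and some extra input (such as a common dominating element, which is exactly what is available in the \(\supseteq\) direction) is needed.
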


\begin{proof}
    If $n\in N^>$, then $q(\overline{\mathrm{supp}}(\hat{n}))$ is a compact bisection by \cref{nhat}.  If $m\in N^>$ is compatible with $n$ then $\overline{\mathrm{supp}}(\hat{m})^{-1}\overline{\mathrm{supp}}(\hat{n})\subseteq\overline{\mathrm{supp}}(\widehat{m^*n})=\mathcal{U}_{m^*n}\subseteq G^{(0)}$.  Likewise, $\overline{\mathrm{supp}}(\hat{m})\overline{\mathrm{supp}}(\hat{n})^{-1}\subseteq G^{(0)}$ and hence $\overline{\mathrm{supp}}(\widehat{m+n})\subseteq\overline{\mathrm{supp}}(\hat{m})\cup\overline{\mathrm{supp}}(\hat{n})$ is also a compact bisection.  Extending to finite compatible sums shows that $\widehat{\mathrm{csum}(N^>)}\subseteq \Nc(\Sigma;G)$.

    Conversely, take any $f\in \Nc(\Sigma;G)$.  As $\overline{\mathrm{supp}}(f)$ is a compact bisection, it is contained in an open bisection $O$, by \cite[Proposition 6.3]{BS2019}.  As $\{\mathcal{U}_n\}_{n\in N}$ forms a basis for the locally compact Hausdorff space $G$, a partition of unity argument yields $f_1,\ldots,f_k\in C(\Sigma;G)$ and $n_1,\ldots,n_k\in N$ such that $f=\sum_{j=1}^kf_j$ and $q(\mathrm{supp}^\circ(f_j))\Subset O\cap\mathcal{U}_{n_j}$, for all $j\leq k$.  Then \cref{nhat} yields $m_j<n_j$ with $\widehat{m_j}=f_j$, for each $j\leq k$.  Also, for all $i,j\leq k$,
    \[\mathcal{U}_{m_i^*m_j}=\mathrm{supp}^\circ\widehat{m_i^*m_j}=\mathrm{supp}^\circ(\widehat{m_i})^{-1}\mathrm{supp}^\circ(m_j)\subseteq O^{-1}O\subseteq G^{(0)}\]
    and hence $m_i^*m_j\in B$, by \cref{BG0}.  Likewise, $m_im_j^*\in B$, for all $i,j\leq k$, showing that $m_1,\ldots,m_k$ are compatible and hence $f=\sum_{j=1}^kf_j=\widehat{\sum_{j=1}^km_j}\in\widehat{\mathrm{csum}(N^>)}$.
\end{proof}

This then even extends to an algebraic isomorphism on a dense subalgebra of $A$. We restate our standing hypotheses in the following theorem, ensuring it and the subsequent corollary, which is our main result, are self-contained.

\begin{thm}\label{spanN}
    Let $A$ be a C*-algebra containing a Cartan semigroup $N$ with semi-Cartan subalgebra $B$ generated by the positive elements of $N$ and a stable expectation $E:A\twoheadrightarrow B$. Then
    the map $a\mapsto\hat{a}$ is a *-algebra isomorphism from $\mathrm{span}(N)$ onto some *-subalgebra of $\mathrm{span}(N_0(\Sigma;G))$ which takes $\mathrm{span}(N^>)$ onto $C_c(\Sigma;G)$, i.e.
    \[\widehat{\mathrm{span}(N^>)}=C_c(\Sigma;G).\]
\end{thm}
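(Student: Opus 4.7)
The plan is to verify three claims, listed in order of increasing technical difficulty: (I) restricted to $\mathrm{span}(N)$, the map $a\mapsto\widehat{a}$ is a *-algebra homomorphism whose image lies in $\mathrm{span}(N_0(\Sigma;G))$; (II) its further restriction to $\mathrm{span}(N^>)$ surjects onto $C_c(\Sigma;G)$; and (III) it is injective on $\mathrm{span}(N)$.

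For (I), I would combine \cref{hatLinear} (which gives *-linearity and contractivity) with \cref{prop:semihomo} (which makes $\widehat{\cdot}$ a semigroup homomorphism on $N$) and extend bilinearly to obtain multiplicativity on $\mathrm{span}(N)$. To see the image lies in $\mathrm{span}(N_0(\Sigma;G))$, it suffices by linearity to verify that each $\widehat{n}$ with $n\in N$ is in $N_0(\Sigma;G)$: continuity and $\mathbb{T}$-contravariance come from \cref{prop:hat a}, the bisection property $q(\supp^\circ(\widehat{n}))=\mathcal{U}_n$ from \cref{prop:supports}, and vanishing at infinity by approximating $n$ via a sequence $(n_k)\subseteq n^>$ supplied by \cref{DominatedApproximation}. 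Each $n_k$ is a singleton (hence trivial) compatible sum from $\mathrm{csum}(N^>)$, so \cref{Nchat} gives $\widehat{n_k}\in N_c(\Sigma;G)\subseteq C_0(\Sigma;G)$, and the contractivity of $\widehat{\cdot}$ then forces $\widehat{n}=\lim_k\widehat{n_k}\in C_0(\Sigma;G)$.

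For (II), the forward inclusion $\widehat{\mathrm{span}(N^>)}\subseteq C_c(\Sigma;G)$ follows immediately from \cref{Nchat}. For the reverse, given $f\in C_c(\Sigma;G)$, my strategy is to cover the compact set $q(\overline{\supp}(f))$ by finitely many basic open bisections $\mathcal{U}_{n_1},\ldots,\mathcal{U}_{n_k}$ (using that these form a basis of $G$), pick a subordinate partition of unity $\{\psi_i\}$ on $G$, and lift it to a $\mathbb{T}$-invariant partition on $\Sigma$ via $\psi_i\circ q$ (the fact that $q(te)=q(e)$ makes these pullbacks $\mathbb{T}$-invariant). The decomposition $f=\sum_i(\psi_i\circ q)f$ breaks $f$ into finitely many pieces of $N_c(\Sigma;G)$, and \cref{Nchat} writes each piece as $\widehat{a_i}$ for some $a_i\in\mathrm{csum}(N^>)\subseteq\mathrm{span}(N^>)$; summing yields $f\in\widehat{\mathrm{span}(N^>)}$.

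The main obstacle is (III). Given $a\in\mathrm{span}(N)$ with $\widehat{a}=0$, multiplicativity from (I) gives $\widehat{n^*a}=0$ for every $n\in N$; \cref{ExpectationRestriction} followed by \cref{Bhat} (injectivity of $\widehat{\cdot}$ on $B$) then forces $E(n^*a)=0$. Extending by linearity and density of $\mathrm{span}(N)$ in $A$ yields $E(ba)=0$ for every $b\in A$, and taking $b=a^*$ gives $E(a^*a)=0$. The delicate concluding step --- deducing $a=0$ from this in the absence of global faithfulness of $E$ --- is the heart of the argument. My intended approach is to first reduce to the case $a\in\mathrm{span}(N^>)$ via \cref{DominatedApproximation} and part (II), and then exploit the compact-bisection structure of \cref{prop:rainbow} together with a refinement of the partition-of-unity decomposition to show that on the finite-dimensional subspaces arising from such coverings, the Gram-type matrix $[E(n_i^*n_j)]$ is positive-definite, forcing the coefficients of $a$ to vanish. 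Injectivity on $\mathrm{span}(N^>)$ then transfers to $\mathrm{span}(N)$ by approximation and continuity of $\widehat{\cdot}$.
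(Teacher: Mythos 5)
Parts (I) and (II) of your plan are sound and essentially coincide with the paper's argument: the paper also combines \cref{hatLinear} with \cref{prop:semihomo} for the homomorphism property, and obtains $\widehat{\mathrm{span}(N^>)}=C_c(\Sigma;G)$ directly from $\mathrm{span}(N^>)=\mathrm{span}(\mathrm{csum}(N^>))$ and \cref{Nchat} (your partition-of-unity decomposition just re-derives what is already inside the proof of \cref{Nchat}). Your opening moves in (III) are also correct: from $\hat a=0$ and multiplicativity you do get $\widehat{n^*a}=0$, and then $E(n^*a)=0$ for all $n\in N$ via \cref{ExpectationRestriction} and the injectivity of $b\mapsto\hat b$ on $B$ from \cref{Bhat}.

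The genuine gap is in how you finish (III). Your argument bottoms out at $E(a^*a)=0$, and since $E$ is \emph{not} assumed faithful this is a dead end; the proposed rescue does not close it. First, the claim that the Gram matrix $[E(n_i^*n_j)]$ is positive-definite is unjustified and cannot hold in the needed generality: it is an element of $M_k(B)$, not a scalar matrix, it is singular whenever the $n_i$ are redundant (e.g.\ $n_2=2n_1$, which is allowed since $\mathbb{C}N=N$ by \cref{lem:Normal}), and asserting a uniform lower bound on it is essentially a restatement of faithfulness of $E$ on the relevant subspace --- precisely what must be proved. Second, the final step ``injectivity on $\mathrm{span}(N^>)$ transfers to $\mathrm{span}(N)$ by approximation and continuity'' is invalid: injectivity of a continuous map on a dense subspace says nothing about injectivity on a larger set. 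The paper avoids $E(a^*a)=0$ entirely and instead runs an induction on the number of summands: writing $a=\sum_{j=1}^{k+1}n_j$ with $n_{k+1}^*a\neq0$, either $E(n_{k+1}^*a)\neq0$, in which case $\hat E(\hat n_{k+1}^*\hat a)=\widehat{E(n_{k+1}^*a)}\neq0$ by \cref{Bhat}, or $E(n_{k+1}^*a)=0$, in which case
\[
n_{k+1}^*a=\sum_{j=1}^{k}\bigl(n_{k+1}^*n_j-E(n_{k+1}^*n_j)\bigr)
\]
(the $(k{+}1)$-st term cancels because $E(n_{k+1}^*n_{k+1})=n_{k+1}^*n_{k+1}$), and each summand lies in $N$ by \cref{prop:resE} and \cref{lem:RR}, so $n_{k+1}^*a$ is a sum of only $k$ elements of $N$ and the inductive hypothesis applies. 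This use of $m-E(m)\in N$, coming from $E(m)\sqsubseteq m$, is the missing idea; without it or something equivalent, your proof of injectivity does not go through.
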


\begin{proof}
    By \cref{hatLinear} and \cref{prop:semihomo}, the map $a\mapsto\hat{a}$ is a *-algebra homomorphism on $\mathrm{span}(N)$.  To see it is also injective on $\mathrm{span}(N)$, first note that, for each $n\in N\setminus\{0\}$, there is an ultrafilter $U\in\mathcal{U}_n$, by \cref{UltrafiltersExist}.  Then $\hat{n}([n]_U)=|n|_U\neq0$ and, in particular, $\hat{n}\neq0$.  Now assume that we have shown $\hat{a}\neq0$ for all non-zero elements of $N_k:=\{\sum_{j=1}^kn_j:n_1,n_2,\ldots,n_k\in N\}$, and take any $a\in N_{k+1}\setminus\{0\}$.  Then $a^*a\neq0$ and hence we must have $n_j^*a\neq0$, for some $j\leq k+1$.  Then we can renumber if necessary to ensure that $n_{k+1}^*a\neq0$.  If $E(n_{k+1}^*a)\neq0$ then $\hat{E}(\hat{n}_{k+1}^*\hat{a})=\widehat{E(n_{k+1}^*a)}\neq0$, by the injectivity on $B\subseteq N$, and hence $\hat{a}\neq0$.  On the other hand, if $E(n_{k+1}^*a)=0$ then, as $E(n_{k+1}^*n_{k+1})=n_{k+1}^*n_{k+1}$,
    \[n_{k+1}^*a=n_{k+1}^*a-E(n_{k+1}^*a)=\sum_{j=1}^{k+1}(n_{k+1}^*n_j-E(n_{k+1}^*n_j))=\sum_{j=1}^k(n_{k+1}^*n_j-E(n_{k+1}^*n_j)).\]
    But $n_{k+1}^*n_j-E(n_{k+1}^*n_j)\in N$, for all $j\leq k$, by \cref{lem:RR} and \cref{prop:resE}, so $n_{k+1}^*a\in N_k$.  By assumption, it follows that $\hat{n}_{k+1}^*\hat{a}=\widehat{n_{k+1}^*a}\neq0$ and hence again $\hat{a}\neq0$.  This completes the induction showing $\hat{a}\neq0$, for all $a\in\mathrm{span}(N)\setminus\{0\}$, as required.

    Now just note that \cref{DominatedApproximation}, \cref{hatLinear} and \cref{nhat} yield
    \[\widehat{N}\subseteq\widehat{\mathrm{cl}(N^>)}\subseteq\mathrm{cl}_\infty(\widehat{N^>})\subseteq\mathrm{cl}_\infty(\Nc(\Sigma;G))=N_0(\Sigma;G),\]
    so $\widehat{\mathrm{span}(N)}=\mathrm{span}(\widehat{N})\subseteq\mathrm{span}(N_0(\Sigma;G))$.  Also $\mathrm{span}(N^>)=\mathrm{span}(\mathrm{csum}(N^>))$ and so \cref{Nchat} yields $\widehat{\mathrm{span}(N^>)}=\mathrm{span}(\widehat{\mathrm{csum}(N^>)})=\mathrm{span}(\Nc(\Sigma;G))=C_c(\Sigma;G)$.
\end{proof}

\begin{cor}\label{cor:it}
We have an isomorphism $\Psi$ from $A$ onto a twisted groupoid C*-algebra $C=\mathrm{cl}(C_c(\Sigma;G))$ such that $\hat{a}=j\circ\Psi(a)$, for all $a\in A$.  Moreover,
    \begin{enumerate}
        \item If $N$ is summable then $\Psi(N)$ is the monomial semigroup $\mathrm{cl}(\Nc(\Sigma;G))$.
        \item\label{it2:it} If $E$ is faithful then $C=\Psi(A)$ is the reduced C*-algebra $C_r^*(\Sigma;G)$.
    \end{enumerate}
\end{cor}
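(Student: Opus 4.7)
The plan is to transport the norm from $A$ to $C_c(\Sigma;G)$ via the *-algebra isomorphism established in \cref{spanN}, then verify the two refinements. By \cref{spanN}, the map $\Psi_0:a\mapsto\hat{a}$ is a *-algebra isomorphism from $\mathrm{span}(N^>)$ onto $C_c(\Sigma;G)$. Setting $\|\hat{a}\|_C:=\|a\|_A$ for $a\in\mathrm{span}(N^>)$ defines a C*-norm on $C_c(\Sigma;G)$ with respect to which $\Psi_0$ is isometric; since $\mathrm{span}(N^>)$ is dense in $A$ (combining dense span of $N$ with \cref{DominatedApproximation}), $\Psi_0$ extends by continuity to an isometric *-isomorphism $\Psi:A\to C:=\mathrm{cl}_{\|\cdot\|_C}(C_c(\Sigma;G))$. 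To ensure $C$ is a genuine twisted groupoid C*-algebra, I would check that $\|\cdot\|_C$ is $D$-contractive: by \cref{ExpectationRestriction}, $D(\hat{a})=\hat{E}(\hat{a})=\widehat{E(a)}$ for $a\in\mathrm{span}(N^>)$, whence $\|D(\hat{a})\|_C=\|E(a)\|_A\leq\|a\|_A=\|\hat{a}\|_C$.

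To verify $\hat{a}=j\circ\Psi(a)$ for all $a\in A$, observe that for $a\in\mathrm{span}(N^>)$ we have $\Psi(a)=\hat{a}\in C_c(\Sigma;G)$ where $j$ is the identity. For general $a\in A$, choose $(a_\lambda)\subseteq\mathrm{span}(N^>)$ with $a_\lambda\to a$; then $\Psi(a_\lambda)\to\Psi(a)$ in $C$, so $j(\Psi(a_\lambda))\to j(\Psi(a))$ uniformly (since $j$ dominates $\|\cdot\|_\infty$), while \cref{hatLinear} gives $\hat{a}_\lambda\to\hat{a}$ uniformly, forcing the two limits to coincide.

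For part (1), both inclusions of $\Psi(N)=\mathrm{cl}(\Nc(\Sigma;G))$ rely on \cref{Nchat}. The forward inclusion uses $N^>\subseteq\mathrm{csum}(N^>)$ to get $\widehat{N^>}\subseteq\Nc(\Sigma;G)$, combined with $N\subseteq\mathrm{cl}(N^>)$ from \cref{DominatedApproximation} and isometry of $\Psi$. The reverse uses summability: with $N^>\subseteq N$, summability implies $\mathrm{csum}(N^>)\subseteq N$, so $\Nc(\Sigma;G)=\widehat{\mathrm{csum}(N^>)}=\Psi(\mathrm{csum}(N^>))\subseteq\Psi(N)$; as $N$ is closed and $\Psi$ isometric, $\Psi(N)$ is closed and hence contains $\mathrm{cl}(\Nc(\Sigma;G))$.

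For part (2), when $E$ is faithful, I would first note $\hat{E}:=\Psi\circ E\circ\Psi^{-1}$ is a faithful expectation on $C$ whose restriction to $C_c(\Sigma;G)$ is $D$ (again via \cref{ExpectationRestriction}). Since the reduced norm is the smallest $D$-contractive C*-norm, $\|\cdot\|_r\leq\|\cdot\|_C$, so the identity on $C_c(\Sigma;G)$ extends to a surjective *-homomorphism $\pi:C\twoheadrightarrow C_r^*(\Sigma;G)$. By the uniqueness of the $D$-extending expectation on any $D$-contractive completion (recorded in \cref{TwistedGroupoidCstarAlgebras}), $\hat{E}_r\circ\pi=\hat{E}$, where $\hat{E}_r$ denotes the canonical faithful reduced expectation. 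For $x\in\ker\pi$ this yields $\hat{E}(x^*x)=\hat{E}_r(\pi(x^*x))=0$, so faithfulness forces $x=0$; hence $\pi$ is an isomorphism and $\|\cdot\|_C=\|\cdot\|_r$. The main obstacle I anticipate is the subtle bookkeeping in part (2): one must carefully confirm that $\hat{E}$ on $C$ actually extends $D$ on $C_c(\Sigma;G)$ (so that the uniqueness assertion applies) and that transporting $E$ through $\Psi$ preserves faithfulness; both reduce to tracing through \cref{ExpectationRestriction} and the fact that $\Psi$ is a *-isomorphism.
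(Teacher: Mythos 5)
Your construction of $\Psi$ and your proof of part (1) follow essentially the same route as the paper: transport the norm of $A$ through the $*$-isomorphism of \cref{spanN}, check $D$-contractivity via \cref{ExpectationRestriction}, and sandwich $\Psi(N)$ between $\Nc(\Sigma;G)$ and its closure using \cref{Nchat}, \cref{DominatedApproximation} and summability (your chain of inclusions is the same one the paper writes down). Part (2) is where you genuinely diverge. The paper argues directly: from the faithful $E$ it forms the Hilbert $B$-module completion of $A$ under $\langle a,b\rangle=E(a^*b)$, represents $A$ faithfully by left multiplication, and reads off that $\|a\|$ coincides with the defining supremum for $\|\cdot\|_r$. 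You instead use the already-proved minimality of the reduced norm among $D$-contractive C*-norms to obtain a surjection $\pi:C\twoheadrightarrow C^*_r(\Sigma;G)$, identify $\Psi\circ E\circ\Psi^{-1}$ with the unique $D$-extending expectation on $C$, and let faithfulness annihilate $\ker\pi$. Both arguments are sound. The paper's version is more hands-on and exhibits the reduced-norm formula concretely; yours is shorter and more structural, at the cost of leaning on two auxiliary facts (minimality of $\|\cdot\|_r$ and uniqueness of the $D$-extending expectation) and of one small identification you should make explicit: $\hat{E}_r\circ\pi$ lands in $\mathrm{cl}_r(\Bc(\Sigma;G))$, so to compare it with the expectation on $C$ you need that $\pi$ restricts to an isometric isomorphism between the two closures of $\Bc(\Sigma;G)$, which follows from \cref{NcNorm} since both norms agree with $\|\cdot\|_\infty$ there. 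With that noted, your proof is complete.
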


\begin{proof}
    By \cref{spanN}, $\mathrm{span}(N^>)$ is a dense copy of $C_c(\Sigma;G)$ in $A$ which is thus isomorphic to a C*-completion $C$ of $C_c(\Sigma;G)$.  Moreover, this completion is $D$-contractive by \cref{ExpectationRestriction} so $C$ is indeed a twisted groupoid C*-algebra.  Denoting the isomorphism from $A$ to $C$ by $\Psi$, we see that $\hat{a}=j\circ\Psi(a)$ for all $a\in\mathrm{span}(N^>)$ (by definition).  But as $\Psi$, $j$ and $a\mapsto\hat{a}$ are all contractive maps, $\hat{a}$ and $j\circ\Psi$ must then coincide everywhere.

    Now if $N$ is summable then
    \[\Nc(\Sigma;G)\subseteq\widehat{\mathrm{csum}(N^>)}\subseteq\widehat{N}\subseteq\widehat{\mathrm{cl}(N^>)}\subseteq\mathrm{cl}_\infty(\Nc(\Sigma;G)).\]
    It follows that $\Nc(\Sigma;G)\subseteq\Psi(N)\subseteq\mathrm{cl}(\Nc(\Sigma;G))\subseteq C$ and hence $\Psi(N)=\mathrm{cl}(\Nc(\Sigma;G))$, as $N$ is closed and $\Psi$ is an isomorphism.

    For item~\eqref{it2:it}, if $E$ is faithful then we have a Hilbert $B$-module $H$ coming from the completion of $A$ with respect to the inner product $\langle a,b\rangle:= E(a^*b)$.  Also, for each $a\in A$, we have $a_L\in \mathcal{L}(H)$, the adjointable operators on $H$, defined by $a_L(b)=ab$ for all $b\in A\subseteq H$.  The map $a\mapsto a_L$ is then an isomorphism from $A$ to a C*-subalgebra of $\mathcal{L}(H)$ and hence the norm of any $a\in A$ is equal to the operator norm of $a_L$, which is precisely $\sup\{\sqrt{\|E(c^*a^*ac)\|}:c\in\mathrm{span}(N^>) \text{ and } E(c^*c)\leq 1\}$ mentioned above.  As $\Psi$ takes $\mathrm{span}(N^>)$ to $C_c(\Sigma;G)$ and $E(a)$ to $\hat{E}(\hat{a})$, for all $a\in A$, this shows that $C$ is the completion of $C_c(\Sigma;G)$ with respect to the reduced norm.
\end{proof}

\section{Normalisers vs MASAs}\label{sec:masa}

The previous section achieves our main goal of identifying the C*-algebra $A$ with a twisted groupoid C*-algebra.  In addition to extending the Kumjian-Renault theory to non-reduced C*-algebras, it extends the theory in two other important ways.  Firstly, our semi-Cartan subalgebra $B$ is only required to be commutative, not a MASA like in the definition of a Cartan subalgebra.  On the other hand, in the Kumjian-Renault theory, one works with the entire normaliser semigroup $N(B)$, while our Cartan semigroup $N$ need only be contained in $N(B)$, as noted in \cref{lem:Normal}.  Here we show that these latter two aspects of our generalisation are actually one and the same, modulo summability.

First observe that if $B$ is a MASA, then \cref{SummableNormalisers} tells us that the normalisers $N(B)$ form summable Cartan semigroup, which must therefore contain not only $N$ but also $\overline{\mathrm{csum}}(N)$, the closure of its compatible sums.  Using \cref{prop:Shiftable}, we can show that, in fact, $N(B)$ contains no other elements.  In particular, $N(B)$ recovers the original $N$ if $N$ is summable and $B$ is a MASA.

\begin{prop}\label{MASAimpliesNormalisers}
    If $B$ is a MASA then $\overline{\mathrm{csum}}(N)=N(B)$.
\end{prop}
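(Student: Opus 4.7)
The inclusion $\overline{\mathrm{csum}}(N) \subseteq N(B)$ is straightforward: \cref{lem:Normal} gives $N \subseteq N(B)$; since $B$ is a MASA, \cref{prop:CartanGeneralisation} shows $N(B)$ is itself a Cartan semigroup with semi-Cartan subalgebra $B$, and \cref{SummableNormalisers} says $N(B)$ is summable. Since $N(B)$ is also closed in $A$ (both defining conditions $n^*Bn, nBn^* \subseteq B$ are preserved under norm limits, as $B$ is closed), the closed summable semigroup $N(B)$ must contain all of $\overline{\mathrm{csum}}(N)$.

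For the reverse inclusion $N(B) \subseteq \overline{\mathrm{csum}}(N)$, I would apply \cref{cor:it} to the summable Cartan semigroup $\overline{\mathrm{csum}}(N)$ (which shares the semi-Cartan subalgebra $B$ with $N$). This yields an isomorphism $\Psi: A \to C = \mathrm{cl}(C_c(\Sigma;G))$ satisfying $\Psi(\overline{\mathrm{csum}}(N)) = \mathrm{cl}(\Nc(\Sigma;G))$, and, via \cref{Bhat}, $\Psi(B) = B_0(\Sigma;G)$. The MASA hypothesis on $B$ inside $A$ then transports to $B_0(\Sigma;G)$ being a MASA inside $C$.

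The critical step is to conclude from this that $G$ must be effective. Suppose instead that $G$ contains an open bisection $O$ of the isotropy with $O \not\subseteq G^{(0)}$. A partition-of-unity argument on $G$, combined with transitivity of the $\mathbb{T}$-action on fibres of $q$ (which lets us turn local sections into $\mathbb{T}$-contravariant functions on $\Sigma$), produces a non-zero $f \in C_c(\Sigma;G)$ with $q(\mathrm{supp}^\circ(f)) \subseteq O$. Since $O$ lies in the isotropy, direct inspection of the convolution formula shows $f$ commutes with every element of $B_0(\Sigma;G)$, yet $f \notin B_0(\Sigma;G)$ because $O \not\subseteq G^{(0)}$. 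Pulling back via $\Psi^{-1}$ yields an element of $A \setminus B$ commuting with $B$, contradicting the MASA property.

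With $G$ now known to be effective, for any $n \in N(B)$ the image $\Psi(n) \in C$ normalises $B_0(\Sigma;G)$. The standard effectiveness argument (as in \cite[Theorem~3.1(3)]{ABCCLMR2023}, adapted from the reduced to the general $D$-contractive setting by working with $j(\Psi(n)) \in C_0(\Sigma;G)$ together with \cref{prop:supports} and \cref{jSemigroupIso}) forces $q(\mathrm{supp}^\circ(j(\Psi(n))))$ to be a bisection. Hence $\Psi(n) \in N_0(\Sigma;G) = \mathrm{cl}(\Nc(\Sigma;G)) = \Psi(\overline{\mathrm{csum}}(N))$, giving $n \in \overline{\mathrm{csum}}(N)$. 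The main obstacle is this last deduction — the effectiveness-to-bisection-support step is classical for the reduced C*-algebra but must here be verified for an arbitrary $D$-contractive completion, which is done by replacing reduced-norm estimates with manipulations of $j$ and the support identities established in \cref{sec:rep}.
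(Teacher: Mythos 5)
Your first inclusion $\overline{\mathrm{csum}}(N)\subseteq N(B)$ is correct and is essentially the observation the paper itself makes in the paragraph preceding the proposition (via \cref{prop:CartanGeneralisation}, \cref{SummableNormalisers} and closedness of $N(B)$).

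The reverse inclusion, however, has a genuine gap at the step you yourself flag as "the main obstacle". The statement that, for an effective $G$, every normaliser of the diagonal has bisection-supported $j$-image is proved in \cite{ABCCLMR2023} only for the \emph{reduced} C*-algebra, and the adaptation to an arbitrary $D$-contractive completion is not a routine replacement of norm estimates: for a non-reduced completion the $j$-map is in general neither injective nor multiplicative, so one cannot read off normaliser properties of $\Psi(n)\in C$ from the function $j(\Psi(n))$. Worse, even if you could show $j(\Psi(n))\in N_0(\Sigma;G)$, this does not give $\Psi(n)\in\mathrm{cl}(\Nc(\Sigma;G))$: \cref{jSemigroupIso} only says $j$ is injective \emph{on} $\mathrm{cl}(\Nc(\Sigma;G))$, not that $\mathrm{cl}(\Nc(\Sigma;G))$ is the full preimage of $N_0(\Sigma;G)$, and for non-faithful $E$ the map $j$ has nontrivial kernel. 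Note also that the paper explicitly presents the extension of \cite[Theorem 3.1]{ABCCLMR2023} to general (e.g.\ full) completions as a \emph{consequence} of this proposition together with \cref{NormalisersImpliesMASA}, so invoking that extension here is close to circular.

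For contrast, the paper's argument is purely algebraic and never passes through the representation: given $a\in N(B)\setminus\overline{\mathrm{csum}}(N)$, it first shows $E(na)a^*\notin N$ for some $n\in N$ (using that otherwise $E(a^*a)a^*=a^*aa^*$ would lie in $\overline{\mathrm{csum}}(N)$ by density of $\mathrm{span}(N)$), then applies shiftability (\cref{prop:Shiftable}) to rewrite $E(nn^*na)a^*=nE(an)n^*a^*$ and exhibits $E(an)n^*a^*$ as an element of the commutant of $B$ lying outside $B$, contradicting the MASA hypothesis via the computation \eqref{Commutant}. If you want to repair your proof you would either need to reproduce that direct argument or independently establish the local bisection hypothesis for arbitrary $D$-contractive completions, which is substantially more than a cosmetic change to the reduced-case proof.
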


\begin{proof}
    Assume $\overline{\mathrm{csum}}(N)\neq N(B)$, so we have some $a\in N(B)\setminus\overline{\mathrm{csum}}(N)$. Let $p_k, q_k$ be as in \cref{def:p_k poly}. Noting that $aa^*a\in\overline{\mathrm{csum}}(N)$ would imply
    \[a=\lim_kap_k(a^*a)=\lim_kaa^*aq_k(a^*a)\in\mathrm{cl}(\overline{\mathrm{csum}}(N)B)\subseteq\overline{\mathrm{csum}}(N),\]
    it follows that $aa^*a\notin\overline{\mathrm{csum}}(N)$ and hence $E(a^*a)a^*=a^*aa^*\notin\overline{\mathrm{csum}}(N)^*=\overline{\mathrm{csum}}(N)$.
    
    We claim that this means $E(na)a^*\notin N$, for some $n\in N$.  If not then, for any $n_1,\ldots,n_k\in N$, we would have $E(\sum_{j=1}^kn_ka)a^*=\sum_{j=1}^kE(n_ka)a^*\in\mathrm{csum}(N)$ because, for all $i,j\leq k$, $aE(n_ia)^*E(n_ja)a^*\in aBa^*\subseteq B$ and $E(n_ia)a^*aE(n_ja)\in BBB\subseteq B$.  But we know that $a^*\in A=\mathrm{cl}(\mathrm{span}(N))$ and so this would mean $E(a^*a)a^*\in\overline{\mathrm{csum}}(N)$, a contradiction.  This completes the proof of the claim.
    
    Now take $n$ with $E(na)a^*\notin N$ and note $E(na)a^*=\lim_kE(p_k(nn^*)na)a^*$ so
    \[N\not\ni E(nn^*na)a^*=nn^*E(na)a^*=E(na)nn^*a^*=nE(an)n^*a^*.\]
    Thus $E(an)n^*a^*\notin N\supseteq B$, even though $E(an)n^*a^*$ commutes with every $b\in B$, as in the proof of \cref{prop:CartanGeneralisation} (replace $n$ with $an$ in \eqref{Commutant}), showing $B$ is not a MASA.
\end{proof}

As alluded to above, the converse also holds, i.e. if $N=N(B)$ then $B$ is a MASA.  To prove this it will help to first prove the following preparatory lemma.

We denote the commutant of $B$
\[C(B)=\{a\in A:\forall b\in B\ (ab=ba)\}.\]
Note that
\[C(B)\cap N(B)=\{c\in C(B):c^*c,cc^*\in B\}.\]

\begin{lemma}\label{lem:12.12}
    If $N=N(B)$ but $B$ is not a MASA then we have $K\in\mathbb{N}\cup\{\infty\}$ with $K>1$ and $c\in C(B)\setminus\{0\}$ such that $c^*c=cc^*\in B$ and $E(c^k)=0$, for all $k<K$.  If $K\neq\infty$ then we can further ensure that $c^K\in B_+$.
\end{lemma}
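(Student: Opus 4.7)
My plan is to invoke the representation theorem \cref{cor:it} to transfer the problem to the twisted groupoid setting. Under the resulting isomorphism, $A$ becomes a $D$-contractive C*-completion of $C_c(\Sigma;G)$ for some twist $q:\Sigma\twoheadrightarrow G$, $B$ is identified with the diagonal $\mathrm{cl}(\Bc(\Sigma;G))$ via \cref{Bhat}, and $C(B)$ correspondingly picks out the (closure of the) functions whose $q$-supports lie in the interior of the isotropy $\Iso(G)$. As observed after \cref{TwistedGroupoid->CartanSemigroup}, $B$ fails to be a MASA precisely when $\mathrm{int}(\Iso(G))$ strictly contains $G^{(0)}$, so we may fix $g_0\in\mathrm{int}(\Iso(G))\setminus G^{(0)}$, let $u:=\mathsf{s}(g_0)=\mathsf{r}(g_0)$, and let $K\in\{2,3,\ldots\}\cup\{\infty\}$ be the order of $g_0$ in the isotropy group $G_u^u$ (so $K>1$).

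Next I would choose a compact open bisection $U\subseteq\mathrm{int}(\Iso(G))\setminus G^{(0)}$ containing $g_0$ such that $U^k\cap G^{(0)}=\emptyset$ for every $1\leq k<K$ and, if $K<\infty$, additionally $U^K\subseteq G^{(0)}$. This uses continuity of the $k$-fold groupoid product together with closedness of $G^{(0)}$ in $G$ (for $k<K$, since $g_0^k\notin G^{(0)}$ forces $U^k$ to avoid $G^{(0)}$ when shrunk) and openness of $G^{(0)}$ (for $k=K$ in the finite case, trapping $U^K$ inside the open neighbourhood $G^{(0)}$ of $g_0^K=u$).

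After shrinking $U$ once more so that $q:\Sigma\twoheadrightarrow G$ admits a continuous local section $\sigma:U\to\Sigma$, I would fix a nonzero positive $f\in C_c(\mathsf{s}(U))\subseteq C_c(G^{(0)})$ and define $c\in C_c(\Sigma;G)$ by $c(t\sigma(g)):=\overline{t}f(\mathsf{s}(g))$ for $g\in U$ and $t\in\T$, extended by zero outside $q^{-1}(U)$. Then $c$ is $\T$-contravariant with support in the isotropy, so $c\in C(B)$; we get $c^*c=cc^*\in B$ because source and range agree along the isotropy bisection $U$; and $\overline{\supp}(c^k)\subseteq q^{-1}(U^k)$ misses $q^{-1}(G^{(0)})$ for $1\leq k<K$, yielding $E(c^k)=0$. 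When $K<\infty$, the containment $\overline{\supp}(c^K)\subseteq q^{-1}(G^{(0)})$ gives $c^K\in B$; to further enforce $c^K\in B_+$, I would replace $c$ by $c\zeta$ for a unimodular $\zeta\in B$ chosen (via continuous functional calculus on $B$) as a $K$-th root of the $B$-valued phase of $c^K$, so that $(c\zeta)^K=c^K\zeta^K$ absorbs the twist cocycle into a positive element of $B$.

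The hardest step is this positivity adjustment: the $K$-fold convolution introduces a phase coming from the twist cocycle that must be trivialised consistently in $B$, which constrains both $\sigma$ and $\zeta$ (and may force further shrinking of $U$ to find a continuous $K$-th root). A secondary technical point is the infinite-order case when $\{g_0^k\}$ accumulates at $G^{(0)}$: no single $U$ avoids $G^{(0)}$ in every power, and one must instead extract a finite $K$ from such accumulation.
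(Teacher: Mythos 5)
Your overall strategy---pass to the groupoid picture via \cref{cor:it} and build $c$ from a function supported on an isotropy bisection---has a genuine gap at its very first step, and a second unresolved gap later. The lemma must hold for an arbitrary $D$-contractive completion (the stable expectation $E$ is not assumed faithful), and in that generality your identification of $C(B)$ with functions supported on $q^{-1}(\mathrm{int}(\operatorname{Iso}(G)))$ is not available: the $j$-map is only guaranteed to be injective and multiplicative on the reduced algebra, so for a general completion an element $a$ can commute with $B$ without this being visible from $j(a)$, and the failure of $B$ to be a MASA need not be caused by non-effectiveness of $G$ at all (it can come from the kernel of the quotient onto $C^*_r(\Sigma;G)$). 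Consequently the existence of your $g_0\in\mathrm{int}(\operatorname{Iso}(G))\setminus G^{(0)}$ does not follow from the hypothesis. Even granting it, your own flagged difficulty in the infinite-order case is fatal as stated: if $g_0$ has infinite order you need a single bisection $U$ with $U^k\cap G^{(0)}=\emptyset$ for \emph{all} $k$, and when the powers $g_0^k$ accumulate at the unit space no such $U$ exists; ``extracting a finite $K$ from the accumulation'' does not produce the containment $U^K\subseteq G^{(0)}$ that your construction of $c^K\in B_+$ requires.

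The paper avoids both problems by staying entirely C*-algebraic and by defining $K$ \emph{a posteriori} rather than geometrically. It takes a unitary $u$ in the commutant of $\widetilde{B}$ with $u\notin\widetilde{B}$, finds $b\in B_+$ with $bu\notin B$, and sets $c=bu-E(bu)$; a direct computation gives $c\in C(B)\setminus\{0\}$, $c^*c=cc^*\in B$ and $E(c)=0$. Then $K$ is simply the least integer with $E(c^K)\neq 0$ (or $\infty$ if there is none). This is where the hypothesis $N=N(B)$---which your argument never uses---enters: $c\in C(B)$ with $c^*c=cc^*\in B$ forces $c\in N(B)=N$, so \cref{prop:resE} gives $E(c^K)\sqsubseteq c^K$, and passing to the limit $d=\lim_j b_jc$ inside the commutative C*-algebra generated by $B$ and $c$ yields $d^K=E(c^K)\in B$ while preserving $E(d^k)=0$ for $k<K$. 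The final positivity adjustment is then done by functional calculus in $B$ (arranging the spectrum of $c^K$ in a half-plane to extract a $K$-th root $b$ and replacing $c$ by $b^*c$), with no twist cocycle or continuous local section to trivialise. If you want to salvage your approach you would need to restrict to the reduced case and handle the infinite-order accumulation separately, but the paper's route is both shorter and the one that actually covers the stated generality.
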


\begin{proof}
    If $B$ is not a MASA in $A$ then the same applies to their unitisations, i.e. $\widetilde{B}$ is not a MASA in $\widetilde{A}$.  Thus we have a unitary $u\in C(\widetilde{B})\setminus\widetilde{B}$, necessarily of the form $u=t1+a$, for some $t\in\mathbb{T}$ and $a\in C(B)\setminus B$.  Now note that there must be some $b\in B_+$ with $bu\notin B$ (otherwise, taking an approximate unit $(b_\lambda)\subseteq B_+$ for $A$, we would have $a=\lim_\lambda(b_\lambda u-tb_\lambda)\in B$, a contradiction).  Setting $c=bu-E(bu)$, it follows that $E(c)=0$, $c\in C(B)\setminus\{0\}$,
    \[cc^*=b^2-buE(u^*b)-E(bu)u^*b+E(bu)E(u^*b)=b^2-E(bu)E(u^*b)\in B\]
    and, likewise, $c^*c=b^2-E(u^*b)E(bu)=cc^*$.
    
    If $E(c^k)=0$ for all $k>1$ as well, we are done.  If not then we can take minimal $K\in\mathbb{N}$ such that $E(c^K)\neq0$.  As $c\in C(B)$ and $c^*c=cc^*\in B$ it follows that $c\in N(B)=N$ and hence $E(c^K)\sqsubseteq c^K$.  Take $(b_j)\in B_+^1$ with $E(c^K)=\lim_jb_jc^K$.  Noting that the C*-algebra generated by $B$ and $c$ is commutative, we see that the sequence $(b_jc)$ must also have a limit.  Letting $d=\lim_jb_jc\in C(B)$, we again see that $d^*d=dd^*\in B$ and $E(d^k)=0$, for all $k<K$.  Moreover, $d^K=\lim_jb_j^Kc^K=E(c^K)\in B\setminus\{0\}$, which also implies $d\neq0$.  In other words, replacing $c$ with $d$, we may further assume that $c^K\in B$.  Multiplying $c$ by some element of $B$ if necessary, we may further assume that the spectrum of $c^K$ lies entirely within the left or right half of the complex plane.  This ensures that there is some $b\in B$ such that $b^K=c^K$ and hence $(b^*c)^K=c^{K*}c^K\in B_+\setminus\{0\}$.  So replacing $c$ with $b^*c$ if necessary, we also obtain $c^K\in B_+$, as required.
\end{proof}

\begin{thm}\label{NormalisersImpliesMASA}
    If $N=N(B)$ then $B$ is a MASA.
\end{thm}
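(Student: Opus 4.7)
The plan is to argue by contrapositive. Suppose $N=N(B)$ but, for contradiction, $B$ is not a MASA. Applying \cref{lem:12.12}, we obtain $c\in C(B)\setminus\{0\}$ with $c^*c=cc^*\in B$, $E(c^k)=0$ for $1\le k<K$ (some $K>1$), and $c^K\in B_+$ if $K<\infty$. Since $c$ commutes with $B$ and $c^*c,cc^*\in B$, we have $c\in N(B)=N$.

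I would then show that every ultrafilter $T\in\mathcal{U}_c$ (nonempty by \cref{UltrafiltersExist}) is both a non-unit and lies in the isotropy of $G$. Non-unitness follows from $E(c)=0$: if $T\in G^{(0)}$, then by \eqref{G0B} we have $b\in T\cap B$, and the filter condition yields $l\in T$ with $l<b,c$; since $l\in B$ by \eqref{Bbounded}, applying \eqref{DominationExpectation} gives $l=E(l)<_{E(s)}E(c)=0$ for the witnessing $s$, which unpacks to $l=0\cdot E(s)\cdot l=0$, contradicting $l\in T$. For the isotropy condition, the relation $bc=cb$ for every $b\in B$ translates, via the identities $\mathcal{U}_{cb}=\{T\in\mathcal{U}_c:\mathsf{s}(T)\in\mathcal{U}_b\}$ and $\mathcal{U}_{bc}=\{T\in\mathcal{U}_c:\mathsf{r}(T)\in\mathcal{U}_b\}$ (which come from \eqref{eq:unm} together with the fact that elements of $\mathcal{U}_b\subseteq G^{(0)}$ are units), into $\mathsf{s}(T)\cap B=\mathsf{r}(T)\cap B$ for every $T\in\mathcal{U}_c$, and hence $\mathsf{s}(T)=\mathsf{r}(T)$ by \cref{ComplementHomeo}.

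The contradiction with $N=N(B)$ will now come from exhibiting an element of $N(B)\setminus N$. By \cref{cor:it} and summability of $N=N(B)$ from \cref{SummableNormalisers}, $\Psi(N)=\mathrm{cl}(\Nc(\Sigma;G))$ is the monomial semigroup, so every element of $N$ has image whose open support (pushed down to $G$ via $q$) is a bisection. In the simplest case $K=2$, Gelfand duality applied to the commutative $C^*$-algebra $C^*(B,c)$ shows that $c^2\in B_+$ forces $\lambda^2\in\mathbb{R}_+$ at every point of its Gelfand dual, so $\lambda\in\mathbb{R}$ and hence $c=c^*$ in $A$. Setting $n:=b+ibc$ for any $b\in B_+$ with $\supp^\circ(b)\cap\mathsf{s}(\mathcal{U}_c)\ne\emptyset$, a direct calculation gives $n^*n=b^2(1+c^2)\in B$ and similarly $nn^*\in B$, so $n\in N(B)$; however $\Psi(n)=\hat b+i\widehat{bc}$ has $q$-projected open support $\supp^\circ(b)\cup\mathcal{U}_{bc}$, on which the source map is two-to-one over the overlap $\supp^\circ(b)\cap\mathsf{s}(\mathcal{U}_c)$, so this is not a bisection and $n\notin N$.

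For general $K$, the Gelfand dual of $C^*(B,c)$ is a $K$-fold branched cover of $X_B$ and $c$ is no longer self-adjoint; the same strategy applies but requires a subtler ``phase-unitary'' type normaliser in $C^*(B,c)$, built via continuous functional calculus on the normal element $c$ (with an approximation argument to handle possible non-invertibility of $|c|$), whose $G$-support is a union of several isotropy bisections rather than a single bisection. Carrying out this functional calculus uniformly across all $K\in\{2,3,\dots,\infty\}$ and producing a single explicit $n\in N(B)\setminus N$ is the main technical obstacle.
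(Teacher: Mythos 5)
Your reduction to \cref{lem:12.12} matches the paper, and your $K=2$ sketch is essentially sound: $c=c^*$ does follow from $c^2\in B_+$ and normality, $n=b+ibc$ does lie in $N(B)$, and since $E(bc)=bE(c)=0$ the open support of $\hat n$ splits as a disjoint union of a piece over $G^{(0)}$ and a piece over the non-unit set $\mathcal{U}_{bc}$, which cannot be (the preimage of) a bisection once $bc\neq0$ — contradicting \cref{prop:supports} applied to $n\in N$. Two remarks on that part: you do not need \cref{cor:it} or summability (and \cref{SummableNormalisers} is stated for MASAs, so quoting it here is off-label, though its proof only uses \eqref{Binormal} and so does go through); \cref{prop:supports} alone already says every element of $N$ has bisection support. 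Also, your route is genuinely different from the paper's, which never leaves the algebra: it uses $E(n)\sqsubseteq n$ from \cref{prop:resE} to show that the approximate unit witnessing the restriction is an approximate unit for all of $n$, forcing $n=E(n)$ and hence the off-diagonal part to vanish. That argument needs none of the groupoid machinery of Sections 7--10, whereas yours needs all of it.

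The genuine gap is exactly where you flag it: the cases $2<K<\infty$ and $K=\infty$ are not an afterthought but the bulk of the theorem, and your proposal does not resolve them. The difficulty is not merely "subtler functional calculus": one must produce an explicit element $n$ of $C^*(B,c)$ satisfying $n^*n=nn^*\in B$ (so that $n\in N(B)$) while its expansion in powers of $c$ retains a nonzero coefficient on some $c^k$ with $0<k<K$ (so that $n\neq E(n)$, or in your language so that its support is not a bisection). This is a delicate algebraic constraint; the paper's choices are $n=(K-2)\lvert c\rvert^K-2\sum_{k=1}^{K-1}\lvert c\rvert^{K-k}c^k$ for finite $K$ (engineered so that $n=n^*$ and $n^2=K^2\lvert c\rvert^{2K}$, via the identity $\sum_{j,k=1}^{K-1}\lvert c\rvert^{j+k}c^{2K-j-k}=(K-1)\lvert c\rvert^{2K}+(K-2)\sum_{k=1}^{K-1}\lvert c\rvert^{2K-k}c^k$) and $n=\lvert c\rvert e^{i(c+c^*)}$ for $K=\infty$ (with a power-series estimate showing the coefficients $a_0$ and $a_1$ have the same support as $\lvert c\rvert$, so the $c+c^*$ term cannot cancel). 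Without constructions of this kind, the argument covers only $K=2$ and the theorem is not proved.
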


\begin{proof}
    Looking for a contradiction, assume that $N=N(B)$ but $B$ is not a MASA, take $K>1$ and $c\in C(B)\setminus\{0\}$ as in \cref{lem:12.12}.
First assume that $K=2$.  Note $|c|=\sqrt{c^2}\in B_+$, as $c^2\in B_+$ and $cc^*=c^*c$.  Letting $n=|c|+ic\in C(B)$, we see that $n^*n=|c|^2+i|c|c-ic^*|c|+c^*c=2|c|^2\in B_+$ and, likewise, $nn^*=2|c|^2\in B_+$ so $n\in N(B)=N$ and hence $|c|=E(n)\sqsubseteq n$.  Taking $(b_k)\subseteq B^1_+$ with $E(n)=\lim_kb_kE(n)=\lim_kb_kn$, we see that $(b_k)$ is an approximate unit for $|c|=E(n)$ and hence $c$ and thus $n$ as well.  So $|c|=E(n)=\lim_kb_kn=n=|c|+ic$ and hence $ic=0$, contradicting our choice of $c\neq0$.
    
    Now assume that $2<K<\infty$.  Again note $c^K=|c|^K$, as $c^K\in B_+$, and hence $|c|^kc^K=|c|^{K+k}=|c|^{K-k}c^{*k}c^k$, whenever $1\leq k\leq K$.  Viewing $c$ as an element of $C_0(X_{C^*(c)})$, we can cancel $c^k$ on either side to obtain $|c|^kc^{K-k}=|c|^{K-k}c^{*k}$ and hence
    \[\Big(\sum_{k=1}^{K-1}|c|^{K-k}c^k\Big)^*=\sum_{k=1}^{K-1}|c|^{K-k}c^{*k}=\sum_{k=1}^{K-1}|c|^kc^{K-k}=\sum_{k=1}^{K-1}|c|^{K-k}c^k.\]
    It follows that $n=n^*$ when we define
    \[n=(K-2)|c|^K-2\sum_{k=1}^{K-1}|c|^{K-k}c^k\in C(B).\]
    To help compute $n^*n$, notice that whenever $j$ and $k$ are numbers from $1$ to $K-1$, we see that $|c|^{j+k}c^{2K-j-k}=|c|^{2K-l}c^l$, for some $l$ between $0$ and $K-1$, e.g.~$l=0$ if $j+k=K$, $l=1$ if $j+k+1=K$, $l=2$ if $j+k+2=K$ or $2K$, etc.  It follows that
    \[\sum_{j,k=1}^{K-1}|c|^{j+k}c^{2K-j-k}=(K-1)|c|^{2K}+(K-2)\sum_{k=1}^{K-1}|c|^{2K-k}c^k.\]
    Then we see that
    \begin{align*}
        n^2&=(K-2)^2|c|^{2K}-4(K-2)\sum_{k=1}^{K-1}|c|^{2K-k}c^k+4((K-1)|c|^{2K}+(K-2)\sum_{k=1}^{K-1}|c|^{2K-k}c^k))\\
        &=((K-2)^2+4(K-1))|c|^{2K}\\
        &=K^2|c|^{2K}.
    \end{align*}
    In particular, $n^*n=nn^*=n^2\in B_+$ so $n\in N(B)\subseteq N$.  As $E(c^k)=0$, for all $k<K$, it follows that $(K-2)|c|^K=E(n)\sqsubseteq n$.  Taking $(b_k)\subseteq B^1_+$ with $E(n)=\lim_kb_kE(n)=\lim_kb_kn$, we again see that $b_k$ is an approximate unit for $|c|$ and hence $c^k$, for all $k<K$, and thus for $n$ as well.  So
    \[(K-2)|c|^K=E(n)=\lim_kb_kn=n=(K-2)|c|^K-2\sum_{k=1}^{K-1}|c|^{K-k}c^k\]
    and hence $\sum_{k=1}^{K-1}|c|^{K-k}c^k=0$.  But again this is not possible because
    \[E(c^*\sum_{k=1}^{K-1}|c|^{K-k}c^k)=E(\sum_{k=1}^{K-1}|c|^{K-k+1}c^{k-1})=|c|^K\neq0.\]

    The only remaining possibility is $K=\infty$.  In this case let us first replace $c$ with $\varepsilon c$ for some $\varepsilon>0$ small enough to ensure that $\|c\|<\frac{1}{\sqrt{2}}$.  Then let $n=|c|e^{i(c+c^*)}\in C(B)$ so $n^*n=nn^*=|c|^2\in B$ and again $n\in N(B)\subseteq N$.  Note that we have $(a_k)_{k\in\mathbb{Z}_+}\subseteq B$ with
    \[n=a_0+\sum_{k=1}^\infty a_k(c^k+c^{*k}),\]
    e.g. $a_0=\sum_{k=0}^\infty(-1)^kk!^{-2}|c|^{2k+1}$ and $a_1=i\sum_{k=0}^\infty(-1)^kk!^{-1}(k+1)!^{-1}|c|^{2k+1}$.  Now note that, whenever $0<r<\frac{1}{\sqrt{2}}$,
    \begin{align*}
	\Big|\sum_{k=0}^\infty(-1)^kk!^{-2}r^{2k+1}\Big|&\geq r-\Big|\sum_{k=1}^\infty(-1)^kk!^{-2}r^{2k+1}\Big|\geq r-\sum_{k=1}^\infty r^{2k+1}\\
	&=r-r^3/(1-r^2)=(r-2r^3)/(1-r^2)>r-2r^3>0.
\end{align*}
It follows that $a_0$ has the same support as $|c|$ when identifying the C*-algebra $D$ generated by $B$ and $c$ with $C_0(X_D)$.  Exactly the same argument shows that $a_1$ has the same support as $|c|$ and hence $a_0$ as well.  This means that any approximate unit in $B$ for $a_0$ is also an approximate unit for $a_1$.  As $E(n)\sqsubseteq n$, we again have $(b_k)\subseteq B^1_+$ with
    \[a_0=E(n)=\lim_kb_kE(n)=\lim_kb_kn=a_0+a_1(c+c^*)+\lim_kb_k\sum_{k=2}^\infty a_k(c^k+c^{*k}).\]
    This implies $a=0$ where $a=a_1(c+c^*)+\lim_kb_k\sum_{k=2}^\infty a_k(c^k+c^{*k})$, which again is not possible because $E(ca)=a_1|c|^2\neq0$.  Thus we get a contradiction for all possible $K$.
\end{proof}

Recall that \cite[Theorem 3.1 (2)$\Leftrightarrow$(5)]{ABCCLMR2023} says that the C*-algebraic
local bisection hypothesis holds for a reduced twisted groupoid C*-algebra precisely when the canonical diagonal is a MASA and hence a Cartan subalgebra.  In light of \cref{TwistedGroupoid->CartanSemigroup}, the results above extend this to more general (e.g. full) C*-completions of $C_c(\Sigma;G)$.  Even in the reduced case (which corresponds to $E$ being faithful, by \cref{cor:it}), the above results provide a somewhat different proof of \cite[Theorem 3.1 (2)$\Leftrightarrow$(5)]{ABCCLMR2023}.

We also get the following corollary.

\begin{cor}\label{cor:masa}
    Let $A$ be a C*-algebra with commutative C*-subalgebra $B$ such that the span of $N(B)$ is dense in $A$.  Suppose we are also given an expectation $E:A\twoheadrightarrow B$.  Then $B$ is a Cartan subalgebra if and only if $E$ is faithful and $E(n)\sqsubseteq n$, for all $n\in N(B)$.
\end{cor}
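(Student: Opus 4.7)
The plan is to handle each direction separately, recognizing that the forward direction is essentially a direct application of earlier machinery while the backward direction reduces to verifying the hypotheses of \cref{NormalisersImpliesMASA}.

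For the forward direction, assume $B$ is a Cartan subalgebra of $A$. By definition this says that $B$ is a MASA, $\mathrm{span}(N(B))$ is dense in $A$, and the given $E$ is a faithful conditional expectation. I would then invoke \cref{prop:CartanGeneralisation}, which promotes this data to a genuine Cartan-semigroup/semi-Cartan-subalgebra structure with $N = N(B)$ and $B = C^*(N(B)_+)$. Once this is in place, \cref{prop:resE} immediately yields $E(n) \sqsubseteq n$ for every $n\in N(B)$, while faithfulness of $E$ is given.

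For the backward direction, assume $E$ is faithful and $E(n)\sqsubseteq n$ for all $n\in N(B)$. The central step is to show that $N(B)$ itself satisfies \cref{def:semiCartan}, i.e.\ that it is a Cartan semigroup with associated semi-Cartan subalgebra $B$. Most conditions are immediate: $N(B)$ is a closed $*$-subsemigroup of $A$ (closure follows because for $n_\lambda\to n$ with $n_\lambda\in N(B)$ and $b\in B$, we have $n_\lambda b n_\lambda^*\to nbn^*\in\mathrm{cl}(B)=B$, and similarly on the other side), its span is dense by hypothesis, and $B=C^*(N(B)_+)$ is a commutative subsemigroup of $N(B)$ since $N(B)_+\subseteq B$ (from $n^*n\in B$) and $B\subseteq N(B)$ (from commutativity of $B$). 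The only content lies in verifying stability: given $n\in N(B)$, the hypothesis $E(n)\sqsubseteq n$ provides $(b_k)\subseteq B^1_+$ with $E(n)=\lim_k nb_k$, whence
\[
E(n)n^* = \lim_k n b_k n^* \in \mathrm{cl}(nBn^*)\subseteq B,
\]
using the normaliser property $nBn^*\subseteq B$.

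With $N(B)$ established as a Cartan semigroup having semi-Cartan subalgebra $B$, I would then apply \cref{NormalisersImpliesMASA} (taking $N=N(B)$) to conclude that $B$ is a MASA. Combined with the already-assumed faithfulness of $E$ and density of $\mathrm{span}(N(B))$, this shows $(A,B)$ is a Cartan pair. The main (only) obstacle is the stability verification above; the rest is bookkeeping, and the proof is short once one recognizes that $E(n)\sqsubseteq n$ is precisely the right substitute that converts the normaliser identity $nBn^*\subseteq B$ into the stability axiom $E(n)n^*\in B$.
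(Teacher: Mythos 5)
Your overall architecture (verify that $N(B)$ is a Cartan semigroup with semi-Cartan subalgebra $B$, then invoke \cref{NormalisersImpliesMASA}) is exactly the paper's, and your stability verification $E(n)n^*\in\mathrm{cl}(nBn^*)\subseteq B$ is correct. But there is a genuine gap in the backward direction: you claim $N(B)_+\subseteq B$ ``from $n^*n\in B$'' as if this were part of the definition of a normaliser. It is not. Membership in $N(B)$ only gives $n^*Bn\cup nBn^*\subseteq B$; it does \emph{not} give $n^*n\in B$ (you cannot run the approximate-unit trick $n^*n=\lim n^*b_\lambda n$ either, since \cref{prop:approxunit} presupposes the semi-Cartan structure you are trying to establish, and the ``MASA contains an approximate unit'' fact is unavailable because $B$ being a MASA is the conclusion, not a hypothesis). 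Proving $n^*n\in B$ is precisely the substantive step of this direction, and it is where faithfulness of $E$ must enter: one first observes that $n^*n$ commutes with $B$ (a standard normaliser fact, which the paper cites to Pitts); since $n^*n\in N(B)$, the hypothesis gives $E(n^*n)\sqsubseteq n^*n$, which via a sequence in $B^1_+$ (\cref{PositiveBallSequence}) and the commutation yields $E(n^*n)\leq n^*n$; then $a:=n^*n-E(n^*n)\geq0$ satisfies $E(a)=0$, so faithfulness forces $a=0$, i.e.\ $n^*n=E(n^*n)\in B$. Without this, $C^*(N(B)_+)=B$ is unproven and \cref{NormalisersImpliesMASA} cannot be applied. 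The fact that your construction of the Cartan semigroup nowhere uses faithfulness should have been a warning sign, since faithfulness appears as a hypothesis of the corollary and cannot be idle.

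A secondary, smaller issue is in the forward direction: the $E$ in the statement is an \emph{arbitrary} given expectation onto $B$, whereas the definition of a Cartan subalgebra only supplies the existence of \emph{some} faithful conditional expectation. Your phrase ``faithfulness of $E$ is given'' conflates the two. One must argue that they coincide: both are stable by (the argument of) \cref{prop:CartanGeneralisation}, and stable expectations are unique by \cref{prop:resE} and \cref{UniqueStableExpectation}, so the given $E$ equals the faithful one and hence is itself faithful and satisfies $E(n)\sqsubseteq n$.
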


\begin{proof}
    If $B$ is a Cartan subalgebra then, by definition, there is a faithful expectation onto $B$.  As shown in \cref{prop:CartanGeneralisation}, both this faithful expectation and the given expectation $E$ are then stable and hence $N(B)$ is a Cartan semigroup.  By \cref{prop:resE}, these expectations must then be the same and satisfy $E(n)\sqsubseteq n$, for all $n\in N(B)$.

    Conversely, for all $n\in N(B)$, if $E(n)\sqsubseteq n$ then $E(n)n^*\in\mathrm{cl}(nBn^*)\subseteq B$.  Also $n^*n$ commutes with $B$ by \cite[Proposition 2.1]{Pitts}, so $E(n^*n)\sqsubseteq n^*n$ implies $E(n^*n)\leq n^*n$ and hence $a:=n^*n-E(n^*n)\geq0$.  As $E(a)=0$, if $E$ is faithful then $a=0$ and hence $n^*n=E(n^*n)\in B$.  Thus $N(B)$ is a Cartan semigroup with associated semi-Cartan subalgebra $B$.  By \cref{NormalisersImpliesMASA}, $B$ is then a MASA and hence a Cartan subalgebra.
\end{proof}

\section{Domination Variants}

We finish by examining a few further properties of domination which clarify its connections to similar relations considered previously in the literature.

The first attempt to define Kumjian-Renault's Weyl groupoid from a domination-like relation appeared in \cite{Bice2021}.  This variant of domination, which we denote here by $<_*$, was a stronger relation defined only on the unit ball of $N$ by
\[m<_*n\qquad\Leftrightarrow\qquad m<_{n^*}n.\]
This is closer to the relation $\ll$, where $m\ll n$ means $mn=m$, which was originally considered on semigroups of real-valued continuous functions in \cite{Milgram1949} (and also sometimes used in C*-algebras -- see \cite[II.3.4.3]{Blackadar2017}).  However, more recent work in \cite{Bice2023}, \cite{Bice2022}, \cite{BC2021} and the present paper indicates that $<$ is the better relation to work with.  Nevertheless, $<$ and $<_*$ are still closely related and, on Cartan semigroups at least, they really differ only by a factor of $B_+$, as we now show.  We also simultaneously show that we could have required $sn$ and $ns$ to lie in $B^1_+$ when defining $<$ (much as the sequence defining $\sqsubseteq$ can be taken in $B^1_+$, as shown in \cref{PositiveBallSequence}).  Accordingly, let us define a strengthening $<^1_s$ of $<_s$ by
\[m<_s^1n\qquad\Leftrightarrow\qquad ms,sm\in B,\quad sn,ns\in B^1_+\quad\text{and}\quad nsm=m=msn.\]
Below we also denote infima and suprema by $\wedge$ and $\vee$ respectively.

\begin{thm}\label{BallDomination}
If $m<n$ then we have $s\in n^*B_+\cap B_+n^*$ with $m<_s^1n$.
\end{thm}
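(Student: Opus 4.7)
The plan is to exhibit $s$ in the symmetric form $s = n^*\phi(nn^*)$ for a suitable cutoff $\phi$ from continuous functional calculus. The identity $\phi(nn^*)n = n\phi(n^*n)$, which follows from $nn^*\cdot n = n\cdot n^*n$ and Stone--Weierstrass, forces $s = \phi(n^*n)n^*$ as well, so membership $s \in n^*B_+\cap B_+n^*$ will come for free.

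First, pick $t \in N$ with $m <_t n$. From $m = mtn = ntm$ we get $mn^* = (mt)(nn^*) \in B$ and $n^*m = (n^*n)(tm) \in B$; this will handle $ms, sm \in B$ the moment $s$ is placed in $n^*B \cap Bn^*$. Next, \cref{prop:rainbow} gives $\mathcal{U}_m \Subset \mathcal{U}_n$, and applying the source and range maps (as in the proof of \cref{prop:rainbow}) yields $\mathcal{U}_{m^*m} \Subset \mathcal{U}_{n^*n}$ and $\mathcal{U}_{mm^*} \Subset \mathcal{U}_{nn^*}$. Transferring through the homeomorphism of \cref{ComplementHomeo} and Gelfand duality $B \cong C_0(X_B)$, these become
\[\supp^\circ(m^*m) \Subset \supp^\circ(n^*n) \quad \text{and} \quad \supp^\circ(mm^*) \Subset \supp^\circ(nn^*)\]
in $X_B$, so some $\delta > 0$ satisfies $n^*n \ge \delta$ on $\supp^\circ(m^*m)$ and $nn^* \ge \delta$ on $\supp^\circ(mm^*)$. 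Fix any continuous $\phi : [0,\infty) \to [0,\infty)$ with $\phi(0) = 0$, $x\phi(x) \le 1$ for all $x \ge 0$, and $\phi(x) = 1/x$ for $x \ge \delta$, and set $s := n^*\phi(nn^*) = \phi(n^*n)n^*$.

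Then $sn = (n^*n)\phi(n^*n)$ and $ns = (nn^*)\phi(nn^*)$ both lie in $B^1_+$ by $0 \le x\phi(x) \le 1$, while $ms = (mn^*)\phi(nn^*) \in B$ and $sm = \phi(n^*n)(n^*m) \in B$ by the first step. Since $sn \in B$ is self-adjoint and commutes with $m^*m \in B$, a direct expansion gives
\[(m - msn)^*(m - msn) = m^*m\bigl(1 - (n^*n)\phi(n^*n)\bigr)^2,\]
which vanishes because $1 - (n^*n)\phi(n^*n) = 0$ wherever $n^*n \ge \delta$, in particular on $\supp^\circ(m^*m)$; hence $msn = m$. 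The symmetric computation $(m - nsm)(m - nsm)^* = mm^*\bigl(1 - (nn^*)\phi(nn^*)\bigr)^2 = 0$ gives $nsm = m$. The only real obstacle is spotting the ansatz: only the shape $s = n^*\phi(nn^*)$ lets the commutation identity automatically place $s$ in $n^*B_+ \cap B_+n^*$, after which the dual role of $\phi$ — at most $1/x$ globally (for the ball condition) and exactly $1/x$ on the relevant supports (for the defining equalities of $<_s^1$) — makes everything else fall into place.
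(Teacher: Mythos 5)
Your proof is correct, but it takes a genuinely different route from the paper's. The paper bootstraps in three purely algebraic stages: it first replaces a witness $s$ by $ss^*n^*$ to arrange $ns,sn\in B_+$, then multiplies by $f(ns)$ with $f(x)=x\wedge x^{-1}$ to get into the unit ball, and finally replaces $s$ by $g(sn)h(n^*n)n^*$ with $g(x)=(2x-1)\vee0$ and $h(x)=x^{-1}\wedge rx$; the needed lower-bound information is carried by the inequality $(sn)^2\le\|s\|^2n^*n$ and the cutoff $g(sn)$, so the ultrafilter groupoid is never invoked. You instead produce the witness in one shot as $s=n^*\phi(nn^*)=\phi(n^*n)n^*$, at the cost of needing a uniform lower bound $n^*n\ge\delta$ on $\supp^\circ(m^*m)$ (and dually for $nn^*$), which you extract from $\mathcal{U}_{m^*m}\Subset\mathcal{U}_{n^*n}$ via \cref{prop:rainbow} and \cref{ComplementHomeo}. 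All the verifications check out: $mn^*=(mt)(nn^*)\in B$ and $n^*m=(n^*n)(tm)\in B$ give $ms,sm\in B$; the factorisation $(m-msn)^*(m-msn)=m^*m(1-(n^*n)\phi(n^*n))^2$ is valid since $sn\in B_+$ commutes with $m^*m$, and it vanishes because $\phi$ inverts $n^*n$ exactly where $m^*m$ is supported. Your approach buys brevity and a transparent ansatz, but imports the groupoid machinery into what is otherwise an intrinsic statement about $(N,B,E)$; the paper's argument is longer but self-contained at the C*-algebra level. If you wanted to keep your one-shot construction while shedding the groupoid, note that $m<n$ gives $m^*m<_bn^*n$ for some $b\in B$ (via \eqref{eq:stardom}, the product stability of $<$, and \eqref{DominationExpectation}), whence $m^*m=m^*m\,b\,n^*n$ forces $n^*n\ge1/\lVert b\rVert$ on $\supp^\circ(m^*m)$ directly in $C_0(X_B)$.
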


\begin{proof}
First we claim that
\[m<_sn\qquad\Rightarrow\qquad m<_{ss^*n^*}n.\]
Indeed, if $m<_sn$ then $nss^*n^*\in B_+$ and $ss^*n^*n\in B_+B_+\subseteq B_+$.  Also, \cref{cor:binormal} yields $ss^*n^*m\in sBm\subseteq B$ and \cref{mnstar} yields $mss^*n^*=s^*n^*ms=ms\in B$ and
\[nss^*n^*m=nsm=m=s^*n^*msn=mss^*n^*n,\]
i.e. $m<_{ss^*n^*}n$.  This shows that $m<n$ implies $m<_sn$, for some $s\in N$ with $ns,sn\in B_+$.

Defining $f$ on $\mathbb{R}_+$ by $f(x)=x\wedge x^{-1}$ (in particular $f(0)=0\wedge\infty=0$), we next claim
\[m<_sn\quad\text{and}\quad ns,sn\in B_+\qquad\Rightarrow\qquad m<_{sf(ns)}n.\]
Indeed, if $m<_sn$ and $ns,sn\in B_+$ then $msf(ns),nsf(ns)\in BB\subseteq B$ and \cref{cor:binormal} yields $sf(ns)n,sf(ns)m\in B$.  Moreover, as $f(1)=1$, $bm=m$ implies $f(b)bm=m$ and $mb=m$ implies $mbf(b)=m$, for any $b\in B$, so $nsf(ns)m=m=mf(sn)sn=msf(ns)n$, i.e. $m<_{sf(ns)}n$.  As $f(x)\leq x^{-1}$, for all $x>0$, $\|nsf(ns)\|\leq1$ and $\|sf(ns)n\|=\|f(sn)sn\|\leq1$.  This shows that $m<n$ implies $m<_s^1n$, for some $s\in N$.

Now assume $m<_s^1n$.  Take any $r>16\|s\|^4$ and define functions $g$ and $h$ on $\mathbb{R}_+$ by
\[g(x)=(2x-1)\vee0\qquad\text{and}\qquad h(x)=x^{-1}\wedge rx.\]
Noting that $g(sn)n^*nn^*=n^*ng(sn)n^*=n^*g(ns)nn^*=n^*nn^*g(ns)$, it follows that $g(sn)p(n^*n)n^*=n^*p(nn^*)g(ns)$ for any polynomial $p$ without constant term and hence any continuous $p$ with $p(0)=0$.  To complete the proof it suffices to show $m<_t^1n$ where
\[t=g(sn)h(n^*n)n^*=n^*h(nn^*)g(ns)\in n^*B_+\cap B_+n^*.\]

To see this, first note $(sn)^2=n^*s^*sn\leq\|s\|^2n^*n$.  Identifying $B$ with $C_0(X_B)$, we see that if $x\in X$ satisfies $sn(x)\geq1/2$ then $1/4\leq sn(x)^2\leq\|s\|^2n^*n(x)$ so $1/16\leq\|s\|^4n^*n(x)^2$ and hence $n^*n(x)^{-1}\leq16\|s\|^4n^*n(x)\leq rn^*n(x)$, which implies $h(n^*n)n^*n(x)=1$.  On the other hand, if $sn(x)\leq1/2$ then $2sn(x)-1\leq0$ and hence $g(sn)(x)=0$.  These observations together imply that $tn=g(sn)h(n^*n)n^*n=g(sn)\in B^1_+$ and, likewise, $nt=g(ns)\in B^1_+$.  As $g(1)=1$ and $nsm=m=msn$, it follows that $ntm=g(ns)m=m=mg(sn)=mtn$. 
 Finally note $n^*m=n^*nsm\in BB\subseteq B$ and hence $tm=g(sn)h(n^*n)n^*m\in B$.  Likewise $mt\in B$ and hence $m<_t^1n$, as required.
\end{proof}

We can even extend this result to any bounded finite family.

\begin{cor}
    If $m_1,\ldots,m_k<n$, we have $s\in n^*B_+\cap B_+n^*$ with $m_1,\ldots,m_k<_s^1n$.
\end{cor}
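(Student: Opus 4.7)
The plan is induction on $k$, with the base case $k=1$ given by \cref{BallDomination}. For the inductive step, I would apply the inductive hypothesis to $m_1,\ldots,m_{k-1}$ to obtain $s_1\in n^*B_+\cap B_+n^*$ with $m_i<_{s_1}^1 n$ for all $i<k$, and apply \cref{BallDomination} to $m_k$ to get $s_2\in n^*B_+\cap B_+n^*$ with $m_k<_{s_2}^1 n$. The candidate combined witness should be the inclusion--exclusion element
\[s := s_1 + s_2 - s_1 n s_2.\]

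The guiding identity is the Boolean ``or'' in the unitisation $\widetilde B$: namely $1-ns = (1-ns_1)(1-ns_2)$ and $1-sn = (1-s_1 n)(1-s_2 n)$. Since $B$ is commutative and $0\le ns_\ell,\,s_\ell n\le 1$, these factored forms automatically yield $0\le ns,\,sn\le 1$, so $ns,sn\in B_+^1$. Moreover $(1-ns_1)m_i=0$ for $i<k$ and $(1-ns_2)m_k=0$, so in every case $(1-ns_1)(1-ns_2)m_i=0$ (moving the annihilating factor next to $m_i$ via commutativity of the two factors in $B$), giving $nsm_i=m_i$; the symmetric computation on the right yields $m_i sn = m_i$.

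Next I would verify $s\in n^*B_+\cap B_+n^*$. Writing $s_\ell = n^*a_\ell = b_\ell n^*$ with $a_\ell,b_\ell\in B_+$, commutativity in $B$ together with the trivial associativity $n^*(nn^*) = (n^*n)n^*$ gives $s_1 n s_2 = n^*(a_1 a_2 nn^*) = (b_1 b_2 n^*n)n^*$, and hence
\[s = n^*\bigl(a_1+a_2 - a_1 a_2 nn^*\bigr) = \bigl(b_1+b_2 - b_1 b_2 n^*n\bigr)n^*,\]
with both bracketed expressions in $B_+$ via $a_1+a_2-a_1 a_2 nn^* = a_1(1-a_2 nn^*)+a_2\ge 0$ (using $nn^*a_2 = ns_2\le 1$), and similarly on the other side. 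For the remaining conditions $m_i s,\,sm_i\in B$, the only troublesome terms are cross-terms $m_i s_\ell$ where $\ell$ is not the index originally paired to $i$; these are controlled by the observation that $m_i<n$ implies both $m_i n^*,\,n^* m_i\in B$ (the first directly from \eqref{StarSwitch}, the second from combining \eqref{eq:stardom} with \eqref{StarSwitch}), so that $m_i s_\ell = m_i n^* a_\ell\in B$ and $s_\ell m_i = b_\ell n^* m_i\in B$, while the correction splits as $s_1 n s_2 m_i = (s_1 n)(s_2 m_i)\in B\cdot B\subseteq B$.

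The main obstacle is spotting the correct combination formula $s_1+s_2-s_1 n s_2$: once chosen, its Boolean behaviour under a commutative $B$ simultaneously forces $ns,sn\in B_+^1$ and the identities $nsm_i=m_i=m_i sn$, while the $n^*B_+\cap B_+n^*$ form and the cross-term bookkeeping follow from manipulations already familiar from the proof of \cref{BallDomination}.
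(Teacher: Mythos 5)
Your proof is correct, but it takes a genuinely different route from the paper's. The paper argues in one shot: it writes each witness as $s_j=b_jn^*=n^*c_j$ with $b_j,c_j\in B_+$ and takes the lattice join, setting $s=(b_1\vee\ldots\vee b_k)n^*=n^*(c_1\vee\ldots\vee c_k)$; the identity $m_js n=m_j$ then follows because $b_jn^*n\leq(b_1\vee\ldots\vee b_k)n^*n\leq 1$ and a right unit for $m_j$ can be replaced by any larger element of $B^1_+$. Your argument instead proceeds by induction on $k$ using the inclusion--exclusion witness $s=s_1+s_2-s_1ns_2$, whose defining property $1-ns=(1-ns_1)(1-ns_2)$ (and its mirror image) does all the work: it gives $ns,sn\in B^1_+$ and $nsm_i=m_i=m_isn$ purely from commutativity of $B$. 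The remaining bookkeeping in your write-up is sound: the factorisations $s=n^*(a_1+a_2-a_1a_2nn^*)=(b_1+b_2-b_1b_2n^*n)n^*$ with positive brackets (using $ns_2=a_2nn^*\leq1$ and $s_1n=b_1n^*n\leq1$) place $s$ in $n^*B_+\cap B_+n^*$, and the cross-terms $m_is_\ell$, $s_\ell m_i$ land in $B$ via $m_in^*,n^*m_i\in B$ from \eqref{StarSwitch} and \eqref{eq:stardom}, which the paper also uses implicitly. The trade-off is that the paper's join construction handles all $k$ witnesses symmetrically at once and produces a witness that is manifestly of the form $bn^*$, at the cost of the (routine) verification that the two joins $(b_1\vee\ldots\vee b_k)n^*$ and $n^*(c_1\vee\ldots\vee c_k)$ coincide; your inductive formula avoids any appeal to lattice operations in $B$ but requires the explicit cross-term checks. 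Both are valid proofs of the statement.
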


\begin{proof}
    If $m_1,\ldots,m_k<n$ then we have $s_1,\ldots,s_k\in n^*B_+\cap B_+n^*$ with $m_j<_{s_j}^1n$, for all $j\leq k$.  This means we have $b_1,\ldots,b_k,c_1,\ldots,c_k\in B_+$ with $s_j=b_jn^*=n^*c_j$, for all $j\leq k$.  Letting $s=(b_1\vee\ldots\vee b_k)n^*=n^*(c_1\vee\ldots\vee c_k)\in n^*B_+\cap B_+n^*$, we see that, for all $j\leq k$,
    \[m_j=m_js_jn=m_jb_jn^*n=m_j(b_1n^*n\vee\ldots\vee b_kn^*n)=m_j(b_1\vee\ldots\vee b_k)n^*n=m_jsn,\]
    as $b_1n^*n,\ldots,b_kn^*n\in B^1_+$.  Likewise, $nsm_j=m_j$ and hence $m_j<_s^1n$, for all $j\leq k$.
\end{proof}

One immediate application of this is that sums behave well with respect to domination.

\begin{cor}
    For all $l,m,n\in N$,
    \[l,m<n\qquad\Rightarrow\qquad l+m<n.\]
\end{cor}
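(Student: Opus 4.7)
The plan is to apply the preceding corollary to the two-element family $\{l, m\}$ to obtain a common witness $s$, and then verify by linearity that this same $s$ witnesses domination of the sum.

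More concretely, the preceding corollary applied to $l, m < n$ yields some $s \in n^* B_+ \cap B_+ n^*$ such that both $l <_s^1 n$ and $m <_s^1 n$. Unpacking, this gives $ls, sl, ms, sm \in B$, $sn, ns \in B^1_+ \subseteq B$, and the identities $l = lsn = nsl$ together with $m = msn = nsm$. Summing, I immediately obtain $(l+m)s = ls + ms \in B$, $s(l+m) = sl + sm \in B$, and $(l+m)sn = l + m = ns(l+m)$, which together with $sn, ns \in B$ are exactly the algebraic conditions in the definition of $l + m <_s n$.

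The one subtlety — and what I expect to be the main point requiring attention — is that the relation $<_s$ is defined only for elements of $N$, so one needs to ensure $l + m \in N$. My plan is to verify that $l$ and $m$ are compatible in the sense of \cref{SigmaN} and then pass to the summable Cartan semigroup $\overline{\mathrm{csum}}(N)$, which was shown to have the same associated semi-Cartan subalgebra $B$ and the same stable expectation. Compatibility follows from \eqref{StarSwitch}: applied to $l <_s n$ it gives $n^* l, l n^* \in B$, and applied to $m <_s n$ it gives $n^* m, m n^* \in B$, so using $m = nsm$ and $l = lsn$ one computes $l^* m = (l^* n)(sm) \in B\cdot B \subseteq B$ and $l m^* = (ls)(nm^*) \in B \cdot B \subseteq B$. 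Hence $l + m \in \overline{\mathrm{csum}}(N)$, and the direct verification above yields $l + m <_s n$ in this extended Cartan semigroup; in particular, $l + m < n$.
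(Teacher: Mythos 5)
Your first two paragraphs follow the paper's route exactly: invoke the preceding corollary to get a single witness $s$ with $l,m<_s^1 n$, and then check the algebraic identities for $l+m<_s n$ by linearity. Those verifications are all correct.

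The gap is in how you handle the membership $l+m\in N$. You correctly identify this as the crux, but your resolution does not prove the stated result. The relation $<_s$ (and hence $<$) is defined only between elements of $N$, so the corollary as stated asserts in particular that $l+m\in N$. Your argument places $l+m$ only in the enlarged semigroup $\overline{\mathrm{csum}}(N)$, which in general strictly contains $N$ (the whole point of the summability discussion is that $N$ need not be closed under compatible sums). Establishing $l+m<_s n$ inside $\overline{\mathrm{csum}}(N)$ is therefore a genuinely weaker conclusion than $l+m<n$ in $N$, and nothing in your argument closes that gap. The compatibility computation itself is fine, but it is also unnecessary. The membership follows in one line from the identities you already have: since $l=lsn$ and $m=msn$ with $ls,ms\in B$, we get
\[l+m=lsn+msn=(ls+ms)n\in BN\subseteq NN\subseteq N,\]
using that $B\subseteq N$ and $N$ is a subsemigroup. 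With that observation in place, the rest of your verification shows $l+m<_s n$ in $N$ itself, which is exactly the paper's proof.
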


\begin{proof}
    If $l,m<n$ then, by the above result, we have $s\in N$ with $l,m<_sn$ and hence $l+m=lsn+msn=(ls+ms)n\in(B+B)N\subseteq BN\subseteq N$.  Likewise $l+m=ns(l+m)$ etc. and hence $l+m<_sn$.
\end{proof}

As another application, we can show that $(N,<)$ is a predomain in the sense of \cite{Keimel2016}, also known as an abstract basis in \cite[Definition III-4.15]{GHKLMS2003} and \cite[Lemma 5.1.32]{Goubault2013}.

\begin{cor}\label{Predomain}
If $m_1,\ldots,m_k<n$, we have $l\in nB_+\cap B_+n$ with $m_1,\ldots,m_k<_{l^*}l<n$.
\end{cor}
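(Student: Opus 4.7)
The plan is to first reduce to a common witness via the previous corollary, and then build $l$ by functional calculus applied to $n^*n$.  The previous corollary furnishes $s \in n^*B_+ \cap B_+ n^*$ with $m_j <^1_s n$ for every $j \leq k$, so $sn, ns \in B^1_+$, $ns\, m_j = m_j = m_j\, sn$, and $s m_j, m_j s \in B$.  If $s = 0$ then every $m_j = 0$, so applying \cref{Interpolation} to $0 < n$ produces a suitable $l$.  Otherwise set $\delta := 1/\|s\|^2 > 0$ and choose continuous $h, k : \mathbb{R}_+ \to \mathbb{R}_+$ with $h(0) = k(0) = 0$, with $h$ vanishing on $[0, \delta/2]$ and equal to $x \mapsto 1/\sqrt{x}$ on $[\delta, \infty)$, and $k(x) = 1/x$ on $[\delta/2, \infty)$.

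Next, I would define
\[ l := n\, h(n^*n) = h(nn^*)\, n, \]
the two expressions agreeing via the standard functional calculus identity $n f(n^*n) = f(nn^*) n$, so $l \in nB_+ \cap B_+n$ immediately.  Setting $t := k(n^*n) n^* = n^* k(nn^*) \in N$, the elementary identity $h(x) k(x) x = h(x)$ (trivial where $h = 0$, and using $k(x)x = 1$ elsewhere) yields $ltn = l = ntl$, while each of $lt$, $tl$, $tn$, $nt$ lies in $B$ via \cref{lem:Normal} (e.g.\ $lt = n\,h(n^*n) k(n^*n)\,n^* \in nBn^* \subseteq B$).  Hence $l <_t n$, giving $l < n$.

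The heart of the argument is verifying $m_j <_{l^*} l$ for each $j \leq k$.  Stability $l^*l, ll^* \in B$ is automatic from the construction, and $l^* m_j, m_j l^* \in B$ because $n^* m_j = (n^*n)(s m_j) \in B$ and $m_j n^* = (m_j s)(nn^*) \in B$ (directly from $m_j = ns m_j = m_j sn$).  For the two identities $m_j l^*l = m_j$ and $ll^* m_j = m_j$, set $a_j := m_j^* m_j$ and $b_j := m_j m_j^*$; the same relations give $a_j = a_j(sn)$ and $b_j = (ns) b_j$ in $B_+$, so $sn$ is identically $1$ on $\supp(a_j) \subseteq X_B$ and $ns$ is identically $1$ on $\supp(b_j)$.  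Combining this with the pointwise bound $|sn|^2 = n^* s^* s n \leq \|s\|^2 n^*n$ in $B$ forces $n^*n \geq \delta$ on $\supp(a_j)$, and symmetrically $nn^* \geq \delta$ on $\supp(b_j)$.  By the choice of $h$ we then have $h(n^*n)^2 n^*n = 1$ and $h(nn^*)^2 nn^* = 1$ on the respective supports, i.e.\ $l^*l\,a_j = a_j$ and $ll^*\,b_j = b_j$ in $B$.  Expanding $(m_j l^*l - m_j)^*(m_j l^*l - m_j) = (l^*l)^2 a_j - 2\, l^*l\, a_j + a_j$ and using these identities twice collapses it to $0$, and analogously $(ll^* m_j - m_j)(ll^* m_j - m_j)^* = 0$, completing $m_j <_{l^*} l$.

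The main obstacle is arranging a uniform spectral lower bound $\delta$ on $n^*n$ across every $\supp(a_j)$ (and on $nn^*$ across every $\supp(b_j)$).  This is precisely where the common witness $s$ from the previous corollary is essential: a single constant $\delta = 1/\|s\|^2$ then works for all $j$ simultaneously, allowing one continuous $h$ to execute every normalisation at once.
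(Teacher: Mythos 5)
Your proof is correct. It shares the paper's opening move --- invoking the preceding corollary to obtain a single witness $s\in n^*B_+\cap B_+n^*$ with $m_1,\ldots,m_k<_s^1n$ --- but then takes a genuinely different route. The paper passes through the construction in the proof of \cref{Interpolation} to get $q\in nB_+\cap B_+n$ with $m_j<_s^1q<n$, factors $s=bn^*=n^*c$ and $q=dn=ne$, and sets $l=n\sqrt{be}=\sqrt{cd}\,n$, leaving the final verifications implicit. You instead define $l=nh(n^*n)=h(nn^*)n$ by a direct spectral cutoff on $n^*n$ at the level $\delta=1/\|s\|^2$. The observation that makes this work --- $sn\equiv1$ on $\supp(m_j^*m_j)$ combined with $(sn)^2=n^*s^*sn\le\|s\|^2n^*n$ forces $n^*n\ge\delta$ there, uniformly in $j$ --- is exactly the quantitative content that the paper's $\sqrt{be}$ encodes implicitly, and your choice of $h$ with $h(x)=1/\sqrt x$ above $\delta$ then makes $l^*l$ identically $1$ on each $\supp(m_j^*m_j)$, so the C*-identity expansion of $(m_jl^*l-m_j)^*(m_jl^*l-m_j)$ collapses to $0$. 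What your version buys is self-containment: every condition in $m_j<_{l^*}l<n$ is checked explicitly (membership of $l^*m_j$, $m_jl^*$ in $B$ via $n^*m_j=(n^*n)(sm_j)$, the witness $t=k(n^*n)n^*$ for $l<n$, and the degenerate case $s=0$, where all $m_j=0$ and \cref{Interpolation} applied to $0<n$ suffices), at the cost of a somewhat longer argument; the paper's version is terser but leans on the interpolation machinery and omits the verification.
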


\begin{proof}
If $m_1,\ldots,m_k<n$ then we have $s\in n^*B_+\cap B_+n^*$ with $m_1,\ldots,m_k<_s^1n$, by \cref{BallDomination}.  Then we have $q\in nB_+\cap B_+n$ with $m_1,\ldots,m_k<_s^1q<n$, as in the proof of \cref{Interpolation}.  This means we have $b,c,d,e\in B$ such that $s=bn^*=n^*c$ and $t=dn=ne$.  Setting $l=n\sqrt{be}=\sqrt{cd}n$, it follows that $m_1,\ldots,m_k<_{l^*}l<n$.
\end{proof}

It follows that we obtain a domain from the $<$-ideals of $N$, which correspond to the open bisections of our ultrafilter groupoid.


\vspace{2ex}
\bibliographystyle{plainurl}
\makeatletter\renewcommand\@biblabel[1]{[#1]}\makeatother
\bibliography{references}

\end{document}